\DeclareMathAlphabet{\mathpzc}{OT1}{pzc}{m}{it}
\newcommand{\adj}[4]{#1\negmedspace: #2\rightleftarrows #3:\negmedspace #4}
\newtheorem{thm}{Theorem}[section]
\newtheorem{cor}[thm]{Corollary}
\newtheorem{example}[thm]{Example}
\newtheorem{lem}[thm]{Lemma}
\newtheorem{prop}[thm]{Proposition}
\theoremstyle{definition}
\newtheorem{defn}[thm]{Definition}
\newtheorem{rem}[thm]{Remark}
\numberwithin{equation}{section}
\newtheorem{notation}[thm]{Notation}
\DeclareFontFamily{U}{rsf}{} \DeclareFontShape{U}{rsf}{m}{n}{
  <5> <6> rsfs5 <7> <8> <9> rsfs7 <10->  rsfs10}{}
\DeclareMathAlphabet{\mathscr}{U}{rsf}{m}{n}
\newcommand*{\defeq}{\mathrel{\vcenter{\baselineskip0.5ex \lineskiplimit0pt
                     \hbox{\scriptsize.}\hbox{\scriptsize.}}}%
                     =}
\renewcommand{\imath}{\sqrt{-1}}
\DeclareMathOperator{\colim}{colim}
\DeclareMathOperator{\Ban}{Ban}
\DeclareMathOperator{\sNorm}{sNorm}
\DeclareMathOperator{\Ho}{Ho}
\numberwithin{equation}{section}
\begin{document}

\title[]{A note on deriving unbounded functors of exact categories, with applications to Ind- and Pro- functors}
\author{Jack Kelly}\thanks{}%
\address{Jack Kelly,
The Hamilton Mathematics Institute,
School of Mathematics,
Trinity College Dublin,
Dublin 2,
Ireland}
\begin{abstract}
In this note we show that under very mild conditions on a functor between exact categories $F:\mathpzc{D}\rightarrow\mathpzc{E}$ it is possible to derive $F$ at the level of unbounded complexes. We also give applications to deriving functors between $Pro$- and $Ind$- categories.
\noindent 
\end{abstract}
\maketitle 
\tableofcontents
Let $F:\mathpzc{D}\rightarrow\mathpzc{E}$ be a right-exact functor of abelian categories. Using Spaltenstein's seminal work on constructing unbounded resolutions in abelian categories \cite{Spaltenstein}, one can show (\cite{stacks-project} Chapter 13, Proposition 29.2) that whenever filtered colimits over the countable ordinal $\omega$ exist and are exact in both $\mathpzc{D}$ and $\mathpzc{E}$, and $F$ commutes with such colimits, then there is an unbounded left derived functor
$$\mathbb{L}F:D(\mathpzc{D})\rightarrow D(\mathpzc{E})$$
Using a version of Spaltenstein's result for exact category which we proved in \cite{kelly2016homotopy}, in this note we will establish the following

\begin{thm}
Let $F:\mathpzc{D}\rightarrow\mathpzc{E}$ be a functor between weakly idempotent complete, weakly $(\omega,\textbf{AdMon})$-elementary exact categories. If $\mathpzc{D}$ has kernels, and there exists an $F$-projective subcategory $\mathcal{S}\subset\mathpzc{D}$, and $F$ commutes with countable coproducts of objects in $\mathcal{S}$, then there is an unbounded left derived functor
$$\mathbb{L}F:D(\mathpzc{D})\rightarrow D(\mathpzc{E})$$
\end{thm}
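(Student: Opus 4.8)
The plan is to obtain $\mathbb{L}F$ by transporting $F$ along homotopy $\mathcal{S}$-projective resolutions, the existence of which, in the unbounded setting, is exactly the exact-category form of Spaltenstein's theorem established in \cite{kelly2016homotopy}. Concretely, I would first recall that since $\mathcal{S}\subseteq\mathpzc{D}$ is $F$-projective, every object of $\mathpzc{D}$ admits an admissible epimorphism from an object of $\mathcal{S}$, so every bounded-above complex has a bounded-above resolution with entries in $\mathcal{S}$, and such resolutions are homotopy $\mathcal{S}$-projective, i.e.\ admit no nonzero maps in $K(\mathpzc{D})$ to an acyclic complex. For an arbitrary $X\in\Ch(\mathpzc{D})$ one writes $X=\colim_n\tau_{\le n}X$ as the union of its canonical truncations (here the hypothesis that $\mathpzc{D}$ has kernels is used), chooses bounded-above $\mathcal{S}$-resolutions $P_n\to\tau_{\le n}X$, lifts the transition maps to a sequence $P_0\to P_1\to\cdots$ using homotopy projectivity of the $P_n$ — after enlarging each $P_n$ by a contractible complex these lifts may be taken degreewise split, hence admissible, monomorphisms — and sets $P\defeq\colim_n P_n$, the mapping telescope. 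That countable $\omega$-indexed colimits along admissible monomorphisms are exact (weak $(\omega,\textbf{AdMon})$-elementarity, with weak idempotent completeness ensuring the homotopy category of acyclics behaves) gives that $P\to X$ is a quasi-isomorphism, and the Milnor sequence for the telescope shows $P$ is again homotopy $\mathcal{S}$-projective. I would package this as: the full subcategory $\mathcal{K}_{\mathcal{S}}\subseteq K(\mathpzc{D})$ of such telescopes is a triangulated subcategory, closed under countable coproducts, on which $K(\mathpzc{D})\to D(\mathpzc{D})$ is fully faithful with essential image all of $D(\mathpzc{D})$.

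Given this, I would define $\mathbb{L}F(X)\defeq F(P)$ for a chosen $P\in\mathcal{K}_{\mathcal{S}}$ over $X$, and argue well-definedness by a soft contractibility argument: $F$ is additive, hence preserves chain homotopies and mapping cones of chain maps; if $A\in\mathcal{K}_{\mathcal{S}}$ is acyclic then $\id_A=0$ in $\Hom_{K(\mathpzc{D})}(A,A)$, so $A$ is contractible and therefore $F(A)$ is contractible, in particular acyclic; consequently, for any quasi-isomorphism $P\to P'$ between objects of $\mathcal{K}_{\mathcal{S}}$ the cone lies in $\mathcal{K}_{\mathcal{S}}$ and is acyclic, so $F(\mathrm{cone})=\mathrm{cone}(F(P)\to F(P'))$ is acyclic and $F(P)\xrightarrow{\sim}F(P')$. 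Functoriality then follows because a morphism in $D(\mathpzc{D})$ lifts uniquely up to homotopy to a chain map of chosen resolutions, and the triangulated structure because distinguished triangles can be represented by term-wise admissible short exact sequences of objects of $\mathcal{K}_{\mathcal{S}}$, which $F$ preserves.

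Finally I would use the remaining hypothesis — that $F$ commutes with countable coproducts of objects of $\mathcal{S}$ — to identify $\mathbb{L}F$ with the classical construction on bounded-above complexes and to exhibit the unbounded functor as a homotopy colimit of its truncations. The telescope $P=\colim_n P_n$ sits in an admissible short exact sequence $0\to\bigoplus_n P_n\to\bigoplus_n P_n\to P\to0$ whose outer terms still have entries in $\mathcal{S}$ (as $\mathcal{S}$ is closed under countable coproducts), so applying $F$ and using the hypothesis gives $F(P)\cong\hocolim_n F(P_n)$; since each $P_n$ is a bounded-above $\mathcal{S}$-resolution of $\tau_{\le n}X$ one has $F(P_n)\cong\mathbb{L}F(\tau_{\le n}X)$ in $D^-(\mathpzc{E})$, whence $\mathbb{L}F(X)\cong\hocolim_n\mathbb{L}F(\tau_{\le n}X)$, recovering the usual functor when $X$ is bounded above.

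I expect the genuine obstacle to be Step 1: producing the homotopy $\mathcal{S}$-projective telescope resolution of an arbitrary unbounded complex and proving that $P\to X$ is a quasi-isomorphism. This is where all the hypotheses on $\mathpzc{D}$ (weak idempotent completeness, existence of kernels, exactness of $\omega$-indexed colimits along admissible monomorphisms) are consumed, and it is exactly the exact-category Spaltenstein theorem of \cite{kelly2016homotopy}. Relative to that input, the only genuinely new point is that $F$ must be compatible with the countable coproducts appearing in the telescopes, which is the last hypothesis; everything else — well-definedness, functoriality, and the triangulated structure — is then formal.
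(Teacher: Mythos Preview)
Your argument has a genuine gap at the step where you assert that bounded $\mathcal{S}$-resolutions are ``homotopy $\mathcal{S}$-projective, i.e.\ admit no nonzero maps in $K(\mathpzc{D})$ to an acyclic complex.'' This is $K$-projectivity, and it holds only when the objects of $\mathcal{S}$ are genuinely projective in the exact structure. The hypothesis here is merely that $\mathcal{S}$ is $F$-projective in the sense of Definition~\ref{defn:F-proj}: closed under kernels of admissible epimorphisms between its objects, and such that $F$ is exact on short exact sequences in $\mathcal{S}$. The standard example is $\mathcal{S}=\{\text{flat modules}\}$ for $F=(-)\otimes_R M$; a bounded complex of flats is not $K$-projective in general, and an acyclic bounded complex of flats need not be contractible. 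Consequently your well-definedness step (``if $A\in\mathcal{K}_{\mathcal{S}}$ is acyclic then $\id_A=0$ in $K(\mathpzc{D})$'') fails, as does the Milnor-sequence step asserting the telescope is $K$-projective, and the functoriality step (``a morphism in $D(\mathpzc{D})$ lifts uniquely up to homotopy to a chain map of chosen resolutions'') which also needs $K$-projectivity of the source.

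What survives from the bounded theory is weaker: $F$ sends quasi-isomorphisms \emph{between} bounded-below complexes in $\mathcal{S}$ to quasi-isomorphisms (Proposition after Definition~\ref{defn:F-proj}). The paper never claims the very-special resolutions are $K$-projective; instead it argues directly that $F$ preserves weak equivalences between them. Given a quasi-isomorphism $f:P_\bullet\to Q_\bullet$ of very-special objects, one only knows that each $F(P_\bullet^n)\to F(Q_\bullet^n)$ is an isomorphism in $\Ho(Ch_+(\mathpzc{E}))$, and these isomorphisms are compatible only in the homotopy category --- not as honest chain maps. The passage from this homotopy-coherent data to the conclusion that $\colim_n F(P_\bullet^n)\to\colim_n F(Q_\bullet^n)$ is an equivalence is exactly Corollary~\ref{cor:limithomequiv} (in its dual form), and that corollary is proved by embedding $\mathpzc{E}$ into an $\omega$-right abelianization and invoking the derived-limit machinery there. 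This embedding argument is the real content that replaces your appeal to $K$-projectivity.
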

To prove this we will show that any weakly $(\omega,\textbf{AdMon})$-elementary exact category embeds in an abelian category which is itself weakly $(\omega,\textbf{AdMon})$-elementary.

We then explain that in many exact categories, derived colimit/ derived limit functors can be defined at the level of unbounded complexes and relate this to deriving functors involving categories of Ind objects. We give dual results for right derived functors. As applications, we consider unbounded derived functors of categories involved in functional analysis, such as the categories $\mathpzc{Born}_{k}$ and $\mathcal{T}_{c,k}$ of, respectively,  bornological $k$-vector spaces and locally convex topological $k$-vector spaces over a Banach field $k$. For example we show that an exact functor $F:sNorm_{k}\rightarrow\mathpzc{D}$ from the category  of semi-normed $k$-vector spaces to a nice enough exact category extends to a right derivable functor of unbounded complexes $Ch(\mathcal{T}_{c,k})\rightarrow Ch(\mathpzc{D})$:

\begin{prop}
Let $k$ be a Banach field, $\mathpzc{D}$ an exact category, and  $F:\mathcal{T}_{c,k}\rightarrow\mathpzc{D}$ a functor whose restriction to the category $\sNorm_{k}$ is exact. Suppose that $\mathpzc{D}$ is ML (Definition \ref{defn:ML}). Then there is a right derived functor
$$\mathbb{R}F:Ch(\mathcal{T}_{c,k})\rightarrow Ch(\mathpzc{D})$$
Moreover if $\mathpzc{D}$ is strongly ML, then for $X\in\mathcal{T}_{c,k}$, $\mathbb{R}F(X)$ is quasi-isomorphic to a complex concentrated in degrees $0,-1$.
\end{prop}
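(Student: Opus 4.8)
The plan is to present $F$ on all of $\mathcal{T}_{c,k}$ as an exact functor followed by a cofiltered limit, and then to feed this into the unbounded derived limit functor constructed earlier in the note. Recall that every $X\in\mathcal{T}_{c,k}$ is canonically the cofiltered limit $X=\lim_{U}X/U$ of its semi-normed quotients, where $U$ runs over a directed family of continuous seminorms, each $X/U$ lies in $\sNorm_{k}$, and every transition map $X/U\twoheadrightarrow X/U'$ is an admissible (strict) epimorphism; applying this termwise, every $X^{\bullet}\in Ch(\mathcal{T}_{c,k})$ gives rise to a cofiltered system $U\mapsto F_{0}(X^{\bullet}/U)$ in $Ch(\mathpzc{D})$, where $F_{0}\defeq F|_{\sNorm_{k}}$ is exact by hypothesis, together with a canonical comparison $F(X^{\bullet})\to\lim_{U}F_{0}(X^{\bullet}/U)$. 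Since $F_{0}$ is exact, the subcategory $\sNorm_{k}\subset\mathcal{T}_{c,k}$ is $F$-acyclic for right derivation, so constructing $\mathbb{R}F$ reduces to deriving the limit functor on the cofiltered systems of $\mathpzc{D}$ produced in this way, i.e.\ to producing an unbounded $\mathbb{R}\lim$ on $\mathrm{Pro}(\mathpzc{D})$.

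This is where the hypothesis that $\mathpzc{D}$ is ML (Definition~\ref{defn:ML}) enters: it is exactly the condition under which the right-derived (dual) analogue of the Theorem above applies to $\lim\colon\mathrm{Pro}(\mathpzc{D})\to\mathpzc{D}$, with respect to a suitable $\lim$-injective subcategory of $\mathrm{Pro}(\mathpzc{D})$, so that the dual of the exact-category Spaltenstein construction of \cite{kelly2016homotopy} yields a functor $\mathbb{R}\lim\colon Ch(\mathrm{Pro}(\mathpzc{D}))\to Ch(\mathpzc{D})$. One then sets $\mathbb{R}F(X^{\bullet})\defeq\mathbb{R}\lim\bigl(U\mapsto F_{0}(X^{\bullet}/U)\bigr)$; the comparison map above, together with the universal property of $\mathbb{R}\lim$ and exactness of $F_{0}$, identifies this with a right derived functor of $F$ on $Ch(\mathcal{T}_{c,k})$.

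For the final assertion, fix $X\in\mathcal{T}_{c,k}$, regarded as a complex in degree $0$. The system $U\mapsto X/U$ is a pro-object of $\sNorm_{k}$ all of whose transition maps are admissible epimorphisms, and, $F_{0}$ being exact, so is $U\mapsto F_{0}(X/U)$ a pro-object of $\mathpzc{D}$ with admissible-epimorphic transition maps, i.e.\ a Mittag--Leffler pro-object. By definition, $\mathpzc{D}$ being strongly ML means precisely that $\mathbb{R}\lim$ of any Mittag--Leffler pro-object of $\mathpzc{D}$ is concentrated in homological degrees $0$ and $-1$ (only the limit and its first derived functor survive, the higher derived limits vanishing). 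Hence $\mathbb{R}F(X)$ is quasi-isomorphic to a two-term complex placed in degrees $0$ and $-1$.

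The technical heart --- and the step most likely to require real work --- is the middle one: verifying that $\mathpzc{D}$ ML really does transport to the dual elementariness hypotheses on $\mathrm{Pro}(\mathpzc{D})$ (or on the category of towers in which one actually resolves), so that the dual Spaltenstein argument runs and delivers homotopy-limit resolutions functorially in $X^{\bullet}$; here one must be attentive to the fact that the index sets $\{U\}$ need not be countable. The remaining ingredient, namely that the seminorm-quotient presentation of objects of $\mathcal{T}_{c,k}$ is compatible with the chosen exact structure (so that $\sNorm_{k}$ is genuinely $F$-acyclic and the comparison map computes $\mathbb{R}F$), is routine functional-analytic bookkeeping, though a harmless passage to separated spaces may be needed to make the identification on $\sNorm_{k}$ itself strictly correct.
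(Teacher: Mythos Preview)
Your overall strategy---factor $F$ through $Pro(\sNorm_{k})$ and reduce to $\mathbb{R}\lim$---is the same as the paper's, but there are two genuine gaps.

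First, your reading of ``strongly ML'' does not match Definition~\ref{defn:ML}. Strongly ML means that $\lim:\mathpzc{Fun}_{\textbf{AdEpi}}(\mathcal{I}^{op},\mathpzc{D})\rightarrow\mathpzc{D}$ is \emph{exact}; this makes admissible-epimorphic systems $\lim$-\emph{acyclic} (derived limit concentrated in degree $0$), not merely ``concentrated in degrees $0,-1$''. The two-term bound in the paper does not come from the Mittag--Leffler property of the seminorm system at all: it is the dual of Proposition~\ref{prop:ind-resolution}, which says every object of $Pro(\sNorm_{k})$ admits a length-two resolution $0\rightarrow X\rightarrow\overline{X}\rightarrow K\rightarrow0$ with $\overline{X}\in\prod(\sNorm_{k})$ and $K\in Pro^{a}(\sNorm_{k})$, and strongly ML makes both terms $\overline{F}$-acyclic. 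So the argument you give for the final clause is incorrect, even though the conclusion is right. (Incidentally, in the paper's convention $X_{\rho}$ is $|X|_{t}$ equipped with $\rho$, not the separated quotient; the transition maps are the identity on the underlying module and are \emph{not} admissible epimorphisms in $\sNorm_{k}$ in general, so your appeal to their strictness also needs adjustment.)

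Second, you define $\mathbb{R}F$ by a formula and then assert that ``the comparison map \ldots\ identifies this with a right derived functor of $F$''. This is exactly the step that needs work, and the paper does not handle it the way you sketch. The paper first shows (Proposition~\ref{prop:tcadmon}) that $\mathcal{T}_{c,k}$ has enough injectives, so $F$ is right derivable by $K$-injective resolutions; it then proves that the adjunction $L_{T_{c}}^{PsN}\dashv R_{PsN}^{T_{c}}$ with $Pro(\sNorm_{k})$ is deformable and that the derived unit $X\rightarrow\mathbb{R}R_{PsN}^{T_{c}}L_{T_{c}}^{PsN}(X)$ is an equivalence for every unbounded complex $X$. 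This lets one transport a $K$-injective resolution from $Pro(\sNorm_{k})$ back to $\mathcal{T}_{c,k}$ and thereby identify $\mathbb{R}F\cong\mathbb{R}\overline{F}\circ L_{T_{c}}^{PsN}$, after which Proposition~\ref{prop:probounded} applies directly. Your proposal skips the existence of enough injectives in $\mathcal{T}_{c,k}$ and the derived-unit equivalence, and the ``$\mathpzc{D}$ ML transports to $Pro(\mathpzc{D})$'' step you flag as the technical heart is not actually where the difficulty lies.
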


This proposition applies in particular to the functor $(-)^{b}:\mathcal{T}_{c,k}\rightarrow\mathpzc{Born}_{k}$ to the category of bornological $k$-vector spaces of convex type, which endows a space $X$ with its von Neumann bornology.

Throughout the paper unless stated otherwise we shall assume that all exact categories are weakly idempotent complete. Moreover our complexes will have homological grading, i.e. if $X_{\bullet}$ is a complex then $d_{n}:X_{n}\rightarrow X_{n-1}$. For $\mathpzc{E}$ an additive category, $\mathcal{K}(\mathpzc{E})$ will denote the homotopy category of complexes.

\subsection*{Acknowledgements}
The research of the author was supported by the Simons Foundation under the program ``Targeted Grants to Institutes”.

\section{Exact Categories}

Recall that if $\mathpzc{E}$ is an additive category then a \textbf{kernel-cokernel pair} in $\mathpzc{E}$ is a sequence
\begin{displaymath}
\xymatrix{
0\ar[r] & X\ar[r]^{f} & Y\ar[r]^{g} & Z\ar[r] &0
}
\end{displaymath}
where $g$ is a cokernel of $f$ and $f$ is a kernel of $g$. A \textbf{Quillen exact category} is a pair $(\mathpzc{E},\mathcal{Q})$ where $\mathpzc{E}$ is an additive category, and $\mathcal{Q}$ is a class of kernel-cokernel pairs in $\mathpzc{E}$ satisfying some axioms which make many of the constructions of homological algebra possible. For details one can consult \cite{Buehler}. In particular, there is a sensible definition of what it means for a map $f:X_{\bullet}\rightarrow Y_{\bullet}$ of complexes in $Ch(\mathpzc{E})$ to be a quasi-isomorphism. 

\begin{notation}
Let $(\mathpzc{E},\mathcal{Q})$ be an exact category.
\begin{enumerate}
\item
If $g$ appears as a cokernel in a short exact sequence in $\mathcal{Q}$, then $g$ is said to be an \textbf{admissible epimorphism}. The class of all admissible epimorphisms is denoted $\textbf{AdEpi}$.
\item
If $f$ appears as a kernel in a short exact sequence in $\mathcal{Q}$, then $f$ is said to be an \textbf{admissible monomorphism}. The class of all admissible monomorphisms is denoted $\textbf{AdMon}$.
\item
$\textbf{SplitMon}$ is the class of split monomorphisms, i.e. morphisms $i$ for which there existence a map $p$ such that $p\circ i$ is the identity. 
\end{enumerate}
\end{notation}

When it will not cause confusion, we will often denote an exact category by its underlying additive category $\mathpzc{E}$, with the class of exact sequences being understood.

\subsection{Derived functors in homotopical cateories}
Let $(\mathpzc{E},\mathcal{Q})$ be an exact category and denote by $\mathcal{W}_{\mathcal{Q}}$ the class of quasi-isomorphisms in $Ch(\mathpzc{E})$. $\mathcal{W}_{\mathcal{Q}}$ satisfies the $2$-out-of-$6$ property, and therefore $(Ch(\mathpzc{E}),\mathcal{W}_{\mathcal{Q}})$ is a homotopical category in the sense of \cite{Riehl}, as are the full subcategories $Ch_{+}(\mathpzc{E})$, $Ch_{\ge0}(\mathpzc{E})$, $Ch_{-}(\mathpzc{E})$, and $Ch_{\le0}(\mathpzc{E})$ of bounded below, non-negatively graded, bounded above, and non-positively graded complexes respectively.

\begin{defn}[\cite{Riehl} Definition 2.1.1]
A \textbf{homotopical category} is a pair $(\mathpzc{M},\mathcal{W})$ where $\mathpzc{M}$ is a category and $\mathcal{W}$ is a wide subcategory which satisfies the $2$-out-of-$6$ property: if $f:m\rightarrow m'$, $g:m'\rightarrow m''$ and $h:m''\rightarrow m'''$ are maps such that $h\circ g$ and $g\circ f$ are in $\mathcal{W}$, then $f,g,h$, and $h\circ g\circ f$ are all in $\mathcal{W}$. A functor between homotopical categories which preserves weak equivalences is said to be a \textbf{relative functor}.
\end{defn}

\begin{defn}[\cite{Riehl}, Definition 2.1.6]
The \textbf{homotopy category}, $\Ho(\mathpzc{M})$ of a homotopical category $(\mathpzc{M},\mathcal{W})$ is the formal localization of $\mathpzc{M}$ at $\mathcal{W}$.
\end{defn}

For $\mathpzc{M}$ a homotopical category we denote by $\gamma_{\mathpzc{M}}$ the canonical functor $\mathpzc{M}\rightarrow\Ho(\mathpzc{M})$.

\begin{defn}
Let $F:\mathpzc{M}\rightarrow\mathpzc{N}$ be a functor between homotopical categories.
\begin{enumerate}
\item
A\textbf{total left derived functor }$\textbf{L}F$ of $F$ is a left Kan extension $Lan_{\gamma_{\mathpzc{M}}}\gamma_{\mathpzc{N}}\circ F$. 
\item
A \textbf{left derived functor }$\mathbb{L}F$ of $F$ is a functor $\mathbb{L}F:\mathpzc{M}\rightarrow\mathpzc{N}$ such that $\gamma_{\mathpzc{N}}\circ\mathbb{L}F$ is a total left derived functor of $F$. 
\end{enumerate}
\end{defn}

Note that a total left derived functor is unique up to unique equivalence, but a left derived functor is by no means unique. 

\begin{defn}[\cite{Riehl} Definition 2.1.1, Definition 2.2.4]\label{def:homotopcialdeformation}
\begin{enumerate}
\item
Let $\mathpzc{M}$ be a homotopical category. A \textbf{left deformation on }$\mathpzc{M}$ is pair $(Q,\eta)$, where $Q:\mathpzc{M}\rightarrow\mathpzc{M}$ is an endomorphism, and $\eta:Q\rightarrow Id_{\mathpzc{M}}$ is a natural weak equivalence. 
\item
Let $F:\mathpzc{M}\rightarrow\mathpzc{N}$ be a functor between homotopical categories. A \textbf{left deformation for }$F$ is a left deformation $(Q,\eta)$ of $\mathpzc{M}$ such that $F$ preserves weak equivalences between objects in the essential image of $Q$. 
\end{enumerate}
\end{defn}

Dually one defines \textbf{right deformations} of homotopical categories and of functors.

\begin{thm}[\cite{Riehl} Theorem 2.2.8]
If $F:\mathpzc{M}\rightarrow\mathpzc{N}$ has a left deformation $(Q,\eta)$, then $F\circ Q$ is a left derived functor of $F$. 
\end{thm}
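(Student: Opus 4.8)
The plan is to verify directly that $FQ$, equipped with a natural transformation built from $\eta$, has the universal property demanded of a left derived functor; conceptually this is a reduction to the (trivial) case of a functor that already preserves weak equivalences. First I would check that $FQ$ is a relative functor. If $f\colon a\to b$ is a weak equivalence in $\mathpzc{M}$, then naturality of $\eta$ gives $\eta_b\circ Qf=f\circ\eta_a$; since $f,\eta_a,\eta_b$ are weak equivalences, $2$-out-of-$3$ (which follows from the $2$-out-of-$6$ axiom, as $\mathcal{W}$ is wide) forces $Qf$ to be one as well. As $Qf$ is a weak equivalence between the objects $Qa,Qb$, which lie in the essential image of $Q$, the defining property of a left deformation \emph{for} $F$ shows $F(Qf)$ is a weak equivalence in $\mathpzc{N}$. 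Hence $FQ$ preserves weak equivalences and, by the universal property of the localization $\gamma_{\mathpzc{M}}$, descends to a functor $\overline{FQ}\colon\Ho(\mathpzc{M})\to\Ho(\mathpzc{N})$ with $\overline{FQ}\circ\gamma_{\mathpzc{M}}=\gamma_{\mathpzc{N}}\circ FQ$. Whiskering $\eta\colon Q\Rightarrow\Id_{\mathpzc{M}}$ with $F$ and applying $\gamma_{\mathpzc{N}}$ produces a $2$-cell $\delta\colon\overline{FQ}\circ\gamma_{\mathpzc{M}}\Rightarrow\gamma_{\mathpzc{N}}\circ F$, and the claim is that $(\overline{FQ},\delta)$ is the Kan extension of $\gamma_{\mathpzc{N}}\circ F$ along $\gamma_{\mathpzc{M}}$ exhibiting $FQ$ as a left derived functor $\mathbb{L}F$.

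Conceptually, let $\mathpzc{M}_Q\subseteq\mathpzc{M}$ be the full subcategory on the essential image of $Q$, with its inherited weak equivalences; this is a homotopical category, the restriction $F_Q:=F|_{\mathpzc{M}_Q}$ is \emph{relative} (this is exactly the hypothesis on $F$), and $Q$ corestricts to a relative functor $\mathpzc{M}\to\mathpzc{M}_Q$ whose composite with the inclusion $\iota\colon\mathpzc{M}_Q\hookrightarrow\mathpzc{M}$ is the original $Q$, carrying the natural weak equivalence $\eta$; restricting $\eta$ to $\mathpzc{M}_Q$ is again a natural weak equivalence. Passing to homotopy categories, $\overline{\iota}$ and $\overline{Q}$ become inverse equivalences $\Ho(\mathpzc{M})\simeq\Ho(\mathpzc{M}_Q)$. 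For the relative functor $F_Q$ the induced functor $\overline{F_Q}$ is tautologically its own total derived functor — for a relative functor the left and right Kan extensions along the localization both coincide with the induced functor, directly from the universal property of localization — and $\overline{FQ}=\overline{F_Q}\circ\overline{Q}$ (both compose with $\gamma_{\mathpzc{M}}$ to $\gamma_{\mathpzc{N}}\circ FQ$). Restricting an arbitrary competitor along $\iota$ and transporting back across the equivalence $\overline{Q}$, using the comparison isomorphisms supplied by $\eta$, turns the universal property of $\overline{F_Q}$ into that of $(\overline{FQ},\delta)$.

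To make this concrete, given any $G\colon\Ho(\mathpzc{M})\to\Ho(\mathpzc{N})$ with a $2$-cell $\alpha\colon G\circ\gamma_{\mathpzc{M}}\Rightarrow\gamma_{\mathpzc{N}}\circ F$, the unique comparison $\beta\colon G\Rightarrow\overline{FQ}$ is forced to be, at an object $X$ of $\Ho(\mathpzc{M})$ (which is literally an object of $\mathpzc{M}$),
\[
\beta_X\;=\;\gamma_{\mathpzc{N}}(FQ\eta_X)\circ\delta_{QX}^{-1}\circ\alpha_{QX}\circ\bigl(G\gamma_{\mathpzc{M}}(\eta_X)\bigr)^{-1},
\]
which is well defined because $\gamma_{\mathpzc{M}}(\eta_X)$ is invertible in $\Ho(\mathpzc{M})$ and, crucially, $\delta_{QX}=\gamma_{\mathpzc{N}}(F\eta_{QX})$ is invertible in $\Ho(\mathpzc{N})$: the map $\eta_{QX}\colon QQX\to QX$ is a weak equivalence between two objects in the essential image of $Q$, so $F\eta_{QX}$ is a weak equivalence. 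Naturality of $\beta$, the triangle identity $\delta\circ(\beta\gamma_{\mathpzc{M}})=\alpha$, and the uniqueness of $\beta$ all reduce to naturality of $\eta$ and of $\alpha$ together with these two invertibility facts: the triangle identity at $X$ follows by rewriting $\gamma_{\mathpzc{N}}(F\eta_X)\circ\gamma_{\mathpzc{N}}(FQ\eta_X)=\gamma_{\mathpzc{N}}(F\eta_X)\circ\delta_{QX}$ via the naturality square of $\eta$ at $\eta_X$ and then applying the naturality square of $\alpha$ at $\eta_X$; uniqueness follows since the triangle identity evaluated at objects of the form $QX$, where $\delta_{QX}$ is invertible, pins $\beta$ down there, after which naturality along the isomorphisms $\gamma_{\mathpzc{M}}(\eta_X)$ pins it down everywhere. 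It suffices to check naturality on the morphisms $\gamma_{\mathpzc{M}}(f)$ with $f$ in $\mathpzc{M}$, since these and the formal inverses of weak equivalences generate $\Ho(\mathpzc{M})$.

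The main obstacle is precisely the uniqueness half, and this is where the hypotheses earn their keep: the $2$-cell $\delta$ is \emph{not} componentwise invertible — its failure to be is exactly the obstruction to $F$ itself being relative — so one cannot cancel $\delta$ from the triangle identity directly. The remedy is that $\delta$ \emph{does} become invertible on the essential image of $Q$, because $F$ is assumed to preserve weak equivalences there; one therefore determines $\beta$ on that subcategory and propagates it to all of $\Ho(\mathpzc{M})$ using that each object $X$ is linked to $QX$ by the isomorphism $\gamma_{\mathpzc{M}}(\eta_X)$. The remaining points are routine: that the inherited structure on $\mathpzc{M}_Q$ really is homotopical and that $Q$ genuinely corestricts to a relative functor into it (both from $2$-out-of-$3$), and, if one runs the conceptual argument rather than the explicit one, that the coherence isomorphisms induced by $\eta$ paste correctly when transporting along the equivalence $\overline{Q}$.
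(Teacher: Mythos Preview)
The paper does not prove this theorem; it merely cites it from \cite{Riehl}, Theorem 2.2.8, and uses it as a black box. Your argument is correct and is essentially the standard proof found there: show $FQ$ is homotopical via $2$-out-of-$3$ and the deformation hypothesis, descend to homotopy categories, and verify the Kan extension universal property by exploiting that $\delta$ is invertible on the essential image of $Q$ and that every object is isomorphic in $\Ho(\mathpzc{M})$ to one in that image via $\gamma_{\mathpzc{M}}(\eta)$.
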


For $\mathpzc{E}$ an exact category we write $D(\mathpzc{E})\defeq\Ho(Ch(\mathpzc{E}))$. Similarly we define $D_{+}(\mathpzc{E})$, $D_{\ge0}(\mathpzc{E}), D_{-}(\mathpzc{E})$, and $D_{\le0}(\mathpzc{E})$. 

\subsection{Exactness of (Co)Limits}
To construct unbounded derived functors, we will need certain (co)limits to be exact. Let $\mathcal{I}$ be a  category, $\mathpzc{E}$ an exact category, and $\mathcal{S}$ a class of morphisms in $\mathpzc{E}$. Denote by $\mathpzc{Fun}_{\mathcal{S}}(\mathcal{I},\mathpzc{E})$ the category of functors $F:\mathcal{I}\rightarrow\mathpzc{E}$ such that for any map $i\rightarrow j$ in $\mathcal{I}$, $F(i)\rightarrow F(j)$ is in $\mathcal{S})$. 
A kernel-cokernel pair
$$0\rightarrow X\rightarrow Y\rightarrow Z\rightarrow 0$$
in $\mathpzc{Fun}_{\mathcal{S}}(\mathcal{I},\mathpzc{E})$ is said to be a \textbf{short exact sequence} if for each $i\in\mathcal{I}$
$$0\rightarrow X(i)\rightarrow Y(i)\rightarrow Z(i)\rightarrow 0$$
is an exact sequence in $\mathpzc{E}$. This does not necessarily endow $\mathpzc{Fun}_{\mathcal{S}}(\mathcal{I},\mathpzc{E})$ with the structure of an exact category, but we can still make sense of what it means for functors out of such categories to be exact, preserve acyclic complexes, etc. 

\begin{defn}
Let $\mathcal{S}$ be a class of morphisms in an exact category $\mathpzc{E}$.
\begin{enumerate}
\item
$\mathpzc{E}$ is said to be \textbf{weakly }$(\omega,\mathcal{S})$-\textbf{elementary} if the functor $\colim:\mathpzc{Fun}_{\mathcal{S}}(\omega,\mathpzc{E})\rightarrow\mathpzc{E}$ exists and is exact.
\item
$\mathpzc{E}$ is said to be \textbf{weakly }$(\omega,\mathcal{S})$-\textbf{coelementary} if the functor $\lim:\mathpzc{Fun}_{\mathcal{S}}(\omega^{op},\mathpzc{E})\rightarrow\mathpzc{E}$ exists and is exact.
\end{enumerate}
We will specifically be interested in weakly $(\omega,\textbf{AdMon})$-elementary exact categories, and weakly $(\omega,\textbf{AdEpi})$-coelementary exact categories
\end{defn}

\begin{example}
Any exact category which is countably cocomplete and has enough injectives is  weakly $(\omega,\textbf{AdMon})$-elementary, and any exact category which is countably complete and has enough projectives is weakly $(\omega,\textbf{AdEpi})$-coelementary c.f. Proposition \ref{prop:enoughprojml}.
\end{example}

It will be convenient to have the following stronger notion as well.

\begin{defn}[Following \cite{jensen254foncteurs} Page 6]
Let $\mathpzc{E}$ be an exact category and $\mathcal{I}$ a filtered category. 
\begin{enumerate}
\item
 An object $F\in\mathpzc{Fun}_{\textbf{AdMon}}(\mathcal{I},\mathpzc{E})$ is said to be \textbf{weakly co-flasque} if for any directed subset $J$ of $I$ the map $\colim_{j\in J}F_{j}\rightarrow\colim_{i\in I}F_{i}$ is an admissible monomorphism. The full subcategory of $\mathpzc{Fun}_{\textbf{AdMon}}(\mathcal{I},\mathpzc{E})$ consisting of weakly co-flasque objects is denoted $\mathpzc{Fun}^{wcfl}_{\textbf{AdMon}}(\mathcal{I},\mathpzc{E})$
 \item
  An object $F\in\mathpzc{Fun}_{\textbf{AdEpi}}(\mathcal{I}^{op},\mathpzc{E})$ is said to be \textbf{weakly flasque} if for any directed subset $J$ of $I$ the map $\lim_{i\in I}F_{i}\rightarrow\lim_{j\in J}F_{j}$ is an admissible epimorphism. The full subcategory of $\mathpzc{Fun}_{\textbf{AdEpi}}(\mathcal{I}^{op},\mathpzc{E})$ consisting of weakly flasque objects is denoted $\mathpzc{Fun}^{wfl}_{\textbf{AdEpi}}(\mathcal{I}^{op},\mathpzc{E})$
 \end{enumerate}
\end{defn}

\begin{rem}
If $F\in \mathpzc{Fun}^{wcfl}_{\textbf{AdMon}}(\mathcal{I},\mathpzc{E})$, and $D:\mathcal{J}\rightarrow\mathcal{I}$ is a cofinal functor, then $F\circ D\in \mathpzc{Fun}^{wcfl}_{\textbf{AdMon}}(\mathcal{J},\mathpzc{E})$, and of course dually for $F\in \mathpzc{Fun}^{wfl}_{\textbf{AdEpi}}(\mathcal{J}^{op},\mathpzc{E})$.
\end{rem}

\begin{prop}
Let $F\in\mathpzc{Fun}_{\textbf{AdMon}}(\mathcal{I},\mathpzc{E})$ be weakly co-flasque, and let $\mathcal{J}\subset\mathcal{I}$ be a direct subcategory. Then $F|_{\mathcal{J}}$ is in $\mathpzc{Fun}_{\textbf{AdMon}}^{wcfl}(\mathcal{J},\mathpzc{E})$. 
\end{prop}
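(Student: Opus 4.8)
The plan is to exhibit the comparison map $\alpha\colon\colim_{j\in J'}F_{j}\to\colim_{j\in\mathcal{J}}F_{j}$ (for $J'$ a directed subset of $\mathcal{J}$) as the first factor of an admissible monomorphism, and then cancel the second factor on the left using weak idempotent completeness of $\mathpzc{E}$.

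First I would note that $F|_{\mathcal{J}}$ automatically lies in $\mathpzc{Fun}_{\textbf{AdMon}}(\mathcal{J},\mathpzc{E})$, since any morphism of $\mathcal{J}$ is a morphism of $\mathcal{I}$ and is therefore sent by $F$ to an admissible monomorphism. For the weak co-flasqueness condition, fix a directed subset $J'$ of $\mathcal{J}$. As $\mathcal{J}$ is a direct subcategory of $\mathcal{I}$, its object set is a directed subset of $\mathcal{I}$, and hence so is $J'$; in particular the colimits $\colim_{j\in J'}F_{j}$, $\colim_{j\in\mathcal{J}}F_{j}$ and $\colim_{i\in\mathcal{I}}F_{i}$ all exist, since weak co-flasqueness of $F$ already presupposes the existence of the colimit over any directed subset of $\mathcal{I}$. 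Functoriality of the colimit in the diagram then gives a commuting factorization
$$\colim_{j\in J'}F_{j}\xrightarrow{\ \alpha\ }\colim_{j\in\mathcal{J}}F_{j}\xrightarrow{\ \beta\ }\colim_{i\in\mathcal{I}}F_{i}$$
in which $\beta\circ\alpha$ is the canonical map determined by viewing $J'$ directly as a directed subset of $\mathcal{I}$.

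Now I would apply the hypothesis that $F$ is weakly co-flasque on $\mathcal{I}$, taking the directed subset to be $J'$: this says precisely that $\beta\circ\alpha$ is an admissible monomorphism. Since $\mathpzc{E}$ is weakly idempotent complete by the standing convention, left cancellation holds for admissible monomorphisms (\cite{Buehler}, Proposition 7.6), so $\alpha$ is itself an admissible monomorphism. As $J'$ was an arbitrary directed subset of $\mathcal{J}$, this yields $F|_{\mathcal{J}}\in\mathpzc{Fun}^{wcfl}_{\textbf{AdMon}}(\mathcal{J},\mathpzc{E})$. I do not foresee a real obstacle here; the only points requiring attention are the bookkeeping that a directed subset of $\mathcal{J}$ is still a directed subset of $\mathcal{I}$ (so that the hypothesis on $\mathcal{I}$ transfers), and the appeal to weak idempotent completeness to discard the factor $\beta$ — without which the conclusion can fail.
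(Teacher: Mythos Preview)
Your proof is correct and follows essentially the same argument as the paper: factor the comparison map through $\colim_{\mathcal{J}}F$, observe that the composite into $\colim_{\mathcal{I}}F$ is an admissible monomorphism by the co-flasque hypothesis, and cancel on the left. The paper phrases the cancellation step as an appeal to ``the obscure axiom'' while you invoke \cite{Buehler} Proposition 7.6 via weak idempotent completeness, but these amount to the same thing here.
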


\begin{proof}
Let $J'\subset J$ be directed. We know that $\colim_{\mathcal{J'}}F|_{J'}\rightarrow\colim_{\mathcal{I}}F$ is an admissible monomorphism. But this map factors through $\colim_{\mathcal{J'}}F|_{J'}\rightarrow \colim_{\mathcal{J}}F|_{J}$. By the obscure axiom this map must also be an admissible monomorphism.
\end{proof}

\begin{defn}\label{defn:ML}
Let $\mathpzc{E}$ be an exact category. 
\begin{enumerate}
\item
$\mathpzc{E}$ is said to be \textbf{weakly co-ML} if for any filtered category $\mathcal{I}$ the functor $\colim:Ch_{\ge0}(\mathpzc{Fun}^{wcfl}_{\textbf{AdMon}}(\mathcal{I},\mathpzc{E}))\rightarrow Ch_{\ge0}(\mathpzc{E})$ exists and sends term-wise acyclic complexes to acyclic complexes. It is said to be \textbf{co-ML} if in addition it has all filtered colimits, and \textbf{strongly co-ML} if it is co-ML and the functor $\colim:\mathpzc{Fun}_{\textbf{AdMon}}(\mathcal{I},\mathpzc{E})\rightarrow\mathpzc{E}$ is exact for any filtered category $\mathcal{I}$.
\item
$\mathpzc{E}$ is said to be \textbf{weakly co-ML} if for any filtered category $\mathcal{I}$ the functor $\lim:Ch_{\le0}(\mathpzc{Fun}^{wfl}_{\textbf{AdEpi}}(\mathcal{I}^{op},\mathpzc{E}))\rightarrow Ch_{\le0}(\mathpzc{E})$ exists and sends term-wise acyclic complexes to acyclic complexes.   It is said to be \textbf{ML} if in addition it has all projective limits, and \textbf{strongly ML} if it is ML and the functor $\lim:\mathpzc{Fun}_{\textbf{AdEpi}}(\mathcal{I}^{op},\mathpzc{E})\rightarrow\mathpzc{E}$ is exact for any filtered category $\mathcal{I}$.
\end{enumerate}
\end{defn}

ML here stands for `Mittag-Leffler'. 

\begin{example}
The category $\mathpzc{Ab}$ of abelian groups is  ML and strongly co-ML. 
\end{example}

\begin{proof}
Filtered colimits are exact in $\mathpzc{Ab}$, so that is is strongly co-ML is clear. To show that it is ML, it suffices to show that for $F\in \mathpzc{Fun}^{wfl}_{\textbf{AdEpi}}(\mathcal{I}^{op},\mathpzc{Ab})$ the map $\mathbb{L}\lim_{\mathcal{I}^{op}}F(i)\rightarrow\lim_{\mathcal{I}^{op}}F(i)$ is an equivalence. By picking a cofinal functor from a directed category, we may assume that $\mathcal{I}$ is directed. This is then Proposition 1.6 in \cite{jensen254foncteurs}.
\end{proof}

\begin{prop}\label{prop:enoughprojml}
Let $\mathpzc{E}$ be an exact category.
\begin{enumerate}
\item
If $\mathpzc{E}$ is has enough injectives then for any filtered set $\mathcal{I}$, the functor $\colim:Ch_{\ge0}(\mathpzc{Fun}^{wcfl}_{\textbf{AdMon}}(\mathcal{I},\mathpzc{E}))\rightarrow Ch_{\ge0}(\mathpzc{E})$ sends acyclic complexes to acyclic complexes.
\item
If $\mathpzc{E}$ is complete and has enough projectives then for any filtered set $\mathcal{I}$, the functor $\lim:Ch_{\le0}(\mathpzc{Fun}^{wfl}_{\textbf{AdEpi}}(\mathcal{I}^{op},\mathpzc{E}))\rightarrow Ch_{\le0}(\mathpzc{E})$  sends acyclic complexes to acyclic complexes.
\end{enumerate}
\end{prop}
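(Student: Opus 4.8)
The plan is to treat the two statements dually, so I will describe part (2); part (1) follows by passing to the opposite category. Fix a filtered set $\mathcal{I}$ and a term-wise acyclic complex $X_{\bullet}$ in $Ch_{\le0}(\mathpzc{Fun}^{wfl}_{\textbf{AdEpi}}(\mathcal{I}^{op},\mathpzc{E}))$; I want to show $\lim_{\mathcal{I}^{op}}X_{\bullet}$ is acyclic in $Ch_{\le0}(\mathpzc{E})$. By the Remark on cofinal functors, I may replace $\mathcal{I}$ by a directed poset cofinal in it, so assume $\mathcal{I}$ is directed. The key point is that term-wise acyclicity of $X_{\bullet}$, since each term lies in the weakly flasque functor category, means that for every $i\in\mathcal{I}$ the complex $X_{\bullet}(i)$ is acyclic in $\mathpzc{E}$, and moreover the transition maps in each homological degree are admissible epimorphisms (that is the condition defining $\mathpzc{Fun}_{\textbf{AdEpi}}$), with the additional weak-flasqueness of the diagram built into $X_{\bullet}$.

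First I would reduce the computation of $\mathbb{R}\lim$ along a directed poset to a two-term complex. The standard tool is that for a directed poset $\mathcal{I}$ of cofinality $\aleph_0$ — or after further cofinal reduction to $\omega^{op}$ when $\mathcal{I}$ is countable — one has a short exact sequence computing $\lim$ and $\lim^1$; in the general uncountable case one uses the analogous higher $\lim^n$ resolution. Concretely, I would use that $\mathpzc{E}$ has enough projectives and is complete to build, for the complex $X_{\bullet}$, a Cartan–Eilenberg-type projective resolution in the diagram category $\mathpzc{Fun}_{\textbf{AdEpi}}(\mathcal{I}^{op},\mathpzc{E})$ whose terms are diagrams of projectives; here the point of ``enough projectives plus complete'' is exactly that $\prod$-style projective objects in the diagram category exist and are acyclic for $\lim$. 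Then $\lim$ applied to the resolution computes $\mathbb{R}\lim$, and the spectral sequence of the double complex has $E_2^{p,q} = \lim{}^p H_q(X_{\bullet})$.

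The crucial input is that $H_q(X_{\bullet}) = 0$ for all $q$ as a diagram — this is where term-wise acyclicity is used — so the spectral sequence collapses and $\lim X_{\bullet} \simeq \mathbb{R}\lim X_{\bullet}$ is acyclic. To make this rigorous in an exact (rather than abelian) category I would invoke the embedding of $\mathpzc{E}$ into its abelian envelope (the Gabriel–Quillen embedding, reflecting exactness), carry out the homological-algebra argument there, and then use that the resolution and limit were built inside $\mathpzc{E}$ itself — completeness and enough projectives being stable data — so that the conclusion descends. The subtle point requiring the weak-flasqueness hypothesis is precisely that the naive term-wise limit of an acyclic weakly flasque diagram agrees with the derived limit: without weak flasqueness, $\lim^1$ of the cycle/boundary diagrams need not vanish, and the argument breaks.

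The main obstacle I anticipate is controlling $\lim^n$ for $n\ge 2$ when $\mathcal{I}$ is an arbitrary filtered (directed) poset of uncountable cofinality: the clean ``$\lim$ and $\lim^1$ only'' picture is special to cofinality $\aleph_0$. I would handle this either by restricting attention to the countable-cofinality case that the rest of the paper actually needs (the category names $\mathpzc{Born}_k$, $\mathcal{T}_{c,k}$ suggest $\omega$-indexed diagrams), or by showing that weak flasqueness forces $\lim^n H_q(X_\bullet)=0$ for $q\neq 0$ regardless of $n$, via the cofinal-restriction Remark applied to sufficiently many subdiagrams. Everything else — the Cartan–Eilenberg resolution, the collapse of the spectral sequence, the descent through the abelian envelope — is routine once that vanishing is in place.
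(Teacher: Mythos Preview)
Your approach is substantially more elaborate than the paper's and, as written, has a real gap.

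The paper's proof is three lines. Since $\mathpzc{E}$ has enough projectives, a complex in $Ch_{\le0}(\mathpzc{E})$ is acyclic if and only if $\Hom(P,-)$ applied to it is acyclic in $Ch_{\le0}(\mathpzc{Ab})$ for every projective $P$. The functor $\Hom(P,-)$ commutes with limits, and because $P$ is projective it sends admissible epimorphisms to surjections and preserves acyclicity; hence it sends the weakly flasque diagram $i\mapsto X_{\bullet}(i)$ to a term-wise acyclic complex of weakly flasque diagrams in $\mathpzc{Ab}$. One then simply invokes the previously established fact that $\mathpzc{Ab}$ is ML (which is Jensen's classical result). The hypothesis ``enough projectives'' is used only to detect acyclicity and to transport the weakly flasque condition into $\mathpzc{Ab}$; no Cartan--Eilenberg machinery, no spectral sequences, no abelian envelope.

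Your argument, by contrast, never actually closes. You correctly note that the second hypercohomology spectral sequence $E_2^{p,q}=\lim^{p}H_q(X_\bullet)$ collapses because $H_q(X_\bullet)=0$, so $\mathbb{R}\lim X_\bullet$ is acyclic. But the proposition asks about the \emph{underived} $\lim X_\bullet$. To pass from $\mathbb{R}\lim$ to $\lim$ you need each degree $X_n$, as a diagram, to be $\lim$-acyclic; equivalently you need the first spectral sequence $E_2^{p,q}=H^p(\lim^{q}X_\bullet)$ to degenerate. That amounts precisely to the implication ``weakly flasque $\Rightarrow$ $\lim$-acyclic'' in $\mathpzc{E}$ (or in whatever abelian envelope you pass to), and you never establish it. Your remarks about ``$\lim^1$ of the cycle/boundary diagrams'' miss the point --- those diagrams are zero here --- and your stated main obstacle (higher $\lim^n$ for uncountable cofinality) is a red herring: the second spectral sequence already kills $\mathbb{R}\lim$ regardless of cofinality, so that is not where the argument fails. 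Filling the actual gap requires exactly Jensen's theorem or an analogue, which is what the paper exploits directly by reducing to $\mathpzc{Ab}$ in one step.
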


\begin{proof}
We prove the second claim, the first being similar. Let $X$ be an acyclic complex in
$Ch_{\le0}(\mathpzc{Fun}^{wfl}_{\textbf{AdEpi}}(\mathcal{I}^{op},\mathpzc{E}))$. We need to show that $\lim_{\mathcal{I}^{op}} X_{i}$ is acyclic in $Ch_{\le0}(\mathpzc{E})$.
 Since $\mathpzc{E}$ has enough projectives, it suffices to show that $Hom(P,\lim_{\mathcal{I}^{op}} X_{i})\cong\lim_{\mathcal{I}^{op}}Hom(P,X_{i})$
is acyclic in $Ch_{\le0}(\mathpzc{Ab})$ for any projective $P$. But since $P$ is projective $i\mapsto Hom(P,X_{i})$ is in $Ch_{\le0}(\mathpzc{Fun}^{wfl}_{\textbf{AdEpi}}(\mathcal{I}^{op},\mathpzc{Ab}))$, so this follows from the fact that $\mathpzc{Ab}$ is ML. 
\end{proof}

\begin{cor}
Any exact category which has filtered colimits and has enough injectives is  co-ML, and any exact category which has projective limits and has enough projectives is ML.
\end{cor}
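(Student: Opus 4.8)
The plan is to read off both halves directly from Proposition~\ref{prop:enoughprojml}: the existence clauses in the definition of (co-)ML (Definition~\ref{defn:ML}) are handed to us by the hypotheses ``has filtered colimits'' / ``has projective limits'', and the only real work is to promote Proposition~\ref{prop:enoughprojml}, which is phrased for filtered \emph{posets}, to arbitrary filtered \emph{categories}. I will give the argument for the co-ML statement; the ML statement is its formal dual.

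So let $\mathpzc{E}$ have all filtered colimits and enough injectives. Since filtered colimits exist, for every filtered category $\mathcal{I}$ the functor $\colim\colon Ch_{\ge0}(\mathpzc{Fun}^{wcfl}_{\textbf{AdMon}}(\mathcal{I},\mathpzc{E}))\rightarrow Ch_{\ge0}(\mathpzc{E})$ is defined, so by Definition~\ref{defn:ML} the only thing to check is that it takes term-wise acyclic complexes to acyclic ones; this, together with the assumed existence of all filtered colimits, is precisely the statement that $\mathpzc{E}$ is co-ML. To verify it, I would take a term-wise acyclic $X\in Ch_{\ge0}(\mathpzc{Fun}^{wcfl}_{\textbf{AdMon}}(\mathcal{I},\mathpzc{E}))$, invoke the standard cofinality theorem to obtain a cofinal functor $D\colon\mathcal{J}\rightarrow\mathcal{I}$ from a directed poset $\mathcal{J}$, and note three things: colimits along cofinal functors agree, so $\colim_{\mathcal{I}}X\cong\colim_{\mathcal{J}}(X\circ D)$ degreewise and compatibly with the differentials, whence one is acyclic iff the other is; $X\circ D$ is still term-wise acyclic, precomposition not affecting this; and, by the remark that weak co-flasqueness is preserved under precomposition with a cofinal functor, $X\circ D$ again lies in $Ch_{\ge0}(\mathpzc{Fun}^{wcfl}_{\textbf{AdMon}}(\mathcal{J},\mathpzc{E}))$. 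This reduces the claim to the directed-poset case, which is exactly Proposition~\ref{prop:enoughprojml}(1) (note that that part uses only ``enough injectives'', so the extra hypothesis of filtered colimits is consumed purely by the existence clauses).

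For the second statement I would run the mirror-image argument for an $\mathpzc{E}$ with all projective limits and enough projectives: the existence of projective limits makes $\lim\colon Ch_{\le0}(\mathpzc{Fun}^{wfl}_{\textbf{AdEpi}}(\mathcal{I}^{op},\mathpzc{E}))\rightarrow Ch_{\le0}(\mathpzc{E})$ defined and also supplies the completeness hypothesis of Proposition~\ref{prop:enoughprojml}(2); one replaces the cofinal $D$ by the coinitial functor $D^{op}\colon\mathcal{J}^{op}\rightarrow\mathcal{I}^{op}$, along which limits agree, and uses the dual form of the same remark to preserve weak flasqueness, thereby reducing to Proposition~\ref{prop:enoughprojml}(2). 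I do not anticipate a genuine obstacle: the only point needing care is the bookkeeping in the reduction from filtered categories to directed posets — ensuring that cofinality preserves both the value of the (co)limit and the weak (co)flasqueness of the diagram — and both of these are supplied by results already in hand, namely the cofinality theorem and the cited remark; everything else is immediate from Proposition~\ref{prop:enoughprojml}.
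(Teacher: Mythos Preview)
Your proposal is correct and matches the paper's intent: the corollary is stated without proof, as an immediate consequence of Proposition~\ref{prop:enoughprojml}. You have correctly identified and filled in the one piece of bookkeeping the paper leaves implicit---the cofinality reduction from filtered categories to directed posets---which is exactly the move the paper itself makes in the proof of the preceding example about $\mathpzc{Ab}$.
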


\section{Embedding Theorems}

 In this section we show that any (small) weakly $(\omega,\textbf{AdEpi})$-coelementary (resp. $(\omega,\textbf{AdMon})$-elementary) exact category embeds in a weakly $(\omega,\textbf{AdEpi})$-coelementary (resp. $(\omega,\textbf{AdMon})$-elementary) abelian categoriy.  The first point to note is that any small exact category $\mathpzc{E}$ has both a left and right abelianization.

\begin{defn}
We call an fully faithful functor $I:\mathpzc{E}\rightarrow\mathpzc{A}$ from an exact category to an abelian category  a \textbf{left abelianization} of $\mathpzc{E}$ if 
\begin{enumerate}
\item
$I$ is fully faithful.
\item
$I$ is exact.
\item
$I$ reflects exactness.
\item
The essential image of $I$ is closed under extensions.
\item
$I$ preserves all kernels which exist.
\item
If $f$ is a morphism in $\mathpzc{E}$, then $f$ is an admissible epic if and only if $I(f)$ is an epic.
\end{enumerate}
Dually one defines a \textbf{right abelianization}.
\end{defn}

\begin{rem}
This terminology comes from \cite{kelly2016homotopy}, and is inspired by the left heart of a quasi-abelian category. However the right abelian envelope of an exact category as discussed in \cite{bodzenta2020abelian} is a \textit{left abelianization}.
\end{rem}

Abelianizations give us a reasonable notion of homology which can detect quasi-isomorphisms.

\begin{notation}
Let $J:\mathpzc{E}\rightarrow\mathpzc{A}$ be an exact functor from an exact category to an abelian category. For $X_{\bullet}\in Ch(\mathpzc{E})$ and $n\in\mathbb{Z}$ we write
$$H^{J}_{n}(X_{\bullet})\defeq H_{n}(J(X_{\bullet}))$$
\end{notation}

Appendix A in \cite{Buehler} guarantees that any small exact category has both a left abelianziation $I^{st}_{l}:\mathpzc{E}\rightarrow\mathpzc{A}^{st}_{l}$, and a right abelianization $I^{st}_{r}:\mathpzc{E}\rightarrow\mathpzc{A}^{st}_{r}$. The left abelianization constructed in \cite{Buehler} is the category of sheaves on a site $(\mathpzc{E},\tau)$, where $\tau$ is the Grothendieck topology on $\mathpzc{E}$ whose covers are of the form $\{p:C\rightarrow D\}$, where $p$ is an admissible epimorphism and $I^{st}_{l}$ is the Yoneda embedding. In particular the functor $I_{l}$ commutes with  limits. 
%
%

\begin{cor}\label{prop:truncquasi}
Let $\mathpzc{E}$ be an exact category with kernels. Then the functors $\tau^{L}_{\ge n}$ preserve quasi-isomorphisms. 
\end{cor}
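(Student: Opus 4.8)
The plan is to reduce the statement to a computation inside an abelian category via a left abelianization, where truncation functors are standard and manifestly preserve quasi-isomorphisms. Fix a left abelianization $I\colon\mathpzc{E}\rightarrow\mathpzc{A}$ (which exists after passing to a small skeleton of $\mathpzc{E}$, as guaranteed by Appendix A of \cite{Buehler}). Since $\mathpzc{E}$ has kernels and $I$ preserves all kernels which exist, the left truncation $\tau^{L}_{\ge n}$ of a complex $X_{\bullet}\in Ch(\mathpzc{E})$ — which is built using kernels of the differentials — is carried by $I$ to the usual (homological) truncation $\tau_{\ge n}$ of $I(X_{\bullet})$ in $Ch(\mathpzc{A})$; this is the point where one must check that the definition of $\tau^{L}_{\ge n}$ in $\mathpzc{E}$ agrees, after applying $I$, with the abelian-category truncation, using exactness of $I$ and preservation of kernels.

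The key steps, in order, are: (1) recall that a map $f\colon X_{\bullet}\rightarrow Y_{\bullet}$ in $Ch(\mathpzc{E})$ is a quasi-isomorphism if and only if $I(f)$ is a quasi-isomorphism in $Ch(\mathpzc{A})$, i.e. induces isomorphisms on $H^{I}_{n}$ for all $n$ — this is essentially the content of $I$ being exact and reflecting exactness, together with the fact that $\mathpzc{A}$ computes the homology theory detecting quasi-isomorphisms; (2) establish the natural isomorphism $I(\tau^{L}_{\ge n}X_{\bullet})\cong\tau_{\ge n}(I(X_{\bullet}))$ in $Ch(\mathpzc{A})$, naturally in $X_{\bullet}$; (3) invoke the elementary fact that in the abelian category $\mathpzc{A}$ the truncation functor $\tau_{\ge n}$ preserves quasi-isomorphisms, since $H_{m}(\tau_{\ge n}C)=H_{m}(C)$ for $m\ge n$ and $0$ for $m<n$; (4) combine: if $f$ is a quasi-isomorphism in $Ch(\mathpzc{E})$, then $I(f)$ is one in $Ch(\mathpzc{A})$, hence $\tau_{\ge n}I(f)$ is, hence $I(\tau^{L}_{\ge n}f)$ is, hence $\tau^{L}_{\ge n}f$ is a quasi-isomorphism in $Ch(\mathpzc{E})$.

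I expect the main obstacle to be step (2): verifying that the construction of $\tau^{L}_{\ge n}$ at the level of $\mathpzc{E}$ (using the kernels that exist in $\mathpzc{E}$, and the weak idempotent completeness to make the relevant maps admissible) is genuinely intertwined by $I$ with the abelian truncation. One has to be careful that the object appearing in degree $n$ of $\tau^{L}_{\ge n}X_{\bullet}$ — the kernel of $d_{n}\colon X_{n}\rightarrow X_{n-1}$, or rather the relevant coimage/image object — is sent by $I$ to exactly the corresponding kernel in $\mathpzc{A}$, and that the induced differential matches. Since $I$ is exact and preserves existing kernels, and since in $\mathpzc{A}$ the truncation in degree $n$ is precisely $\ker(d_{n})$ with the obvious differentials above, this should go through, but it requires unwinding the definition of $\tau^{L}_{\ge n}$ given in the earlier part of the paper. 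Steps (1), (3), (4) are then formal.
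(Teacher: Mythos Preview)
Your proposal is correct and is exactly the argument the paper gives: pick a left abelianization $I_{l}\colon\mathpzc{E}\rightarrow\mathpzc{A}$, use that $I_{l}$ preserves kernels to get $I_{l}\circ\tau^{L}_{\ge n}\cong\tau^{L}_{\ge n}\circ I_{l}$, and conclude since truncations preserve quasi-isomorphisms in abelian categories and $I_{l}$ reflects them. Your worry about step (2) is overstated --- by the paper's definition $(\tau^{L}_{\ge n}X)_{n}=\ker(d_{n})$, so there is no coimage/image subtlety and the intertwining is immediate from the kernel-preservation property of a left abelianization.
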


\begin{proof}
Pick a left abelianziation $I_{l}:\mathpzc{E}\rightarrow\mathpzc{A}$. Then there is a natural isomorphism $I_{l}\circ \tau^{L}_{\ge n}\cong \tau^{L}_{\ge n}\circ I_{l}$. Since truncations preserve weak equivalences in abelian categories, and $I_{l}$ reflects equivalences, this proves the claim.
\end{proof}
 
\begin{defn}
An $\omega$-\textbf{left abelianization} of an exact category $\mathpzc{E}$, is a left abelianiziation $I_{l}:\mathpzc{E}\rightarrow\mathpzc{A}_{l}$ such that
\begin{enumerate}
\item
Filtered colimits are exact in $\mathpzc{A}_{l}$.
\item
$\mathpzc{A}_{l}$ is weakly $(\omega,\textbf{AdEpi})$-coelementary. 
\item
$I_{l}$ commutes with limits.
\end{enumerate}
Dually one defines a $\omega$-\textbf{right abelianization}.
\end{defn}

The main technical result is that that small weakly $(\omega,\textbf{AdEpi})$-coelementary exact categories have $\omega$-left abelianizations and, dually, small weakly $(\omega,\textbf{AdMon})$-elementary exact categories have $\omega$-right abelianizations.

\begin{lem}\label{lem:filteredthingsheaves}
Let $\mathpzc{E}$ be a small exact category, and 
\begin{displaymath}
\xymatrix{
0\ar[r] & K\ar[r]^{\alpha} & G\ar[r]^{\beta} & H\ar[r] & 0
}
\end{displaymath}
be an exact sequence of functors $\omega^{op}\rightarrow\mathpzc{A}^{st}_{l}$. Let $D$ be an object of $\mathpzc{E}$ and $y$ an element of $\lim_{\omega^{op}}G_{n}(D)$. Denote by $\pi_{m}^{G}:\lim_{\omega^{op}}G_{m}\rightarrow G_{m}$ and $\pi_{m}^{H}:\lim_{\omega^{op}}H_{m}\rightarrow H_{m}$ the projection maps. Then 
\begin{enumerate}
\item
There is a functor $\overline{C}_{-}\in\mathpzc{Fun}_{\textbf{AdEpi}}(\omega^{op},\mathpzc{E})$ and compatible epimorphisms $\overline{\pi}_{n}:\overline{C}_{n}\rightarrow D$
\item
There are elements $\overline{x}_{n}\in G_{n}(\overline{C}_{n})$
\end{enumerate}
such that 
\begin{enumerate}
\item
Denoting by $g_{n}$ the map $G_{n}\rightarrow G_{n-1}$, and by $\overline{f}_{n}$ the map $\overline{C}_{n}\rightarrow\overline{C}_{n-1}$, for each $n$, $g_{n}^{\overline{C}_{n}}(\overline{x}_{n})=G_{n-1}(\overline{f}_{n})(\overline{x}_{n-1})$.
\item
For each $n$ $\beta_{n}^{\overline{C}_{n}}(\overline{x}_{n})=H_{i}(\overline{\pi_{n}})(\pi^{H}_{n}(y))$.
\end{enumerate}
\end{lem}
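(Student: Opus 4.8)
The plan is to construct the tower $\overline C_\bullet$ together with the sections $\overline x_n$ by induction on $n$, using at each step the one fact we really need about the left abelianization. Recall (from the construction of $\mathpzc A^{st}_l$ in \cite{Buehler} as the category of sheaves on the site $(\mathpzc E,\tau)$ whose covers are single admissible epimorphisms) that every epimorphism $\psi\colon A\to B$ in $\mathpzc A^{st}_l$ is locally split over a single admissible epimorphism: for each $E\in\mathpzc E$ and each $s\in B(E)$ there exist an admissible epimorphism $p\colon E'\to E$ in $\mathpzc E$ and a section $t\in A(E')$ with $\psi_{E'}(t)=B(p)(s)$. Write $y_n:=\pi^H_n(y)\in H_n(D)$, so that $h_n(y_n)=y_{n-1}$ where $h_n\colon H_n\to H_{n-1}$ is the transition map; write also $g_n\colon G_n\to G_{n-1}$ and $k_n\colon K_n\to K_{n-1}$, and note that each $\beta_n$ is an epimorphism because $\beta$ is.

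For the base case $n=0$, apply local surjectivity to $\beta_0\colon G_0\to H_0$ and $y_0$, obtaining an admissible epimorphism $\overline\pi_0\colon\overline C_0\to D$ and $\overline x_0\in G_0(\overline C_0)$ with $\beta_0(\overline x_0)=H_0(\overline\pi_0)(y_0)$. For the inductive step, suppose $\overline\pi_{n-1}\colon\overline C_{n-1}\to D$ and $\overline x_{n-1}\in G_{n-1}(\overline C_{n-1})$ are given with $\beta_{n-1}(\overline x_{n-1})=H_{n-1}(\overline\pi_{n-1})(y_{n-1})$. Form the pullback $P_n:=G_{n-1}\times_{H_{n-1}}H_n$ (along $\beta_{n-1}$ and $h_n$) and the canonical map $\phi_n:=(g_n,\beta_n)\colon G_n\to P_n$, which is well defined by naturality of $\beta$. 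Naturality of $h_n$ gives $h_n\bigl(H_n(\overline\pi_{n-1})(y_n)\bigr)=H_{n-1}(\overline\pi_{n-1})(y_{n-1})=\beta_{n-1}(\overline x_{n-1})$, so $\sigma:=\bigl(\overline x_{n-1},\,H_n(\overline\pi_{n-1})(y_n)\bigr)$ is a section of $P_n$ over $\overline C_{n-1}$. Applying local surjectivity of $\phi_n$ to $\sigma$ yields an admissible epimorphism $\overline f_n\colon\overline C_n\to\overline C_{n-1}$ and $\overline x_n\in G_n(\overline C_n)$ with $\phi_n(\overline x_n)=P_n(\overline f_n)(\sigma)$; reading off the two coordinates of $P_n$ this says precisely $g_n(\overline x_n)=G_{n-1}(\overline f_n)(\overline x_{n-1})$ and $\beta_n(\overline x_n)=H_n(\overline\pi_n)(y_n)$ with $\overline\pi_n:=\overline\pi_{n-1}\circ\overline f_n$. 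The $\overline\pi_n$ are admissible epimorphisms (compositions of such) and compatible by construction, and the $\overline f_n$ assemble $\overline C_\bullet$ into an object of $\mathpzc{Fun}_{\textbf{AdEpi}}(\omega^{op},\mathpzc E)$; this completes the induction and gives conclusions (1) and (2).

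The single point where the hypothesis on the exact sequence is used is the claim that $\phi_n$ is an epimorphism, and I expect this to be the main obstacle. Comparing the short exact sequences $0\to K_n\to G_n\xrightarrow{\beta_n}H_n\to 0$ and $0\to K_{n-1}\to P_n\xrightarrow{\mathrm{pr}_2}H_n\to 0$ — the second being the base change of the epimorphism $\beta_{n-1}$ along $h_n$ — via the morphism $(k_n,\phi_n,\mathrm{id}_{H_n})$, the snake lemma gives $\coker\phi_n\cong\coker k_n$ (and $\ker\phi_n\cong\ker k_n$). Hence $\phi_n$ is epic as soon as $k_n$ is, which I would extract from the transition maps of $0\to K\to G\to H\to 0$ being admissible epimorphisms; note that passing to a further cover cannot help here, since sheaf separation shows a nonzero class in $\coker\phi_n$ cannot die on a cover. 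Finally, the lemma deliberately stops at the tower $\overline C_\bullet$: the single admissible cover of $D$ one ultimately wants is recovered afterwards as $\lim_{\omega^{op}}\overline C_n$, which exists and admissibly covers $D$ because $\mathpzc E$ is weakly $(\omega,\textbf{AdEpi})$-coelementary.
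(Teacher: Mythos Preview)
Your argument is essentially identical to the paper's. Both proceed by induction, starting with the local surjectivity of the sheaf epimorphism $\beta_0$ to get $\overline C_0$, then at each step forming the pullback $G_{n-1}\times_{H_{n-1}}H_n$, observing that the canonical map $\phi_n\colon G_n\to G_{n-1}\times_{H_{n-1}}H_n$ is an epimorphism, and lifting the section $(\overline x_{n-1},H_n(\overline\pi_{n-1})(y_n))$ along a single admissible cover $\overline f_n\colon\overline C_n\to\overline C_{n-1}$. The paper phrases the epimorphism step as ``the left and right vertical maps in the morphism of short exact sequences are epimorphisms, hence so is the middle,'' which is exactly your snake-lemma computation $\coker\phi_n\cong\coker k_n$.

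You are also right to flag the hypothesis issue: both your argument and the paper's require the transition map $k_n\colon K_n\to K_{n-1}$ to be an epimorphism, which is not stated in the lemma but is present in the intended application (the corollary immediately following, where the sequence lives in $\mathpzc{Fun}_{\textbf{AdEpi}}(\omega^{op},\mathpzc A^{st}_l)$). The lemma as printed also has $y\in\lim G_n(D)$ where it should read $\lim H_n(D)$; you read it the intended way.
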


\begin{proof}
Since $\beta_{0}:G_{0}\rightarrow H_{0}$ is an epimorphism of sheaves, there is an admissible epimorphism $\overline{\pi}_{0}:\overline{C}_{0}\rightarrow D$ and $\overline{x}_{0}\in G_{n}(\overline{C}_{0})$ such that $\beta_{0}^{\overline{C}_{0}}(\overline{x}_{0})=H_{0}(\overline{\pi}_{0})(\pi^{H}_{0}(y))$. Suppose we have defined $\overline{C}_{n}$ and $\overline{x}_{n}$. Consider the pullback diagram
\begin{displaymath}
\xymatrix{
G_{n}\ar[r]^{\beta_{n}} & H_{n}\\
G_{n}\times_{H_{n}}H_{n+1}\ar[u]\ar[r]^{\beta_{n+1}} & H_{n+1}\ar[u]
}
\end{displaymath}
in which all maps are epimorphisms. $(\overline{x}_{n},H_{n+1}(\overline{\pi}_{n})(\pi^{H}_{n+1}(y)))$ is a well defined element of $(G_{n}\times_{H_{n}}H_{n+1})(\overline{C}_{n})$. Now we have a commutative diagram of exact sequences
\begin{displaymath}
\xymatrix{
0\ar[r] & K_{n+1}\ar[r]\ar[d] & G_{n+1}\ar[r]\ar[d] & H_{n+1}\ar[r]\ar[d] & 0\\
0\ar[r] & K_{n}\ar[r] & G_{n}\times_{H_{n}}H_{n+1}\ar[r] & H_{n}\ar[r] & 0
}
\end{displaymath} 
The left and right vertical maps are epimorphisms, so the middle one must be as well. Thus there is some admissible epimorphism $\overline{f}_{n+1}:\overline{C}_{n+1}\rightarrow\overline{C}_{n}$ and some $\overline{x}_{n+1}\in G_{n+1}(\overline{C}_{n+1})$ such that
$$(g^{\overline{C}_{n+1}}_{n+1}(\overline{x_{n+1}}),\beta^{\overline{C}_{n+1}}_{n+1}(\overline{x_{n+1}}))=(G_{n}(\overline{f}_{n+1})(\overline{x}_{n}),H_{n+1}(\overline{\pi}_{n}\circ\overline{f}_{n+1})(\pi^{H}_{n+1}(y)))$$
Now we just set $\overline{\pi}_{n+1}\defeq\overline{\pi}_{n}\circ\overline{f}_{n+1}$
\end{proof}

\begin{cor}
Let $\mathpzc{E}$ be a weakly $(\omega,\textbf{AdEpi})$-coelementary exact category.  Then the abelian category $\mathpzc{A}^{st}_{l}(\mathpzc{E})$ is weakly $(\omega,\textbf{AdEpi})$-coelementary 
\end{cor}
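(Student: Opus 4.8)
The plan is to show that for any short exact sequence $0\to X\to Y\to Z\to 0$ of functors in $\mathpzc{Fun}_{\textbf{AdEpi}}(\omega^{op},\mathpzc{A}^{st}_{l}(\mathpzc{E}))$, the sequence $0\to \lim_{\omega^{op}}X\to \lim_{\omega^{op}}Y\to\lim_{\omega^{op}}Z\to 0$ is exact in $\mathpzc{A}^{st}_{l}(\mathpzc{E})$. Since $\mathpzc{A}^{st}_{l}(\mathpzc{E})$ is abelian, left-exactness of $\lim$ is automatic, so the only content is surjectivity of $\lim_{\omega^{op}}Y\to\lim_{\omega^{op}}Z$. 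Because $\mathpzc{A}^{st}_{l}(\mathpzc{E})$ is a category of sheaves on the site $(\mathpzc{E},\tau)$ and epimorphisms of sheaves are detected by local surjectivity on representables, it suffices to show: for every object $D\in\mathpzc{E}$ and every $y\in\lim_{\omega^{op}}Y_n(D)$, there is an admissible epimorphism $p:C\to D$ in $\mathpzc{E}$ and an element of $(\lim_{\omega^{op}}Y)(C)$ mapping to $p^*y$ — here I use that $\tau$-covers are single admissible epimorphisms. But an element of $(\lim_{\omega^{op}}Y)(C)=\lim_{\omega^{op}}Y_n(C)$ is exactly a compatible family $\overline{x}_n\in Y_n(C)$, and an admissible epimorphism $C\to D$ is exactly the data furnished by Lemma \ref{lem:filteredthingsheaves} (with $G=Y$, $H=Z$, $K=X$).

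So the key steps, in order, are: (1) reduce the claim to surjectivity of $\lim_{\omega^{op}}Y\to\lim_{\omega^{op}}Z$; (2) reduce surjectivity of a map of sheaves to the local lifting statement on representables, using the explicit description of the topology $\tau$ whose covers are single admissible epimorphisms; (3) given $D$ and $y\in\lim_{\omega^{op}}Y_n(D)$, invoke Lemma \ref{lem:filteredthingsheaves} to obtain the tower $\overline{C}_\bullet\in\mathpzc{Fun}_{\textbf{AdEpi}}(\omega^{op},\mathpzc{E})$, the compatible epimorphisms $\overline{\pi}_n:\overline{C}_n\to D$, and the elements $\overline{x}_n\in Y_n(\overline{C}_n)$; (4) set $C\defeq \colim_{\omega^{op}}\overline{C}_\bullet$ in $\mathpzc{E}$ — this exists and the structure sequence is exact precisely because $\mathpzc{E}$ is weakly $(\omega,\textbf{AdEpi})$-coelementary — check that the $\overline{\pi}_n$ assemble to an admissible epimorphism $p:C\to D$ (the class $\textbf{AdEpi}$ is closed under such colimits by weak $(\omega,\textbf{AdEpi})$-coelementarity, or one argues directly that $\lim_{\omega^{op}}$ of the kernels is the kernel), and that the compatibility condition (a) on the $\overline{x}_n$ together with $I^{st}_l$ commuting with the relevant limits shows the $\overline{x}_n$ define a compatible family, i.e. an element $\overline{x}\in(\lim_{\omega^{op}}Y)(C)$; (5) use condition (b) to conclude that $\overline{x}$ maps to $p^*y$ in $(\lim_{\omega^{op}}Z)(C)$, which exhibits the local lift and finishes the proof.

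The routine-but-slightly-delicate point is step (4): one must be careful that $C=\colim_{\omega^{op}}\overline{C}_n$ computed in $\mathpzc{E}$ agrees, after applying the Yoneda embedding $I^{st}_l$, with what one needs inside the sheaf category, and that the Yoneda images of the $\overline{C}_n$ together with $\overline{x}_n\in Y_n(\overline{C}_n)=\Hom_{\mathpzc{A}}(I^{st}_l\overline{C}_n, Y_n)$ are genuinely compatible for the \emph{inverse} system; this is where condition (a) of the Lemma is used, rewritten via the adjunction/Yoneda. One also needs that an admissible epimorphism in $\mathpzc{E}$ becomes an epimorphism of sheaves — this is part of the definition of a left abelianization (property (6)), so it is available. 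I expect the \textbf{main obstacle} to be purely bookkeeping: organizing step (2)–(4) so that "a compatible family of local sections" on the sheaf side is correctly matched with "an element of a limit of a tower in $\mathpzc{E}$", since the topology $\tau$ has covers by single maps and so there are no cocycle conditions to worry about, but one must still verify the colimit $C$ lies in $\mathpzc{E}$ (using weak $(\omega,\textbf{AdEpi})$-coelementarity, which is exactly the hypothesis) rather than merely in $\mathpzc{A}^{st}_l(\mathpzc{E})$. Once that is set up, the exactness of $\lim_{\omega^{op}}$ on $\mathpzc{Fun}_{\textbf{AdEpi}}(\omega^{op},\mathpzc{A}^{st}_l(\mathpzc{E}))$ follows immediately.
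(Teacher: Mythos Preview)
Your proposal is correct and follows essentially the same route as the paper: reduce to showing $\lim_{\omega^{op}}Y\to\lim_{\omega^{op}}Z$ is an epimorphism of sheaves, test on representables, invoke Lemma~\ref{lem:filteredthingsheaves} to build the tower $\overline{C}_\bullet$, take its limit in $\mathpzc{E}$ using weak $(\omega,\textbf{AdEpi})$-coelementarity, and pull back the $\overline{x}_n$ along the projections $\overline{C}\to\overline{C}_n$ to obtain the required lift. Two small slips to fix: in step~(4) you write $\colim_{\omega^{op}}\overline{C}_\bullet$ where you mean $\lim_{\omega^{op}}\overline{C}_\bullet$, and in step~(2)--(3) the section $y$ to be lifted should lie in $\lim_{\omega^{op}}Z_n(D)$, not $\lim_{\omega^{op}}Y_n(D)$ (the paper's Lemma statement itself has this typo, but its proof and the Corollary's proof use $y\in\lim H_n(D)$).
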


\begin{proof}
Let $\mathpzc{E}$ be an exact category, and 
\begin{displaymath}
\xymatrix{
0\ar[r] & K\ar[r]^{\alpha} & G\ar[r]^{\beta} & H\ar[r] & 0
}
\end{displaymath}
be an exact sequence of functors $\omega^{op}\rightarrow\mathpzc{A}^{st}_{l}$. It suffices to show that $\lim_{\omega^{op}}\beta_{n}:\lim_{\omega^{op}}G_{n}\rightarrow \lim_{\omega^{op}}H_{n}$ is an epimorphism. Let $D$ be an object of $\mathpzc{E}$, and let $y\in  \lim_{\omega^{op}}H_{n}(D)$. Denote by $\pi_{m}^{G}:\lim_{\omega^{op}}G_{m}\rightarrow G_{m}$ and $\pi_{m}^{H}:\lim_{\omega^{op}}H_{m}\rightarrow H_{m}$ the projection maps.  Let $\overline{C}_{-}\in\mathpzc{Fun}_{\textbf{AdEpi}}(\omega^{op},\mathpzc{E})$, $\overline{x}_{n}$, and $\overline{\pi}_{n}$ be as in the statement of Lemma \ref{lem:filteredthingsheaves}. Write $\overline{C}\defeq\lim_{\omega^{op}}\overline{C}_{n}$, and let $\overline{\pi}_{\infty}:\overline{C}\rightarrow D$ be the projection. Since $\mathpzc{E}$ is weakly $(\omega,\textbf{AdEpi})$-coelementary the map $\overline{\pi}_{\infty}$ is an admissible epimorphism by \cite{kelly2016homotopy} Proposition 2.96, as are the maps $\overline{p}_{n}:\overline{C}\rightarrow\overline{C}_{n}$. The sequence $x=(G_{n}(\overline{p}_{n})(\overline{x}_{n}))$ is a well-defined element of $\lim_{\omega^{op}}G_{n}(\overline{C})$, and $\lim_{\omega^{op}}\beta^{\overline{C}}(x)=(\lim_{\omega^{op}}G_{n}(\overline{\pi}_{\infty}))(y)$. This shows surjectivity of $\lim_{\omega^{op}}\beta$. 
\end{proof}

\begin{cor}
\begin{enumerate}
\item
Every small weakly idempotent complete, weakly $(\omega,\textbf{AdEpi})$-coelementary exact category has a $\omega$-left abelianization.
\item
Every small weakly idempotent complete, weakly $(\omega,\textbf{AdMon})$-elementary  exact category has a  $\omega$-right abelianization. 
\end{enumerate}
\end{cor}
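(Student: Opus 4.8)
The plan is to assemble the $\omega$-left abelianization out of the standard left abelianization $I^{st}_l:\mathpzc{E}\rightarrow\mathpzc{A}^{st}_l(\mathpzc{E})$ furnished by Appendix A of \cite{Buehler}, and to verify the three extra conditions in the definition of $\omega$-left abelianization one at a time. Condition (3), that $I^{st}_l$ commutes with all limits, is already recorded in the excerpt: the standard left abelianization is the sheaf category on the site $(\mathpzc{E},\tau)$ with the Yoneda embedding, and Yoneda into a sheaf category preserves limits. So the substance is conditions (1) and (2): filtered colimits are exact in $\mathpzc{A}^{st}_l(\mathpzc{E})$, and $\mathpzc{A}^{st}_l(\mathpzc{E})$ is weakly $(\omega,\textbf{AdEpi})$-coelementary.

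First I would handle condition (2), which is exactly the content of the Corollary immediately preceding the statement: if $\mathpzc{E}$ is weakly $(\omega,\textbf{AdEpi})$-coelementary then $\mathpzc{A}^{st}_l(\mathpzc{E})$ is weakly $(\omega,\textbf{AdEpi})$-coelementary, the proof going through Lemma \ref{lem:filteredthingsheaves} to lift an element of the limit of a short exact sequence of sheaves on the site down to an explicit $\textbf{AdEpi}$-tower in $\mathpzc{E}$ and then back up using that $\mathpzc{E}$ is weakly $(\omega,\textbf{AdEpi})$-coelementary. Next, condition (1): $\mathpzc{A}^{st}_l(\mathpzc{E})$ is a Grothendieck abelian category — it is the category of sheaves of abelian groups on a small site — and in any Grothendieck abelian category filtered colimits are exact (AB5). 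This is standard, so conditions (1) and (2) both follow essentially for free once one identifies $\mathpzc{A}^{st}_l(\mathpzc{E})$ with a sheaf category. Finally, the hypothesis that $\mathpzc{E}$ is weakly idempotent complete is what guarantees that the standard construction genuinely produces a \emph{left abelianization} in the strong sense (conditions (4)--(6) of that definition, in particular closure under extensions and the admissible-epic characterization), so the resulting $I^{st}_l$ satisfies all of (1)--(6) of "left abelianization" together with the three $\omega$-conditions. Part (2) of the Corollary is then just the formal dual: apply part (1) to $\mathpzc{E}^{op}$, which is weakly idempotent complete and weakly $(\omega,\textbf{AdMon})$-elementary precisely when $\mathpzc{E}$ is weakly idempotent complete and weakly $(\omega,\textbf{AdEpi})$-coelementary, and dualize the resulting $\omega$-left abelianization to an $\omega$-right abelianization of $\mathpzc{E}$.

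I expect the main obstacle to be bookkeeping rather than a genuine difficulty: one must be careful that the abelianization used really is the one built in \cite{Buehler} as a sheaf category (so that both "preserves limits" and "is Grothendieck" are available simultaneously), and one must make sure the weak idempotent completeness hypothesis is invoked at the right place so that all six axioms of a left abelianization hold — the delicate one being that $I^{st}_l$ reflects exactness and detects admissible epimorphisms, which is where weak idempotent completeness of $\mathpzc{E}$ is genuinely used. Everything else is either quoted (the preceding Corollary, the description of $\mathpzc{A}^{st}_l$ as sheaves on the admissible-epi topology) or a standard fact about Grothendieck categories, and the dual statement requires no new work beyond checking that all the relevant hypotheses are self-dual in the appropriate sense.
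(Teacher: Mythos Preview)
Your proposal is correct and matches the paper's intended argument: the Corollary is stated in the paper without proof precisely because it is immediate from the preceding Corollary (giving condition (2)), the identification of $\mathpzc{A}^{st}_{l}(\mathpzc{E})$ as a Grothendieck sheaf category (giving condition (1) and condition (3)), and formal duality for part (2). One small slip: in your last sentence you wrote that $\mathpzc{E}^{op}$ is weakly $(\omega,\textbf{AdMon})$-elementary when $\mathpzc{E}$ is weakly $(\omega,\textbf{AdEpi})$-coelementary, but for part (2) you start with $\mathpzc{E}$ weakly $(\omega,\textbf{AdMon})$-elementary and need $\mathpzc{E}^{op}$ weakly $(\omega,\textbf{AdEpi})$-coelementary --- the labels are reversed, though the intended dualization is clearly right.
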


%
%

\subsection{Aside: Quasi-abelian Categories}

Many of the examples of exact categories we consider are quasi-abelian categories: 

\begin{defn}
A \textbf{quasi-abelian category} is a finitely complete and finitely cocomplete additive category $\mathpzc{E}$ such that the class of all kernel-cokernel pairs form a Quillen exact structure on $\mathpzc{E}$. 
\end{defn}
We regard a quasi-abelian category $\mathpzc{E}$ as being equipped with a canonical Quillen exact structure given by the class $\mathcal{Q}_{can}$ of all kernel-cokernel pairs.  

If $\mathpzc{E}$ is a quasi-abelian category then there are two canonical $t$-structures on the derived category called the \textbf{left} and \textbf{right} $t$-structures respectively. The respective hearts are called the \textbf{left heart}, denoted $LH(\mathpzc{E})$, and the \textbf{right heart}, denoted $RH(\mathpzc{E})$. Section 1.2 of \cite{qacs} shows in particular that there are canonical embeddings $\mathpzc{E}\rightarrow LH(\mathpzc{E})$ and $\mathpzc{E}\rightarrow RH(\mathpzc{E})$ such that the former is a left abelianization and the latter a right abelianization. Moreover if $\mathpzc{E}$ is weakly $(\omega,\textbf{AdEpi})$-coelementary, then a similar proof to Corollary 1.4.7 in \cite{qacs} can be given to show that in this case the left heart is an $\omega$-left abelianization, and dually, the right heart is an $\omega$-right abelianization.

\subsection{Deriving $\lim_{\omega^{op}}$}
Before moving onto the general theory of deriving unbounded functors in the next section, we need some results on deriving the projective limit functor $\lim_{\omega^{op}}:\mathpzc{Fun}(\omega^{op},\mathpzc{E})\rightarrow\mathpzc{E}$. We will prove these results by passing to abeliainizations, although we expect it is reasonably easy to prove them internally to exact categories. Note that although we technically need $\mathpzc{E}$ to be small to pass to abelianizations, since we are working with small diagrams \cite{kelly2016homotopy} Section 1.2, for the proofs in this section it is enough that $\mathpzc{E}$ is locally small.

Recall (e.g. \cite{stacks-project} Definition 13.34.1) that if $\mathcal{T}$ is a triangulated category with countable products, and $K\in\mathpzc{Fun}(\omega^{op},\mathcal{T})$ is a functor, then the \textbf{derived limit of }$K$ is given by the distinguished triangle
$$R\lim_{\omega^{op}}K\rightarrow\prod_{n} K^{n}\rightarrow\prod_{n} K^{n}\rightarrow R\lim_{\omega^{op}}K[-1]$$
with $\prod_{n} K_{n}\rightarrow\prod_{n} K_{n}$ being the is the map $Id-k$, and $k$ is induced by the maps $k^{n}:K^{n}\rightarrow K^{n-1}$. In particular if $\eta:K\rightarrow L$ is a natural transformation in $\mathpzc{Fun}(\omega^{op},\mathcal{D})$ then there is an induced map $R\lim_{\omega^{op}}\eta:R\lim_{\omega^{op}}K\rightarrow R\lim_{\omega^{op}}L$. If $\psi$ is a natural isomorphism then so is $R\lim_{\omega^{op}}\psi$. 

\begin{example}
Let $\mathpzc{E}$ be an exact category with exact countable products. Then as in \cite{stacks-project} Lemma 13.34.2, the triangulated category $\Ho(Ch(\mathpzc{E}))$ has countable products. On objects they are computed by taking term-wise products of complexes.
\end{example}

The result below can be proven as in \cite{stacks-project} Lemma 15.85.1 and Lemma 15.85.6, where we also use Theorem 5.2.4 in \cite{prosmans1999derived}.

\begin{prop}
Let $\mathpzc{A}$ be an abelian category with exact countable products
\begin{enumerate}
\item
$\lim_{\omega^{op}}:Ch(\mathpzc{Fun}(\omega^{op},\mathpzc{A}))\rightarrow Ch(\mathpzc{A})$ is right deformable. 
\item
If $X^{n}_{\bullet}$ is a complex in $Ch(\mathpzc{Fun}(\omega^{op},\mathpzc{A}))$ such that for each $m$ $X^{n}_{m}$ is $\lim_{\omega^{op}}$-acyclic, then the map $\lim_{\omega^{op}}X^{n}_{\bullet}\rightarrow \mathbb{R}\lim_{\omega^{op}}X^{n}_{\bullet}$ is an equivalence.
\item
 $\mathbb{R}\lim_{\omega^{op}}X^{n}_{\bullet}\cong R\lim_{\omega^{op}}X^{n}_{\bullet}$
 \end{enumerate}
\end{prop}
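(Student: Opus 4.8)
The plan is to reduce all three statements to standard facts about abelian categories with exact countable products, following closely the development in \cite{stacks-project} Sections 15.85 and 13.34, with the extra input from \cite{prosmans1999derived}. The key observation is that $\mathpzc{Fun}(\omega^{op},\mathpzc{A})$ is itself an abelian category (limits and colimits are computed pointwise), and that it has enough objects which are $\lim_{\omega^{op}}$-acyclic: concretely, any functor of the form $n\mapsto \prod_{k\le n} M_k$ with the obvious projection maps — a ``split'' or ``Mittag-Leffler'' tower — is $\lim_{\omega^{op}}$-acyclic, because the defining triangle $R\lim_{\omega^{op}}K\to\prod_n K^n\xrightarrow{\id-k}\prod_n K^n$ has $\id-k$ an epimorphism with kernel the constant-limit, using exactness of countable products in $\mathpzc{A}$. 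Moreover every object of $\mathpzc{Fun}(\omega^{op},\mathpzc{A})$ admits a monomorphism into such an acyclic object, and these acyclic objects are closed under the operations needed (kernels of surjections between them are again acyclic, by the long exact sequence of $\lim_{\omega^{op}}$ and $\lim^1_{\omega^{op}}$ together with exactness of products). This is exactly the content of \cite{prosmans1999derived} Theorem 5.2.4.

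First I would record that the full subcategory $\mathcal{P}\subset\mathpzc{Fun}(\omega^{op},\mathpzc{A})$ of $\lim_{\omega^{op}}$-acyclic towers is ``deformation-retract-like'': it is closed under countable products, contains enough objects, and $\lim_{\omega^{op}}$ is exact on short exact sequences with all three terms in $\mathcal{P}$. Then, to get part (1), I would invoke the dual of the standard construction of a right deformation from a class of ``fibrant-like'' objects (this is the abelian-categories version of \cite{stacks-project} Lemma 15.85.1): using that $Ch_{\ge 0}$-type functorial resolutions by objects of $\mathcal{P}$ exist, one builds, for an arbitrary complex $X^{\bullet}_{\bullet}\in Ch(\mathpzc{Fun}(\omega^{op},\mathpzc{A}))$, a quasi-isomorphism $X\to RX$ with each $RX_m\in\mathcal{P}$, naturally in $X$; this $(R,\eta)$ is the desired right deformation for $\lim_{\omega^{op}}$. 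The only subtlety here — and I expect it to be the main obstacle — is the unbounded case: a naive termwise resolution need not be a quasi-isomorphism of complexes, so one must use a Spaltenstein-style totalization (a product-totalization of a Cartan–Eilenberg-type resolution), and then argue that $\lim_{\omega^{op}}$ applied to the totalization still computes the derived functor, which is where exactness of countable products (ensuring products of acyclics are acyclic, hence the totalization spectral sequence degenerates appropriately) is essential. This is precisely the role of \cite{stacks-project} Lemma 15.85.6.

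Part (2) is then a formal consequence: if each $X^n_m$ is $\lim_{\omega^{op}}$-acyclic, then $X$ itself (as a complex of towers) is already built from objects of $\mathcal{P}$, so the deformation map $X\to RX$ is a quasi-isomorphism between complexes on which $\lim_{\omega^{op}}$ is ``exact enough'', and applying $\lim_{\omega^{op}}$ gives a quasi-isomorphism $\lim_{\omega^{op}}X\to\lim_{\omega^{op}}RX=\mathbb{R}\lim_{\omega^{op}}X$; here one again needs that a termwise quasi-isomorphism between termwise-acyclic complexes of towers becomes a quasi-isomorphism after $\lim_{\omega^{op}}$, which follows from the hypercohomology spectral sequence collapsing because $\lim^1_{\omega^{op}}$ vanishes on the terms. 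Part (3) identifies the two a priori different derived limits: the ``naive'' $R\lim_{\omega^{op}}K$ defined by the explicit triangle $R\lim_{\omega^{op}}K\to\prod_n K^n\xrightarrow{\id-k}\prod_n K^n\to R\lim_{\omega^{op}}K[1]$, and the Kan-extension-style $\mathbb{R}\lim_{\omega^{op}}$ coming from the deformation. To see they agree, I would note that the mapping cone description is functorial and exact-triangle-natural, so it suffices to check agreement on objects of $\mathcal{P}$, where both reduce to the ordinary $\lim_{\omega^{op}}$ (for $K\in\mathcal{P}$ the map $\id-k$ is surjective with kernel $\lim_{\omega^{op}}K$, so the cone is $\lim_{\omega^{op}}K$ concentrated in degree $0$), and then extend along the resolution $X\to RX$ using that both constructions send the quasi-isomorphism $X\to RX$ to an isomorphism — the left side because $\id-k$ on a termwise product is compatible with termwise quasi-isomorphisms and products preserve quasi-isomorphisms (exactness of countable products again), the right side by definition. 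Assembling these, all three assertions follow, with the unbounded totalization argument in part (1) being the only genuinely technical point.
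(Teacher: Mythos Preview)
Your proposal is correct and follows precisely the approach the paper indicates: the paper does not give a proof but simply refers to \cite{stacks-project} Lemmas 15.85.1 and 15.85.6 together with \cite{prosmans1999derived} Theorem 5.2.4, and your sketch is a faithful expansion of what those references contain. The one point you flag as the ``main obstacle'' --- the Spaltenstein-style product-totalization needed for unbounded complexes, using exactness of countable products --- is exactly the content of the cited Stacks Project lemmas, so there is nothing missing.
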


\begin{cor}
 If $\mathpzc{A}$ is a weakly $(\omega,\textbf{AdEpi})$-coelementary abelian category, and $K\in Ch(\mathpzc{Fun}(\omega^{op},\mathpzc{A}))$ is a functor such that for each $n$, $K^{n+1}\rightarrow K^{n}$ is an epimorphism in each degree, then $\lim_{\omega^{op}}X^{n}_{\bullet}\rightarrow \mathbb{R}\lim_{\omega^{op}}X^{n}_{\bullet}$ is an equivalence.
\end{cor}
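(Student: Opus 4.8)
The plan is to reduce the statement to the previous proposition by showing that the hypothesis on $K$ forces each term to be $\lim_{\omega^{op}}$-acyclic. First I would recall that in a weakly $(\omega,\textbf{AdEpi})$-coelementary abelian category $\mathpzc{A}$, a tower $\cdots\rightarrow A_{n+1}\rightarrow A_n\rightarrow\cdots\rightarrow A_0$ in which every transition map $A_{n+1}\rightarrow A_n$ is an epimorphism is a ``Mittag-Leffler'' tower in the strongest sense, and that for such towers $\lim_{\omega^{op}}$ is acyclic, i.e. $\mathbb{L}^i\lim_{\omega^{op}} A=0$ for $i>0$. Concretely, the surjectivity of the transition maps means precisely that the canonical map $\prod_n A_n\xrightarrow{\,\mathrm{Id}-k\,}\prod_n A_n$ is an epimorphism (this uses exactness of countable products in $\mathpzc{A}$ to identify the cokernel of $\mathrm{Id}-k$ with $\lim^1$), and hence the two-term complex computing $R\lim_{\omega^{op}}A$ has homology concentrated in degree $0$.

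Given that, the argument runs as follows. By hypothesis, for each fixed homological degree $m$ the tower $n\mapsto K^n_m$ has all transition maps $K^{n+1}_m\rightarrow K^n_m$ epic, so by the previous step $K^n_m$ is $\lim_{\omega^{op}}$-acyclic for every $m$. This is exactly the hypothesis of part (2) of the preceding proposition (applied to the complex $X^n_\bullet = K^n_\bullet$), which tells us that $\lim_{\omega^{op}}K^n_\bullet\rightarrow\mathbb{R}\lim_{\omega^{op}}K^n_\bullet$ is an equivalence. (Part (3) of that proposition additionally identifies $\mathbb{R}\lim_{\omega^{op}}$ with the triangulated $R\lim_{\omega^{op}}$, which is what makes the statement meaningful, but for the claim as stated part (2) is all that is needed.) So the corollary is essentially immediate once the acyclicity of ML towers is in hand.

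The one point requiring a little care — and the main obstacle — is justifying that a surjective tower is $\lim_{\omega^{op}}$-acyclic using only the weakly $(\omega,\textbf{AdEpi})$-coelementary hypothesis rather than, say, exactness of all countable limits or an AB4$^*$-type assumption. Here I would invoke the coelementary structure directly: the tower $K^n_m$, being a functor $\omega^{op}\rightarrow\mathpzc{A}$ valued in epimorphisms, lies in $\mathpzc{Fun}_{\textbf{AdEpi}}(\omega^{op},\mathpzc{A})$, and the defining property of weak $(\omega,\textbf{AdEpi})$-coelementarity is that $\lim_{\omega^{op}}$ restricted to this subcategory is exact; exactness of $\lim_{\omega^{op}}$ on short exact sequences of such towers is precisely the vanishing of the first derived functor on them, hence acyclicity. (Equivalently one can cite the computation of $\lim^1$ via $\mathrm{Id}-k$ together with exactness of countable products, which holds in a weakly $(\omega,\textbf{AdEpi})$-coelementary abelian category.) I would then remark that, as usual, one may replace $\omega^{op}$ by any directed index set cofinally below it without changing the conclusion, and that the whole argument dualizes to give the $\colim$/co-ML version.
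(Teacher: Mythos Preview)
Your proposal is correct and follows exactly the intended route: the paper states the corollary without proof, leaving it as an immediate consequence of part (2) of the preceding proposition once one observes that a tower with epimorphic transition maps is $\lim_{\omega^{op}}$-acyclic. Your justification of this last point via the weakly $(\omega,\textbf{AdEpi})$-coelementary hypothesis (or equivalently via surjectivity of $\mathrm{Id}-k$) is precisely what is needed, and the reference to Prosmans' Theorem 5.2.4 cited in the proposition covers it.
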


\begin{prop}\label{prop:limithomequiv}
Let $\mathpzc{A}$ be an abelian category with countable exact products. Let $K,L\in\mathpzc{Fun}(\omega^{op},Ch(\mathpzc{A}))$ be $\lim_{\omega^{op}}$-acyclic diagrams, and let $f:\lim_{\omega^{op}}K^{n}\rightarrow\lim_{\omega^{op}}L^{n}$ be a map. Suppose there is a natural equivalence $\psi:K\rightarrow L$ in $\mathpzc{Fun}(\omega^{op},\Ho(Ch(\mathpzc{A})))$ such that the diagram 
\begin{displaymath}
\xymatrix{
\lim_{\omega^{op}}K^{n}\ar[d]^{f}\ar[r] & K^{n}\ar[d]^{\psi^{n}}\\
\lim_{\omega^{op}}L^{n}\ar[r] & L^{n}
}
\end{displaymath}
commutes in $\Ho(Ch(\mathpzc{A}))$. Then $f$ is an equivalence. 
\end{prop}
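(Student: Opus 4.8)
The plan is to transport everything into the triangulated category $\Ho(Ch(\mathpzc{A}))$ and finish with a five-lemma argument. First, since $\mathpzc{A}$ has exact countable products, $\Ho(Ch(\mathpzc{A}))$ has countable products, computed term-wise (cf. the Example above). Because $K$ and $L$ are $\lim_{\omega^{op}}$-acyclic, the preceding proposition gives $\lim_{\omega^{op}}K^{n}\cong\mathbb{R}\lim_{\omega^{op}}K^{n}\cong R\lim_{\omega^{op}}K^{n}$, so $\lim_{\omega^{op}}K^{n}$ sits in the distinguished triangle
$$\lim_{\omega^{op}}K^{n}\xrightarrow{a_{K}}\prod_{n}K^{n}\xrightarrow{1-k_{K}}\prod_{n}K^{n}\xrightarrow{b_{K}}\Big(\lim_{\omega^{op}}K^{n}\Big)[-1],$$
and likewise for $L$; concretely this is the triangle attached to the term-wise short exact sequence $0\to\lim_{\omega^{op}}K^{n}\to\prod_{n}K^{n}\xrightarrow{1-k_{K}}\prod_{n}K^{n}\to 0$, surjectivity of $1-k_{K}$ being exactly the $\lim_{\omega^{op}}$-acyclicity.

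Next I assemble the comparison. Each $\psi^{n}\colon K^{n}\to L^{n}$ is an isomorphism in $\Ho(Ch(\mathpzc{A}))$, hence $\prod_{n}\psi^{n}\colon\prod_{n}K^{n}\to\prod_{n}L^{n}$ is an isomorphism (a product of isomorphisms, computed term-wise). Naturality of $\psi$ with respect to the transition maps gives $(1-k_{L})\circ\big(\prod_{n}\psi^{n}\big)=\big(\prod_{n}\psi^{n}\big)\circ(1-k_{K})$. The hypothesis that the square relating the canonical projections and $\psi^{n}$ commutes, for every $n$, says exactly that $\operatorname{pr}^{L}_{n}\circ a_{L}\circ f=\psi^{n}\circ\operatorname{pr}^{K}_{n}\circ a_{K}$ for all $n$, where $\operatorname{pr}_{n}$ is the product projection and $\operatorname{pr}_{n}\circ a$ is the canonical map to the $n$-th term. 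Since a morphism into $\prod_{n}L^{n}$ is detected by its components ($\Hom(-,\prod_{n}L^{n})=\prod_{n}\Hom(-,L^{n})$), this upgrades to the identity $a_{L}\circ f=\big(\prod_{n}\psi^{n}\big)\circ a_{K}$ in $\Ho(Ch(\mathpzc{A}))$.

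We now have the left-hand and middle squares of a candidate morphism of distinguished triangles from the one for $K$ to the one for $L$, with vertical maps $\big(f,\prod_{n}\psi^{n},\prod_{n}\psi^{n}\big)$: the left square is the identity just established, the middle square is naturality. If this extends to an honest morphism of triangles, the triangulated five lemma applies — the two middle vertical maps are isomorphisms — and forces $f$ to be an isomorphism in $\Ho(Ch(\mathpzc{A}))$, i.e.\ a quasi-isomorphism, which is the claim. Promoting the partial compatibility to a genuine morphism of triangles — equivalently, controlling the discrepancy on the connecting maps $b_{K},b_{L}$, which a priori lives in a $\lim^{1}$-type group — is the delicate point, and is where the acyclicity is really used. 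I would handle it with the octahedral axiom: $\big(\prod_{n}\psi^{n}\big)\circ a_{K}=a_{L}\circ f$ are two factorizations of one map $\lim_{\omega^{op}}K^{n}\to\prod_{n}L^{n}$, and since $\prod_{n}\psi^{n}$ is invertible its cone vanishes, so the octahedron on $\big(\prod_{n}\psi^{n}\big)\circ a_{K}$ identifies $\operatorname{cone}(a_{L}\circ f)\cong\operatorname{cone}(a_{K})\cong\prod_{n}K^{n}$; feeding this into the octahedron on the factorization $\lim_{\omega^{op}}K^{n}\xrightarrow{f}\lim_{\omega^{op}}L^{n}\xrightarrow{a_{L}}\prod_{n}L^{n}$ puts $\operatorname{cone}(f)$ into a triangle $\operatorname{cone}(f)\to\prod_{n}K^{n}\to\prod_{n}L^{n}\to\operatorname{cone}(f)[-1]$ whose middle arrow, traced through the identifications together with naturality, is conjugate to the isomorphism $\prod_{n}\psi^{n}$, whence $\operatorname{cone}(f)=0$. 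Verifying the compatibility of all these identifications is the main obstacle in the argument.
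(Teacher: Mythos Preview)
Your approach is essentially the paper's: identify $\lim_{\omega^{op}}K^{n}$ and $\lim_{\omega^{op}}L^{n}$ with $R\lim$ via acyclicity, set up the two $R\lim$-triangles, check that the hypothesis gives the square $a_{L}\circ f=\big(\prod_{n}\psi^{n}\big)\circ a_{K}$ and that naturality gives $(1-k_{L})\circ\big(\prod_{n}\psi^{n}\big)=\big(\prod_{n}\psi^{n}\big)\circ(1-k_{K})$, and then invoke the triangulated five-lemma. This is exactly what the paper does; the paper simply writes down the full $3\times 4$ diagram with vertical maps $\big(f,\prod_{n}\psi^{n},\prod_{n}\psi^{n},\,\cdot\,\big)$, declares it commutative, and reads off that $f$ is an equivalence because the two middle verticals are.

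Where you differ is in being more scrupulous than the paper about the one genuinely delicate step: two commuting squares do not, by themselves, force the third square with the connecting maps to commute, so one does not \emph{a priori} have a morphism of distinguished triangles with prescribed verticals $\big(f,\prod\psi^{n},\prod\psi^{n}\big)$. The paper does not comment on this and simply asserts the full commutative diagram; you flag it and try an octahedral workaround but leave it unfinished (``verifying the compatibility of all these identifications is the main obstacle''). So the gap you identify is real, but it is the same point the paper elides rather than justifies; your write-up is therefore at least as complete as the paper's, and your octahedral outline is a reasonable route toward closing it. If you want to finish along the paper's lines, the cleanest phrasing is: transport the $L$-triangle through the isomorphism $\prod_{n}\psi^{n}$ so that both triangles share the \emph{same} middle map $1-k_{K}$, use TR3 on that shared square to produce an isomorphism $\phi\colon\lim K^{n}\to\lim L^{n}$ fitting into a genuine morphism of triangles, and then compare $f$ with $\phi$ using $a_{L}f=a_{L}\phi$; the residual difference factors through the connecting map, and one argues it does not obstruct invertibility.
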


\begin{proof}
We have commutative diagrams
\begin{displaymath}
\xymatrix{
\lim_{\omega^{op}}K^{n}\ar[d]^{f}\ar[r] & K^{n}\ar[d]^{\psi^{n}}\\
\lim_{\omega^{op}}L^{n}\ar[r] & L^{n}
}
\end{displaymath}
and therefore a commutative diagram
\begin{displaymath}
\xymatrix{
\lim_{\omega^{op}}K^{n}\ar[d]^{f}\ar[r] & \prod _{n}K^{n}\ar[d]^{\psi^{n}}\\
\lim_{\omega^{op}}L^{n}\ar[r] & \prod_{n}L^{n}
}
\end{displaymath}
Thus we get a commutative diagram
\begin{displaymath}
\xymatrix{
\lim_{\omega^{op}}K^{n}\ar[d]^{f}\ar[r] & \prod _{n}K^{n}\ar[d]^{\psi^{n}}\ar[r] & \prod _{n}K^{n}\ar[d]^{\psi^{n}}\ar[r] & \lim_{\omega^{op}}K^{n}[-1]\ar[d]\\
\lim_{\omega^{op}}L^{n}\ar[r] & \prod_{n}L^{n}\ar[r] & \prod _{n}K^{n}\ar[r] & \lim_{\omega^{op}}K^{n}[-1]
}
\end{displaymath}
Since $K$ and $L$ are $\lim_{\omega^{op}}$-acyclic, this is a distinguished triangle. The middle vertical maps are equivalences, so the left vertical map is also an equivalence. 
\end{proof}

\begin{cor}\label{cor:limithomequiv}
Let $\mathpzc{E}$ be a weakly $(\omega,\textbf{AdEpi})$-coelementary exact category. Let $K,L\in\mathpzc{Fun}(\omega^{op},Ch(\mathpzc{E}))$ be $\lim_{\omega^{op}}$-acyclic diagrams and $f:\lim_{\omega^{op}}K^{n}\rightarrow\lim_{\omega^{op}}L^{n}$ be a map. Suppose there is a natural equivalence $\psi:K\rightarrow L$ in $\mathpzc{Fun}(\omega^{op},\Ho(Ch(\mathpzc{E})))$ such that the diagram below commutes
\begin{displaymath}
\xymatrix{
\lim_{\omega^{op}}K^{n}\ar[d]^{f}\ar[r] & K^{n}\ar[d]^{\psi^{n}}\\
\lim_{\omega^{op}}L^{n}\ar[r] & L^{n}
}
\end{displaymath}
commutes in $\Ho(Ch(\mathpzc{E}))$. Then $f$ is an equivalence. 
\end{cor}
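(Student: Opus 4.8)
The plan is to reduce Corollary \ref{cor:limithomequiv} to its abelian-category analogue, Proposition \ref{prop:limithomequiv}, by passing to an $\omega$-left abelianization. Since $\mathpzc{E}$ is weakly $(\omega,\textbf{AdEpi})$-coelementary, the preceding corollary guarantees it has an $\omega$-left abelianization $I_{l}:\mathpzc{E}\rightarrow\mathpzc{A}_{l}$; here $\mathpzc{A}_{l}$ has exact filtered colimits, is weakly $(\omega,\textbf{AdEpi})$-coelementary, and $I_{l}$ commutes with limits. First I would note that "weakly $(\omega,\textbf{AdEpi})$-coelementary" gives exactness of $\lim_{\omega^{op}}$ on $\mathpzc{Fun}_{\textbf{AdEpi}}(\omega^{op},-)$, but to apply Proposition \ref{prop:limithomequiv} I need $\mathpzc{A}_{l}$ to have \emph{exact countable products}. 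I would address this by observing that a weakly $(\omega,\textbf{AdEpi})$-coelementary abelian category has exact countable products: a countable product is a limit over the discrete category $\omega$ (or can be realised via the $\lim_{\omega^{op}}$ of partial products with surjective transition maps), and exactness of products of short exact sequences is precisely captured by the coelementary condition together with the fact that in an abelian category a product of epimorphisms along a tower of epimorphisms computes via $\lim_{\omega^{op}}$. So the hypotheses of Proposition \ref{prop:limithomequiv} are met for $\mathpzc{A}_{l}$.

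Next I would transport the data. Applying $I_{l}$ termwise sends $K,L\in\mathpzc{Fun}(\omega^{op},Ch(\mathpzc{E}))$ to $I_{l}K, I_{l}L\in\mathpzc{Fun}(\omega^{op},Ch(\mathpzc{A}_{l}))$. Since $I_{l}$ commutes with limits, $\lim_{\omega^{op}}I_{l}K^{n}\cong I_{l}\lim_{\omega^{op}}K^{n}$ and likewise for $L$, so $I_{l}(f)$ is a map between these limits. I must check that $I_{l}K$ and $I_{l}L$ are $\lim_{\omega^{op}}$-acyclic in $Ch(\mathpzc{A}_{l})$: this should follow from the results of the subsection "Deriving $\lim_{\omega^{op}}$", specifically because $\mathbb{R}\lim_{\omega^{op}}$ in $\mathpzc{A}_{l}$ is computed by the Milnor triangle and $I_{l}$ being exact and limit-preserving intertwines the $R\lim_{\omega^{op}}$ constructions; $\lim_{\omega^{op}}$-acyclicity of $K$ in the exact-category sense is defined precisely so that it is detected after abelianization. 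Since $I_{l}$ is exact it induces a functor $Ch(\mathpzc{E})\rightarrow Ch(\mathpzc{A}_{l})$ preserving quasi-isomorphisms, hence a functor $\Ho(Ch(\mathpzc{E}))\rightarrow\Ho(Ch(\mathpzc{A}_{l}))$, so the natural equivalence $\psi:K\rightarrow L$ in $\mathpzc{Fun}(\omega^{op},\Ho(Ch(\mathpzc{E})))$ pushes forward to a natural equivalence $I_{l}\psi$ in $\mathpzc{Fun}(\omega^{op},\Ho(Ch(\mathpzc{A}_{l})))$, and the commuting square is preserved because $I_{l}$ is a functor.

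Then Proposition \ref{prop:limithomequiv} applies verbatim to $I_{l}K$, $I_{l}L$, $I_{l}(f)$, $I_{l}\psi$, yielding that $I_{l}(f)$ is an equivalence in $\Ho(Ch(\mathpzc{A}_{l}))$; equivalently $I_{l}(f)$ is a quasi-isomorphism in $Ch(\mathpzc{A}_{l})$. Finally, because $I_{l}$ reflects exactness (it is a left abelianization), $f:\lim_{\omega^{op}}K^{n}\rightarrow\lim_{\omega^{op}}L^{n}$ is a quasi-isomorphism in $Ch(\mathpzc{E})$, i.e. an equivalence in $\Ho(Ch(\mathpzc{E}))$, as desired. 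The main obstacle I anticipate is the bookkeeping around $\lim_{\omega^{op}}$-acyclicity: one must be careful that "$\lim_{\omega^{op}}$-acyclic" for diagrams of complexes in an exact category is the right notion — presumably meaning $\lim_{\omega^{op}}K^{n}\to R\lim_{\omega^{op}}K^{n}$ is an equivalence, interpreted via the Milnor triangle using exact countable products — and that this condition is equivalent to the abelian-category condition after applying $I_{l}$; verifying this compatibility, rather than any deep new idea, is where the real work lies. Everything else is a formal transfer along the exact, limit-preserving, exactness-reflecting embedding $I_{l}$.
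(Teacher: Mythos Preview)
Your approach is essentially identical to the paper's: the paper's proof is the single sentence ``By picking an $\omega$-right abelianization we reduce to the situation of Proposition \ref{prop:limithomequiv},'' and you have correctly unpacked what that entails (indeed, given the hypothesis that $\mathpzc{E}$ is weakly $(\omega,\textbf{AdEpi})$-coelementary, it is the $\omega$-\emph{left} abelianization that exists by the preceding corollary, so the paper's wording appears to contain a typo). Your additional remarks---that weak $(\omega,\textbf{AdEpi})$-coelementarity yields exact countable products via partial products with split projection maps, and that the compatibility of $\lim_{\omega^{op}}$-acyclicity under the exact limit-preserving embedding $I_{l}$ is the only real point to check---are exactly the bookkeeping the paper leaves implicit.
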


\begin{proof}
By picking an $\omega$-right abelianization we reduce to the situation of Proposition \ref{prop:limithomequiv}.
\end{proof}

\section{Deriving Unbounded Functors}

Let $\mathpzc{E}$ be an exact category. As mentioned earlier, for $*\in\{\emptyset,\ge0,\le0,+,-\}$ we can regard the categories $Ch_{*}(\mathpzc{E})$ as homotopical categories, where $\mathcal{W}$ is the class of quasi-isomorphisms. In this section we will be interested in deriving functors of the form
$$F:Ch_{*}(\mathpzc{D})\rightarrow Ch_{*}(\mathpzc{E})$$
where $\mathpzc{D}$ and $\mathpzc{E}$ are exact categories, and $F$ arises from an additive functor $F:\mathpzc{D}\rightarrow\mathpzc{E}$. Note we are abusing notation here by also denoting by $F$ the functor at the level of complexes. This is to simplify notation throughout. 

\subsection{Deformation Functors for Exact Categories}

Let $\mathpzc{E}$ be an exact category, and $i_{\mathcal{B}}:\mathcal{B}\rightarrow Ch(\mathpzc{D})$  a full subcategory. $\mathcal{B}$ is said to be \textbf{degree-wise split extension closed} if for any sequence
$$0\rightarrow X\rightarrow Y\rightarrow Z\rightarrow 0$$
in $Ch(\mathpzc{E})$ which is split exact and each degree, and which has $X,Z\in\mathcal{B}$, then $Y\in\mathcal{B}$. 

\begin{defn}
Let $\mathpzc{D}$ be an exact  category. A degree-wise split extension closed full subcategory $i_{\mathcal{B}}:\mathcal{B}\rightarrow Ch_{+}(\mathpzc{D})$ is said to be a \textbf{homological left deformation of} $Ch_{+}(\mathpzc{D})$ if there is a pair $(Q_{+},\eta)$ where $Q_{+}:Ch_{+}(\mathpzc{E})\rightarrow\mathcal{B}$ is a functor, and $\eta:i_{\mathcal{B}}\circ Q\rightarrow Id_{Ch_{+}(\mathpzc{E})}$ is a natural quasi-isomorphism, and admissible epimorphism in each degree such that whenever $X_{\bullet}$ is concentrated in degrees $\ge n$, $Q_{+}(X_{\bullet})$ is concentrated in degrees $\ge n$. 
\end{defn}

\begin{defn}
Let $\mathpzc{D}$ be an exact category.  A degree-wise split extension closed full subcategory $i_{\mathcal{B}}:\mathcal{B}\rightarrow Ch_{-}(\mathpzc{D})$ is said to be a \textbf{homological right deformation of } $Ch_{-}(\mathpzc{D})$ if there is a pair $(Q_{-},\eta)$ where $Q_{-}:Ch_{-}(\mathpzc{E})\rightarrow\mathcal{B}$ is a functor, and $\eta:Id_{Ch_{+}(\mathpzc{E})}\rightarrow i_{\mathcal{B}}\circ Q $ is a natural quasi-isomorphism, and admissible monomorphism in each degree such that whenever $X_{\bullet}$ is concentrated in degrees $\le n$, $Q_{+}(X_{\bullet})$ is concentrated in degrees $\le n$. 
\end{defn}

\begin{defn}
Let $F:\mathpzc{D}\rightarrow\mathpzc{E}$ be a functor.
\begin{enumerate}
\item
$i_{\mathcal{B}}:\mathcal{B}\rightarrow Ch_{+}(\mathpzc{D})$ is said to be a \textbf{homological left deformation of} $F$ if $Ch_{+}(\mathpzc{D})$ is a homological left deformation of $Ch_{+}(\mathpzc{D})$, and $F$ preserves weak equivalence between complexes in $\mathcal{B}$. 
\item
$i_{\mathcal{B}}:\mathcal{B}\rightarrow Ch_{-}(\mathpzc{D})$ is said to be a \textbf{homological right deformation of} $F$ if $Ch_{-}(\mathpzc{D})$ is a homological right deformation of $Ch_{-}(\mathpzc{D})$, and $F$ preserves weak equivalence between complexes in $\mathcal{B}$. 
\end{enumerate}
\end{defn}

Note that if $\mathcal{B}$ is a homological left (resp. right) deformation of $Ch_{+}(\mathpzc{D})$ (or of a functor $F$) then $(i_{\mathcal{B}}\circ Q_{+},\eta_{+})$ is a left (resp. right) deformation of $Ch_{+}(\mathpzc{D})$ (or $F$) in the sense of homotopical categories, i.e. Definition \ref{def:homotopcialdeformation}.

\begin{defn}
Let $\mathpzc{D}$ be an exact category, and $i_{\mathcal{S}}:\mathcal{S}\rightarrow\mathpzc{D}$ a full subcategory, with fully faithful inclusion functor $i_{\mathcal{S}}$. A $\mathcal{S}$-\textbf{left deformation functor} is a pair $(Q,\eta)$, where $Q:\mathpzc{D}\rightarrow\mathcal{S}$ is a functor, and $\eta:i_{\mathcal{S}}\circ Q\rightarrow i_{\mathpzc{D}}$ is a natural transformation which is an admissible epimorphism. One defines a $\mathcal{S}$-right deformation functor dually. 
\end{defn}

The following can be proven exactly as in \cite{Buehler} Theorem 12.7.

\begin{prop}
Let $\mathpzc{D}$ be an exact category, $i_{\mathcal{S}}:\mathcal{S}\rightarrow\mathpzc{D}$ a full subcategory closed under finite sums, and $(Q,\eta)$ a $\mathcal{S}$-left deformation functor. Then there is a functor $Q_{+}:Ch_{+}(\mathpzc{D})\rightarrow Ch_{+}(\mathcal{S})$, and a natural transformation $\eta_{+}:i_{\mathcal{S}}\circ Q_{+}\rightarrow Id_{Ch_{+}(\mathpzc{D})}$ such that $(Q_{+},\eta_{+})$ realises $Ch_{+}(\mathcal{S})$ as a left deformation of $Ch_{+}(\mathpzc{D})$. 
\end{prop}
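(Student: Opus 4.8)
The plan is to mimic the classical construction of Cartan--Eilenberg / projective resolutions of complexes, but carried out degreewise with the deformation functor $(Q,\eta)$ in place of a genuine projective cover, exactly as in \cite{Buehler} Theorem 12.7. The key device is the total complex of a resolution by ``columns'', so I would first recall that construction. Given $X_\bullet\in Ch_+(\mathpzc{D})$, concentrated in degrees $\ge N$ say, I build a first-quadrant (relative to $N$) double complex $P_{\bullet,\bullet}$ together with an augmentation $P_{\bullet,\bullet}\to X_\bullet$ such that each column $P_{i,\bullet}$ lies in $Ch_+(\mathcal{S})$ and each augmented row is split exact; then $Q_+(X_\bullet)\defeq \Tot(P_{\bullet,\bullet})$, which lands in $Ch_+(\mathcal{S})$ because $\mathcal{S}$ is closed under finite sums (and each total-degree piece is a finite sum since we are in the bounded-below setting, with only finitely many nonzero columns contributing in each degree). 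The augmentation assembles into $\eta_+:i_{\mathcal{S}}\circ Q_+\to Id_{Ch_+(\mathpzc{D})}$.

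The construction of $P_{\bullet,\bullet}$ proceeds by induction on the homological degree of the column. To start, apply the $\mathcal{S}$-left deformation functor to $X_N$: the component $\eta_{X_N}: i_\mathcal{S}Q(X_N)\to X_N$ is an admissible epimorphism with some kernel (which exists since... actually we do not need kernels in $\mathpzc{D}$ here, only in $\mathcal{S}$ — I would instead proceed as in Buehler using the mapping-cone/horseshoe bookkeeping so that one never needs to take kernels in $\mathpzc{D}$). More precisely, at each stage one has built the truncation of the bicomplex through column $i$, together with the relevant ``cycle'' data, and one applies $Q$ to an appropriate pullback-type object to produce column $i+1$; the admissible-epimorphism hypothesis on $\eta$ guarantees at every step that the newly produced map is an admissible epimorphism, and the split-exactness of the augmented rows is arranged by always taking direct sums with the previous column's data (the ``horseshoe'' trick), which is where degreewise-split-exactness — and hence the requirement that $\mathcal{B}=Ch_+(\mathcal{S})$ be degree-wise split extension closed — enters. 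I would carry out the induction carefully enough to record: (i) functoriality of $P_{\bullet,\bullet}$ in $X_\bullet$ (each choice is made by applying the fixed functor $Q$, never by non-canonical choices), (ii) that column $i$ vanishes in homological degrees $<N$, so that $\Tot$ is again concentrated in degrees $\ge N$, and (iii) that $\eta_+$ is an admissible epimorphism in each degree (a finite composite/sum of admissible epimorphisms, using that $\mathbf{AdEpi}$ is closed under composition and direct sums).

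It remains to see $(Q_+,\eta_+)$ realises $Ch_+(\mathcal{S})$ as a left deformation of $Ch_+(\mathpzc{D})$, i.e. that $\eta_+$ is a natural quasi-isomorphism (and that $Ch_+(\mathcal{S})$ is degree-wise split extension closed, which is immediate since $\mathcal{S}$ is closed under finite sums and direct summands). For the quasi-isomorphism claim I would pass to a left abelianization $I_l:\mathpzc{D}\to\mathpzc{A}$ (Appendix A of \cite{Buehler}; $\mathpzc{D}$ locally small suffices for the bounded-below diagrams involved), under which admissible epimorphisms go to epimorphisms and acyclicity is detected; there $I_l(P_{\bullet,\bullet})\to I_l(X_\bullet)$ is a bicomplex whose augmented rows are exact, and a standard spectral-sequence (or acyclic-assembly) argument shows $\Tot I_l(P_{\bullet,\bullet})\to I_l(X_\bullet)$ is a quasi-isomorphism — here the bounded-below hypothesis makes the total complex and the spectral sequence well-behaved with no convergence subtleties. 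Since $I_l$ reflects quasi-isomorphisms, $\eta_+$ is a quasi-isomorphism in $Ch_+(\mathpzc{D})$.

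The main obstacle I expect is the bookkeeping of the inductive bicomplex construction while keeping everything strictly functorial in $X_\bullet$ and verifying degreewise that the augmented rows are split exact (not merely exact) — this is precisely the subtlety that forces the ``$\mathcal{S}$ closed under finite sums'' hypothesis and the degree-wise split extension closed condition, and it is the only genuinely non-formal part; the quasi-isomorphism verification is then routine homological algebra in the abelianization.
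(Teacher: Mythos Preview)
The paper gives no proof here beyond the sentence ``can be proven exactly as in \cite{Buehler} Theorem 12.7,'' and your proposal is precisely an attempt to unpack that reference; the overall strategy is correct and aligned with it. One small correction: you claim $\mathcal{S}$ is closed under direct summands, but this is neither assumed nor needed---closure under finite sums already makes $Ch_{+}(\mathcal{S})$ degree-wise split extension closed, since in a degreewise-split extension each $Y_n \cong X_n \oplus Z_n$ is a finite sum of objects of $\mathcal{S}$.
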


Dually we have the following,

\begin{prop}
Let $\mathpzc{D}$ be an exact category, $i_{\mathcal{S}}:\mathcal{S}\rightarrow\mathpzc{D}$ a full subcategory closed under finite sums, and $(Q,\eta)$ a $\mathcal{S}$-right deformation functor. Then there is a functor $Q_{-}:Ch_{-}(\mathpzc{D})\rightarrow Ch_{-}(\mathcal{S})$, and a natural transformation $\eta_{-}:Id_{Ch_{-}(\mathpzc{D})}\rightarrow i_{\mathcal{S}}\circ Q_{-}$ such that $(Q_{-},\eta_{-})$ realises $Ch_{-}(\mathcal{S})$ as a right deformation of $Ch_{-}(\mathpzc{D})$. 
\end{prop}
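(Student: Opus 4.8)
The plan is to dualise the proof of the previous Proposition, which in turn follows \cite{Buehler} Theorem 12.7, so essentially all the work is bookkeeping of variances. First I would recall the construction in the bounded-below case: given a complex $X_{\bullet}\in Ch_{+}(\mathpzc{D})$, one builds $Q_{+}(X_{\bullet})$ by descending induction, using at each stage the $\mathcal{S}$-left deformation functor $(Q,\eta)$ to produce an admissible epimorphism from an object of $\mathcal{S}$ onto a pullback-type object assembled from $X_{n}$ and the already-constructed piece in degree $n-1$, and the Horseshoe Lemma to keep everything term-wise split exact; closure of $\mathcal{S}$ under finite sums is exactly what makes the mapping-cone-style assembly land back in $\mathcal{S}$. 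The statement that $(Q_{+},\eta_{+})$ realises $Ch_{+}(\mathcal{S})$ as a left deformation of $Ch_{+}(\mathpzc{D})$ amounts to: $\eta_{+}$ is a natural quasi-isomorphism (proved by a standard ``two-out-of-three on a filtration'' or long-exact-sequence argument, using that each $\eta_{X_{n}}$ is an admissible epimorphism with kernel in $\mathpzc{D}$), and that $Ch_{+}(\mathcal{S})$ is closed under weak equivalences in the relevant sense.

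To get the Proposition as stated, I would run the same construction with all arrows reversed: replace $Ch_{+}$ by $Ch_{-}$, ``admissible epimorphism'' by ``admissible monomorphism'', pullbacks by pushouts, descending induction on degrees by ascending induction (building $Q_{-}(X_{\bullet})$ upward from the top nonzero degree), and the $\mathcal{S}$-left deformation functor by the $\mathcal{S}$-right deformation functor $(Q,\eta)$ whose counit-like map $\eta:i_{\mathpzc{D}}\rightarrow i_{\mathcal{S}}\circ Q$ is an admissible monomorphism. Concretely, at each inductive step one forms the pushout of $X_{n+1}\leftarrow (\text{cycles/coboundary piece})\rightarrow (\text{already built degree-}n\text{ object})$, applies $Q$ to obtain an admissible monomorphism into an object of $\mathcal{S}$, and uses the dual Horseshoe Lemma to keep the whole map $\eta_{-}$ term-wise split monomorphic with cokernel again a complex over $\mathpzc{D}$ with acyclic... wait, rather with the property forcing $\eta_{-}$ to be a quasi-isomorphism. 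The boundedness-above hypothesis is what makes the ascending induction well-founded (it starts at the top degree), exactly as boundedness-below makes the descending induction in the previous Proposition well-founded.

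Since the argument is formally dual to one the paper has already invoked as ``proven exactly as in \cite{Buehler} Theorem 12.7'', I would present it simply as: apply the previous Proposition to the opposite categories. That is, observe that $(\mathpzc{D}^{op}, \mathcal{Q}^{op})$ is again an exact category, that $i_{\mathcal{S}}^{op}:\mathcal{S}^{op}\rightarrow\mathpzc{D}^{op}$ is a full subcategory closed under finite sums, that an $\mathcal{S}$-right deformation functor for $\mathpzc{D}$ is precisely an $\mathcal{S}^{op}$-left deformation functor for $\mathpzc{D}^{op}$, and that $Ch_{-}(\mathpzc{D})\simeq Ch_{+}(\mathpzc{D}^{op})^{op}$ with quasi-isomorphisms corresponding to quasi-isomorphisms and right deformations to left deformations. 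Feeding the data into the previous Proposition yields $Q_{+}:Ch_{+}(\mathpzc{D}^{op})\rightarrow Ch_{+}(\mathcal{S}^{op})$ with $\eta_{+}:i_{\mathcal{S}^{op}}\circ Q_{+}\rightarrow Id$; transporting back across the equivalence $Ch_{-}(\mathpzc{D})\simeq Ch_{+}(\mathpzc{D}^{op})^{op}$ gives the desired $Q_{-}$ and $\eta_{-}:Id_{Ch_{-}(\mathpzc{D})}\rightarrow i_{\mathcal{S}}\circ Q_{-}$.

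The main obstacle — to the extent there is one — is making sure the opposite-category dictionary is airtight: that the exact structure $\mathcal{Q}^{op}$ behaves as expected (admissible monos $\leftrightarrow$ admissible epis), that quasi-isomorphisms in $Ch(\mathpzc{E})$ are self-dual under $Ch(\mathpzc{E})\mapsto Ch(\mathpzc{E}^{op})^{op}$ (this is where one uses that the notion of quasi-isomorphism in an exact category is defined symmetrically, via admissible mono/epi factorisations of differentials), and that the technical clause about $X_{\bullet}$ concentrated in degrees $\le n$ implying $Q_{-}(X_{\bullet})$ concentrated in degrees $\le n$ dualises the clause ``$\ge n$'' correctly under the reindexing $Ch_{-}\leftrightarrow Ch_{+}$. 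None of this is deep, but it is the only place an error could hide, so I would state the dictionary explicitly before invoking the previous Proposition.
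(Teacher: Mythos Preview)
Your proposal is correct and matches the paper's approach: the paper introduces this proposition with ``Dually we have the following,'' and gives no further proof, so your formal dualisation via opposite categories is exactly what is intended. Your additional care about the opposite-category dictionary is more detail than the paper provides, but is the right thing to check.
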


\begin{defn}\label{defn:F-proj}
Let $F:\mathpzc{D}\rightarrow\mathpzc{E}$ be a functor of exact categories. A full subcategory $\mathcal{S}\subset\mathpzc{D}$ is said to be \textbf{left functorially adapted to }$F$ if 
\begin{enumerate}
\item
There is a $\mathcal{S}$-left deformation functor $(Q,\eta)$,
\item
$\mathcal{S}$ is closed under finite direct sums.
\item
If $0\rightarrow X\rightarrow Y\rightarrow Z\rightarrow 0$ is an exact sequence with $Y$ and $Z$ in $\mathcal{S}$, then $X$ in $\mathcal{S}$ and 
$$0\rightarrow F(X)\rightarrow F(Y)\rightarrow F(Z)\rightarrow 0$$
is an exact sequence in $\mathpzc{E}$. 
\end{enumerate}
\end{defn}
In the context of quasi-abelian categories, \cite{qacs} Definition 1.3.2 $\mathcal{S}$ is called an $F$-\textbf{projective subcategory} - though functoriality is not assumed there. 
We then have the following.

\begin{prop}
Let $F:\mathpzc{D}\rightarrow\mathpzc{E}$ be an additive functor of exact, and $\mathcal{S}\subset\mathpzc{D}$ a subcategory which is left functorially adapted to $F$. If $f:X\rightarrow Y$ is a quasi-isomorphism of complexes in $Ch_{+}(\mathcal{S})$, then $F(f)$ is a weak equivalence in $Ch_{+}(\mathpzc{E})$. In particular $Ch_{+}(\mathcal{S})$ is a homological left deformation of $F$.
\end{prop}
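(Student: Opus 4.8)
The plan is to reduce the statement about unbounded-below complexes to a statement about honest resolutions degree by degree, using the key homological fact supplied by condition (3) of Definition \ref{defn:F-proj}: if $0\rightarrow X\rightarrow Y\rightarrow Z\rightarrow 0$ is exact with $Y,Z\in\mathcal{S}$, then $X\in\mathcal{S}$ and $F$ takes the sequence to a short exact sequence. First I would recall the standard fact (as in \cite{Buehler} Theorem 12.7, and used in the preceding propositions) that a quasi-isomorphism $f:X\rightarrow Y$ in $Ch_{+}(\mathpzc{D})$ between complexes with all terms in $\mathcal{S}$ factors — up to homotopy — through an object built from mapping cones and split extensions. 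More concretely, the mapping cone $\mathrm{Cone}(f)$ is an acyclic complex in $Ch_{+}(\mathcal{S})$ (using that $\mathcal{S}$ is closed under finite direct sums), so it suffices to show that $F$ sends bounded-below acyclic complexes with terms in $\mathcal{S}$ to acyclic complexes; then $F(\mathrm{Cone}(f)) = \mathrm{Cone}(F(f))$ is acyclic, hence $F(f)$ is a quasi-isomorphism.

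So the heart of the matter is: \emph{if $A_{\bullet}\in Ch_{+}(\mathcal{S})$ is acyclic, then $F(A_{\bullet})$ is acyclic in $Ch_{+}(\mathpzc{E})$}. Here I would argue by the usual induction on the (bounded-below) degree, using that acyclicity of a bounded-below complex in an exact category means it splices into short exact sequences $0\rightarrow Z_{n}\rightarrow A_{n}\rightarrow Z_{n-1}\rightarrow 0$ where $Z_{n}$ is the $n$-th cycle object. Starting from the bottom degree, where $Z$ is just a subobject coming from an admissible mono, one shows inductively that each cycle object $Z_{n}$ lies in $\mathcal{S}$: indeed $0\rightarrow Z_{n}\rightarrow A_{n}\rightarrow Z_{n-1}\rightarrow 0$ has $A_{n}\in\mathcal{S}$ by hypothesis and $Z_{n-1}\in\mathcal{S}$ by induction, so condition (3) of Definition \ref{defn:F-proj} forces $Z_{n}\in\mathcal{S}$ and simultaneously tells us $0\rightarrow F(Z_{n})\rightarrow F(A_{n})\rightarrow F(Z_{n-1})\rightarrow 0$ is exact in $\mathpzc{E}$. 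Splicing these short exact sequences back together (using that $\mathpzc{E}$ is weakly idempotent complete so the relevant compositions of admissible monos/epis behave well) shows $F(A_{\bullet})$ is acyclic.

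The main obstacle — really the only subtle point — is getting the induction off the ground and making the "splices into short exact sequences" step rigorous in an exact (rather than abelian) category: in an exact category one must check that the cycle subobjects $Z_{n}\hookrightarrow A_{n}$ are genuinely admissible monomorphisms and that the induced maps $A_{n}\twoheadrightarrow Z_{n-1}$ are admissible epimorphisms, which is exactly what acyclicity of a complex in an exact category provides (so the complex is "exact" in Bühler's sense), and one needs weak idempotent completeness for the obscure axiom arguments that keep these short exact sequences admissible after composition. Once the cycle objects are known to lie in $\mathcal{S}$ and $F$ preserves the spliced short exact sequences, reassembling $F(A_{\bullet})$ as an acyclic complex is formal. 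Finally, the last sentence of the proposition — that $Ch_{+}(\mathcal{S})$ is a homological left deformation of $F$ — follows by combining this with the earlier proposition producing the functor $(Q_{+},\eta_{+})$ from the $\mathcal{S}$-left deformation functor $(Q,\eta)$, since that proposition already exhibits $Ch_{+}(\mathcal{S})$ as a left deformation of $Ch_{+}(\mathpzc{D})$, and we have just shown $F$ preserves weak equivalences between its objects.
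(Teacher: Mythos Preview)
Your proposal is correct and follows essentially the same approach as the paper: reduce to showing $F$ sends bounded-below acyclic complexes in $\mathcal{S}$ to acyclic complexes via the cone, then invoke condition (3) of Definition~\ref{defn:F-proj}. The paper's proof is terser --- it simply asserts that ``it follows from Definition~\ref{defn:F-proj}~(3)'' that $F(\mathrm{cone}(f))$ is acyclic --- whereas you spell out the inductive splicing argument that justifies this step; your extra detail is accurate and is exactly what the paper is implicitly using.
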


\begin{proof}
Let $f:X_{\bullet}\rightarrow Y_{\bullet}$ be a quasi-isomorphism of complexes in $Ch_{+}(\mathcal{S})$. Then $cone(f)$ is an acyclic complex in $Ch_{+}(\mathcal{S})$. It follows from Definition \ref{defn:F-proj} (3) that $F(cone(f))\cong (cone(F(f)))$ is acyclic, i.e. $F(f)$ is a weak equivalence.
\end{proof}

There is a dual situation.

\begin{defn}
Let $F:\mathpzc{D}\rightarrow\mathpzc{E}$ be a functor of quasi-abelian categories. A full subcategory $\mathcal{S}\subset\mathpzc{D}$ is said to be \textbf{right functorially adapted to }$F$ if 
\begin{enumerate}
\item
There is a full subcategory $\mathcal{S}$ of $\mathpzc{D}$, and an $\mathcal{S}$-right deformation functor $(Q,\eta)$,
\item
If $0\rightarrow X\rightarrow Y\rightarrow Z\rightarrow 0$ is an exact sequence with $X$ and $Y$ in $\mathcal{S}$, then $Z\in\mathcal{S}$ and
$$0\rightarrow F(X)\rightarrow F(Y)\rightarrow F(Z)\rightarrow 0$$
is an exact sequence in $\mathcal{S}$. 
\end{enumerate}
\end{defn}
Again in \cite{qacs}, this is called an $F$-injective subcategory (without functoriality). 

\begin{prop}
Let $F:\mathpzc{D}\rightarrow\mathpzc{E}$ be a functor of exact categories, and $\mathcal{S}\subset\mathpzc{D}$ a subcategory which is right functorially adapted to $F$. If $f:X\rightarrow Y$ is a weak equivalence of complexes in $Ch_{-}(\mathcal{S})$, then $F(f)$ is a weak equivalence in $Ch_{-}(\mathpzc{E})$. In particular $Ch_{-}(\mathcal{S})$ is a homological right deformation of $F$.
\end{prop}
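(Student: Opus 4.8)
The plan is to mirror the proof of the left-handed analogue (the dual of the Proposition preceding Definition~\ref{defn:F-proj}'s dual). First I would take a weak equivalence $f:X_\bullet\rightarrow Y_\bullet$ between complexes in $Ch_-(\mathcal{S})$ and form its cone $cone(f)\in Ch_-(\mathcal{S})$, which is acyclic; the key point is that since $\mathcal{S}$ is closed under the relevant short exact sequences and $Ch_-(\mathcal{S})$ sits inside $Ch_-(\mathpzc{D})$, the cone lands in $Ch_-(\mathcal{S})$ and is still acyclic in $Ch_-(\mathpzc{D})$. Then it suffices to show that $F$ sends acyclic complexes in $Ch_-(\mathcal{S})$ to acyclic complexes in $Ch_-(\mathpzc{E})$, since $F(cone(f))\cong cone(F(f))$ and a map of complexes is a weak equivalence exactly when its cone is acyclic.

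Next I would reduce the claim about acyclic complexes to the Proposition's hypothesis, namely condition (2) of the definition of a right functorially adapted subcategory: if $0\rightarrow X\rightarrow Y\rightarrow Z\rightarrow 0$ is exact with $X,Y\in\mathcal{S}$, then $Z\in\mathcal{S}$ and $F$ carries this sequence to an exact sequence. An acyclic bounded-above complex in an exact category breaks up, via the admissible mono--epi factorizations of its differentials, into a collection of short exact sequences $0\rightarrow Z_{n}\rightarrow C_n\rightarrow Z_{n-1}\rightarrow 0$ where $Z_n$ are the (co)cycle objects. Working from the top of the complex downward, condition (2) inductively guarantees each $Z_n\in\mathcal{S}$ (the boundedness above is what makes the induction start), and then that $F$ preserves each of these short exact sequences. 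Splicing the resulting short exact sequences $0\rightarrow F(Z_n)\rightarrow F(C_n)\rightarrow F(Z_{n-1})\rightarrow 0$ back together shows $F$ applied to the complex is acyclic in $Ch_-(\mathpzc{E})$.

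Finally, the statement that $Ch_-(\mathcal{S})$ is a homological right deformation of $F$ follows by combining this with the dual of the Proposition producing $Q_-$ from an $\mathcal{S}$-right deformation functor: that result already exhibits $Ch_-(\mathcal{S})$ as a right deformation of $Ch_-(\mathpzc{D})$ (it is degree-wise split extension closed because $\mathcal{S}$ is closed under finite direct sums), and we have just checked the remaining requirement that $F$ preserves weak equivalences between complexes in $\mathcal{B}=Ch_-(\mathcal{S})$.

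The main obstacle I anticipate is the bookkeeping in the d\'evissage step: one must be careful that the factorization of an acyclic complex into short exact sequences genuinely stays inside $\mathcal{S}$ at every stage, which relies essentially on the complexes being bounded above (so the induction has a base case) and on $\mathcal{S}$ being closed under the operation in condition (2) — not merely under kernels or cokernels in isolation. Establishing that the differentials of an acyclic complex in $Ch_-(\mathcal{S})$ admit admissible mono--epi factorizations through objects of $\mathcal{S}$, rather than just through objects of $\mathpzc{D}$, is the point that needs genuine care; everything else is formal manipulation of cones and short exact sequences that parallels \cite{Buehler} and the already-proven left-handed case.
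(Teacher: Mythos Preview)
Your proposal is correct and is precisely the approach the paper takes: the paper does not write out a separate proof for this statement, treating it as formally dual to the left-handed case, whose proof is exactly the cone argument you describe (take $cone(f)$, observe it is acyclic in $Ch_{-}(\mathcal{S})$, and invoke condition (2) to conclude $F(cone(f))\cong cone(F(f))$ is acyclic). Your d\'evissage paragraph simply makes explicit what the paper compresses into the phrase ``It follows from Definition~\ref{defn:F-proj} (3) that\ldots'' in the left-handed proof.
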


\begin{defn}[\cite{qacs} Definition 1.3.6]
An additive functor $F:\mathpzc{D}\rightarrow\mathpzc{E}$ between exact categories is said to be \textbf{functorially explicitly right (resp. left) derivable} it $\mathpzc{E}$ has a subcategory which is left-adapted (resp. right-adapted) to $F$.
\end{defn}

\subsection{Left truncation systems}

To derive functors of unbounded complexes, we need to be able to construct our complexes from bounded below complexes in a controlled way.

\begin{defn}
A \textbf{left truncation system} on an exact category $\mathpzc{E}$ is a sequence
$$T_{0}\rightarrow T_{1}\rightarrow\ldots \rightarrow T_{n}\rightarrow\ldots$$
of functors $T_{n}:Ch(\mathpzc{E})\rightarrow Ch(\mathpzc{E})$ and for each $n$ a natural transformation $\psi_{n}: T_{n}\rightarrow Id_{Ch(\mathpzc{E})}$ such that 
\begin{enumerate}
\item
The diagrams 
\begin{displaymath}
\xymatrix{
T_{n}\ar[dr]^{\psi_{n}}\ar[rr] & & T_{n+1}\ar[dl]^{\psi_{n+1}}\\
& Id_{Ch(\mathpzc{E})} & 
}
\end{displaymath}
commute for all $n$
\item
for each $X\in Ch(\mathpzc{E})$ and each $n\in\mathbb{N}$, $T_{n}(X)$ is concentrated in degrees $\ge -n$.
\item
the map $T_{n}T_{n+1}\rightarrow T_{n}$ is a natural quasi-isomorphism.
\item
for each $X\in Ch(\mathpzc{E})$ and each $n\in\mathbb{N}$, $T_{n}(X)\rightarrow X$ is an admissible epimorphism in degrees $>-n$. 
\item
if $X$ is concentrated in degrees $\ge -n$, Then $T_{n}(X)\rightarrow X$ is a quasi-isomorphism.
\item
$T_{n}$ sends weak equivalences to weak equivalences.
\end{enumerate}
We write the data of a left truncation system as $(T_{n},\psi_{n})$. A left truncation system is said to be \textbf{admissible} if the maps $T_{n}(X)\rightarrow T_{n+1}(X)$ are admissible monomorphisms in degrees $>-n$, and \textbf{homologically right complete} if $\colim_{\omega}\psi_{n}$ exists and is a natural weak equivalence.
\end{defn}

\begin{defn}
Let $(T_{n},\psi_{n})$ and $(T'_{n},\psi'_{n})$ be left truncation systems. A \textbf{morphism of left truncation systems} is a collection of natural transformations $\alpha_{n}:T_{n}\rightarrow T'_{n}$ such that the diagrams below commute for all $n$
\begin{displaymath}
\xymatrix{
T_{n}\ar[d]^{\alpha_{n}}\ar[r] & T_{n+1}\ar[d]^{\alpha_{n+1}}\\
T'_{n}\ar[r] & T'_{n+1}
}
\end{displaymath}
\begin{displaymath}
\xymatrix{
T_{n}\ar[dr]^{\psi_{n}}\ar[rr]^{\alpha_{n}} & &T'_{n}\ar[dl]^{\psi'_{n}}\\
& Id_{Ch(\mathpzc{E})} & 
}
\end{displaymath}
A morphism $(\alpha_{n})$ of left truncation systems is said to be an \textbf{equivalence} if each $\alpha_{n}$ is a natural weak equivalence. 
\end{defn}

\begin{defn}
Let $\mathpzc{E}$ be an additive category which has kernels. For a complex $X_{\bullet}$ we denote by $\tau^{L}_{\ge n}X$ the complex such that $(\tau_{\ge n}X)_{m}=0$ if $m<n$, $(\tau^{L}_{\ge n}X)_{m}= X_{m}$ if $m>n$ and $(\tau^{L}_{\ge n}X)_{n}=\textrm{Ker}(d_{n})$. The differentials are the obvious ones. The construction is clearly functorial. Dually we define $\tau^{R}_{\le n}X$.
\end{defn}

\begin{example}
Let $\mathpzc{E}$ be an exact category with kernels. For each $n$ let $T_{n}=\tau^{L}_{\ge -n}$. There are compatible natural transformations $\tau^{L}_{\ge -n}\rightarrow \tau^{L}_{\ge -n+1}$, and there is a natural isomorphism $\colim_{n\in\omega}\tau^{L}_{\ge -n}\rightarrow Id_{Ch(\mathpzc{E})}$. This is called the \textbf{standard left truncation system}. Note that as explained in Proposition \ref{prop:truncquasi} above, truncation functors send quasi-isomorphisms to quasi-isomorphisms. This truncation system is in fact admissible and homologically right complete. 
\end{example}

\begin{example}
Let $\mathpzc{E}$ be an exact category, and $i_{\mathpzc{D}}:\mathpzc{D}\subset\mathpzc{E}$ an exact subcategory such that there is a $\mathpzc{D}$-left deformation functor $(Q,\eta)$ on $\mathpzc{E}$. Let $(T_{n},\psi_{n})$ be a left truncation system on $\mathpzc{E}$. Define $T_{n}^{\mathpzc{D}}\defeq Q_{+}\circ T_{n}\circ i_{\mathpzc{D}}$. Define $\overline{\psi}_{n}^{\mathpzc{D}}$ by $\psi_{n}\circ\eta_{T_{n}\circ i_{\mathpzc{D}}}$. If the inclusion $\mathpzc{D}\rightarrow\mathpzc{E}$ reflects admissible epimorphisms then this defines a left truncation system on $\mathpzc{D}$. A situation in which this occurs is that $\mathpzc{D}$ is an exact category with a projectively generating subcategory $\mathcal{P}$ which has weak kernels in the sense of \cite{bodzenta2020abelian} Section 4.5. Then by Proposition 4.20 in \cite{bodzenta2020abelian} $\mathpzc{D}$ has a right abelian envelope $\mathpzc{A}_{r}(\mathpzc{D})$. Moreover $\mathpzc{A}_{r}(\mathpzc{D})$ has enough projectives, and in fact $\mathcal{P}$ is also a projective generating subcategory of $\mathpzc{A}_{r}(\mathpzc{D})$. Thus $Ch_{+}(\mathcal{P})$ is a homological left deformation of $\mathpzc{A}_{r}(\mathpzc{D})$, and consequently  $Ch_{+}(\mathpzc{D})$ is a homological left deformation of $\mathpzc{A}_{r}(\mathpzc{D})$. The standard left truncation system on $\mathpzc{A}_{r}(\mathpzc{D})$ then induces a left truncation system on $\mathpzc{E}$. We will explain later how often this truncation system can be modified to be admissible and homologically right complete.
\end{example}

\subsection{Very Special Resolutions and Unbounded Adapted Classes}

%
%
%

\begin{defn}
\begin{enumerate}
\item
Let $\mathpzc{D}$ be an exact category.  A degree-wise split extension closed full subcategory $i_{\mathcal{B}}:\mathcal{B}\rightarrow Ch(\mathpzc{D})$ is said to be a \textbf{homological left deformation of }$Ch(\mathpzc{D})$ if there is a pair $(\overline{Q},\overline{\eta})$ where $\overline{Q}:Ch(\mathpzc{D})\rightarrow\mathcal{B}$ is a functor, and $\overline{\eta}:i_{\mathcal{B}}\circ \overline{Q}\rightarrow Id_{Ch(\mathpzc{D})}$ is a natural weak equivalence and admissible epimorphism in each degree such that whenever $X_{\bullet}$ is concentrated in degrees $\ge n$, $\overline{Q}(X_{\bullet})$ is concentrated in degrees $\ge n$.
\item
Let $\mathpzc{D}$ and $\mathpzc{E}$ be exact categories, and let $F:\mathpzc{D}\rightarrow\mathpzc{E}$  A full subcategory $i_{\mathcal{B}}:\mathcal{B}\rightarrow Ch(\mathpzc{D})$ is said to be a \textbf{homological left deformation for }$F$ if it is a homological left deformation of $Ch(\mathpzc{D})$, and $F$ preserves weak equivalences between complexes in $\mathcal{B}$.
\end{enumerate}
\end{defn}

Clearly if $i_{\mathcal{B}}:\mathcal{B}\rightarrow Ch(\mathpzc{D})$  is a left deformation of $Ch(\mathpzc{D})$, then $\mathcal{B}\cap Ch_{+}(\mathpzc{D})\rightarrow Ch_{+}(\mathpzc{D})$ is a left deformation of $Ch_{+}(\mathpzc{D})$, and consequently if $i_{\mathcal{B}}:\mathcal{B}\rightarrow Ch(\mathpzc{D})$ is a left deformation of $F:Ch(\mathpzc{D})\rightarrow Ch(\mathpzc{E})$, then $\mathcal{B}\cap Ch_{+}\rightarrow Ch_{+}(\mathpzc{D})$ is a left deformation of $F:Ch_{+}(\mathpzc{D})\rightarrow Ch_{+}(\mathpzc{E})$. In the rest of this section we will explain how often one can go in the other direction, namely we are given a left deformation of $F:Ch_{+}(\mathpzc{D})\rightarrow Ch_{+}(\mathpzc{E})$, and we wish to construct a left deformation of the functor at the level of unbounded complexes $F:Ch(\mathpzc{D})\rightarrow Ch(\mathpzc{E})$.
\begin{defn}
Let $\mathcal{B}$ be a degree-wise split extension closed full subcategory $i_{\mathcal{B}}:\mathcal{B}\rightarrow Ch(\mathpzc{D})$, and let $(T_{n},\psi_{n})$ be a left truncation system on $\mathpzc{D}$. A direct system $(P^{n}_{\bullet})_{n\in\omega}$  in $Ch(\mathpzc{E})$ is said to be a $(\mathcal{B},T,\psi)$-\textbf{very-special system} if
\begin{enumerate}
\item
Each $P^{n}_{\bullet}$ is a complex, concentrated in degrees $\ge n$.
\item
$P^{0}_{\bullet}$ is in $\mathcal{B}$.
\item
For each $n\in\omega$, the map $T_{n}P^{n}_{\bullet}\rightarrow T_{n}P^{n+1}_{\bullet}$ is a quasi-isomorphism. 
\item
For each $n\ge1$, $P_{\bullet}^{n-1}\rightarrow P_{\bullet}^{n}$ is a degree-wise split monomorphism, and its cokernel $C_{\bullet}^{n}$ is in $\mathcal{B}$. 
\end{enumerate}
An object $P_{\bullet}$ is said to be $(\mathcal{B},T,\psi)$-\textbf{very special} if it is the direct colimit of a $(\mathcal{B},T,\psi)$-very special system. The full subcategory of $(\mathcal{B},T,\psi)$-very special objects is denoted $\varinjlim^{vs}(\mathcal{B},T,\psi)$.
\end{defn}


%

The following results are generalisations of Proposition 2.63  and Corollary 2.64 in\cite{kelly2016homotopy}. The proofs are almost exactly the same as the proofs in \cite{kelly2016homotopy} (which in turn are essentially the same as the proofs of Lemma 3.3 and Theorem 3.4 in \cite{Spaltenstein}) but we include it due to the very minor alterations, and for completeness.

\begin{thm}\label{prop:DK-equivspecial}
Let $\mathpzc{E}$ be equipped with a left truncation system $(T_{n},\psi_{n})$, and let $\mathcal{B}$ be a subcategory of complexes in $Ch(\mathpzc{E})$. Suppose that there is a left deformation functor $Q_{+}:Ch_{+}(\mathpzc{E})\rightarrow\mathcal{B}$ with corresponding natural weak equivalence
$$\eta_{+}:i_{\mathcal{B}}\circ Q_{+}\rightarrow Id$$
which is a degree-wise admissible epimorphism. Then there is a $(\mathcal{B},T,\psi)$-very-special system $(P^{n}_{\bullet})_{n\ge 0}$, and a direct system of chain maps $f^{n}:P^{n}_{\bullet}\rightarrow T_{n}X_{\bullet}$ such that
\begin{enumerate}
\item
$f^{n}$ is a quasi-isomorphism for every $n\ge0$
\item
$f^{n}$ is an admissible epimorphism in each degree.
\end{enumerate}
\end{thm}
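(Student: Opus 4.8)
The plan is to construct the very-special system $(P^n_\bullet)_{n\ge 0}$ together with the chain maps $f^n\colon P^n_\bullet\to T_nX_\bullet$ simultaneously, by induction on $n$, following the inductive scheme of \cite{Spaltenstein}, Lemma~3.3, and \cite{kelly2016homotopy}, Proposition~2.63; the only substantive change is that the canonical projective deformation used there is replaced by the abstract pair $(Q_+,\eta_+)$ and by a general left truncation system. For $n=0$ I would set $P^0_\bullet := Q_+(T_0X_\bullet)$ and $f^0 := (\eta_+)_{T_0X_\bullet}$: axiom~(2) of a left truncation system gives that $T_0X_\bullet$ is concentrated in degrees $\ge 0$, hence lies in $Ch_+(\mathpzc{E})$; then $P^0_\bullet\in\mathcal{B}$, it is again concentrated in degrees $\ge 0$ since $Q_+$ (as a homological left deformation functor) preserves connective truncations, and $f^0$ is a quasi-isomorphism and a degree-wise admissible epimorphism by the hypothesis on $\eta_+$.

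For the inductive step, assume $P^0_\bullet\to\cdots\to P^n_\bullet$ and $f^0,\dots,f^n$ have been produced with all the required properties. Let $\theta_n\colon T_nX_\bullet\to T_{n+1}X_\bullet$ be the structure map of the truncation system, form $D_\bullet:=\mathrm{cone}\big(\theta_n\circ f^n\colon P^n_\bullet\to T_{n+1}X_\bullet\big)$, and let $\delta\colon D_\bullet\to P^n_\bullet[-1]$ be the canonical map of the cone triangle. Since $P^n_\bullet$ is concentrated in degrees $\ge -n$ and $T_{n+1}X_\bullet$ in degrees $\ge -(n+1)$ (axiom~(2)), the complex $D_\bullet$ is concentrated in degrees $\ge -(n+1)$, so it lies in $Ch_+(\mathpzc{E})$; I would then put $C^{n+1}_\bullet:=Q_+(D_\bullet)\in\mathcal{B}$, with $q:=(\eta_+)_{D_\bullet}\colon C^{n+1}_\bullet\to D_\bullet$ a quasi-isomorphism and degree-wise admissible epimorphism, again concentrated in degrees $\ge -(n+1)$. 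Finally I would define $P^{n+1}_\bullet$ to be the degree-wise split extension of $C^{n+1}_\bullet$ by $P^n_\bullet$ classified by the chain map $\delta\circ q\colon C^{n+1}_\bullet\to P^n_\bullet[-1]$, so that $P^{n+1}_m=P^n_m\oplus C^{n+1}_m$ with a twisted differential. This makes $P^n_\bullet\hookrightarrow P^{n+1}_\bullet$ a degree-wise split monomorphism whose cokernel is literally $C^{n+1}_\bullet\in\mathcal{B}$, assembles the $P^n_\bullet$ into a direct system over $\omega$, and keeps $P^{n+1}_\bullet$ concentrated in degrees $\ge -(n+1)$; thus conditions (1), (2) and (4) of a very-special system hold.

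It then remains to build $f^{n+1}$ and to verify condition (3). For $f^{n+1}$ I would take $(\theta_n f^n,\ p\circ q)\colon P^n_\bullet\oplus C^{n+1}_\bullet\to T_{n+1}X_\bullet$, where $p\colon D_\bullet\to T_{n+1}X_\bullet$ is the degree-zero projection onto the $T_{n+1}X_\bullet$-summand of the cone (not itself a chain map); a short computation with the cone differential shows this is a genuine chain map restricting to $\theta_n\circ f^n$ on $P^n_\bullet$, which is the required compatibility of $f^\bullet$ with the tower. It is a degree-wise admissible epimorphism because $p\circ q$ already is (the split epimorphism $p$ composed with the admissible epimorphism $q$) and a map $(\,\cdot\,,h)\colon A'\oplus A\to B$ is an admissible epimorphism whenever $h$ is. It is a quasi-isomorphism because $P^n_\bullet\hookrightarrow P^{n+1}_\bullet\twoheadrightarrow C^{n+1}_\bullet$ and $P^n_\bullet\xrightarrow{\theta_n f^n}T_{n+1}X_\bullet\to D_\bullet$ fit into a morphism of distinguished triangles in $D(\mathpzc{E})$ whose outer two components are $\mathrm{id}_{P^n_\bullet}$ and the equivalence $q$, so the triangulated five lemma applies. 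For condition (3), applying $T_n$ to the commuting square relating $P^n_\bullet\hookrightarrow P^{n+1}_\bullet$, $f^n$, $f^{n+1}$ and $\theta_n$, axiom~(6) turns $T_nf^n$ and $T_nf^{n+1}$ into equivalences, so $T_nP^n_\bullet\to T_nP^{n+1}_\bullet$ is identified with $T_n(\theta_n)\colon T_nT_nX_\bullet\to T_nT_{n+1}X_\bullet$, which is an equivalence by a direct argument from axioms~(5) and~(3) and the relation $\psi_n=\psi_{n+1}\circ\theta_n$. Iterating over $n\in\omega$ yields the direct system.

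The step I expect to be the main obstacle is the construction of $f^{n+1}$ as a \emph{degree-wise admissible} epimorphism (not merely a split one) that is simultaneously a quasi-isomorphism, is compatible with the tower and with $\theta_n$, and has cokernels landing \emph{literally} in $\mathcal{B}$: this is exactly the point at which one must carry the exact-category axioms (stability of admissible epimorphisms under pushout, the obscure axiom) through the mapping-cone construction on the nose rather than up to homotopy, precisely as in \cite{Spaltenstein} and \cite{kelly2016homotopy}. The degree bookkeeping, the verification of condition~(3), and the triangulated five-lemma argument are routine by comparison.
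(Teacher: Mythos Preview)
Your proposal is correct and follows essentially the same approach as the paper: both reproduce Spaltenstein's inductive cone construction, resolving the cone of the composite $P^{n}_\bullet\to T_{n+1}X_\bullet$ via $Q_+$ and building $P^{n+1}_\bullet$ as the resulting degree-wise split extension. The only differences are cosmetic---the paper starts the induction at $n=-1$ with $P^{-1}_\bullet=0$, works with $\mathrm{cone}(f)[1]$ rather than $\mathrm{cone}(\theta_n f^n)$, and identifies $\mathrm{cone}(f^{n+1})$ directly with a shift of $\mathrm{cone}(g)$ instead of invoking the triangulated five lemma; your verification of condition~(3) is actually more explicit than the paper's one-line ``by construction''.
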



\begin{proof}
We construct the data $(P^{n}_{\bullet})_{n\ge-1}$ and $(f^{n})_{n\ge-1}$ by induction. For $n=-1$ we take $P^{-1}_{\bullet}=0$ and so $f^{-1}=0$. Let now $n\ge1$, and suppose that $P^{-1}_{\bullet},\ldots, P^{n-1}_{\bullet}$ and $f^{-1},\ldots,f^{n-1}$ have been constructed. Let $P_{\bullet}= P_{\bullet}^{n-1}$ and $Y_{\bullet}=T_{n}X_{\bullet}$. Denote by $f$ the composite $P^{n-1}_{\bullet}\rightarrow T_{n-1}X_{\bullet}\rightarrow Y_{\bullet}$. By assumption we can find a quasi-isomorphism $g:Q_{\bullet}\rightarrow\textrm{cone}(f)[1]$ which is an admissible epimorphism in each degree, and $Q_{\bullet}[-1]\in\mathcal{B}$. Now we have a degree wise splitting, $\textrm{cone}(f)[1]=P_{\bullet}\oplus Y_{\bullet}[1]$. We therefore get two maps $g':Q_{\bullet}\rightarrow P_{\bullet}$ and $g'':Q_{\bullet}\rightarrow Y_{\bullet}[1]$ which are admissible epimorphisms in each degree, and such that $g'$ is a chain map. Define $P^{n}\defeq\textrm{cone}(-g')$ and let $f^{n}:\textrm{cone}(-g')=Q[1]\oplus P\rightarrow Y$ be defined by $f^{n}=g''[1]+f$. As in \cite{Spaltenstein} Lemma 3.3, by direct calculation $f^{n}$ is a chain map and $\textrm{cone}(f^{n})=\textrm{cone}(g)[1]$. Since $g$ is a quasi-isomorphism $f^{n}$ is as well. Moreover the sequence
$$0\rightarrow P^{n-1}_{\bullet}\rightarrow P_{\bullet}^{n}\rightarrow Q_{\bullet}[1]\rightarrow0$$
is split exact in each degree. By construction the maps $T_{n}P^{n}_{\bullet}\rightarrow T_{n}P^{n+1}_{\bullet}$ are equivalences. Note that all steps in the procedure of this proof can be made functorial.
\end{proof}

\begin{cor}\label{Bres}
Let $\mathpzc{E}$ be an exact category be equipped with an admissible homologically right complete left truncation system $(T_{n},\psi_{n})$, and $\mathcal{B}$ a homological left deformation of $Ch_{+}(\mathpzc{E})$. Suppose that $\mathpzc{E}$ is weakly $(\omega,\textbf{AdMon})$-elementary. Then any chain complex $X_{\bullet}$ in $\mathpzc{E}$ admits a $\varinjlim^{vs}(\mathcal{B},T,\psi)$ resolution which is an admissible epimorphism in each degree.
\end{cor}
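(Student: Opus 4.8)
The plan is to apply Theorem~\ref{prop:DK-equivspecial} to the given complex $X_{\bullet}$ and then pass to the colimit over $\omega$ of the very-special system it produces. Since $\mathcal{B}$ is a homological left deformation of $Ch_{+}(\mathpzc{E})$, it comes with a functor $Q_{+}:Ch_{+}(\mathpzc{E})\rightarrow\mathcal{B}$ and a natural weak equivalence $\eta_{+}:i_{\mathcal{B}}\circ Q_{+}\rightarrow\Id$ which is a degree-wise admissible epimorphism, so the hypotheses of Theorem~\ref{prop:DK-equivspecial} are satisfied. It yields a $(\mathcal{B},T,\psi)$-very-special system $(P^{n}_{\bullet})_{n\ge 0}$ and a compatible family of chain maps $f^{n}:P^{n}_{\bullet}\rightarrow T_{n}X_{\bullet}$, each of which is a quasi-isomorphism and an admissible epimorphism in every degree.

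Next I would set $P_{\bullet}\defeq\colim_{n\in\omega}P^{n}_{\bullet}$. By condition (4) in the definition of a very-special system the structure maps $P^{n-1}_{\bullet}\rightarrow P^{n}_{\bullet}$ are degree-wise split monomorphisms, hence degree-wise admissible monomorphisms since $\mathpzc{E}$ is weakly idempotent complete; as $\mathpzc{E}$ is weakly $(\omega,\textbf{AdMon})$-elementary the colimit $P_{\bullet}$ therefore exists, is computed degree-wise, and by definition lies in $\varinjlim^{vs}(\mathcal{B},T,\psi)$. The composites $\psi_{n}\circ f^{n}:P^{n}_{\bullet}\rightarrow X_{\bullet}$ are compatible with the transition maps (by compatibility of the $f^{n}$ together with condition (1) of a left truncation system), so they assemble into a chain map $f:P_{\bullet}\rightarrow X_{\bullet}$ which factors as
$$P_{\bullet}=\colim_{n}P^{n}_{\bullet}\xrightarrow{\ \colim_{n}f^{n}\ }\colim_{n}T_{n}X_{\bullet}\xrightarrow{\ \colim_{n}\psi_{n}\ }X_{\bullet},$$
the colimit $\colim_{n}T_{n}X_{\bullet}$ existing by homological right completeness of the truncation system.

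It then remains to show $f$ is a quasi-isomorphism and a degree-wise admissible epimorphism. The second factor $\colim_{n}\psi_{n}$ is a weak equivalence, again by homological right completeness. For the first factor, colimits along $\omega$ of $\textbf{AdMon}$-diagrams in $\mathpzc{E}$ are exact and so commute with homology in each fixed degree; applying this to the honest $\textbf{AdMon}$-diagram $(P^{n}_{\bullet})_{n}$ and — after restricting, in a given homological degree, to a cofinal tail of $\omega$ on which the relevant three-term sub-diagrams are genuinely $\textbf{AdMon}$-diagrams — to $(T_{n}X_{\bullet})_{n}$, one obtains $H_{m}(\colim_{n}f^{n})\cong\colim_{n}H_{m}(f^{n})$, which is an isomorphism because each $f^{n}$ is a quasi-isomorphism. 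Hence $\colim_{n}f^{n}$, and therefore $f$, is a quasi-isomorphism. For admissible epimorphicity, fix a degree $m$ and pick $n>-m$; then $(f^{n})_{m}$ is an admissible epimorphism and $(\psi_{n})_{m}$ is an admissible epimorphism by condition (4) of a left truncation system, so the composite $P^{n}_{m}\rightarrow X_{m}$ is an admissible epimorphism, and since it factors through $P_{m}=(\colim_{n}P^{n}_{\bullet})_{m}$ and a left factor of an admissible epimorphism is again an admissible epimorphism, $P_{m}\rightarrow X_{m}$ is an admissible epimorphism.

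The step I expect to be the main obstacle is making precise the interchange of homology with the colimits $\colim_{n}T_{n}X_{\bullet}$ (and, in an alternative formulation, with the cones $\textrm{cone}(f^{n})$): the axioms for an admissible left truncation system only guarantee that the maps $T_{n}\rightarrow T_{n+1}$ are admissible monomorphisms in degrees $>-n$, so these systems are not $\textbf{AdMon}$-diagrams of complexes on the nose, and one must pass — degree by degree, hence along cofinal tails of $\omega$ that depend on the degree — to sub-systems where weak $(\omega,\textbf{AdMon})$-elementariness applies and yields the required exactness. Modulo this bookkeeping the argument is the straightforward adaptation of the proof of Corollary~2.64 in \cite{kelly2016homotopy}, which itself follows \cite{Spaltenstein} Theorem~3.4, as indicated before the statement.
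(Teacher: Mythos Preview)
Your overall strategy matches the paper's: apply Theorem~\ref{prop:DK-equivspecial}, form $P_{\bullet}=\colim_{n}P^{n}_{\bullet}$, and verify that the induced map $P_{\bullet}\rightarrow X_{\bullet}$ is a quasi-isomorphism and a degree-wise admissible epimorphism. Your argument for the latter (factoring an admissible epimorphism through $P_{m}$ and invoking the obscure axiom) is correct and essentially what the paper does.

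The difference is in the quasi-isomorphism step, and here your argument has a genuine gap. You phrase it as ``$H_{m}(\colim_{n}f^{n})\cong\colim_{n}H_{m}(f^{n})$'', but $H_{m}$ is not intrinsically defined in an exact category; making this precise requires passing to an $\omega$-right abelianization. Even there, the systems $(H_{m}(P^{n}))_{n}$ and $(H_{m}(T_{n}X))_{n}$ need not be $\textbf{AdMon}$-diagrams, so weak $(\omega,\textbf{AdMon})$-elementariness does not directly give you the colimit--homology interchange you want.

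The paper avoids this entirely by working with the kernels $K^{n}=\ker(f^{n})$. Each $K^{n}$ is acyclic (because $f^{n}$ is a quasi-isomorphism and a degree-wise admissible epimorphism), and for fixed $m$ with $m>-n$ the obscure axiom forces $K^{n}_{m}\rightarrow K^{n+1}_{m}$ to be an admissible monomorphism. One then takes the colimit of the short exact sequences $0\rightarrow K^{n}_{m}\rightarrow P^{n}_{m}\rightarrow (T_{n}X)_{m}\rightarrow 0$ along this cofinal tail; by exactness of $\colim$ on $\textbf{AdMon}$-systems this identifies $K=\colim_{n}K^{n}$ as the kernel of $P\rightarrow\colim_{n}T_{n}X$. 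Since $K$ is the colimit of an $\textbf{AdMon}$-system of acyclic complexes, it is itself acyclic (one checks that the cycle objects $Z_{m}(K^{n})$ also form $\textbf{AdMon}$-systems), so $P\rightarrow\colim_{n}T_{n}X$ is a quasi-isomorphism. Composing with the weak equivalence $\colim_{n}\psi_{n}$ finishes. This keeps the whole argument inside $\mathpzc{E}$ and replaces your three-term bookkeeping with a single application of the obscure axiom to the kernel column.
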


\begin{proof}
Fix a $(\mathcal{B},T,\psi)$-very special direct system $(P_{\bullet}^{n})_{n\ge-1}$ and a direct system of chain maps $f^{n}:P^{n}_{\bullet}\rightarrow T_{n}X_{\bullet}$ such that 
\begin{enumerate}
\item
$f^{n}$ is a quasi-isomorphism for every $n\ge0$.
\item
$f^{n}$ is an admissible epimorphism in each degree.
\end{enumerate}
Let $P_{\bullet}$ be the direct limit of the special direct system. 
Denote the kernel of the map $f^{n}$ by $K^{n}$. Then $K^{n}\rightarrow K^{n+1}$ is an admissible mono. Now we have an admissible epimorphism $f:P\rightarrow\colim_{\omega}T_{n}X$. We claim that its kernel is $K\defeq lim_{\rightarrow}K^{n}$. Fix $m\in\mathbb{Z}$. For $m>-n$ we have a diagram of exact sequences
\begin{displaymath}
\xymatrix{
0\ar[r] & K^{n}_{m}\ar[r]\ar[d] & P^{n}_{m}\ar[r]\ar[d] & (T_{n}X)_{m}\ar[r]\ar[d] & 0\\
0\ar[r] & K^{n+1}_{m}\ar[r]\ar[d] & P^{n+1}_{m}\ar[r]\ar[d] & (T_{n+1}X)_{m}\ar[r]\ar[d] & 0\\
 & \vdots & \vdots & \vdots
}
\end{displaymath}
All the vertical morphisms are admissible monomorphisms. Therefore the direct limit of this diagram is an exact sequence. But the colimit is 
$$0\rightarrow K_{m}\rightarrow P_{m}\rightarrow \colim_{\omega}(T_{n}X)_{m}\rightarrow 0$$
Thus the sequence
$$0\rightarrow K\rightarrow P\rightarrow \colim_{\omega}T_{n}X\rightarrow 0$$ 
is exact. Now each map $K_{n}\rightarrow K_{n+1}$ is an admissible monomorphism. Since $\mathpzc{E}$ is weakly $(\textbf{AdMon},\omega)$-elementary and each $K_{n}$ is exact, $K$ is also exact. Thus the composite $P\rightarrow  \colim_{\omega}T_{n}X\rightarrow X$ is an admissible epimorphism and a weak equivalence. 

\end{proof}

Let $\mathpzc{E}$ be a weakly $(\omega,\textbf{AdMon})$-elementary exact category equipped with an admissible, homologically right complete left truncation system $(T_{n},\psi_{n})$. Let $i_{\mathcal{B}}:\mathcal{B}\rightarrow Ch_{+}(\mathpzc{E})$ be a homological left deformation of $Ch_{+}(\mathpzc{E})$.  Let $\overline{T}^{\mathcal{B}}_{n}(X)$ be the functor which sends $X$ to $\overline{X}^{n}_{\bullet}$, where $(\overline{X}^{n}_{\bullet})$ is a $(\mathcal{B},T,\psi)$-very special resolution of $X$. There are by construction compatible natural transformations $\overline{\psi}^{\mathcal{B}}_{n}:\overline{X}^{n}_{\bullet}\rightarrow X$, which are admissible epimorphisms in degree $>-n$. This is an admissible homologically right complete truncation system on $\mathpzc{E}$. Note that it is equivalent to the truncation system $(T_{n},\psi_{n})$.

In particular, let $\mathpzc{E}$ be an exact category with an admissible, homologically right complete left truncation system $(T_{n},\psi_{n})$, and let $\mathpzc{D}\subset\mathpzc{E}$ be an exact full subcategory such that the inclusion reflects admissible epimorphisms and acyclicity of complexes. Suppose there is a subcategory $\mathcal{B}\subset Ch_{+}(\mathpzc{D})\subset Ch_{+}(\mathpzc{E})$ which is a homological left deformation of $Ch_{+}(\mathpzc{E})$, and consider the truncation system $(\overline{T}^{\mathcal{B}}_{n},\overline{\psi}^{\mathcal{B}}_{n})$. This restricts to an admissible left truncation system on $\mathpzc{D}$. Moreover if the inclusion $\mathpzc{D}\rightarrow\mathpzc{E}$ commutes with countable coproducts, then the restriction is a homologically right complete truncation system on $\mathpzc{D}$. 

\begin{example}
Let $\mathpzc{D}$ be an exact category with a projectively generating subcategory $\mathcal{P}$ which has weak kernels. Suppose that objects of $\mathcal{P}$ can be chosen such that for $P\in\mathcal{P}$ the functor $Hom(P,-)$ commutes with countable coproducts. Then the inclusion $\mathpzc{E}\rightarrow\mathpzc{A}_{r}(\mathpzc{E})$, where $\mathpzc{A}_{r}(\mathpzc{E})$ is the right abelian envelope of \cite{bodzenta2020abelian} commutes with countable coproducts. Thus we get an admissible, homologically right complete truncation system  $(\overline{T}^{\mathcal{P}_{n}},\overline{\psi}^{\mathcal{P}_{n}})$ on $\mathpzc{E}$.
\end{example}

%
%

%

\begin{thm}\label{lem:unboundeddeform}
Let $F:\mathpzc{D}\rightarrow\mathpzc{E}$ be a functor of exact categories, where both $\mathpzc{D}$ and $\mathpzc{E}$ are weakly $(\omega,\textbf{AdMon})$-elementary, and $\mathpzc{D}$ has a (homologically right complete, admissible) left truncation systems $(T_{n},\psi_{n})$. Suppose that $F$ commutes with countable coproducts. Let $\mathcal{B}$ be a subcategory of $Ch_{+}(\mathpzc{D})$ which is functorially left-adapted to $F$. Then $F:Ch(\mathpzc{D})\rightarrow Ch(\mathpzc{E})$ is left deformable.
\end{thm}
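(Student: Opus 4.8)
The plan is to construct an explicit left deformation $(\overline{Q}, \overline{\eta})$ on $Ch(\mathpzc{D})$ adapted to $F$ by assembling the bounded-below pieces $Q_+$ into an unbounded functor via the very-special resolutions of the previous subsection. First I would invoke Theorem \ref{prop:DK-equivspecial} and Corollary \ref{Bres}: since $\mathcal{B} = Ch_+(\mathcal{S})$ is functorially left-adapted to $F$, it carries a left deformation functor $Q_+$ whose counit is a degree-wise admissible epimorphism, and since $\mathpzc{D}$ is weakly $(\omega, \textbf{AdMon})$-elementary and equipped with the admissible, homologically right complete truncation system $(T_n, \psi_n)$, every $X_\bullet \in Ch(\mathpzc{D})$ receives a $\varinjlim^{vs}(\mathcal{B}, T, \psi)$-resolution $\overline{Q}(X_\bullet) = \colim_n P^n_\bullet \to X_\bullet$ that is an admissible epimorphism in each degree and a quasi-isomorphism; by the functoriality remark at the end of the proof of Theorem \ref{prop:DK-equivspecial}, this can be done functorially in $X_\bullet$. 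Let $\mathcal{C} \defeq \varinjlim^{vs}(\mathcal{B}, T, \psi) \subset Ch(\mathpzc{D})$, which is degree-wise split extension closed by its very definition, and set $\overline{\eta}$ to be the resulting natural weak equivalence. This exhibits $\mathcal{C}$ as a homological left deformation of $Ch(\mathpzc{D})$.

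The main work is then to show that $F$ preserves quasi-isomorphisms between objects of $\mathcal{C}$, i.e. that $\mathcal{C}$ is a homological left deformation \emph{for} $F$ and not merely of $Ch(\mathpzc{D})$. By the standard cone argument (as in the proof following Definition \ref{defn:F-proj}) it suffices to show that $F$ sends an acyclic complex $P_\bullet \in \mathcal{C}$ to an acyclic complex. Write $P_\bullet = \colim_n P^n_\bullet$ for a very-special system: $P^0_\bullet \in \mathcal{B}$, each $P^{n-1}_\bullet \to P^n_\bullet$ is a degree-wise split monomorphism with cokernel $C^n_\bullet \in \mathcal{B}$, and each $T_n P^n_\bullet \to T_n P^{n+1}_\bullet$ is a quasi-isomorphism. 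Because $F$ is additive it commutes with degree-wise split short exact sequences, so $0 \to F(P^{n-1}_\bullet) \to F(P^n_\bullet) \to F(C^n_\bullet) \to 0$ is degree-wise split exact in $Ch(\mathpzc{E})$, and since $F$ commutes with countable coproducts (hence with the filtered colimit over $\omega$, which is computed degree-wise as a sequential colimit of split monomorphisms) we get $F(P_\bullet) = \colim_n F(P^n_\bullet)$. Now $\mathpzc{E}$ being weakly $(\omega, \textbf{AdMon})$-elementary, the colimit of the tower $F(P^0_\bullet) \hookrightarrow F(P^1_\bullet) \hookrightarrow \cdots$ of degree-wise admissible monomorphisms is exact provided each $F(P^n_\bullet)$ is acyclic; so the problem reduces to showing each $F(P^n_\bullet)$ is acyclic.

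For this last reduction I would argue by induction on $n$. The complex $P^n_\bullet$ is concentrated in degrees $\ge n$ and the truncation condition (3) together with homological right completeness forces $P^n_\bullet$ to be quasi-isomorphic (via $\psi_n$-type maps) to $\tau^{L}_{\ge -n}$ of the acyclic complex $P_\bullet$, hence $P^n_\bullet$ is itself acyclic — more precisely, acyclicity of $P_\bullet = \colim_n P^n_\bullet$ together with the fact that $T_m P^n_\bullet \to T_m P_\bullet$ is a quasi-isomorphism for $m \le n$ shows $P^n_\bullet$ is acyclic. Since $P^0_\bullet \in \mathcal{B} = Ch_+(\mathcal{S})$ and $\mathcal{S}$ is left-adapted to $F$ (condition (3) of Definition \ref{defn:F-proj}), $F$ sends the acyclic $P^0_\bullet$ to an acyclic complex; and the inductive step uses the degree-wise split exact sequence $0 \to P^{n-1}_\bullet \to P^n_\bullet \to C^n_\bullet \to 0$ with $C^n_\bullet \in Ch_+(\mathcal{S})$ acyclic (being a cokernel of a quasi-isomorphism between acyclic complexes, or directly from the long exact sequence), so $F(C^n_\bullet)$ is acyclic, $F(P^{n-1}_\bullet)$ is acyclic by induction, and the split sequence $0 \to F(P^{n-1}_\bullet) \to F(P^n_\bullet) \to F(C^n_\bullet) \to 0$ forces $F(P^n_\bullet)$ acyclic via the associated long exact homology sequence in an abelianization. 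Finally, applying Theorem \ref{lem:unboundeddeform}'s hypotheses to package $(\mathcal{C}, \overline{Q}, \overline{\eta})$, we conclude $F$ preserves weak equivalences between objects in the essential image of $\overline{Q}$, so $(\overline{Q}, \overline{\eta})$ is a left deformation for $F$ and $F : Ch(\mathpzc{D}) \to Ch(\mathpzc{E})$ is left deformable.

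The step I expect to be the main obstacle is controlling the interaction between $F$ and the \emph{infinite} colimit $\colim_n P^n_\bullet$: one needs $F$ to commute with this colimit (which is where "commutes with countable coproducts" enters, via the presentation of a sequential colimit of split monos as a coproduct-with-differential), and one needs the colimit in $\mathpzc{E}$ of the resulting tower of degree-wise admissible monomorphisms between acyclic complexes to remain acyclic (which is exactly weak $(\omega, \textbf{AdMon})$-elementarity of $\mathpzc{E}$ applied degree-wise, together with the fact established in Corollary \ref{Bres} that such colimits of acyclic complexes along admissible monos are acyclic). Keeping the bookkeeping of which complexes land in $\mathcal{B} = Ch_+(\mathcal{S})$ versus merely in $Ch_+(\mathpzc{D})$, and ensuring all constructions are functorial so that $\overline{Q}$ is genuinely a functor, is the remaining technical burden, but it is routine given the functoriality already noted in the proofs of Theorem \ref{prop:DK-equivspecial} and Corollary \ref{Bres}.
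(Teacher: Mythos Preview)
Your argument has a genuine gap at the cone reduction step. You assert that $\mathcal{C} = \varinjlim^{vs}(\mathcal{B},T,\psi)$ is degree-wise split extension closed ``by its very definition,'' and then invoke the standard cone argument to reduce to showing $F$ kills acyclic objects of $\mathcal{C}$. But this closure property is not part of the definition and is in fact the crux of the matter. Given a quasi-isomorphism $f:P_\bullet\to Q_\bullet$ between very-special objects with presentations $P=\colim P^n$ and $Q=\colim Q^n$, the map $f$ need not arise from a compatible system $f^n:P^n\to Q^n$; in each degree $m$ one has $P_m\cong\bigoplus_n C^n_{P,m}$ and $Q_m\cong\bigoplus_n C^n_{Q,m}$, and a general map between such coproducts has no reason to respect the filtrations by partial sums. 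Consequently $\mathrm{cone}(f)$ carries no evident very-special presentation, and your argument that acyclic very-special objects map to acyclic complexes under $F$ (which is itself correct) does not apply to $\mathrm{cone}(f)$.

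The paper circumvents this obstruction entirely: it never reduces to acyclic complexes. Instead it works directly with the quasi-isomorphism $f:P_\bullet\to Q_\bullet$ and exploits the fact that $P^n\to T_nP_\bullet$ and $Q^n\to T_nQ_\bullet$ are equivalences, so that $T_n f$ induces isomorphisms $P^n\cong Q^n$ in $\Ho(Ch_+(\mathpzc{D}))$, hence $F(P^n)\cong F(Q^n)$ in $\Ho(Ch_+(\mathpzc{E}))$ (here one uses that $P^n,Q^n\in\mathcal{B}$ so that $\mathbb{L}F\simeq F$ on them). Using the calculus of fractions one finds roofs $R^n\to P^n$, $R^n\to Q^n$ with $R^n\in\mathcal{B}$, showing these homotopy-category isomorphisms are compatible with the maps $F(P^n)\to F(P_\bullet)$ and $F(Q^n)\to F(Q_\bullet)$. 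One then appeals to the dual of Corollary~\ref{cor:limithomequiv} --- which was set up precisely for this purpose, via passage to an $\omega$-right abelianization --- to conclude that $F(f):\colim F(P^n)\to\colim F(Q^n)$ is an equivalence. Your intuition that the problem reduces to controlling the interaction of $F$ with the colimit is correct, but the control is achieved through this homotopy-coherent comparison, not through a cone argument.
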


\begin{proof}
By Proposition \ref{prop:DK-equivspecial} it will suffice to prove that $F$ preserves weak equivalences between objects in $\colim_{\rightarrow}^{vs}(\mathcal{B},T,\psi)$. The proof is in the spirit of \cite{stacks-project} Chapter 13, Proposition 29.2. 
 Let $P_{\bullet}=\colim_{n}P_{\bullet}^{n}$ and $Q_{\bullet}=\colim_{n}Q_{\bullet}^{n}$ be $(\mathcal{B},T,\psi)$-very special objects, and let $f:P_{\bullet}\rightarrow Q_{\bullet}$ be a weak equivalence. Then $P_{\bullet}^{n}\rightarrow T_{n}P_{\bullet}$ and $Q_{\bullet}^{n}\rightarrow T_{n}Q_{\bullet}$ are equivalences. We thus get a commutative diagram in $\Ho(Ch_{+}(\mathpzc{D}))$
 \begin{displaymath}
 \xymatrix{
 P^{n}_{\bullet}\ar[d]\ar[r] & Q^{n}_{\bullet}\ar[d]\\
T_{n}P_{\bullet}\ar[r] & T_{n} Q_{\bullet}\
 }
 \end{displaymath}
 Since $\mathbb{L}F(P_{\bullet}^{n})\rightarrow F(P_{\bullet}^{n})$ and $\mathbb{L}F(Q_{\bullet}^{n})\rightarrow F(Q_{\bullet}^{n})$ are equivalences, we get an isomorphism $F(P_{\bullet}^{n})\rightarrow F(Q_{\bullet}^{n})$ in $\Ho(Ch_{+}(\mathpzc{E}))$. Moreover the diagram below is commutative in $\Ho(Ch_{+}(\mathpzc{E})$.
 \begin{displaymath}
 \xymatrix{
 F(P^{n}_{\bullet})\ar[d]\ar[r] & F(Q^{n}_{\bullet})\ar[d]\\
  F(P^{n+1}_{\bullet})\ar[r] & F(Q^{n+1}_{\bullet})
 }
 \end{displaymath}
Quasi-isomorphisms in $\mathcal{K}(\mathpzc{D})$ form a multiplicative system. Thus there is a commutative diagram in $\mathcal{K}(\mathpzc{D})$
 \begin{displaymath}
 \xymatrix{
 & R_{\bullet}^{n} \ar[dl]\ar[dr]&\\
 P^{n}_{\bullet}\ar[d] & & Q^{n}_{\bullet}\ar[d]\\
 P_{\bullet}\ar[rr] & & Q_{\bullet}
 }
 \end{displaymath}
 where both the maps are quasi-isomorphisms, and $R_{\bullet}^{n}$ may be assumed to be in $\mathcal{B}$ and be concentrated in degrees $\ge -n$. We then get a commutative diagram in $\mathcal{K}(\mathpzc{E})$
  \begin{displaymath}
 \xymatrix{
 & F(R_{\bullet}^{n})\ar[dl]\ar[dr] &\\
 F(P^{n}_{\bullet})\ar[d] & & F(Q^{n}_{\bullet})\ar[d]\\
 F(P_{\bullet})\ar[rr] && F(Q_{\bullet})
 }
 \end{displaymath}
 where the diagonal maps are quasi-isomorphisms. We therefore get commutative diagrams in $\Ho(Ch(\mathpzc{E})))$
   \begin{displaymath}
 \xymatrix{
 F(P^{n}_{\bullet})\ar[d]\ar[rr] & & F(Q^{n}_{\bullet})\ar[d]\\
 F(P_{\bullet})\ar[rr] && F(Q_{\bullet})
 }
 \end{displaymath}
 Moreover  the diagrams $F(P^{n}_{\bullet})$ and $F(Q^{n}_{\bullet})$ are in $\mathpzc{Fun}_{\textbf{AdMon}}(\omega^{op},Ch(\mathpzc{E}))$, and their colimits are $F(P_{\bullet})$ and $F(Q_{\bullet})$ respectively. Thus by the dual of Corollary \ref{cor:limithomequiv}, $F(P_{\bullet})\rightarrow F(Q_{\bullet})$ is an equivalence.

\end{proof}

\begin{rem}\label{rem:weakerres}
In the proof of the theorem, if $F$ commutes with $(\omega,\textbf{AdMon})$-colimits, then we can relax the assumptions on the very-special resolutions. Namely, we can allow resolutions of the form $\colim_{\omega}P_{\bullet}^{n}$ where each $P_{\bullet}^{n}$ is in $\mathcal{B}$, and $P_{\bullet}^{n}\rightarrow P_{\bullet}^{n+1}$ is an admissible (not necessarily degree-wise split) monomorphism such that $F(P_{\bullet}^{n})\rightarrow F(P_{\bullet}^{n+1})$ is an admissible monomorphism. 
\end{rem}

Dually one defines (admissible, homologically left complete) right truncation systems on $\mathpzc{D}$, homological right deformations of $Ch(\mathpzc{D})$ and of functors $F:Ch(\mathpzc{D})\rightarrow Ch(\mathpzc{E})$, and $\mathcal{B}$-co-very special systems. Completely formally, we get the following.

\begin{thm}\label{lem:counboundeddeform}
Let $F:\mathpzc{D}\rightarrow\mathpzc{E}$ be a functor of exact categories, where both $\mathpzc{D}$ and $\mathpzc{E}$ are weakly $(\omega,\textbf{AdEpi})$-coelementary, and $\mathpzc{D}$ has an admissible homologically left complete right truncation system. Suppose that $F$ commutes with countable products. Let $\mathcal{B}$ be a subcategory of $Ch_{-}(\mathpzc{D})$ which is functorially right-adapted to $F$. Then $F:Ch(\mathpzc{D})\rightarrow Ch(\mathpzc{E})$ is right deformable.
\end{thm}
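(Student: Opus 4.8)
The plan is to derive this from Theorem \ref{lem:unboundeddeform} purely formally, by passing to opposite categories. The statement is precisely the dual of Theorem \ref{lem:unboundeddeform}: a functor $F\colon\mathpzc{D}\to\mathpzc{E}$ of weakly $(\omega,\textbf{AdEpi})$-coelementary exact categories with an admissible, homologically left complete right truncation system on $\mathpzc{D}$, commuting with countable products, and admitting a subcategory $\mathcal{B}\subset Ch_{-}(\mathpzc{D})$ functorially right-adapted to $F$, should be right deformable.

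First I would set up the dualization dictionary carefully. If $\mathpzc{D}$ is an exact category, then $\mathpzc{D}^{op}$ is canonically an exact category in which admissible monomorphisms and admissible epimorphisms are interchanged; weak idempotent completeness is self-dual. Under $Ch(\mathpzc{D})\cong Ch(\mathpzc{D}^{op})^{op}$ (with a degree flip $X_n\mapsto X_{-n}$), bounded-above complexes correspond to bounded-below complexes, quasi-isomorphisms correspond to quasi-isomorphisms, $Ch_{-}(\mathpzc{D})\simeq Ch_{+}(\mathpzc{D}^{op})^{op}$, left truncation systems correspond to right truncation systems with ``admissible'' and ``homologically right complete'' going to ``admissible'' and ``homologically left complete'', and a subcategory functorially left-adapted to $F^{op}$ corresponds to one functorially right-adapted to $F$. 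Finally, $\mathpzc{D}$ being weakly $(\omega,\textbf{AdEpi})$-coelementary means exactly that $\mathpzc{D}^{op}$ is weakly $(\omega,\textbf{AdMon})$-elementary, since $\lim$ over $\omega^{op}$ in $\mathpzc{D}$ is $\colim$ over $\omega$ in $\mathpzc{D}^{op}$, and exactness is preserved under the duality. Likewise, $F$ commuting with countable products is the same as $F^{op}\colon\mathpzc{D}^{op}\to\mathpzc{E}^{op}$ commuting with countable coproducts.

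Then I would invoke Theorem \ref{lem:unboundeddeform} applied to $F^{op}\colon\mathpzc{D}^{op}\to\mathpzc{E}^{op}$ with the transported data: it yields that $F^{op}\colon Ch(\mathpzc{D}^{op})\to Ch(\mathpzc{E}^{op})$ is left deformable, i.e.\ there is a left deformation $(\overline{Q},\overline{\eta})$ for it. Transporting back along the duality $Ch(\mathpzc{D})\simeq Ch(\mathpzc{D}^{op})^{op}$, the left deformation becomes a right deformation $(Q,\eta)$ for $F\colon Ch(\mathpzc{D})\to Ch(\mathpzc{E})$ --- a natural weak equivalence $Id_{Ch(\mathpzc{D})}\to Q$ that is a degree-wise admissible monomorphism into a subcategory on which $F$ preserves weak equivalences --- which is exactly the assertion that $F$ is right deformable.

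I do not expect any serious obstacle here; the content of the proposition has already been done in Theorem \ref{lem:unboundeddeform}, and the only thing to be careful about is bookkeeping the duality, in particular making sure that all the auxiliary notions introduced in Section 3 (left/right truncation systems, (co-)very-special systems, homological left/right deformations, functorially left/right-adapted subcategories) were set up so as to be exact formal duals of one another --- which the excerpt states explicitly each time with the phrase ``dually one defines''. So the proof will simply read:

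\begin{proof}
Apply Theorem \ref{lem:unboundeddeform} to the functor $F^{op}\colon\mathpzc{D}^{op}\to\mathpzc{E}^{op}$. The hypotheses translate as follows: $\mathpzc{D}^{op}$ and $\mathpzc{E}^{op}$ are weakly idempotent complete and weakly $(\omega,\textbf{AdMon})$-elementary (since $\lim_{\omega^{op}}$ in $\mathpzc{D},\mathpzc{E}$ is $\colim_{\omega}$ in $\mathpzc{D}^{op},\mathpzc{E}^{op}$ and exactness is self-dual); the admissible homologically left complete right truncation system on $\mathpzc{D}$ becomes an admissible homologically right complete left truncation system on $\mathpzc{D}^{op}$; $F^{op}$ commutes with countable coproducts because $F$ commutes with countable products; and $\mathcal{B}^{op}\subset Ch_{+}(\mathpzc{D}^{op})$ is functorially left-adapted to $F^{op}$. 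Hence $F^{op}\colon Ch(\mathpzc{D}^{op})\to Ch(\mathpzc{E}^{op})$ is left deformable. Dualizing the resulting left deformation along the equivalence $Ch(\mathpzc{D})\simeq Ch(\mathpzc{D}^{op})^{op}$ (degree reversal) produces a right deformation of $F\colon Ch(\mathpzc{D})\to Ch(\mathpzc{E})$, so $F$ is right deformable.
\end{proof}
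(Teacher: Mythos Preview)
Your proposal is correct and is precisely the approach the paper takes: the theorem is introduced with the phrase ``Completely formally, we get the following'', i.e.\ it is stated as the formal dual of Theorem~\ref{lem:unboundeddeform} with no further argument. Your careful bookkeeping of the duality dictionary is more explicit than what the paper provides, but the content is the same.
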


\section{Derived Colimits (Limits) and Ind- (Pro-) Categories}

\subsection{Deriving Limits and Colimits}

Let $\mathpzc{E}$ be a complete exact category and let $\mathcal{I}$ be a filtered category. The category of functors
$$\mathpzc{Fun}(\mathcal{I}^{op},\mathpzc{E})$$
can be given the structure of an exact category, whereby a sequence of functors
$$0\rightarrow F\rightarrow G\rightarrow H\rightarrow 0$$
is exact precisely if for each $i\in\mathcal{I}$ 
$$0\rightarrow F(i)\rightarrow G(i)\rightarrow H(i)\rightarrow 0$$
is exact in $\mathpzc{E}$. 
 Consider the functor
$$\lim_{\mathcal{I}}:Ch_{-}(\mathpzc{Fun}(\mathcal{I}^{op},\mathpzc{E}))\rightarrow Ch_{-}(\mathpzc{E})$$
It turns out that if $\mathpzc{E}$ has exact products then this functor is deformable. 

Given a functor $X:\mathcal{I}^{op}\rightarrow\mathpzc{E}$, with $\mathcal{I}$ a filtered category, one can define the \textbf{Roos complex} of $X$, denoted $\mathpzc{R}(X):\mathcal{I}^{op}\rightarrow Ch_{\le0}(\mathpzc{E})$. For $i\in\mathcal{I}$,
$$\mathpzc{R}_{n}(X)(i)\defeq\prod_{i_{0}\rightarrow\ldots\rightarrow i_{n}\rightarrow i}X(i_{0})$$
The differentials are given in \cite{prosmans1999derived} Remark 3.2.6, and it is shown there that
\begin{enumerate}
\item
this construction is functorial in $X$.
\item
there is a natural weak equivalence and object-wise admissible monomorphism $X\rightarrow\mathpzc{R}(X)$
\end{enumerate}

\begin{defn}[\cite{prosmans1999derived} Definition 3.3.1]
A functor $X:\mathcal{I}^{op}\rightarrow\mathpzc{E}$ is said to be \textbf{Roos-acyclic} if the map $\lim_{\mathcal{I}^{op}}X\rightarrow\lim_{\mathcal{I}^{op}}\mathpzc{R}(X)$ is an equivalence.
\end{defn}

Prosmans proves the following for quasi-abelian categories, but the proof also works for exact categories.

\begin{prop}[\cite{prosmans1999derived} Proposition 3.3.3 ]
Let $\mathpzc{E}$ be a complete exact category with exact products. The functor $\lim_{\mathcal{I}}:\mathpzc{Fun}(\mathcal{I}^{op},\mathpzc{E})\rightarrow\mathpzc{E}$ is explicitly right-derivable by Roos-acyclic objects. 
\end{prop}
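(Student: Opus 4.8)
The plan is to mimic Prosmans' argument from \cite{prosmans1999derived} Proposition 3.3.3, checking at each step that the quasi-abelian hypotheses are only used in ways valid in a general complete exact category with exact products. First I would verify that the full subcategory $\mathcal{R}\subset\mathpzc{Fun}(\mathcal{I}^{op},\mathpzc{E})$ of Roos-acyclic functors is a \textbf{right deformation} for $\lim_{\mathcal{I}^{op}}$ at the level of $Ch_{-}$: this requires (a) that $\mathcal{R}$ is closed under the relevant operations and contains ``enough'' objects, and (b) that $\lim_{\mathcal{I}^{op}}$ preserves weak equivalences between complexes built out of Roos-acyclic terms. For (a), the key input is the natural object-wise admissible monomorphism $X\rightarrow\mathpzc{R}(X)$ together with the fact that each term $\mathpzc{R}_{n}(X)(i)=\prod_{i_{0}\to\cdots\to i_{n}\to i}X(i_{0})$ is a product, hence (using exactness of products in $\mathpzc{E}$) the terms $\mathpzc{R}_{n}(X)$ are themselves Roos-acyclic — this is precisely where ``exact products'' enters, ensuring that $\lim_{\mathcal{I}^{op}}$ commutes with the products appearing in the Roos complex and that the double complex computing the comparison is exact.

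Next I would promote this to chain complexes: given $X_{\bullet}\in Ch_{-}(\mathpzc{Fun}(\mathcal{I}^{op},\mathpzc{E}))$, apply $\mathpzc{R}$ degree-wise to obtain a first-quadrant-type double complex $\mathpzc{R}_{\bullet}(X_{\bullet})$, and take the (product) totalization $\mathpzc{R}(X_{\bullet})\defeq\Tot^{\Pi}\mathpzc{R}_{\bullet}(X_{\bullet})$. Because we are in $Ch_{-}$ the totalization in each degree is a \emph{finite} product, so no convergence issues arise and the functor is everywhere defined. The natural map $X_{\bullet}\to\mathpzc{R}(X_{\bullet})$ is a degree-wise admissible monomorphism and a quasi-isomorphism (the latter by the usual spectral-sequence / filtration argument for a totalization of a complex with exact columns, again legitimate here since finite products are exact in any additive category and $\mathpzc{E}$ has exact products for the infinite ones inside each $\mathpzc{R}_n$). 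This exhibits $\mathpzc{R}(-)$ as the endofunctor $Q_{-}$ of the right deformation, with essential image landing in complexes whose terms are Roos-acyclic functors.

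Then I would check the second deformation axiom: $\lim_{\mathcal{I}^{op}}$ sends quasi-isomorphisms between complexes of Roos-acyclic functors to quasi-isomorphisms. For this, take a quasi-isomorphism $f:X_{\bullet}\to Y_{\bullet}$ with all terms Roos-acyclic; its cone is an acyclic complex of Roos-acyclic functors, so it suffices to show $\lim_{\mathcal{I}^{op}}$ of an acyclic complex of Roos-acyclic functors in $Ch_{-}$ is acyclic. Resolving each term $Z_{\bullet}$ via $Z_\bullet\to\mathpzc{R}(Z_\bullet)$ and using that $\lim_{\mathcal{I}^{op}}\mathpzc{R}(Z_\bullet)$ is computed by products (which are exact), the claim reduces to acyclicity being preserved by $\lim_{\mathcal{I}^{op}}$ on the subcategory of functors of the form $i\mapsto\prod X(i_0)$ — and there the limit is literally a product, hence exact. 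Combining, $(\mathpzc{R},X_\bullet\to\mathpzc{R}(X_\bullet))$ is a right deformation, so $\mathbb{R}\lim_{\mathcal{I}^{op}}$ exists and is computed by $X_\bullet\mapsto\lim_{\mathcal{I}^{op}}\mathpzc{R}(X_\bullet)$, i.e. by Roos-acyclic objects, which is the assertion.

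The main obstacle I anticipate is purely bookkeeping rather than conceptual: verifying that the homological-algebra lemmas Prosmans invokes for quasi-abelian categories (acyclicity of totalizations, behaviour of the canonical filtration, the $2$-out-of-$3$ manipulations with cones) all have proofs valid in a general exact category. Most of these are available in \cite{Buehler}, but one must be careful that the infinite products occurring inside the Roos complex interact well with exactness — this is exactly the role of the ``exact products'' hypothesis, and it is the one place where a naive translation from the quasi-abelian setting could fail. Everything else (filteredness of $\mathcal{I}$, functoriality of $\mathpzc{R}$, the map $X\to\mathpzc{R}(X)$) is quoted directly from \cite{prosmans1999derived} and needs no modification.
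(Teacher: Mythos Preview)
The paper gives no proof of this proposition: it simply cites \cite{prosmans1999derived} Proposition 3.3.3 and remarks in the preceding sentence that ``Prosmans proves the following for quasi-abelian categories, but the proof also works for exact categories.'' Your proposal is precisely the verification the paper leaves implicit, and your identification of where the exact-products hypothesis enters (computing $\lim_{\mathcal{I}^{op}}\mathpzc{R}_{n}(X)$ as a product and passing exactness through) matches what Prosmans does; the sketch is sound and more than the paper itself supplies.
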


Dually, if $\mathpzc{E}$ is a cocomplete exact category with exact coproducts, then
$$\mathpzc{Fun}(\mathcal{I},\mathpzc{E})$$
can be given the structure of an exact category, and the functor
$$\colim:Ch_{+}(\mathpzc{Fun}(\mathcal{I},\mathpzc{E}))\rightarrow Ch_{+}(\mathpzc{E})$$
is explicitly left derivable. To resolve a complex, one can use the dual construction of the Roos complex, which we call the coRoos complex, and denote by $\mathpzc{cR}(F)$. 


\begin{lem}
\begin{enumerate}
\item
Let $\mathpzc{E}$ be a weakly $(\omega,\textbf{AdEpi})$-coelementary exact category with projective limits and exact products. Then for any filtered category $\mathcal{I}$,  $\mathpzc{Fun}(\mathcal{I}^{op},\mathpzc{E})$ is weakly $(\omega,\textbf{AdEpi})$-coelementary. In particular the functor
$$\lim:Ch(\mathpzc{Fun}(\mathcal{I}^{op},\mathpzc{E}))\rightarrow Ch(\mathpzc{E})$$
is right-deformable. 
\item
Let $\mathpzc{E}$ be a weakly $(\omega,\textbf{AdMon})$-elementary exact category with filtered colimits and exact coproducts. Then for any filtered category $\mathcal{I}$,  $\mathpzc{Fun}(\mathcal{I},\mathpzc{E})$ is weakly $(\omega,\textbf{AdMon})$-elementary. In particular the functor
$$\colim:Ch(\mathpzc{Fun}(\mathcal{I},\mathpzc{E}))\rightarrow Ch(\mathpzc{E})$$
is left-deformable. 
\end{enumerate}
\end{lem}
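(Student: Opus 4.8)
The two statements are dual, so I will describe the plan for (1); part (2) is obtained by reversing all arrows, replacing projective limits, exact products, $\lim$, $\omega^{op}$ and $\textbf{AdEpi}$ by filtered colimits, exact coproducts, $\colim$, $\omega$ and $\textbf{AdMon}$. There are two things to do: first show that $\mathpzc{Fun}(\mathcal{I}^{op},\mathpzc{E})$ is again weakly $(\omega,\textbf{AdEpi})$-coelementary, and then deduce right-deformability of $\lim$ on unbounded complexes by feeding this into Theorem \ref{lem:counboundeddeform}.

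For the first point: since $\mathpzc{E}$ has all projective limits (in particular kernels) and the exact structure on $\mathpzc{Fun}(\mathcal{I}^{op},\mathpzc{E})$ is the object-wise one, kernels, admissible monomorphisms, admissible epimorphisms, short exact sequences and all small limits in $\mathpzc{Fun}(\mathcal{I}^{op},\mathpzc{E})$ are formed and detected object-wise, and $\mathpzc{Fun}(\mathcal{I}^{op},\mathpzc{E})$ is again weakly idempotent complete because $\mathpzc{E}$ is. Via the exponential law $\mathpzc{Fun}(\omega^{op},\mathpzc{Fun}(\mathcal{I}^{op},\mathpzc{E}))\cong\mathpzc{Fun}(\mathcal{I}^{op},\mathpzc{Fun}(\omega^{op},\mathpzc{E}))$ and the fact that the condition ``transition maps are admissible epimorphisms'' is object-wise, an object of $\mathpzc{Fun}_{\textbf{AdEpi}}(\omega^{op},\mathpzc{Fun}(\mathcal{I}^{op},\mathpzc{E}))$ is the same thing as a functor $\mathcal{I}^{op}\rightarrow\mathpzc{Fun}_{\textbf{AdEpi}}(\omega^{op},\mathpzc{E})$; under this identification $\lim_{\omega^{op}}$ becomes post-composition with the functor $\lim_{\omega^{op}}:\mathpzc{Fun}_{\textbf{AdEpi}}(\omega^{op},\mathpzc{E})\rightarrow\mathpzc{E}$, which exists and is exact because $\mathpzc{E}$ is weakly $(\omega,\textbf{AdEpi})$-coelementary. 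As short exact sequences in $\mathpzc{Fun}(\mathcal{I}^{op},\mathpzc{E})$ are detected object-wise, post-composition with an exact functor is exact, so $\lim_{\omega^{op}}:\mathpzc{Fun}_{\textbf{AdEpi}}(\omega^{op},\mathpzc{Fun}(\mathcal{I}^{op},\mathpzc{E}))\rightarrow\mathpzc{Fun}(\mathcal{I}^{op},\mathpzc{E})$ exists and is exact. This is the first assertion of (1).

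For the ``in particular'', apply Theorem \ref{lem:counboundeddeform} to the additive functor $F=\lim_{\mathcal{I}^{op}}:\mathpzc{Fun}(\mathcal{I}^{op},\mathpzc{E})\rightarrow\mathpzc{E}$. Its hypotheses are checked as follows. Both source and target are weakly $(\omega,\textbf{AdEpi})$-coelementary, by the previous paragraph and by assumption. The functor $F$ commutes with countable products, since products in $\mathpzc{Fun}(\mathcal{I}^{op},\mathpzc{E})$ are object-wise and limits commute with limits; exactness of countable products in $\mathpzc{E}$, hence in $\mathpzc{Fun}(\mathcal{I}^{op},\mathpzc{E})$, is what makes $Ch$ have products and enters the truncation and resolution data below, but it is not needed for this point. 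For the truncation system, when $\mathpzc{E}$ has cokernels (as in the examples of interest) $\mathpzc{Fun}(\mathcal{I}^{op},\mathpzc{E})$ carries the object-wise standard right truncation system $\tau^{R}_{\le n}$, which is admissible and homologically left complete exactly as in the dual of the standard example. Finally, the Roos-acyclic objects form a subcategory of $Ch_{-}(\mathpzc{Fun}(\mathcal{I}^{op},\mathpzc{E}))$ that is functorially right-adapted to $F$: that it is right-adapted is \cite{prosmans1999derived} Proposition~3.3.3, whose proof goes through verbatim for exact categories with exact products, and functoriality is provided by the functoriality of the Roos complex $X\mapsto\mathpzc{R}(X)$ together with the object-wise admissible monomorphism $X\rightarrow\mathpzc{R}(X)$ recalled earlier. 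Theorem \ref{lem:counboundeddeform} then yields right-deformability of $\lim:Ch(\mathpzc{Fun}(\mathcal{I}^{op},\mathpzc{E}))\rightarrow Ch(\mathpzc{E})$; part (2) follows dually, using Theorem \ref{lem:unboundeddeform}, commutation with countable coproducts, the object-wise standard left truncation system, and the coRoos-acyclic objects.

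I expect the only steps requiring genuine care, rather than formal bookkeeping with object-wise (co)limits, to be the last two: confirming that the functor category inherits an admissible, homologically complete truncation system — this is where the (co)completeness of $\mathpzc{E}$, not merely the existence of countable $\omega$-(co)limits, is used, and where a workaround via an abelianization may be needed if $\mathpzc{E}$ lacks the requisite (co)kernels — and upgrading Prosmans' ``explicitly right-derivable by Roos-acyclics'' to a genuinely \emph{functorial} right-adapted class in the exact-category setting. Both are routine given the machinery already developed, but they are the points one would actually write out in detail.
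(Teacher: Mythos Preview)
Your proposal is correct and follows the same approach as the paper. The paper's own proof is a single sentence: ``Both of these claims are easy, since exactness, admissibility of monomorphisms, and limits/colimits are computed termwise.'' Your first paragraph spells out exactly this termwise argument via the exponential law, and your second paragraph makes explicit what the paper leaves implicit in the phrase ``in particular'': namely, that the Roos-acyclic class set up immediately before the lemma, together with the weakly coelementary property just established, feeds into Theorem~\ref{lem:counboundeddeform}. Your caution about the truncation-system hypothesis (needing cokernels for $\tau^{R}_{\le n}$) is well placed; the paper does not comment on it, and your suggested workaround via an abelianization is in keeping with how the paper handles such issues elsewhere.
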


\begin{proof}
Both of these claims are easy, since exactness, admissibility of monomorphisms, and limits/ colimits are computed termwise. 
\end{proof}

\subsection{Deriving Ind- and Pro- Functors}

Let $\mathpzc{E}$ be a small category. Let $\mathpzc{Lex}(\mathpzc{E}^{op},\mathpzc{Ab})$ denote the category of left-exact functors $F:\mathpzc{E}^{op}\rightarrow\mathpzc{Ab}$, i.e. functors that send short exact sequences
$$0\rightarrow X\rightarrow Y\rightarrow Z\rightarrow 0$$
in $\mathpzc{E}$ to right exact sequences
$$0\rightarrow F(X)\rightarrow F(Y)\rightarrow F(Z)$$
is exact in $\mathpzc{Ab}$. This can be given the structure of an abelian category - see for example the remark after Lemma 2.22 in \cite{braunling2014tate}. The Yoneda embedding 
$$y:\mathpzc{E}\rightarrow \mathpzc{Lex}(\mathpzc{E}^{op},\mathpzc{Ab}),\; E\mapsto Hom(-,E)$$
defines a functor which is exact, and reflects exactness. 

\begin{defn}
The category $Ind(\mathpzc{E})$ is the full subcategory of $\mathpzc{Lex}(\mathpzc{E}^{op},\mathpzc{Ab})$  generated by the image of $y:\mathpzc{E}\rightarrow \mathpzc{Lex}(\mathpzc{E}^{op},\mathpzc{Ab})$ under filtered colimits.
\end{defn}

Any object of $Ind(\mathpzc{E})$ may be represented by a functor $\alpha:\mathcal{I}\rightarrow\mathpzc{E}$, where $\mathcal{I}$ is a filtered category. We denote such an object by $``\colim_{\rightarrow_{\mathcal{I}}}"\alpha(i)$, or just $``\colim"\alpha$ when $\mathcal{I}$ is understood. In \cite{braunling2014tate} Section 3 the Braunling and Groechenig show that $Ind^{a}(\mathpzc{E})$ (which we define later) is an extension-closed subcategory of $\mathpzc{Lex}(\mathpzc{E}^{op},\mathpzc{Ab})$, and therefore inherits an exact structure. Mutatis mutandis, their proof also works to give the following:

\begin{prop}
$Ind(\mathpzc{E})$ is an extension closed subcategory of $\mathpzc{Lex}(\mathpzc{E}^{op},\mathpzc{Ab})$. In particular $Ind(\mathpzc{E})$ inherits an exact structure from $\mathpzc{Lex}(\mathpzc{E}^{op},\mathpzc{Ab})$. Moreover, a sequence
\begin{displaymath}
\xymatrix{
0\ar[r] & X\ar[r]^{f} & Y\ar[r]^{g} & Z\ar[r]&0
}
\end{displaymath}
is exact precisely if there is a filtered category $\mathcal{I}$, functors $\alpha,\beta,\gamma:\mathcal{I}\rightarrow\mathpzc{C}$, and natural transformations $\tilde{f}:\alpha\rightarrow\beta,\tilde{g}:\beta\rightarrow\gamma$, such that
\begin{displaymath}
\xymatrix{
0\ar[r] & \alpha\ar[r]^{\tilde{f}} & \beta\ar[r]^{\tilde{g}} & \gamma\ar[r] &0
}
\end{displaymath}
such that $f\cong``\colim"\tilde{f}$, and $g\cong``\colim"\tilde{g}$. 
\end{prop}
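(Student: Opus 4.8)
The plan is to mimic the Braunling--Groechenig argument for $Ind^a(\mathpzc{E})$ verbatim, checking at each step that the restriction to admissibility-of-morphisms is nowhere used. First I would establish that $Ind(\mathpzc{E})$ is closed under extensions in $\mathpzc{Lex}(\mathpzc{E}^{op},\mathpzc{Ab})$. Given a short exact sequence $0\to X\to Y\to Z\to 0$ in the abelian category $\mathpzc{Lex}(\mathpzc{E}^{op},\mathpzc{Ab})$ with $X,Z\in Ind(\mathpzc{E})$, write $X\cong\,``\colim"_{i\in\mathcal{I}}\alpha(i)$ and $Z\cong\,``\colim"_{j\in\mathcal{J}}\gamma(j)$ as filtered colimits of representables. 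The standard move is to replace $\mathcal{I}$ and $\mathcal{J}$ by a common filtered index category: one forms a suitable category of "compatible triples" fibred over $\mathcal{J}$ whose objects record an index $j$, a finite subdiagram of $\alpha$, and a lift of $\gamma(j)$ along $Y\to Z$ over that subdiagram; filteredness of this new category follows from filteredness of $\mathcal{I},\mathcal{J}$ together with the fact that representables are (by construction) "small" enough that maps into a filtered colimit factor through a stage. This is exactly the bookkeeping in \cite{braunling2014tate} Section 3.

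Once $X$ and $Z$ are presented over the same filtered $\mathcal{K}$ as $``\colim"\alpha$ and $``\colim"\gamma$, I would produce a level-wise short exact sequence $0\to\alpha(k)\to\beta(k)\to\gamma(k)\to0$ of functors $\mathcal{K}\to\mathpzc{E}$ with $``\colim"\beta\cong Y$. Here $\beta(k)$ is obtained as the pullback in $\mathpzc{E}$ (equivalently, via the exact structure, the appropriate term fitting into a kernel-cokernel pair) of $Y\to Z\leftarrow\gamma(k)$, using that $\mathpzc{E}$ (being weakly idempotent complete and exact) has the relevant pullbacks of admissible epics; filtered colimits in $\mathpzc{Lex}$ are exact, so $``\colim"\beta\cong Y$ and the sequence of functors is objectwise exact. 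Since $Ind(\mathpzc{E})$ is the colimit-closure of representables and $\beta$ takes values in $\mathpzc{E}$, $``\colim"\beta\in Ind(\mathpzc{E})$, giving extension-closedness; Quillen's extension-closure criterion (as in \cite{Buehler}) then endows $Ind(\mathpzc{E})$ with the induced exact structure, and the characterisation of its exact sequences is precisely what this construction produced, with the converse direction being exactness of filtered colimits in $\mathpzc{Lex}$.

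The main obstacle I anticipate is the first step: carefully constructing the common filtered index category $\mathcal{K}$ and verifying it is filtered, in particular that the lifting data (lifts of $\gamma(j)$ to $Y$ along finite subdiagrams of the $\alpha$-presentation of $X$) can always be amalgamated. This requires knowing that a morphism from a representable $y(E)$ into $Y\cong``\colim"\beta$ factors through a finite stage, which in turn rests on representables being compact objects of $\mathpzc{Lex}(\mathpzc{E}^{op},\mathpzc{Ab})$ relative to the filtered colimits under consideration --- a point that in \cite{braunling2014tate} is handled for $Ind^a$ and which, the proposition asserts, goes through unchanged ("mutatis mutandis") for $Ind$. Everything after that --- the pullback construction of $\beta$, objectwise exactness, and identifying the colimit with $Y$ --- is routine diagram chasing using exactness of filtered colimits in the ambient abelian category. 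I would therefore devote the bulk of the write-up to the index-category construction and merely indicate that the remaining verifications are identical to \cite{braunling2014tate} Section 3.
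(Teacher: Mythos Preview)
Your proposal is correct and matches the paper's approach exactly: the paper gives no proof beyond the remark that the argument of \cite{braunling2014tate} Section~3 for $Ind^{a}(\mathpzc{E})$ goes through ``mutatis mutandis'', and your plan is precisely to carry out that adaptation. One small imprecision worth cleaning up in your write-up: the pullback $Y\times_{Z}\gamma(k)$ is taken in $\mathpzc{Lex}(\mathpzc{E}^{op},\mathpzc{Ab})$, not in $\mathpzc{E}$, and the nontrivial point (which is exactly what the index-category bookkeeping and the extension-closedness of $\mathpzc{E}$ inside $\mathpzc{Lex}$ buy you) is that the resulting levelwise terms are representable.
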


\begin{rem}
For $\kappa$ an infinite cardinal, one can also consider the exact category $Ind_{\kappa}(\mathpzc{E})$, where we restrict to diagrams $\mathcal{I}$ which of size at most $\kappa$. What is proven in  \cite{braunling2014tate} is in fact the slightly stronger result that for each $\kappa$, $Ind^{a}_{\kappa}(\mathpzc{E})$ is an extension closed subcategory of $\mathpzc{Lex}(\mathpzc{E}^{op},\mathpzc{Ab})$. Again this works for $Ind_{\kappa}(\mathpzc{E})$.
\end{rem}

The exact structure on $Ind(\mathpzc{E})$ is called the \textbf{indization} of the exact structure on $\mathpzc{E}$.  From now on we shall assume that $Ind(\mathpzc{E})$ has a left truncation system, which we may assume to be admissible and right homologically complete. This is the case, for example, if $\mathpzc{E}$ (and therefore $Ind(\mathpzc{E})$) has kernels. It is also the case if $\mathpzc{E}$ has a projective generating set $\mathcal{P}$ with weak kernels, because $\mathcal{P}$ is still a projective generating set in $\mathpzc{E}$. 

\begin{prop}
For any filtered category $\mathcal{I}$ the functor $\mathpzc{Fun}(\mathcal{I},Ind(\mathpzc{E}))\rightarrow Ind(\mathpzc{E})$ exists and is exact. In particular it is a strongly co-ML category. If all cokernels exist then it is also cocomplete.
\end{prop}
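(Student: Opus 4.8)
The plan is to reduce the claim about $\mathpzc{Fun}(\mathcal{I}, Ind(\mathpzc{E}))$ to known exactness properties of $Ind(-)$ and of filtered colimits of $Ind$-objects, by commuting colimits. First I would recall that a filtered colimit in $Ind(\mathpzc{E})$ exists and is computed as an $Ind$-object: given a functor $\Phi : \mathcal{I} \to Ind(\mathpzc{E})$ with $\mathcal{I}$ filtered, each $\Phi(i)$ is represented by a diagram $\alpha_i : \mathcal{J}_i \to \mathpzc{E}$, and one assembles a single filtered diagram over the Grothendieck construction $\int_{\mathcal{I}} \mathcal{J}_{(-)}$ whose $Ind$-colimit represents $\colim_{\mathcal{I}} \Phi$. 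Thus $\colim_{\mathcal{I}} : \mathpzc{Fun}(\mathcal{I}, Ind(\mathpzc{E})) \to Ind(\mathpzc{E})$ exists. (If all cokernels exist in $\mathpzc{E}$, then $Ind(\mathpzc{E})$ has all colimits, and hence $\mathpzc{Fun}(\mathcal{I}, Ind(\mathpzc{E}))$ is cocomplete, proving the last sentence.)

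Next I would prove exactness. By the description of exact sequences in $Ind(\mathpzc{E})$ recalled in the Proposition above, a short exact sequence $0 \to X \to Y \to Z \to 0$ in $\mathpzc{Fun}(\mathcal{I}, Ind(\mathpzc{E}))$ is, pointwise in $i \in \mathcal{I}$, an exact sequence in $Ind(\mathpzc{E})$, so it is levelwise (in $i$) representable by a short exact sequence of filtered diagrams in $\mathpzc{E}$. The key step is to make these representations compatible across $\mathcal{I}$: using a standard cofinality/reindexing argument (as in \cite{braunling2014tate} Section 3, where compatible representing diagrams are built over a common index category), one produces a single filtered category $\mathcal{K}$ with a functor $\mathcal{K} \to \mathcal{I}$ and a short exact sequence $0 \to \tilde{\alpha} \to \tilde{\beta} \to \tilde{\gamma} \to 0$ of functors $\mathcal{K} \to \mathpzc{E}$ (exact in the sense that it is so after composing with each fibre inclusion) whose $Ind$-colimit along the fibres recovers $0 \to X \to Y \to Z \to 0$. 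Applying $\colim_{\mathcal{I}}$ then just amounts to taking the $Ind$-colimit over all of $\mathcal{K}$, which by the criterion of the Proposition is again a short exact sequence in $Ind(\mathpzc{E})$. Hence $\colim_{\mathcal{I}}$ is exact, so $\mathpzc{Fun}(\mathcal{I}, Ind(\mathpzc{E}))$ is weakly $(\omega, \textbf{AdMon})$-elementary for every filtered $\mathcal{I}$, and moreover the defining exactness condition for \emph{strongly co-ML} (Definition \ref{defn:ML}) holds, giving the middle assertion.

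Finally, strong co-ML also requires that $Ind(\mathpzc{E})$ have all filtered colimits and that $\colim : Ch_{\ge 0}(\mathpzc{Fun}^{wcfl}_{\textbf{AdMon}}(\mathcal{I}, Ind(\mathpzc{E}))) \to Ch_{\ge 0}(Ind(\mathpzc{E}))$ send term-wise acyclic complexes to acyclic ones; but since exact filtered colimits preserve acyclicity of complexes term-wise and filtered colimits of complexes are computed degree-wise, this is immediate once exactness of $\colim_{\mathcal{I}}$ is in hand. The main obstacle is the reindexing step: arranging the representing diagrams $\alpha_i, \beta_i, \gamma_i$ for $\Phi$ evaluated at varying $i$ into one coherent diagram over $\mathcal{I}$ requires care with $2$-categorical coherence (pseudo-functoriality of $i \mapsto \mathcal{J}_i$), and this is exactly the place where we must invoke, \emph{mutatis mutandis}, the cofinality arguments of \cite{braunling2014tate}; everything after that is formal, since exactness, admissible monomorphisms, and colimits in the functor category are all computed pointwise.
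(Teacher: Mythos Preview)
Your proposal takes a genuinely different, and much harder, route than the paper. The paper's proof is essentially one line: $Ind(\mathpzc{E})$ sits inside the abelian category $\mathpzc{Lex}(\mathpzc{E}^{op},\mathpzc{Ab})$ as an extension-closed subcategory closed under filtered colimits, and filtered colimits are exact in $\mathpzc{Lex}(\mathpzc{E}^{op},\mathpzc{Ab})$; hence they are exact in $Ind(\mathpzc{E})$. No reindexing, no coherence, no representing diagrams --- the ambient abelian category does all the work.

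Your argument, by contrast, tries to stay internal to representations of $Ind$-objects, and this forces you into the reindexing step you flag as the ``main obstacle''. I do not think the citation to \cite{braunling2014tate} Section~3 discharges this obligation: that argument reindexes a \emph{single} short exact sequence in $Ind(\mathpzc{E})$ over a common filtered category, whereas you need a coherent choice of such reindexings as $i$ ranges over $\mathcal{I}$, compatible with all transition maps. The assignment $i\mapsto\mathcal{J}_{i}$ is only pseudofunctorial, and upgrading this to a strict diagram over a Grothendieck construction with a levelwise short exact sequence of functors into $\mathpzc{E}$ is a genuine piece of $2$-categorical bookkeeping that is not in the cited reference and that you have not supplied. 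It can very likely be done, but ``mutatis mutandis'' undersells what is required. The paper's embedding argument is both shorter and avoids this issue entirely; I would recommend adopting it.
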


\begin{proof}
Filtered colimits are exact in $\mathpzc{Lex}(\mathpzc{E}^{op},\mathpzc{Ab})$, and $Ind(\mathpzc{E})$ is an extension closed subcategory which is closed under filtered colimits in $\mathpzc{Lex}(\mathpzc{E}^{op},\mathpzc{Ab})$. So in fact all filtered colimits are exact in $Ind(\mathpzc{E})$.
\end{proof}

Thus for any filtered category $\mathcal{I}$ the colimit functor
$$\colim:Ch(\mathpzc{Fun}(\mathcal{I},Ind(\mathpzc{E})))\rightarrow Ch(Ind(\mathpzc{E}))$$
sends equivalences to equivalences, and is trivially deformable.

Dually one defines the category $Pro(\mathpzc{E})\defeq (Ind(\mathpzc{E}^{op}))^{op}$. This is a strongly ML-category, and if all kernels exist then it is complete. For any filtered category $\mathcal{I}$
$$\lim:Ch(\mathpzc{Fun}(\mathcal{I}^{op},Pro(\mathpzc{E})))\rightarrow Ch(Pro(\mathpzc{E}))$$
sends equivalences to equivalences, and is trivially deformable.
\begin{notation}
Let $F:\mathpzc{E}\rightarrow\mathpzc{D}$ be a functor of exact categories.
\begin{enumerate}
\item
We denote by $IF$ the functor $Ind(\mathpzc{E})\rightarrow Ind(\mathpzc{D})$ sending $``\colim_{\mathcal{I}}"X_{i}$  to $``\colim_{\mathcal{I}}"F(X)_{i}$ 
\item
We denote by $PF$ the functor $Pro(\mathpzc{E})\rightarrow Pro(\mathpzc{D})$ sending $``\lim_{\mathcal{I}^{op}}"X_{i}$  to $``\lim_{\mathcal{I}^{op}}"F(X)_{i}$ 
\item
If $\mathpzc{D}$ has filtered colimits we denote by $\overline{F}:Ind(\mathpzc{E})\rightarrow\mathpzc{D}$ the functor which sends an object $``\colim_{\mathcal{I}}"X_{i}$ to $\colim_{\mathcal{I}}F(X_{i})$.
\item
If $\mathpzc{D}$ has projective colimits we denote by $\overline{F}:Pro(\mathpzc{E})\rightarrow\mathpzc{D}$ the functor which sends an object $``\lim_{\mathcal{I}^{op}}"X_{i}$ to $\lim_{\mathcal{I}^{op}}F(X_{i})$.
\end{enumerate}
\end{notation}

\begin{rem}
Let $F:\mathpzc{E}\rightarrow\mathpzc{D}$ be a functor of exact categories with $\mathpzc{D}$ cocomplete. Then there is a natural equivalence $\overline{F}\cong\overline{Id}_{\mathpzc{D}}\circ IF$. 
\end{rem}

In this section we will be interested in deriving functors of the form $\overline{F}$. We need to introduce some classes of objects.

%

\begin{notation}
Let $\mathpzc{D}$ be an exact category, and $i_{\mathcal{S}}:\mathcal{S}\rightarrow\mathpzc{D}$ a full subcategory. We denote by
\begin{enumerate}
\item
$Ind^{a}(\mathcal{S})$ is the full subcategory of $Ind(\mathpzc{E})$ consisting of those objects which are isomorphic to objects of the form $``\colim_{\mathcal{I}}"X_{i}$ where for $i\rightarrow j$ in $\mathcal{I}$, $X_{i}\rightarrow X_{j}$ is an admissible monomorphism, and each $X_{i}$ is in $\mathcal{S}$.
\item
$\coprod(\mathcal{S})$ the full subcategory of $Ind^{a}(\mathpzc{E})$ consisting of objects which are isomorphic to coproducts of objects of $\mathcal{S}$.
\item
If $F:\mathpzc{D}\rightarrow\mathpzc{E}$ is a functor, by $Ind^{F-wcfl}(\mathcal{S})$ is the full subcategory of $Ind^{a}(\mathcal{S})$ consisting of those objects which are isomorphic to objects of the form $``\colim_{\mathcal{I}}"X_{i}$ where the diagram $F\circ X:\mathcal{I}\rightarrow\mathpzc{E}$ is weakly co-flasque
\item
$Pro^{a}(\mathcal{S})$ the full subcategory of $Pro(\mathpzc{E})$ consisting those objects which are isomorphic to objects of the form $``\lim_{\mathcal{I}^{op}}"X_{i}$ where for $i\rightarrow j$ in $\mathcal{I}$, $X_{j}\rightarrow X_{i}$ is an admissible epimorphism, and each 
\item
$\prod(\mathcal{S})$ the full subcategory of $Pro^{a}(\mathpzc{E})$ consisting of objects which are isomorphic to products of objects of $\mathcal{S}$.
\item
If $\mathpzc{E}$ is complete $Pro^{F-wfl}(\mathcal{S})$ is the full subcategory of $Pro^{a}(\mathcal{S})$ consisting of those objects which are isomorphic to objects of the form $``\lim_{\mathcal{I}^{op}}"X_{i}$ where the diagram $F\circ X:\mathcal{I}^{op}\rightarrow\mathpzc{E}$ is weakly flasque
\end{enumerate}
\end{notation}

\begin{rem}
$\coprod(\mathcal{S})\subset Ind^{F-wcfl}(\mathcal{S})$ and $\prod(\mathcal{S})\subset Pro^{F-wfl}(\mathcal{S})$ for any functor $F:\mathpzc{D}\rightarrow\mathpzc{E}$.
\end{rem}

In particular, $Ind^{a}(\mathpzc{E})$ is the full subcategory of $Ind(\mathpzc{E})$ consisting of \textbf{essentially admissibly monomorphic objects}. By \cite{braunling2014tate} Section 3 this is in fact itself an exact subcategory of $Ind(\mathpzc{E})$.

\begin{prop}\label{prop:ind-resolution}
Let $X=``\colim_{\mathcal{I}}"X_{i}\in Ind(\mathpzc{E})$. Then one can functorially construct an exact sequence.
$$0\rightarrow K\rightarrow\overline{X}\rightarrow X\rightarrow 0$$
where $\overline{X}$ is in $\coprod(\mathpzc{E})$ and $K$ is in $Ind^{a}(\mathpzc{E})$. In particular $Ch_{+}(\coprod(\mathpzc{E}))$ and $Ch_{+}(Ind^{a}(\mathpzc{E}))$ are left deformations of $Ch_{+}(Ind(\mathpzc{E}))$.
\end{prop}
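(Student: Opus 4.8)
\emph{Approach.} The plan is to build the \emph{canonical} admissible epimorphism onto $X$ out of a coproduct, namely $\epsilon\colon\overline{X}\defeq\coprod_{i\in\mathcal{I}}X_{i}\longrightarrow X$, and to identify its kernel with an essentially admissibly monomorphic ind-object. The mechanism is to exhibit $\epsilon$ itself (not merely its target) as the filtered colimit of an objectwise split short exact sequence of $\mathpzc{E}$-valued diagrams, and then to invoke the description of short exact sequences in $Ind(\mathpzc{E})$ recalled above. To secure functoriality in $X$ one first replaces the presenting data by the canonical one: take $\mathcal{I}=\mathpzc{E}\downarrow X$ and $X_{i}$ the domain of $i\colon X_{i}\to X$; this is filtered and strictly functorial in $X$.

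\emph{The construction.} Introduce the auxiliary category $\mathcal{J}$ whose objects are triples $(S,i,\psi)$ consisting of a finite subset $S\subseteq\mathrm{Ob}(\mathcal{I})$, an object $i\in S$, and morphisms $\psi_{s}\colon s\to i$ in $\mathcal{I}$ for $s\in S$ with $\psi_{i}=\mathrm{id}$; a morphism $(S,i,\psi)\to(S',i',\psi')$ is an inclusion $S\subseteq S'$ together with a morphism $\theta\colon i\to i'$ of $\mathcal{I}$ with $\psi'_{s}=\theta\circ\psi_{s}$ for all $s\in S$. Filteredness of $\mathcal{J}$, and cofinality of the two projections $(S,i,\psi)\mapsto i$ to $\mathcal{I}$ and $(S,i,\psi)\mapsto S$ to the directed poset of finite subsets of $\mathrm{Ob}(\mathcal{I})$, are routine from filteredness of $\mathcal{I}$. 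Define $B,C\colon\mathcal{J}\to\mathpzc{E}$ by $B(S,i,\psi)=\bigoplus_{s\in S}X_{s}$ (transition maps: inclusions of summands) and $C(S,i,\psi)=X_{i}$ (transition maps: $X_{\theta}$), with $B\to C$ the map $(x_{s})_{s}\mapsto\sum_{s}X_{\psi_{s}}(x_{s})$. The summand indexed by $i$, where $\psi_{i}=\mathrm{id}$, splits $B(S,i,\psi)\to C(S,i,\psi)$ objectwise, so with $A$ the objectwise kernel we obtain an objectwise split exact sequence $0\to A\to B\to C\to 0$ in $\mathpzc{Fun}(\mathcal{J},\mathpzc{E})$. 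By the cofinality statements, $``\colim_{\mathcal{J}}"C\cong X$ and $``\colim_{\mathcal{J}}"B\cong\coprod_{i\in\mathcal{I}}X_{i}=\overline{X}\in\coprod(\mathpzc{E})$, with the induced map being $\epsilon$. Applying the description of exact sequences in $Ind(\mathpzc{E})$ to $0\to A\to B\to C\to 0$ yields an exact sequence $0\to K\to\overline{X}\to X\to 0$ with $K\defeq``\colim_{\mathcal{J}}"A$, and transporting the whole construction along $\mathpzc{E}\downarrow X\to\mathpzc{E}\downarrow X'$ makes it functorial in $X$.

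\emph{The only non-formal point.} One must show $K\in Ind^{a}(\mathpzc{E})$, i.e. that the transition maps of $A$ are admissible monomorphisms. For a morphism of $\mathcal{J}$ the induced map $B(S,i,\psi)\to B(S',i',\psi')$ is the inclusion of a sub-coproduct of summands, hence a split, in particular admissible, monomorphism, whereas the map $C(S,i,\psi)\to C(S',i',\psi')$ is an arbitrary morphism of $\mathpzc{E}$. From the resulting morphism of (split, hence admissible) short exact sequences, the composite $A(S,i,\psi)\hookrightarrow B(S,i,\psi)\to B(S',i',\psi')$ is a composition of admissible monomorphisms which factors through the admissible monomorphism $A(S',i',\psi')\hookrightarrow B(S',i',\psi')$, so by the obscure axiom (valid since $\mathpzc{E}$ is weakly idempotent complete) the induced map $A(S,i,\psi)\to A(S',i',\psi')$ is an admissible monomorphism. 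This is exactly the manoeuvre used above in the proof that restrictions of weakly co-flasque diagrams remain weakly co-flasque, so I expect the real work to be not conceptual but the bookkeeping: checking that $\mathcal{J}$ is filtered and that the two projections are cofinal for a general (rather than directed) filtered $\mathcal{I}$, and tracking compatibility for functoriality.

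\emph{The final assertion.} The pair $(\overline{(-)},\epsilon)$ is a $\coprod(\mathpzc{E})$-left deformation functor, and equally an $Ind^{a}(\mathpzc{E})$-left deformation functor since $\coprod(\mathpzc{E})\subseteq Ind^{a}(\mathpzc{E})$; both $\coprod(\mathpzc{E})$ and $Ind^{a}(\mathpzc{E})$ are closed under finite direct sums inside $Ind(\mathpzc{E})$. Hence the proposition above modelled on \cite{Buehler} Theorem~12.7, which promotes an $\mathcal{S}$-left deformation functor to a left deformation $Q_{+}\colon Ch_{+}(\mathpzc{D})\to Ch_{+}(\mathcal{S})$, shows that $Ch_{+}(\coprod(\mathpzc{E}))$ and $Ch_{+}(Ind^{a}(\mathpzc{E}))$ are left deformations of $Ch_{+}(Ind(\mathpzc{E}))$.
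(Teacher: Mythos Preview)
Your proof is correct and follows essentially the same strategy as the paper: present the canonical epimorphism $\coprod_{i}X_{i}\to X$ as a filtered colimit of levelwise split epimorphisms of $\mathpzc{E}$-valued diagrams, and then apply the obscure axiom to conclude that the induced transition maps on kernels are admissible monomorphisms. Your auxiliary category $\mathcal{J}$ is an explicit construction of what the paper calls $\mathcal{F}$ (after first reducing to directed $\mathcal{I}$ and working with finite subsets possessing a maximum), and your use of $\mathpzc{E}\downarrow X$ makes the functoriality claim cleaner than the paper's sketch.
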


\begin{proof}
Write $\overline{X}=\bigoplus_{i\in\mathcal{I}}X_{i}$. We may assume that $\mathcal{I}$ is a directed set. Then the canonical map $\pi:\overline{X}\rightarrow ``\colim_{\mathcal{I}}"X_{i}$ is an admissible epimorphism. Let $\mathcal{I}^{<\infty}$ denote the set of finite subsets of $\mathcal{I}$ which have a maximal element. As an ind-object $\overline{X}$ can be written as $``\colim_{I\in\mathcal{I}^{<\infty}}"\bigoplus_{i\in I}X_{i}$. For $I\subset J\subset\mathcal{I}$ with $J\in\mathcal{I}^{<\infty}$, the map $\bigoplus_{i\in I}X_{i}\rightarrow \bigoplus_{j\in J}X_{j}$ is a split, and hence admissible monomorphism. Moreover the map $\overline{X}\rightarrow X$ is induced by the canonical sum maps $\bigoplus_{i\in I}X_{i}\rightarrow X_{max\{i\in I\}}$ for $I\in\mathcal{I}^{<\infty}$. This is a split map, which shows that $\overline{X}\rightarrow X$ is an admissible epimorphism. Now there is a diagram $\mathcal{F}$ together with final functors $p:\mathcal{F}\rightarrow\{\mathcal{I}^{<\infty}\}$ n and $q:\mathcal{F}\rightarrow\mathcal{I}$, and compatible admissible epimorphisms $\pi_{f}:\overline{X}_{p(f)}\rightarrow X_{q(f)}$ such that $\pi=``\colim_{\mathcal{F}}"\pi_{f}$. Then $K\cong``\colim_{\mathcal{F}}"Ker(\pi_{f})$. Moreover for $f\rightarrow f'\in\mathcal{F}$ we have a commutative diagram of exact sequences
\begin{displaymath}
\xymatrix{
0\ar[r] & K_{f}\ar[d]\ar[r] & \overline{X}_{p(f)}\ar[d]\ar[r] & X_{q(f)}\ar[d]\ar[r] & 0\\
0\ar[r] & K_{f'}\ar[r] & \overline{X}_{p(f')}\ar[r] & X_{q(f')}\ar[r] & 0
}
\end{displaymath}
Since the middle vertical map is an admissible monomorphism, the obscure axiom implies that the left vertical map is as well. Thus $K$ is in $Ind^{a}(\mathpzc{E})$.
\end{proof}

\begin{prop}\label{prop:co-Mlacyc}
Let $\mathpzc{E}$ be a co-ML category, let $\mathcal{I}$ be a filtered category, and let $X$ be an object in $\mathpzc{Fun}_{\textbf{AdMon}}^{wcfl}(\mathcal{I},\mathpzc{E})$. Then the map $\mathbb{L}\colim_{\mathcal{I}}X\rightarrow\colim_{\mathcal{I}}X$ is an equivalence.
\end{prop}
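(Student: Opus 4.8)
The plan is to compute the derived colimit $\mathbb{L}\colim_{\mathcal I}$ by resolving $X$ termwise by $\colim$-acyclic objects and then show that a weakly co-flasque diagram is already $\colim$-acyclic, so that the canonical comparison map is an equivalence. First I would invoke the dual of the Roos-complex construction: since $\mathpzc{E}$ is co-ML it has all filtered colimits, and for any $X\in\mathpzc{Fun}(\mathcal{I},\mathpzc{E})$ there is a coRoos complex $\mathpzc{cR}(X)$ together with a natural weak equivalence and object-wise admissible monomorphism $X\to\mathpzc{cR}(X)$ whose terms are (dually to \cite{prosmans1999derived} Proposition 3.3.3) coRoos-acyclic, hence $\colim_{\mathcal I}$-acyclic. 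This realizes $\colim_{\mathcal I}:\mathpzc{Fun}(\mathcal{I},\mathpzc{E})\to\mathpzc{E}$ as left-derivable with $\mathbb{L}\colim_{\mathcal I}X\simeq\colim_{\mathcal I}\mathpzc{cR}(X)$, and reduces the claim to showing that the object-wise admissible monomorphism $X\to\mathpzc{cR}(X)$ becomes a quasi-isomorphism after applying $\colim_{\mathcal I}$ whenever $X$ is weakly co-flasque.

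The key step is then the following: if $X\in\mathpzc{Fun}^{wcfl}_{\textbf{AdMon}}(\mathcal{I},\mathpzc{E})$, then the cone of $X\to\mathpzc{cR}(X)$, which is an acyclic complex in $Ch_{\ge 0}(\mathpzc{Fun}(\mathcal{I},\mathpzc{E}))$, lies termwise in $\mathpzc{Fun}^{wcfl}_{\textbf{AdMon}}(\mathcal{I},\mathpzc{E})$ — or more precisely, the whole resolution can be taken within $Ch_{\ge0}(\mathpzc{Fun}^{wcfl}_{\textbf{AdMon}}(\mathcal{I},\mathpzc{E}))$. Indeed each term $\mathpzc{cR}_n(X)$ is built from coproducts $\coprod X(i_0)$ indexed over chains, and coproducts of weakly co-flasque diagrams are weakly co-flasque (a directed colimit of coproducts of admissible monos is an admissible mono, using that $\mathpzc{E}$ is weakly $(\omega,\textbf{AdMon})$-elementary and the obscure axiom), and the differentials and the inclusion $X\to\mathpzc{cR}(X)$ are object-wise admissible monomorphisms so that all kernels appearing are again weakly co-flasque by the Proposition proved above (restriction to directed subcategories, plus the obscure axiom for factorizations of admissible monos). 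Once we know the acyclic complex $\operatorname{cone}(X\to\mathpzc{cR}(X))$ is an acyclic complex in $Ch_{\ge0}(\mathpzc{Fun}^{wcfl}_{\textbf{AdMon}}(\mathcal{I},\mathpzc{E}))$, the defining property of a co-ML category (Definition \ref{defn:ML}(1)) says exactly that $\colim_{\mathcal I}$ sends it to an acyclic complex in $Ch_{\ge0}(\mathpzc{E})$; hence $\colim_{\mathcal I}X\to\colim_{\mathcal I}\mathpzc{cR}(X)=\mathbb{L}\colim_{\mathcal I}X$ is a quasi-isomorphism, which is the assertion.

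I expect the main obstacle to be the bookkeeping in the second paragraph: verifying that the coRoos resolution of a weakly co-flasque diagram can be chosen inside $\mathpzc{Fun}^{wcfl}_{\textbf{AdMon}}(\mathcal{I},\mathpzc{E})$, i.e.\ that each term, each differential, and each syzygy kernel is again weakly co-flasque. This requires knowing that coproducts indexed by the chain-sets, and the kernels of the (object-wise admissible-monic) coRoos differentials, preserve weak co-flasqueness; the former uses exactness of $\omega$-filtered colimits of admissible monomorphisms together with the obscure axiom, and the latter uses the Proposition on restriction to direct subcategories together with the fact that weak co-flasqueness is detected after restricting along cofinal functors from directed sets (the Remark preceding that Proposition). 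A cleaner alternative, if one prefers to avoid the coRoos machinery, is to reduce first to $\mathcal I$ directed by choosing a cofinal functor from a directed set (using the Remark that weak co-flasqueness is preserved), and then argue by transfinite induction on a well-ordering of $\mathcal I$, exhibiting $\colim_{\mathcal I}X$ as an $\omega$-indexed (or longer) filtered colimit of admissible monomorphisms of objects each of which is a finite colimit of the $X(i)$; but carrying the acyclicity of the resolution through the transfinite filtration still ultimately appeals to the co-ML hypothesis in the same way, so the coRoos route is the most economical.
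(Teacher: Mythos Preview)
Your approach is the paper's: resolve by the coRoos complex and apply the co-ML hypothesis to the augmented complex. But you overcomplicate two points.

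First, a minor direction issue: the coRoos resolution is $\mathpzc{cR}(X)\to X$, an object-wise admissible \emph{epimorphism} and weak equivalence (dually to the Roos case), since we are left-deriving $\colim$.

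Second, the bookkeeping you anticipate is unnecessary. The reason each term $\mathpzc{cR}_n(X)$ is weakly co-flasque has nothing to do with $X$: the transition map $\mathpzc{cR}_n(X)(i)\to\mathpzc{cR}_n(X)(j)$ is the inclusion of summands indexed by chains ending at $i$ into those ending at $j$, hence a \emph{split} monomorphism, and for the same reason $\colim_{J}\mathpzc{cR}_n(X)|_J\to\colim_{\mathcal{I}}\mathpzc{cR}_n(X)$ is a split inclusion of coproducts for any directed $J\subset\mathcal{I}$. The paper encodes this by saying the $\mathpzc{cR}_n(X)$ lie in $\coprod(\mathpzc{E})$, which is automatically contained in the weakly co-flasque objects. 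Your justification via ``coproducts of weakly co-flasque diagrams'' is misplaced: the coRoos terms are not coproducts of copies of the diagram $X$. Finally, Definition~\ref{defn:ML} only requires the \emph{terms} of the complex to be weakly co-flasque and the complex to be term-wise acyclic; nothing about syzygies. Since $X$ is weakly co-flasque by hypothesis and the $\mathpzc{cR}_n(X)$ are weakly co-flasque structurally, the augmented complex $\mathpzc{cR}(X)\to X$ already lies in $Ch_{\ge0}(\mathpzc{Fun}^{wcfl}_{\textbf{AdMon}}(\mathcal{I},\mathpzc{E}))$ and is term-wise acyclic, so the co-ML hypothesis applies directly with no kernel-chasing.
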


\begin{proof}
Consider the resolution $\mathpzc{cR}(X)\rightarrow X$. This augmented complex is a complex of objects in $\coprod(\mathpzc{E})$. In particular it is in $\mathpzc{Fun}_{\textbf{AdMon}}^{wcfl}(\mathcal{I},\mathpzc{E})$. Thus the augmented complex $\colim\mathpzc{cR}(X)\rightarrow X$ is acyclic, i.e. $\mathbb{L}\colim_{\mathcal{I}}X\rightarrow\colim_{\mathcal{I}}X$ is an equivalence.
\end{proof}

\begin{cor}
Let $\mathpzc{E}$ be co-ML, let $\mathcal{I}$ be a filtered category, and let
$$0\rightarrow X\rightarrow Y\rightarrow Z\rightarrow 0$$
be an exact sequence in $\mathpzc{Fun}(\mathcal{I},\mathpzc{E})$ with $Z$ in $\mathpzc{Fun}_{\textbf{AdMon}}^{wcfl}(\mathcal{I},\mathpzc{E})$. Then the sequence
$$0\rightarrow\colim_{\mathcal{I}}X\rightarrow\colim_{\mathcal{I}}Y\rightarrow\colim_{\mathcal{I}}Z\rightarrow 0$$
is exact in $\mathpzc{E}$. Moreover if $Y\in\mathpzc{Fun}_{\textbf{AdMon}}^{wcfl}(\mathcal{I},\mathpzc{E})$ then so is $X$.
\end{cor}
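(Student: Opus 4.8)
The plan is to read off the first statement from the long exact sequence of the left derived colimit, the key input being that $Z$, as a weakly co-flasque diagram, is $\colim_{\mathcal{I}}$-acyclic by Proposition~\ref{prop:co-Mlacyc}; the ``moreover'' part will then follow from the first statement by two applications of the obscure axiom.

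Concretely, since $\mathpzc{E}$ is co-ML I would represent $\mathbb{L}\colim_{\mathcal{I}}$ on diagrams of objects by the coRoos resolution, $\mathbb{L}\colim_{\mathcal{I}}W=\colim_{\mathcal{I}}\mathpzc{cR}(W)$, a complex supported in non-negative homological degrees whose degree-$0$ homology is $\colim_{\mathcal{I}}W$ (both $\colim_{\mathcal{I}}$ and any abelianization preserve cokernels). Applying $\mathpzc{cR}$ to $0\rightarrow X\rightarrow Y\rightarrow Z\rightarrow 0$ and then $\colim_{\mathcal{I}}$ yields a sequence of complexes
$$0\rightarrow\mathbb{L}\colim_{\mathcal{I}}X\rightarrow\mathbb{L}\colim_{\mathcal{I}}Y\rightarrow\mathbb{L}\colim_{\mathcal{I}}Z\rightarrow 0$$
which is exact in each bidegree, since $\colim_{\mathcal{I}}\mathpzc{cR}_{n}$ of the sequence is a coproduct of conflations and $\mathpzc{E}$ has exact coproducts; passing to an abelianization $\iota\colon\mathpzc{E}\rightarrow\mathpzc{A}$ it induces a long exact sequence of $H^{\iota}_{\bullet}$. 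By Proposition~\ref{prop:co-Mlacyc} the augmentation $\mathbb{L}\colim_{\mathcal{I}}Z\rightarrow\colim_{\mathcal{I}}Z$ is a quasi-isomorphism, so $\mathbb{L}\colim_{\mathcal{I}}Z$ is concentrated in degree $0$; combined with the fact that $\mathbb{L}\colim_{\mathcal{I}}X$ lives in degrees $\geq 0$, the groups $H^{\iota}_{1}(\mathbb{L}\colim_{\mathcal{I}}Z)$ and $H^{\iota}_{-1}(\mathbb{L}\colim_{\mathcal{I}}X)$ vanish and the long exact sequence collapses to the exactness of $0\rightarrow\colim_{\mathcal{I}}X\rightarrow\colim_{\mathcal{I}}Y\rightarrow\colim_{\mathcal{I}}Z\rightarrow 0$ in $\mathpzc{A}$; since $\iota$ reflects exactness this sequence is exact in $\mathpzc{E}$.

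For the ``moreover'', suppose also $Y\in\mathpzc{Fun}^{wcfl}_{\textbf{AdMon}}(\mathcal{I},\mathpzc{E})$. For an arrow $i\rightarrow j$ the composite $X(i)\rightarrow Y(i)\rightarrow Y(j)$ is an admissible monomorphism and factors as $X(i)\rightarrow X(j)\rightarrow Y(j)$ through the admissible monomorphism $X(j)\rightarrow Y(j)$, so by the obscure axiom $X(i)\rightarrow X(j)$ is an admissible monomorphism and $X\in\mathpzc{Fun}_{\textbf{AdMon}}(\mathcal{I},\mathpzc{E})$. Given a directed subcategory $\mathcal{J}\subseteq\mathcal{I}$, the restriction $Z|_{\mathcal{J}}$ is again weakly co-flasque and $0\rightarrow X|_{\mathcal{J}}\rightarrow Y|_{\mathcal{J}}\rightarrow Z|_{\mathcal{J}}\rightarrow 0$ is exact, so the first part of the corollary applied over $\mathcal{J}$ makes $\colim_{\mathcal{J}}X\rightarrow\colim_{\mathcal{J}}Y$ an admissible monomorphism. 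Postcomposing with the admissible monomorphism $\colim_{\mathcal{J}}Y\rightarrow\colim_{\mathcal{I}}Y$ (here we use that $Y$ is weakly co-flasque) and rewriting the composite as $\colim_{\mathcal{J}}X\rightarrow\colim_{\mathcal{I}}X\rightarrow\colim_{\mathcal{I}}Y$, the obscure axiom forces $\colim_{\mathcal{J}}X\rightarrow\colim_{\mathcal{I}}X$ to be an admissible monomorphism; thus $X$ is weakly co-flasque.

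The one delicate point is making sure, in the second paragraph, that $\mathbb{L}\colim_{\mathcal{I}}$ is genuinely supported in non-negative homological degrees on the single-object diagrams at hand, so that the connecting homomorphism out of $\mathbb{L}\colim_{\mathcal{I}}Z\simeq\colim_{\mathcal{I}}Z$ is forced to vanish and the long exact sequence truncates to the asserted four-term sequence. This is precisely what the coRoos resolution guarantees---the dual of the situation for $\lim_{\omega^{op}}$ treated earlier---so no input beyond Proposition~\ref{prop:co-Mlacyc} is needed, and the remainder is a routine diagram chase with the obscure axiom.
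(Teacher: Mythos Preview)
Your proof is correct and follows essentially the same approach as the paper. For the first claim the paper simply says it is ``immediate from Proposition~\ref{prop:co-Mlacyc}'', which is exactly the long exact sequence argument you spell out; for the ``moreover'' the paper uses the same commutative square of exact sequences over $\mathcal{J}\subset\mathcal{I}$ and the obscure axiom, only more tersely (and without separately checking that $X\in\mathpzc{Fun}_{\textbf{AdMon}}(\mathcal{I},\mathpzc{E})$, which you rightly verify first).
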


\begin{proof}
The first claim is immediate from Proposition \ref{prop:co-Mlacyc}. For the second, let $J\subset\mathcal{I}$ be directed. Since the restriction of a weakly co-flasque system to a directed subset is still weakly co-flasque, we get a diagram of exact sequences
\begin{displaymath}
\xymatrix{
0\ar[r] & \colim_{J}X|_{J}\ar[d]\ar[r] &  \colim_{J}Y|_{J}\ar[d]\ar[r] &  \colim_{J}Z|_{J}\ar[d]\ar[r] & 0\\
0\ar[r] & \colim_{\mathcal{I}}X\ar[r] & \colim_{\mathcal{I}}Y\ar[r] & \colim_{\mathcal{I}}Z\ar[r] & 0
}
\end{displaymath}
The second two vertical maps are admissible monomorphisms, so the first must be as well.
\end{proof}

\begin{cor}
Let $F:\mathpzc{E}\rightarrow\mathpzc{D}$ be an exact functor of exact categories where $\mathpzc{D}$ is co-ML, and let
$$0\rightarrow ``\colim_{\mathcal{I}}"X_{i}\rightarrow ``\colim_{\mathcal{J}}"Y_{j}\rightarrow ``\colim_{\mathcal{K}}"Z_{k}\rightarrow 0$$
be an exact sequence in $Ind(\mathpzc{E})$ with $Z$ in $Ind^{F-wcfl}(\mathpzc{E})$. Then the sequence
$$0\rightarrow \colim_{\mathcal{I}}X_{i}\rightarrow \colim_{\mathcal{J}}Y_{j}\rightarrow \colim_{\mathcal{K}}Z_{k}\rightarrow 0$$
is exact in $\mathpzc{D}$. Moreover if $`\colim_{\mathcal{J}}"Y_{j}$ and $``\colim_{\mathcal{K}}"Z_{k}$ are in $Ind^{F-wcfl}(\mathpzc{D})$, then so is  $``\colim_{\mathcal{I}}"X_{i}$.
\end{cor}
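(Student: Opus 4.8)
The plan is to reduce to the preceding corollary (the one for short exact sequences in $\mathpzc{Fun}(\mathcal{I},\mathpzc{E})$ with the third term weakly co-flasque) by presenting the whole sequence over a single filtered index category. First I would invoke the structure theorem for short exact sequences in $Ind(\mathpzc{E})$ together with the standard fact that isomorphic ind-objects admit a common cofinal refinement: this lets us choose the presentation $``\colim_{\mathcal{K}}"Z_{k}$ of $Z$ so that its transition maps are admissible monomorphisms and $F\circ Z_{\bullet}:\mathcal{K}\to\mathpzc{D}$ is weakly co-flasque (exactly the hypothesis $Z\in Ind^{F-wcfl}(\mathpzc{E})$), and then arrange a filtered category $\mathcal{L}$, a cofinal functor $u:\mathcal{L}\to\mathcal{K}$, and functors $\alpha,\beta:\mathcal{L}\to\mathpzc{E}$ fitting into a level-wise exact sequence $0\to\alpha\to\beta\to\gamma\to0$ with $\gamma\cong Z_{\bullet}\circ u$ and with $``\colim_{\mathcal{L}}"\alpha\cong``\colim_{\mathcal{I}}"X_{i}$, $``\colim_{\mathcal{L}}"\beta\cong``\colim_{\mathcal{J}}"Y_{j}$ in $Ind(\mathpzc{E})$. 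Applying $F$ object-wise and using exactness of $F$ gives a level-wise exact sequence $0\to F\alpha\to F\beta\to F\gamma\to0$ in $\mathpzc{Fun}(\mathcal{L},\mathpzc{D})$. Because $\overline{F}$ factors as $\overline{Id}_{\mathpzc{D}}\circ IF$ with $IF$ and $\overline{Id}_{\mathpzc{D}}$ well-defined on ind-objects (the Remark that $\overline{F}\cong\overline{Id}_{\mathpzc{D}}\circ IF$), the colimit over $\mathcal{L}$ of $F\alpha$ agrees with $\overline{F}(``\colim_{\mathcal{I}}"X_{i})=\colim_{\mathcal{I}}F(X_{i})$, and similarly for $\beta$ and $\gamma$; so it suffices to prove $0\to\colim_{\mathcal{L}}F\alpha\to\colim_{\mathcal{L}}F\beta\to\colim_{\mathcal{L}}F\gamma\to0$ is exact in $\mathpzc{D}$.

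Now $F\gamma\cong(F\circ Z_{\bullet})\circ u$ is the restriction along the cofinal functor $u$ of a weakly co-flasque diagram, hence is itself weakly co-flasque by the Remark that weak co-flasqueness is preserved under cofinal restriction, and it lies in $\mathpzc{Fun}_{\textbf{AdMon}}(\mathcal{L},\mathpzc{D})$ since $Z_{\bullet}$ has admissible-mono transitions. The first claim is therefore immediate from the preceding corollary applied to $0\to F\alpha\to F\beta\to F\gamma\to0$ in $\mathpzc{Fun}(\mathcal{L},\mathpzc{D})$, using that $\mathpzc{D}$ is co-ML. I expect the genuinely technical point to be the reduction in the previous paragraph — rectifying the given exact sequence of ind-objects to a level-wise exact sequence of diagrams that remembers the chosen weakly co-flasque presentation of $Z$ — which, although standard, is a little fiddly; if one prefers it can be sidestepped by working with the left derived realization $\mathbb{L}\colim:Ind(\mathpzc{D})\to D_{\ge0}(\mathpzc{D})$ furnished by the coRoos resolutions of Proposition \ref{prop:ind-resolution}, which is manifestly invariant under isomorphism of ind-objects, feeding in Proposition \ref{prop:co-Mlacyc} to see $\mathbb{L}\colim F(Z_{k})\simeq\colim F(Z_{k})$ and then running the distinguished-triangle argument of the preceding corollary to get exactness of the underived sequence.

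For the moreover statement, assume in addition that $Y$ and $Z$ lie in $Ind^{F-wcfl}(\mathpzc{E})$; carrying out the same reduction we may take $F\alpha,F\beta,F\gamma$ to be defined over a common directed $\mathcal{L}$ with $F\beta$ and $F\gamma$ weakly co-flasque. For any directed subset $J\subset\mathcal{L}$ the restriction of the sequence to $J$ is still level-wise exact and, by the Proposition on restriction of weakly co-flasque systems to directed subcategories, $F\beta|_{J}$ and $F\gamma|_{J}$ remain weakly co-flasque; so the first part of the corollary, applied over $J$ and over $\mathcal{L}$, yields a commutative ladder of short exact sequences
\begin{displaymath}
\xymatrix{
0\ar[r] & \colim_{J}F\alpha|_{J}\ar[d]\ar[r] & \colim_{J}F\beta|_{J}\ar[d]\ar[r] & \colim_{J}F\gamma|_{J}\ar[d]\ar[r] & 0\\
0\ar[r] & \colim_{\mathcal{L}}F\alpha\ar[r] & \colim_{\mathcal{L}}F\beta\ar[r] & \colim_{\mathcal{L}}F\gamma\ar[r] & 0
}
\end{displaymath}
whose two right-hand vertical maps are admissible monomorphisms (weak co-flasqueness of $F\beta$ and $F\gamma$). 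By the obscure axiom the left-hand vertical map $\colim_{J}F\alpha|_{J}\to\colim_{\mathcal{L}}F\alpha$ is then an admissible monomorphism; since $J$ was arbitrary, $F\alpha$ is weakly co-flasque, and as $\alpha$ is an admissibly-monomorphic presentation of $X$ this exhibits $X=``\colim_{\mathcal{L}}"\alpha\in Ind^{F-wcfl}(\mathpzc{E})$.
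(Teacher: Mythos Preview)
Your proposal is correct and follows essentially the same route as the paper: reduce the exact sequence in $Ind(\mathpzc{E})$ to a level-wise exact sequence over a single filtered index category via cofinal functors, then apply the preceding corollary (for $\mathpzc{Fun}(\mathcal{L},\mathpzc{D})$) after pushing through the exact functor $F$. The paper's proof is a single sentence doing exactly this; you have simply unpacked it, correctly flagged the one genuinely fiddly point (arranging the common index so that the chosen weakly co-flasque presentations of $Y$ and $Z$ survive the rectification), and for the ``moreover'' clause you reproduce the obscure-axiom ladder argument rather than just citing the preceding corollary, which is fine.
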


\begin{proof}
By picking  cofinal functors $\mathcal{L}\rightarrow\mathcal{I},\mathcal{L}\rightarrow\mathcal{J},\mathcal{L}\rightarrow\mathcal{K}$, we may assume that the sequence arises from an exact sequence 
$$0\rightarrow X\rightarrow Y\rightarrow Z\rightarrow 0$$
in $\mathpzc{Fun}(\mathcal{L},\mathpzc{E})$. 
\end{proof}

We will denote by $(Q^{wcfl},\eta^{wcfl})$ the $Ind^{wcfl}(\mathpzc{E})$-left deformation functor constructed in the proof of Proposition \ref{prop:ind-resolution}. Namely $Q^{wcfl}(X)=\overline{X}$, and $\eta_{X}^{wcfl}:\overline{X}\rightarrow X$ is the natural epimorphism.

\begin{prop}\label{prop:indsdef}
Let $\mathpzc{E}$ be an exact category, $i_{\mathcal{S}}:\mathcal{S}\rightarrow\mathpzc{E}$ a full subcategory, and $(Q,\eta)$ an $\mathcal{S}$-left deformation functor. Then there exists a $\coprod(\mathcal{S})$-left deformation functor on $Ind(\mathpzc{E})$.
\end{prop}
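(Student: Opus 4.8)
The plan is to build the desired $\coprod(\mathcal{S})$-left deformation functor on $\Ind(\mathpzc{E})$ by composing the $\mathcal{S}$-left deformation functor $(Q,\eta)$ on $\mathpzc{E}$ with the $\coprod(\mathpzc{E})$-left deformation functor $(Q^{wcfl},\eta^{wcfl})$ on $\Ind(\mathpzc{E})$ constructed in Proposition \ref{prop:ind-resolution}. Recall that for $X=``\colim_{\mathcal{I}}"X_i\in\Ind(\mathpzc{E})$ (with $\mathcal{I}$ directed, without loss of generality) we have $Q^{wcfl}(X)=\bigoplus_{i\in\mathcal{I}}X_i$ together with the canonical admissible epimorphism $\eta^{wcfl}_X:\bigoplus_{i\in\mathcal{I}}X_i\rightarrow X$. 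First I would apply $Q$ objectwise to this coproduct: define
$$Q^{\coprod}(X)\defeq\bigoplus_{i\in\mathcal{I}}Q(X_i),$$
which lies in $\coprod(\mathcal{S})$ since each $Q(X_i)\in\mathcal{S}$. The natural transformation $\eta^{\coprod}_X:Q^{\coprod}(X)\rightarrow X$ is the composite $\bigoplus_i Q(X_i)\xrightarrow{\bigoplus_i\eta_{X_i}}\bigoplus_i X_i\xrightarrow{\eta^{wcfl}_X}X$.

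The key steps are then as follows. First, functoriality: since $(Q,\eta)$ is a functor and $Q^{wcfl}$ was constructed functorially in Proposition \ref{prop:ind-resolution}, the assignment $X\mapsto Q^{\coprod}(X)$ is functorial and $\eta^{\coprod}$ is natural; here one should be slightly careful, since an $\Ind$-object does not canonically determine an indexing diagram, so I would phrase $Q^{\coprod}$ as the composite $Q^{wcfl}$ followed by ``apply $Q$ termwise to a coproduct representation,'' noting that $Q^{wcfl}(X)$ already comes with a canonical coproduct presentation indexed by $\mathcal{I}^{<\infty}$ as in the proof of Proposition \ref{prop:ind-resolution}, and this is what makes everything canonical. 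Second, I must check $\eta^{\coprod}_X$ is an admissible epimorphism in $\Ind(\mathpzc{E})$. Each $\eta_{X_i}:Q(X_i)\rightarrow X_i$ is an admissible epimorphism in $\mathpzc{E}$ by definition of an $\mathcal{S}$-left deformation functor; the coproduct $\bigoplus_i\eta_{X_i}$ is then an admissible epimorphism in $\Ind(\mathpzc{E})$ (coproducts of admissible epimorphisms of $\Ind$-objects are admissible epimorphisms, which follows from the description of exact sequences in $\Ind(\mathpzc{E})$ as filtered colimits of exact sequences); and $\eta^{wcfl}_X$ is an admissible epimorphism by Proposition \ref{prop:ind-resolution}. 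A composite of admissible epimorphisms is an admissible epimorphism, so $\eta^{\coprod}_X$ is one. This is precisely the data required by the definition of a $\coprod(\mathcal{S})$-left deformation functor (a functor $Q^{\coprod}:\Ind(\mathpzc{E})\rightarrow\coprod(\mathcal{S})$ together with a natural transformation $i_{\coprod(\mathcal{S})}\circ Q^{\coprod}\rightarrow i_{\Ind(\mathpzc{E})}$ which is a degree-wise admissible epimorphism).

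The main obstacle I anticipate is the functoriality/canonicity issue: the construction must be manifestly independent of the choice of diagram representing an $\Ind$-object, and the cleanest way around this is to note that $Q^{wcfl}$ from Proposition \ref{prop:ind-resolution} is already a genuine functor landing in $\coprod(\mathpzc{E})$ equipped with a preferred presentation, so that ``apply $Q$ in each summand'' can be made into a functor $\coprod(\mathpzc{E})\rightarrow\coprod(\mathcal{S})$ — a morphism of coproducts $\bigoplus_i X_i\rightarrow\bigoplus_j Y_j$ in $\Ind(\mathpzc{E})$ induces, on the summand level, a morphism that $Q$ can be applied to — and then set $Q^{\coprod}$ to be the composite of the two functors. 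Once this is set up, verifying that $\eta^{\coprod}$ is admissible epic is routine from closure of admissible epimorphisms in $\Ind(\mathpzc{E})$ under coproducts and composition.
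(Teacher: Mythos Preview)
Your proposal is correct and arrives at exactly the same object and natural transformation as the paper, but the paper composes in the opposite order: it sets $Q^{wcfl,\mathcal{S}}\defeq Q^{wcfl}\circ IQ$, where $IQ:\Ind(\mathpzc{E})\rightarrow\Ind(\mathcal{S})$ is the standard ind-extension of $Q$ (sending $``\colim_{\mathcal{I}}"X_i$ to $``\colim_{\mathcal{I}}"Q(X_i)$), and then applies the already-constructed $Q^{wcfl}$ from Proposition~\ref{prop:ind-resolution}. Since $\mathcal{S}$ is closed under finite sums, the output $\bigoplus_i Q(X_i)$ lies in $\coprod(\mathcal{S})$. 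The natural transformation is the composite $Q^{wcfl}(IQ(X))\xrightarrow{\eta^{wcfl}}IQ(X)\xrightarrow{I\eta}X$, which agrees with your composite by the evident commutative square.

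The advantage of the paper's ordering is precisely that it dissolves the functoriality obstacle you flagged: $IQ$ is manifestly a functor (ind-extension of a functor always is), and $Q^{wcfl}$ is already known to be functorial, so there is no need to argue that ``apply $Q$ summand-wise'' is well-defined on morphisms of $\coprod(\mathpzc{E})$. Your route works too, but the justification you sketch for functoriality is more delicate than necessary; swapping the order of the two steps makes the whole thing a one-liner.
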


\begin{proof}
Write $Q^{wcfl,\mathcal{S}}\defeq Q^{wcfl}\circ IQ$, where $IQ$ is the functor $Ind(\mathcal{E})\rightarrow Ind(\mathcal{S})$ extending $Q$ by formal filtered colimits. Since $\mathcal{S}$ is closed under finite sums, this defines a functor $Ind(\mathpzc{E})\rightarrow \coprod(\mathcal{S})$. 
\end{proof}

\begin{prop}\label{prop:indprobounded}
 Let $\mathpzc{D}$ be a co-ML exact category, and let $F:\mathpzc{E}\rightarrow\mathpzc{D}$ be a functor. Suppose there is a category $\mathcal{S}\subset\mathpzc{E}$ which is left functorially adapted to $F$. Then the functor
 $$\overline{F}:Ch(Ind(\mathpzc{E}))\rightarrow Ch(\mathpzc{D})$$
 is left deformable. If $F$ is exact, i.e. $\mathcal{S}=\mathpzc{E}$, and $\mathpzc{D}$ is strongly co-ML, then for $X\in Ind(\mathpzc{E})$, $\mathbb{L}\overline{F}(X)$ is quasi-isomorphic to a complex concentrated in degrees $0,1$.
\end{prop}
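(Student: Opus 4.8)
The plan is to deduce left deformability from Theorem~\ref{lem:unboundeddeform} and the degree bound from the two-term resolution of Proposition~\ref{prop:ind-resolution}. First I would assemble the hypotheses needed to apply Theorem~\ref{lem:unboundeddeform} to $\overline{F}\colon Ch(Ind(\mathpzc{E}))\to Ch(\mathpzc{D})$. Since $\overline{F}(``\colim_{\mathcal{I}}"X_i)=\colim_{\mathcal{I}}F(X_i)$, the functor $\overline{F}$ commutes with filtered colimits, in particular with countable coproducts. Filtered colimits are exact in $Ind(\mathpzc{E})$ (they are computed in $\mathpzc{Lex}(\mathpzc{E}^{op},\mathpzc{Ab})$, where they are exact, and $Ind(\mathpzc{E})$ is closed under them and extension closed), so $Ind(\mathpzc{E})$ is weakly $(\omega,\textbf{AdMon})$-elementary, and by hypothesis it also carries an admissible, homologically right complete left truncation system. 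It then remains to exhibit a subcategory of $Ch_{+}(Ind(\mathpzc{E}))$ functorially left-adapted to $\overline{F}$; I would take $\mathcal{B}\defeq Ch_{+}(Ind^{F-wcfl}(\mathcal{S}))$.

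The heart of the matter is to verify that $Ind^{F-wcfl}(\mathcal{S})\subset Ind(\mathpzc{E})$ is left functorially adapted to $\overline{F}$ in the sense of Definition~\ref{defn:F-proj}. The deformation functor $(Q^{wcfl,\mathcal{S}},\eta^{wcfl,\mathcal{S}})$ of Proposition~\ref{prop:indsdef} has essential image in $\coprod(\mathcal{S})\subset Ind^{F-wcfl}(\mathcal{S})$, and the class is closed under finite direct sums, so the work is in the third axiom. Given an exact sequence $0\to A\to B\to C\to 0$ in $Ind(\mathpzc{E})$ with $B,C\in Ind^{F-wcfl}(\mathcal{S})$, I would represent it, after passing to a cofinal directed index, by a pointwise exact sequence $0\to\alpha\to\beta\to\gamma\to0$ with $\beta,\gamma$ valued in $\mathcal{S}$ and with admissible-monomorphic transition maps; axiom (3) of Definition~\ref{defn:F-proj} for $\mathcal{S}\subset\mathpzc{E}$ then forces $\alpha$ to take values in $\mathcal{S}$ with $0\to F\alpha\to F\beta\to F\gamma\to0$ pointwise exact, and since $F\gamma$ is weakly co-flasque the argument of the corollary preceding Proposition~\ref{prop:indsdef} (which uses only that $\mathpzc{D}$ is co-ML) shows $0\to\overline{F}(A)\to\overline{F}(B)\to\overline{F}(C)\to0$ is exact in $\mathpzc{D}$ and $A\in Ind^{F-wcfl}(\mathcal{S})$.

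Granting this, the proof of Theorem~\ref{lem:unboundeddeform} goes through essentially verbatim, except that there the target is assumed weakly $(\omega,\textbf{AdMon})$-elementary whereas here $\mathpzc{D}$ is only co-ML. That hypothesis enters the proof solely through the final appeal to the dual of Corollary~\ref{cor:limithomequiv}, in order to know that the direct systems $\overline{F}(P^{n}_{\bullet})$ built from $(\mathcal{B},T,\psi)$-very special objects are $\colim_{\omega}$-acyclic. But the transition maps $P^{n}_{\bullet}\to P^{n+1}_{\bullet}$ are degree-wise split monomorphisms, hence so are the $\overline{F}(P^{n}_{\bullet})\to\overline{F}(P^{n+1}_{\bullet})$, and therefore $\bigoplus_{n}\overline{F}(P^{n}_{\bullet})\xrightarrow{\mathrm{Id}-k}\bigoplus_{n}\overline{F}(P^{n}_{\bullet})$ is a degree-wise split monomorphism with cokernel $\colim_{\omega}\overline{F}(P^{n}_{\bullet})$; these systems are $\colim_{\omega}$-acyclic for free, and co-ML of $\mathpzc{D}$ suffices to run the comparison. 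This proves $\overline{F}$ is left deformable.

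For the final clause, assume $F$ is exact ($\mathcal{S}=\mathpzc{E}$) and $\mathpzc{D}$ is strongly co-ML; then every admissible-monomorphic diagram valued in $\mathpzc{D}$ is weakly co-flasque, so the adapted class collapses to $Ind^{a}(\mathpzc{E})=Ind^{F-wcfl}(\mathpzc{E})$, and in particular every object of $Ind^{a}(\mathpzc{E})$ is $\mathbb{L}\overline{F}$-acyclic. Given $X\in Ind(\mathpzc{E})$, Proposition~\ref{prop:ind-resolution} supplies $0\to K\to\overline{X}\to X\to0$ with $\overline{X}\in\coprod(\mathpzc{E})$ and $K\in Ind^{a}(\mathpzc{E})$, both $\mathbb{L}\overline{F}$-acyclic; the two-term complex $Y_{\bullet}$ with $\overline{X}$ in degree $0$ and $K$ in degree $1$ is quasi-isomorphic to $X$, and the degree-wise split short exact sequence of complexes $0\to K[1]\to Y_{\bullet}\to\overline{X}\to0$ yields a distinguished triangle $\overline{F}(K)[1]\to\mathbb{L}\overline{F}(X)\to\overline{F}(\overline{X})\to\overline{F}(K)[2]$ in $D(\mathpzc{D})$, so $\mathbb{L}\overline{F}(X)$ has homology only in degrees $0$ and $1$. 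I expect the main obstacle to be precisely the third axiom above: controlling the kernel of an admissible epimorphism between objects of $Ind^{F-wcfl}(\mathcal{S})$ together with the exactness of $\overline{F}$ on the resulting sequence, which is exactly where co-ML of $\mathpzc{D}$ is invoked, and, secondarily, the bookkeeping needed to see that Theorem~\ref{lem:unboundeddeform} tolerates a merely co-ML target.
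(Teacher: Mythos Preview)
Your approach is essentially the paper's: take $Ind^{F-wcfl}(\mathcal{S})$ as the adapted class, verify Definition~\ref{defn:F-proj}(3) via the corollary on kernels in $Ind^{F-wcfl}$ plus co-ML of $\mathpzc{D}$, then invoke Theorem~\ref{lem:unboundeddeform}; for the degree bound, use the two-term resolution of Proposition~\ref{prop:ind-resolution} with both terms in $Ind^{a}(\mathpzc{E})$ and hence $\overline{F}$-acyclic when $\mathpzc{D}$ is strongly co-ML.

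One point worth noting: you are more careful than the paper about the hypotheses of Theorem~\ref{lem:unboundeddeform}. The theorem as stated asks that the \emph{target} be weakly $(\omega,\textbf{AdMon})$-elementary, whereas here $\mathpzc{D}$ is only assumed co-ML, and the paper simply invokes the theorem without comment. Your observation that the transition maps $P^{n}_{\bullet}\to P^{n+1}_{\bullet}$ in a very-special system are degree-wise split, so that the induced systems $\overline{F}(P^{n}_{\bullet})$ are automatically $\colim_{\omega}$-acyclic in any additive category with countable coproducts, is exactly what is needed to close that gap and is a genuine addition to the paper's argument.
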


\begin{proof}
Consider the category $Ind^{F-wcfl}(\mathcal{S})$. Clearly if
$$0\rightarrow X\rightarrow Y\rightarrow Z\rightarrow 0$$
is an exact sequence with $Y,Z\in Ind^{F-wcfl}(\mathcal{S})$, then $X\in Ind^{F-wcfl}(\mathcal{S})$. Moreover since $\overline{F}\cong\overline{Id}_{\mathpzc{D}}\circ IF$, $\overline{F}$ sends exact sequences in $Ind^{F-wcfl}(\mathcal{S})$ to exact sequences in $\mathpzc{D}$. Finally since $\coprod\mathcal{S}\subset Ind^{F-wcfl}(\mathcal{S})$, $Ch_{+}(Ind^{F-wcfl}(\mathcal{S}))$ is a left deformation of $Ch_{+}(Ind(\mathpzc{E}))$, and by what we have just shown a left deformation of $\overline{F}$. By Lemma \ref{lem:unboundeddeform}, $\colim^{vs}_{\rightarrow}(Ch_{+}(Ind^{wcfl}(\mathcal{S}),T,\psi)$ is a left deformation of $\overline{F}$

For the final claim, we note that in this case any object $X$ has a resolution 
$$0\rightarrow K\rightarrow\overline{X}\rightarrow X\rightarrow 0$$
where $K$ and $\overline{X}$ are in $Ind^{a}(\mathpzc{E})$. Thus $\mathbb{L}\overline{F}(X)$ is quasi-isomorphic to the two-term complex $\overline{F}(K)\rightarrow\overline{F}(\overline{X})$ with $\overline{F}(K)$ in degree $1$. 
\end{proof}

Entirely dually, we have the following

\begin{prop}\label{prop:probounded}
 Let $\mathpzc{D}$ be a quasi-abelian category, and let $F:\mathpzc{E}\rightarrow\mathpzc{D}$ be a functor where $\mathpzc{D}$ is ML. Suppose there is a category $\mathcal{S}\subset\mathpzc{E}$ which is right functorially adapted to $F$. Then the functor
 $$\overline{F}:Ch(Pro(\mathpzc{E}))\rightarrow Ch(\mathpzc{D})$$
 is right derivable.  If $F$ is exact, i.e. $\mathcal{S}=\mathpzc{E}$, and $\mathpzc{D}$ is strongly ML, then for any $X\in Pro(\mathpzc{E})$, $\mathbb{R}\overline{F}(X)$ is quasi-isomorphic to a complex concentrated in degrees $0,-1$.
\end{prop}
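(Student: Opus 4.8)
The statement is the exact dual of Proposition~\ref{prop:indprobounded}, and the plan is to transcribe that proof with $Ind\leftrightarrow Pro$, $\colim\leftrightarrow\lim$, weakly co-flasque $\leftrightarrow$ weakly flasque, admissible monomorphisms $\leftrightarrow$ admissible epimorphisms, $Ch_{+}\leftrightarrow Ch_{-}$, and degrees $0,1\leftrightarrow$ degrees $0,-1$. The working subcategory is $Pro^{F-wfl}(\mathcal{S})\subset Pro(\mathpzc{E})$. First I would establish its two closure properties. Dualising Proposition~\ref{prop:indsdef}, the given $\mathcal{S}$-right deformation functor on $\mathpzc{E}$ induces a $\prod(\mathcal{S})$-right deformation functor on $Pro(\mathpzc{E})$, which in particular takes values in $Pro^{F-wfl}(\mathcal{S})$. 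And, dually to the Corollary preceding Proposition~\ref{prop:indsdef}, if $0\to X\to Y\to Z\to0$ is exact in $Pro(\mathpzc{E})$ with $X,Y\in Pro^{F-wfl}(\mathcal{S})$, then $Z\in Pro^{F-wfl}(\mathcal{S})$: condition~(2) of ``right functorially adapted'' keeps the terms in $\mathcal{S}$, while the obscure axiom together with stability of weak flasqueness under restriction to directed subsets keeps them weakly flasque after composing with $F$.

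Next, using the dual of the natural equivalence $\overline{F}\cong\overline{Id}_{\mathpzc{D}}\circ PF$, I would verify that $\overline{F}$ sends a short exact sequence with all terms in $Pro^{F-wfl}(\mathcal{S})$ to a short exact sequence in $\mathpzc{D}$: realise it by a termwise-exact sequence of $\mathcal{S}$-valued systems over a common cofiltered index category whose composites with $F$ are weakly flasque (using the description of exact sequences in $Pro$), apply $PF$ to land in $Pro(\mathpzc{D})$, and apply $\overline{Id}_{\mathpzc{D}}=\lim$, which is exact on such a sequence because $\mathpzc{D}$ is ML. Since $\prod(\mathcal{S})\subset Pro^{F-wfl}(\mathcal{S})$ and, dually to Proposition~\ref{prop:ind-resolution}, every $X\in Pro(\mathpzc{E})$ has a functorial exact sequence $0\to X\to\overline{X}\to C\to0$ with $\overline{X}\in\prod(\mathpzc{E})$ and $C\in Pro^{a}(\mathpzc{E})$, this shows that $Ch_{-}(Pro^{F-wfl}(\mathcal{S}))$ is a homological right deformation of $Ch_{-}(Pro(\mathpzc{E}))$, hence (by the previous step) of $\overline{F}$.

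I would then invoke Theorem~\ref{lem:counboundeddeform} with $Pro(\mathpzc{E})$ in the role of the source and $\mathpzc{D}$ in that of the target, checking its hypotheses exactly as co-ML is used in Proposition~\ref{prop:indprobounded}: $Pro(\mathpzc{E})$ is strongly ML, hence weakly $(\omega,\textbf{AdEpi})$-coelementary, and it carries an admissible, homologically left complete right truncation system (the dual of the standing hypothesis on $Ind(\mathpzc{E})$, available when $\mathpzc{E}$ has cokernels, or an injective cogenerating set with weak cokernels); $\mathpzc{D}$ being ML provides the coelementarity of the target; and $\overline{F}$ commutes with countable products, a countable product of pro-objects being computed formally on index categories while $\lim$ commutes with products and $\mathpzc{D}$ has all projective limits. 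Theorem~\ref{lem:counboundeddeform} then yields that $\overline{F}:Ch(Pro(\mathpzc{E}))\to Ch(\mathpzc{D})$ is right deformable, hence right derivable by the dual of \cite{Riehl} Theorem~2.2.8. For the final assertion, assume $\mathcal{S}=\mathpzc{E}$ and $\mathpzc{D}$ strongly ML; then in $0\to X\to\overline{X}\to C\to0$ both $\overline{X}$ and $C$ lie in $Pro^{a}(\mathpzc{E})$, so $X$ is quasi-isomorphic in $Ch_{-}(Pro(\mathpzc{E}))$ to $[\,\overline{X}\to C\,]$ placed in degrees $0,-1$. Strong ML makes $\lim$, and hence $\overline{F}=\overline{Id}_{\mathpzc{D}}\circ PF$, exact on exact sequences of objects of $Pro^{a}(\mathpzc{E})$, so $Ch_{-}(Pro^{a}(\mathpzc{E}))$ is itself a right deformation of $\overline{F}$ containing $[\,\overline{X}\to C\,]$; therefore $\mathbb{R}\overline{F}(X)\simeq[\,\overline{F}(\overline{X})\to\overline{F}(C)\,]$, concentrated in degrees $0,-1$.

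The step I expect to be the main obstacle is the bookkeeping in the second paragraph: given an exact sequence in $Pro^{F-wfl}(\mathcal{S})$, realising it termwise-exactly over a \emph{single} cofiltered diagram whose three legs are simultaneously $\mathcal{S}$-valued and weakly flasque after composition with $F$, and then confirming that the sequence which $PF$ produces in $Pro(\mathpzc{D})$ really lies within the scope of the ML (resp.\ strongly ML) hypothesis, i.e.\ that its outer terms are weakly flasque so that $\lim$ is exact on it. Everything else is a mechanical dualisation of the $Ind$ case already carried out for Proposition~\ref{prop:indprobounded}.
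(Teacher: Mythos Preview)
Your proposal is correct and matches the paper's approach exactly: the paper's proof of Proposition~\ref{prop:probounded} is simply the phrase ``Entirely dually, we have the following,'' so your careful transcription of the proof of Proposition~\ref{prop:indprobounded} under the $Ind\leftrightarrow Pro$ dictionary is precisely what is intended. The bookkeeping concern you flag in your last paragraph is no worse than in the $Ind$ case already handled by the corollaries preceding Proposition~\ref{prop:indsdef}, so there is no genuine obstacle.
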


As shown in \cite{prosmans1999derived} Corollary 7.3.7 for quasi-abelian categories, deriving functors out of $Ind(\mathpzc{E})$ (resp. $Pro(\mathpzc{E}))$ is closely connected to computing derived colimits (resp. derived limits). Indeed, let $X$ be an object of $Ind(\mathpzc{E})$. Let $\mathcal{I}$ be a filtered category, and $G_{X}:\mathcal{I}\rightarrow Ind(\mathpzc{E})$ a functor which factors through $\mathpzc{E}$, such that $X\cong\colim G_{X}$. Since filtered colimits are exact in $Ind(\mathpzc{E})$, there is an equivalence
$$\mathbb{L}\colim G_{X}\cong X$$
Thus, if $\mathpzc{cR}(G_{X})$ is the coRoos complex of $G_{X}$ in $Ind(\mathpzc{E})$, then the admissible epimorphism $\colim\mathpzc{cR}(G_{X})\rightarrow X$ is an equivalence. Let $F:\mathpzc{E}\rightarrow\mathpzc{D}$ be an exact functor. Now each of the terms in the complex $\colim\mathpzc{cR}(G_{X})$ is a coproduct of objects in $\mathpzc{E}$, and is therefore an object of $Ind^{F-wcfl}(\mathpzc{E})$. Hence each term is $\overline{F}$-acyclic. So we have
$$\mathbb{L}\overline{F}(X)\cong\overline{F}(\colim\mathpzc{cR}(G_{X}))\cong\colim\mathpzc{cR}(F\circ G_{X})\cong\mathbb{L}\colim F\circ G_{X}$$
We have proven the following.

\begin{prop}
 Let $\mathpzc{D}$ be an exact category, and let $F:\mathpzc{E}\rightarrow\mathpzc{D}$ be an exact functor. Suppose that $\mathpzc{D}$ is co-ML. Let $X=``\colim_{\mathcal{I}}"X_{i}$ be an object of $Ind(\mathpzc{E})$. Then there is a natural equivalence
 $$\mathbb{L}\overline{F}(X)\cong\mathbb{L}\colim_{\mathcal{I}}F(X_{i})$$
\end{prop}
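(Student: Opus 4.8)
The plan is to produce an explicit $\overline{F}$-acyclic resolution of $X$ out of the coRoos construction, and to check that $\overline{F}$ carries it onto the coRoos resolution of the diagram $F\circ G_X$, whose colimit computes $\mathbb{L}\colim_{\mathcal{I}}$. Fix a filtered category $\mathcal{I}$ and a functor $G_X\colon\mathcal{I}\to\mathpzc{E}$ with $X\cong``\colim_{\mathcal{I}}"G_X(i)$, regarded also as a diagram in $Ind(\mathpzc{E})$ via Yoneda. Since $F$ is exact, $\mathcal{S}=\mathpzc{E}$ is left functorially adapted to $F$, so by Proposition \ref{prop:indprobounded} the functor $\overline{F}$ is left deformable and $\mathbb{L}\overline{F}$ can be computed on any resolution lying in $Ch_{+}(Ind^{F-wcfl}(\mathpzc{E}))$. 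Form the coRoos complex $\mathpzc{cR}(G_X)$ inside $Ind(\mathpzc{E})$; it is concentrated in non-negative degrees and comes with a natural object-wise admissible epimorphism and weak equivalence $\mathpzc{cR}(G_X)\to G_X$. Applying $\colim_{\mathcal{I}}$, which is exact since filtered colimits are exact in $Ind(\mathpzc{E})$, yields a weak equivalence and degree-wise admissible epimorphism $\colim_{\mathcal{I}}\mathpzc{cR}(G_X)\to X$. Each term of $\colim_{\mathcal{I}}\mathpzc{cR}(G_X)$ is a coproduct of objects of $\mathpzc{E}$, hence lies in $\coprod(\mathpzc{E})\subset Ind^{F-wcfl}(\mathpzc{E})$, so this is a resolution of the required type and $\mathbb{L}\overline{F}(X)\cong\overline{F}\bigl(\colim_{\mathcal{I}}\mathpzc{cR}(G_X)\bigr)$.

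Next I would use the Remark that $\overline{F}\cong\overline{Id}_{\mathpzc{D}}\circ IF$. Because $IF$ is defined by applying $F$ to representatives, it commutes with the formal coproducts and filtered colimits out of which $\mathpzc{cR}$ is assembled, giving a natural isomorphism $IF\bigl(\colim_{\mathcal{I}}\mathpzc{cR}(G_X)\bigr)\cong\colim_{\mathcal{I}}\mathpzc{cR}(F\circ G_X)$ in $Ch_{+}(Ind(\mathpzc{D}))$; applying $\overline{Id}_{\mathpzc{D}}=\colim$ then gives $\overline{F}\bigl(\colim_{\mathcal{I}}\mathpzc{cR}(G_X)\bigr)\cong\colim_{\mathcal{I}}\mathpzc{cR}(F\circ G_X)$ in $Ch_{+}(\mathpzc{D})$. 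Finally, each term of $\mathpzc{cR}(F\circ G_X)$ is a coproduct of objects of $\mathpzc{D}$, so the coRoos resolution is a complex in $\mathpzc{Fun}^{wcfl}_{\textbf{AdMon}}(\mathcal{I},\mathpzc{D})$ augmented term-wise-acyclically over $F\circ G_X$; since $\mathpzc{D}$ is co-ML, the argument of Proposition \ref{prop:co-Mlacyc} shows $\colim_{\mathcal{I}}\mathpzc{cR}(F\circ G_X)$ computes $\mathbb{L}\colim_{\mathcal{I}}F\circ G_X$. Composing the displayed equivalences yields $\mathbb{L}\overline{F}(X)\cong\mathbb{L}\colim_{\mathcal{I}}F(X_i)$, and every step is natural in $X$.

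The main obstacle I anticipate is bookkeeping: checking naturality of the whole chain and, especially, independence of the presentation $X\cong``\colim_{\mathcal{I}}"G_X(i)$. This follows from cofinality — a cofinal functor of filtered index categories preserves weakly co-flasque diagrams (the Remark after the definition of weakly co-flasque objects), and both $\mathbb{L}\colim$ and $\mathbb{L}\overline{F}$ are insensitive to it — together with uniqueness of total derived functors, but it requires care. A secondary point is verifying that $\colim_{\mathcal{I}}$ applied to the object-wise admissible-epimorphism weak equivalence $\mathpzc{cR}(G_X)\to G_X$ remains a weak equivalence with admissible-epimorphism augmentation in each degree; this is exactly where exactness of filtered colimits in $Ind(\mathpzc{E})$ is used essentially.
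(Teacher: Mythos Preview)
Your proposal is correct and follows essentially the same route as the paper: resolve $X$ by the colimit of the coRoos complex of $G_X$ in $Ind(\mathpzc{E})$, observe each term is a coproduct of objects of $\mathpzc{E}$ hence $\overline{F}$-acyclic, then identify $\overline{F}$ of this resolution with $\colim_{\mathcal{I}}\mathpzc{cR}(F\circ G_X)$ computing $\mathbb{L}\colim_{\mathcal{I}}F(X_i)$. Your exposition is more detailed---explicitly invoking $\overline{F}\cong\overline{Id}_{\mathpzc{D}}\circ IF$ and discussing naturality and independence of presentation---but the argument is the same.
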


Dually we have:

\begin{prop}
 Let $\mathpzc{D}$ be an exact category, and let $F:\mathpzc{E}\rightarrow\mathpzc{D}$ be a functor. Suppose that $\mathpzc{D}$ is ML. Let $X=``\lim_{\mathcal{I}}"X_{i}$ be an object of $Pro(\mathpzc{E})$. Then there is a natural equivalence
 $$\mathbb{R}\overline{F}(X)\cong\mathbb{R}\lim_{\mathcal{I}^{op}}F(X_{i})$$
\end{prop}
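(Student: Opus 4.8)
The plan is to dualise the preceding proposition about $Ind$-categories and derived colimits, exploiting the fact that $Pro(\mathpzc{E}) = (Ind(\mathpzc{E}^{op}))^{op}$ and that every assertion about colimits and left derived functors translates into one about limits and right derived functors. First I would unwind the definitions: writing $X = ``\lim_{\mathcal{I}^{op}}"X_i$, I would consider a functor $G_X:\mathcal{I}^{op}\to Pro(\mathpzc{E})$ factoring through $\mathpzc{E}$ with $X\cong\lim G_X$. Since $Pro(\mathpzc{E})$ is a strongly ML category, filtered limits are exact in it, and hence the map $\lim \mathpzc{R}(G_X)\to X$ coming from the Roos resolution is a weak equivalence, i.e. $\mathbb{R}\lim G_X\cong X$ in $Ch(Pro(\mathpzc{E}))$.

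Next I would observe that each term of the Roos complex $\lim\mathpzc{R}(G_X)$ is a product of objects of $\mathpzc{E}$, hence lies in $\prod(\mathpzc{E})\subset Pro^{F-wfl}(\mathpzc{E})$, and is therefore $\overline{F}$-acyclic (this uses $\mathpzc{D}$ being ML together with the dual of Proposition \ref{prop:co-Mlacyc}, exactly as in the $Ind$-case). Since $\overline{F}$ commutes with the relevant products (by construction of $\overline{F}$ as $\lim_{\mathcal{I}^{op}}F(X_i)$, and using that $F$ is exact so $PF$ is exact), I get $\overline{F}(\mathpzc{R}(G_X))\cong\mathpzc{R}(F\circ G_X)$ as complexes in $Ch(\mathpzc{D})$. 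Chaining the equivalences:
$$\mathbb{R}\overline{F}(X)\cong\overline{F}\bigl(\lim\mathpzc{R}(G_X)\bigr)\cong\lim\mathpzc{R}(F\circ G_X)\cong\mathbb{R}\lim_{\mathcal{I}^{op}}F(X_i),$$
where the last equivalence is the definition of $\mathbb{R}\lim$ via Roos-acyclic resolutions in $\mathpzc{D}$ (Proposition \ref{prop:limithomequiv} and the surrounding discussion on deriving $\lim_{\mathcal{I}^{op}}$), valid since $\mathpzc{D}$ is ML.

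The main obstacle is verifying that $\overline{F}$ applied to the $Pro$-level Roos complex genuinely computes $\mathbb{R}\overline{F}(X)$, i.e. that the terms $\lim\mathpzc{R}_n(G_X)$ are $\overline{F}$-acyclic and that this acyclicity is what the left deformation of Proposition \ref{prop:probounded} detects. This requires knowing that $Ch_-(Pro^{F-wfl}(\mathpzc{E}))$ — or rather the unbounded very-special resolution class built from it — is a right deformation for $\overline{F}$, which is precisely the content of the dual of Proposition \ref{prop:indprobounded}; once that is in place, naturality in $X$ follows from the functoriality of the Roos construction. I would also need to be slightly careful that a cofinal/directed reindexing is permitted when passing between the abstract $Pro$-object $``\lim_{\mathcal{I}^{op}}"X_i$ and an honest inverse system, but the Remark on cofinal functors preserving weak (co)flasqueness handles this, so I do not expect it to cause real trouble.
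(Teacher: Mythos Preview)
Your proposal is correct and is precisely the dualisation of the paper's argument for the $Ind$-case: the paper proves the $Ind$-statement via the coRoos resolution and $\overline{F}$-acyclicity of coproducts of objects of $\mathpzc{E}$, then states the $Pro$-version with nothing more than ``Dually we have'', and what you have written is exactly that dual argument spelled out. The only quibble is that the paper's $Ind$-statement explicitly assumes $F$ exact (and this is used in identifying the terms as $\overline{F}$-acyclic), so you are right to invoke it even though the $Pro$-statement as printed omits the word.
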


\subsubsection{Essentially Monomorphic Objects}

In this section we shall assume that all categories have kernels, and are equipped with the standard left truncation system. 

\begin{defn}
$Ind^{m}(\mathpzc{E})$ is the full subcategory of $Ind(\mathpzc{E})$ consisting of those objects which are isomorphic to objects of the form $``\colim_{\mathcal{I}}"X_{i}$ where for $i\rightarrow j$ in $\mathcal{I}$, $X_{i}\rightarrow X_{j}$ is a monomorphism. 
\end{defn}

Again the proof in \cite{braunling2014tate} Section 3 that $Ind^{a}(\mathpzc{E})\subset \mathpzc{Lex}(\mathpzc{E}^{op},\mathpzc{Ab})$ is extension closed can be straightforwardly modified to show the following. 

\begin{prop}
$Ind^{m}(\mathpzc{E})$ is an extension closed subcategory of $Ind(\mathpzc{E})$. In particular it inherits an exact category structure from $Ind(\mathpzc{E})$. 
\end{prop}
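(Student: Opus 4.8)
The plan is to mimic, verbatim up to the evident dualizations, the argument of Braun\-ling--Groechenig from \cite{braunling2014tate} Section 3 which shows that $Ind^{a}(\mathpzc{E})$ is extension closed inside $\mathpzc{Lex}(\mathpzc{E}^{op},\mathpzc{Ab})$; the only change is that ``admissible monomorphism'' is replaced throughout by ``monomorphism'' (in the ambient abelian category $\mathpzc{Lex}(\mathpzc{E}^{op},\mathpzc{Ab})$ these are detected objectwise on representing diagrams). First I would recall that a short exact sequence in the exact structure on $Ind(\mathpzc{E})$ can, after passing to a common cofinal filtered index category $\mathcal{I}$, be represented by a short exact sequence of diagrams
\begin{displaymath}
\xymatrix{
0\ar[r] & \alpha\ar[r] & \beta\ar[r] & \gamma\ar[r] & 0
}
\end{displaymath}
in $\mathpzc{Fun}(\mathcal{I},\mathpzc{E})$, exact at each $i$. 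So suppose we are given a short exact sequence $0\to X\to Y\to Z\to 0$ in $Ind(\mathpzc{E})$ with $X$ and $Z$ in $Ind^{m}(\mathpzc{E})$; we must produce a representing diagram for $Y$ whose transition maps are monomorphisms.

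The key steps, in order, are: (1) choose representatives so that $X=``\colim_{\mathcal{I}}"X_{i}$ with all transition maps $X_{i}\to X_{j}$ monomorphisms, and similarly for $Z$, and arrange (by a cofinality/reindexing argument, exactly as in \cite{braunling2014tate}) that the whole short exact sequence is the colimit of a short exact sequence $0\to X_{i}\to Y_{i}\to Z_{i}\to 0$ in $\mathpzc{E}$ over a directed set $\mathcal{I}$; (2) for a transition $i\to j$, contemplate the commutative diagram of short exact sequences with vertical maps $X_{i}\to X_{j}$, $Y_{i}\to Y_{j}$, $Z_{i}\to Z_{j}$, where the outer two verticals are monomorphisms, and conclude by a diagram chase (the kernel-cokernel/snake argument, or simply that a map of short exact sequences that is mono on the sub and the quotient is mono on the middle) that $Y_{i}\to Y_{j}$ is a monomorphism; hence $Y\cong``\colim_{\mathcal{I}}"Y_{i}$ exhibits $Y\in Ind^{m}(\mathpzc{E})$. (3) Finally invoke Quillen's criterion (an extension-closed full additive subcategory of an exact category, closed under the relevant admissible monos and epis, inherits an exact structure), noting as in the statement that the ambient category is $Ind(\mathpzc{E})$ rather than $\mathpzc{Lex}(\mathpzc{E}^{op},\mathpzc{Ab})$, which is harmless since $Ind(\mathpzc{E})$ is itself extension closed in $\mathpzc{Lex}(\mathpzc{E}^{op},\mathpzc{Ab})$ and $Ind^{m}(\mathpzc{E})\subset Ind(\mathpzc{E})\subset\mathpzc{Lex}(\mathpzc{E}^{op},\mathpzc{Ab})$.

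The main obstacle is the bookkeeping in step (1): passing from three \emph{a priori} unrelated filtered diagrams $\alpha,\beta,\gamma$ representing $X,Y,Z$ to a single directed index category over which the extension is levelwise short exact \emph{and} the sub/quotient diagrams still have monomorphic transition maps. This is precisely the technical content of the Braun\-ling--Groechenig argument, and I would import it essentially unchanged, remarking only that since monomorphisms (unlike admissible monomorphisms) are automatically stable under the pushout/pullback manipulations used in that reindexing — indeed in any category monomorphisms compose and are right-cancellable — the ``mutatis mutandis'' really is routine here; if anything it is easier than the $Ind^{a}$ case, because no appeal to the obscure axiom is needed. Step (2)'s diagram chase is then immediate, and step (3) is a citation to \cite{Buehler}.
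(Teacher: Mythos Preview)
Your proposal is correct and matches the paper's approach exactly: the paper's entire proof is the single sentence ``Again the proof in \cite{braunling2014tate} Section 3 that $Ind^{a}(\mathpzc{E})\subset \mathpzc{Lex}(\mathpzc{E}^{op},\mathpzc{Ab})$ is extension closed can be straightforwardly modified to show the following,'' which is precisely what you have spelled out in more detail. Your identification of the reindexing step (1) as the only nontrivial point, with step (2) reducing to the elementary fact that a morphism of short exact sequences which is monic on the outer terms is monic in the middle, is accurate; the one caveat is that your aside about monomorphisms being ``stable under the pushout/pullback manipulations'' is a little loose (monomorphisms need not be pushout-stable in a general exact category), but since the argument is ultimately carried out inside the abelian category $\mathpzc{Lex}(\mathpzc{E}^{op},\mathpzc{Ab})$ and reflected back along the fully faithful Yoneda embedding, this causes no trouble.
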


Since $Ind^{a}(\mathpzc{E})$ is a full subcategory of $Ind^{m}(\mathpzc{E})$ a proof identical to the one for Proposition \ref{prop:indprobounded} gives the following.

\begin{prop}\label{prop:mindprobounded}
 Let $\mathpzc{D}$ be co-ML, and let $F:\mathpzc{E}\rightarrow\mathpzc{D}$ be an exact functor. Then the functor
 $$\overline{F}:Ch(Ind^{m}(\mathpzc{E}))\rightarrow Ch(\mathpzc{D})$$
 is left deformable. Moreover if $\mathpzc{D}$ is strongly co-ML then for any $X\in Ind^{m}(\mathpzc{E})$, $\mathbb{L}\overline{F}(X)$ is quasi-isomorphic to a complex concentrated in degrees $0,1$.
\end{prop}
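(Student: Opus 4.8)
The plan is to reduce the statement to Proposition~\ref{prop:indprobounded} by observing that its proof only uses that $Ch_{+}(Ind^{a}(\mathpzc{E}))$ is a homological left deformation of $Ch_{+}$ of the ambient category, together with the resolution from Proposition~\ref{prop:ind-resolution}, and that both of these facts persist when $Ind(\mathpzc{E})$ is replaced by $Ind^{m}(\mathpzc{E})$. First I would record that $Ind^{a}(\mathpzc{E})$ is a full subcategory of $Ind^{m}(\mathpzc{E})$: an admissible monomorphism in $Ind(\mathpzc{E})$ is in particular a monomorphism, so a system with admissibly monomorphic transition maps is a system with monomorphic transition maps. By the preceding Proposition, $Ind^{m}(\mathpzc{E})$ is extension-closed in $Ind(\mathpzc{E})$, hence inherits an exact structure, and the inclusion $Ind^{a}(\mathpzc{E})\hookrightarrow Ind^{m}(\mathpzc{E})$ is exact and reflects exactness.

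Next I would check that the resolution of Proposition~\ref{prop:ind-resolution} lands in $Ind^{m}(\mathpzc{E})$. For $X=``\colim_{\mathcal{I}}"X_{i}\in Ind^{m}(\mathpzc{E})$ the proof there produces $0\to K\to \overline{X}\to X\to 0$ with $\overline{X}\in\coprod(\mathpzc{E})\subset Ind^{a}(\mathpzc{E})\subset Ind^{m}(\mathpzc{E})$ and $K\in Ind^{a}(\mathpzc{E})\subset Ind^{m}(\mathpzc{E})$; the construction of $\overline{X}$ and $K$ there did not use anything about the transition maps of $X$ beyond the colimit presentation, so it applies verbatim. Hence $Ch_{+}(\coprod(\mathpzc{E}))$ and $Ch_{+}(Ind^{a}(\mathpzc{E}))$ are homological left deformations of $Ch_{+}(Ind^{m}(\mathpzc{E}))$. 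Since $\mathpzc{E}=\mathcal{S}$ is (trivially) left functorially adapted to the exact functor $F$, the subcategory $Ind^{F\text{-}wcfl}(\mathpzc{E})\cap Ind^{m}(\mathpzc{E})=Ind^{F\text{-}wcfl}(\mathpzc{E})$ (it already consists of $Ind^{a}$-objects) is left functorially adapted to $\overline{F}:Ind^{m}(\mathpzc{E})\to\mathpzc{D}$ by exactly the argument in Proposition~\ref{prop:indprobounded}: exact sequences in $Ind^{F\text{-}wcfl}(\mathpzc{E})$ go to exact sequences in $\mathpzc{D}$ because $\overline{F}\cong\overline{Id}_{\mathpzc{D}}\circ IF$ and $\mathpzc{D}$ is co-ML. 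One also needs a left truncation system on $Ind^{m}(\mathpzc{E})$, which exists since $Ind^{m}(\mathpzc{E})$ has kernels (we are assuming all categories have kernels here), and $\mathpzc{D}$ weakly $(\omega,\textbf{AdMon})$-elementary, which follows from co-ML; so Theorem~\ref{lem:unboundeddeform} applies and gives left deformability of $\overline{F}:Ch(Ind^{m}(\mathpzc{E}))\to Ch(\mathpzc{D})$.

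For the final sentence, when $\mathpzc{D}$ is strongly co-ML and $X\in Ind^{m}(\mathpzc{E})$, the resolution $0\to K\to\overline{X}\to X\to 0$ with $K,\overline{X}\in Ind^{a}(\mathpzc{E})$ consists of $\overline{F}$-acyclic objects (coproducts and $Ind^{a}$-objects are weakly co-flasque after applying $F$, and strong co-ML makes their derived colimit agree with their colimit, by Proposition~\ref{prop:co-Mlacyc}), so $\mathbb{L}\overline{F}(X)$ is computed by the two-term complex $[\overline{F}(K)\to\overline{F}(\overline{X})]$ with $\overline{F}(K)$ in degree $1$, exactly as in the last paragraph of the proof of Proposition~\ref{prop:indprobounded}.

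The only genuinely substantive point — and the step I expect to need the most care — is verifying that the resolution from Proposition~\ref{prop:ind-resolution} genuinely stays inside $Ind^{m}(\mathpzc{E})$ and that $Ind^{F\text{-}wcfl}(\mathpzc{E})$, defined relative to $Ind(\mathpzc{E})$, is still cofinal/adapted when we restrict the ambient category to $Ind^{m}(\mathpzc{E})$; everything else is a transcription of the proof of Proposition~\ref{prop:indprobounded} with $Ind(\mathpzc{E})$ replaced throughout by $Ind^{m}(\mathpzc{E})$.
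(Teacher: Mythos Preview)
Your proposal is correct and follows essentially the same approach as the paper: the paper's own proof is the one-line remark that, since $Ind^{a}(\mathpzc{E})$ is a full subcategory of $Ind^{m}(\mathpzc{E})$, the proof of Proposition~\ref{prop:indprobounded} goes through verbatim, and you have simply unpacked that remark in detail. The only minor imprecision is that for the final claim you should invoke the definition of strongly co-ML directly (exactness of $\colim$ on $\mathpzc{Fun}_{\textbf{AdMon}}$) rather than Proposition~\ref{prop:co-Mlacyc}, since $Ind^{a}$-presentations need not be weakly co-flasque after applying $F$.
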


\begin{prop}\label{prop:indcolimiderived}
Let $\mathpzc{D}$ be co-ML, and let $F:\mathpzc{E}\rightarrow\mathpzc{D}$ be an exact functor. Let $X=``\colim_{\mathcal{I}}"X_{i}$ be an object of $Ind^{m}(\mathpzc{E})$. Then there is a natural equivalence
 $$\mathbb{L}\overline{F}(X)\cong\mathbb{L}\colim_{\mathcal{I}}F(X_{i})$$
\end{prop}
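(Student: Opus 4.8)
The plan is to reduce the claim to the already-established comparison between $\mathbb{L}\overline{F}$ computed via a $\coprod(\mathpzc{E})$-resolution and the derived colimit, using the fact that any $X\in Ind^m(\mathpzc{E})$ admits a presentation by a monomorphic diagram which, after applying the exact functor $F$ (hence $\overline F$), becomes weakly co-flasque in $\mathpzc{D}$.

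First I would fix a filtered category $\mathcal{I}$ and a functor $X_\bullet\colon\mathcal{I}\to\mathpzc{E}$ with $X_i\to X_j$ a monomorphism for every $i\to j$, and $X\cong``\colim_{\mathcal{I}}"X_i$. By passing to a cofinal directed subcategory (and using the Remark that restriction along a cofinal functor preserves the relevant co-flasqueness, and that this does not change the colimit) I may assume $\mathcal{I}$ is a directed set. Since $F$ is exact, the diagram $F\circ X_\bullet\colon\mathcal{I}\to\mathpzc{D}$ has the property that each $F(X_i)\to F(X_j)$ is... well, one must be a little careful: $F$ exact means it preserves admissible monomorphisms, whereas in $Ind^m$ the transition maps are only monomorphisms in $Ind(\mathpzc{E})$. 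The honest route, which avoids this issue entirely, is to use Proposition \ref{prop:mindprobounded}: for $X\in Ind^m(\mathpzc{E})$ there is a resolution $0\to K\to\overline X\to X\to 0$ with $K,\overline X\in Ind^a(\mathpzc{E})$, and more precisely $\overline X$ is a coproduct of objects of $\mathpzc{E}$, so $\overline F(\overline X)$ and $\overline F(K)$ are computed correctly and $\mathbb{L}\overline F(X)$ is the two-term complex $\overline F(K)\to\overline F(\overline X)$.

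The key step is then the following identification, carried out exactly as in the discussion preceding the Proposition for general $Ind(\mathpzc{E})$: choose the coRoos complex $\mathpzc{cR}(X_\bullet)$ of the diagram $X_\bullet$ \emph{in $Ind(\mathpzc{E})$}; because filtered colimits are exact in $Ind(\mathpzc{E})$, the augmented complex $\colim\mathpzc{cR}(X_\bullet)\to X$ is a quasi-isomorphism, and each term of $\colim\mathpzc{cR}(X_\bullet)$ is a coproduct of objects of $\mathpzc{E}$, hence lies in $\coprod(\mathpzc{E})\subset Ind^{F\text{-}wcfl}(\mathpzc{E})$, so each term is $\overline F$-acyclic. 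Therefore $\mathbb{L}\overline F(X)\cong\overline F(\colim\mathpzc{cR}(X_\bullet))$. Now $\overline F\cong\overline{Id}_{\mathpzc{D}}\circ IF$, and $IF$ commutes with the formation of coRoos complexes and with filtered colimits term by term, so $\overline F(\colim\mathpzc{cR}(X_\bullet))\cong\colim\mathpzc{cR}(F\circ X_\bullet)$, the colimit of the coRoos complex of $F\circ X_\bullet$ computed in $\mathpzc{D}$. Since $\mathpzc{D}$ is co-ML and each term of $\mathpzc{cR}(F\circ X_\bullet)$ is a coproduct, hence weakly co-flasque, the comparison $\mathbb{L}\colim_{\mathcal{I}}(F\circ X_\bullet)\to\colim_{\mathcal{I}}(F\circ X_\bullet)$ applied to the resolution $\mathpzc{cR}(F\circ X_\bullet)$ (using Proposition \ref{prop:co-Mlacyc}) shows $\colim\mathpzc{cR}(F\circ X_\bullet)\cong\mathbb{L}\colim_{\mathcal{I}}F(X_i)$. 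Stringing these equivalences together gives $\mathbb{L}\overline F(X)\cong\mathbb{L}\colim_{\mathcal{I}}F(X_i)$, and naturality is inherited from the functoriality of the coRoos construction and of $IF$.

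The main obstacle I anticipate is book-keeping around the exactness hypotheses: one must be sure that the coRoos complex of $X_\bullet$ in $Ind(\mathpzc{E})$ really is an $\overline F$-acyclic resolution, i.e. that its terms — which are \emph{products/coproducts indexed by the simplices of $\mathcal{I}$} — indeed land in $\coprod(\mathpzc{E})$ and hence in $Ind^{F\text{-}wcfl}(\mathpzc{E})$, and that applying $\overline F=\overline{Id}_{\mathpzc{D}}\circ IF$ commutes with all the colimits in sight (which is where exactness of filtered colimits in $Ind(\mathpzc{E})$ and co-ML-ness of $\mathpzc{D}$ are used). Everything else is a formal concatenation of equivalences already available in the text; the argument is essentially identical to the proof of the corresponding statement for $Ind(\mathpzc{E})$, with $Ind^m(\mathpzc{E})$ playing no role beyond guaranteeing, via Proposition \ref{prop:mindprobounded}, that $\overline F$ is left deformable on $Ch(Ind^m(\mathpzc{E}))$ in the first place.
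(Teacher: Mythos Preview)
Your proposal is correct and follows essentially the same approach as the paper. The paper does not give a separate proof for this proposition; it is stated immediately after the analogous statement for $Ind(\mathpzc{E})$, whose proof (given in the paragraph preceding that proposition) is precisely the coRoos-complex argument you reproduce: use exactness of filtered colimits in $Ind(\mathpzc{E})$ to see that $\colim\mathpzc{cR}(G_X)\to X$ is an equivalence, observe each term is a coproduct of objects of $\mathpzc{E}$ hence $\overline{F}$-acyclic, and conclude $\mathbb{L}\overline{F}(X)\cong\overline{F}(\colim\mathpzc{cR}(G_X))\cong\colim\mathpzc{cR}(F\circ G_X)\cong\mathbb{L}\colim_{\mathcal{I}}F(X_i)$. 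Your observation that the $Ind^m$ hypothesis enters only through Proposition~\ref{prop:mindprobounded} (ensuring $\overline{F}$ is left deformable on $Ch(Ind^m(\mathpzc{E}))$) is exactly right, and your initial detour through the two-term resolution, while unnecessary, does no harm.
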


\subsubsection{Right Deriving Functors of Ind- Categories}

In the previous sections we showed that if $\mathpzc{E}$ is an exact category, then $Ind(\mathpzc{E})$ and $Ind^{m}(\mathpzc{E})$ are, essentially completely formally, weakly co-ML categories, with the former being strongly co-ML. Consequently it is often relatively easy to construct unbounded left derived functors into and out of such categories. In this section we deal briefly with the far less formal matter of right-deriving functors of Ind- categories. The major issues are that projective limits in these categories may not exist, and if they do there is no guarantee that they will satisfy any exactness properties. One way to solve the latter problem is with enough projectives.

\begin{prop}
If an exact category $\mathpzc{E}$ has enough projectives then so do $Ind(\mathpzc{E})$ and $Ind^{m}(\mathpzc{E})$. In particular they are weakly ML
\end{prop}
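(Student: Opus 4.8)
The plan is to establish two things: first, that if $\mathpzc{E}$ has enough projectives then the projectives of $\mathpzc{E}$, viewed inside $Ind(\mathpzc{E})$ (resp.\ $Ind^{m}(\mathpzc{E})$) via the Yoneda embedding, remain projective there and generate enough of the category; and second, to deduce ``weakly ML'' from ``enough projectives'' by appealing to Proposition~\ref{prop:enoughprojml}(2). For the first point, I would recall that the Yoneda embedding $y:\mathpzc{E}\rightarrow Ind(\mathpzc{E})$ is exact and, crucially, that $\Hom_{Ind(\mathpzc{E})}(yP, ``\colim_{\mathcal{I}}"X_i)\cong\colim_{\mathcal{I}}\Hom_{\mathpzc{E}}(P,X_i)$ for any $P\in\mathpzc{E}$ — this is the defining property of $Ind$-objects as filtered colimits of representables. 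If $P$ is projective in $\mathpzc{E}$, then given an admissible epimorphism $Y\twoheadrightarrow Z$ in $Ind(\mathpzc{E})$, by the description of exact sequences in $Ind(\mathpzc{E})$ (the proposition following the definition of the indization) it is, after passing to a cofinal directed index category, the filtered colimit of admissible epimorphisms $Y_i\twoheadrightarrow Z_i$ in $\mathpzc{E}$; since $\Hom(P,-)$ commutes with filtered colimits and each $\Hom(P,Y_i)\rightarrow\Hom(P,Z_i)$ is surjective, and a filtered colimit of surjections of abelian groups is a surjection, $\Hom(yP,Y)\rightarrow\Hom(yP,Z)$ is surjective. Hence $yP$ is projective in $Ind(\mathpzc{E})$, and the same computation works verbatim in $Ind^{m}(\mathpzc{E})$ since it is a full exact subcategory closed under the relevant extensions.

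Next I would show there are enough such projectives. Given an object $X=``\colim_{\mathcal{I}}"X_i\in Ind(\mathpzc{E})$, for each $i$ pick an admissible epimorphism $P_i\twoheadrightarrow X_i$ with $P_i$ projective in $\mathpzc{E}$. One wants to assemble these into an admissible epimorphism onto $X$ from an object built out of the $yP_i$. The clean way is to mimic the construction in the proof of Proposition~\ref{prop:ind-resolution}: first resolve $X$ by a coproduct $\overline{X}=\bigoplus_{i\in\mathcal{I}}X_i\twoheadrightarrow X$ (an admissible epimorphism, by that proof), then resolve each summand by $yP_i\twoheadrightarrow X_i$ and take $\bigoplus_i yP_i\twoheadrightarrow\bigoplus_i X_i$, which is admissible since coproducts of admissible epimorphisms are admissible in an exact category with the relevant coproducts; composing gives an admissible epimorphism $\bigoplus_i yP_i\twoheadrightarrow X$, and $\bigoplus_i yP_i$ is projective as a coproduct of projectives. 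For $Ind^{m}(\mathpzc{E})$ one argues the same way, noting $\coprod(\mathpzc{E})\subset Ind^{a}(\mathpzc{E})\subset Ind^{m}(\mathpzc{E})$.

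Finally, ``weakly ML'' is by definition the statement (Definition~\ref{defn:ML}(2)) that for every filtered $\mathcal{I}$ the functor $\lim:Ch_{\le0}(\mathpzc{Fun}^{wfl}_{\textbf{AdEpi}}(\mathcal{I}^{op},-))\rightarrow Ch_{\le0}(-)$ sends term-wise acyclic complexes to acyclic complexes. But this is precisely Proposition~\ref{prop:enoughprojml}(2) applied to the category $Ind(\mathpzc{E})$ (resp.\ $Ind^{m}(\mathpzc{E})$), whose hypothesis — existence of projective limits and enough projectives — we have just verified (completeness being part of the standing assumption here, or one restricts to the $wfl$-diagrams for which the relevant limits are being computed, exactly as in the statement of Proposition~\ref{prop:enoughprojml}). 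The main obstacle I anticipate is the bookkeeping in the ``enough projectives'' step: one must be careful that the cofinality reductions used to describe admissible epimorphisms in $Ind(\mathpzc{E})$ are compatible across all the $X_i$ simultaneously, which is why routing everything through the coproduct resolution of Proposition~\ref{prop:ind-resolution} — rather than trying to resolve the diagram $\mathcal{I}\rightarrow\mathpzc{E}$ term-by-term and take the colimit — is the safer path.
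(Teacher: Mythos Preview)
The paper states this proposition without proof, so there is nothing to compare against directly. Your argument is correct and is precisely the one the surrounding machinery is set up to deliver: projectivity of $yP$ in $Ind(\mathpzc{E})$ follows from compactness of representables together with the description of admissible epimorphisms as filtered colimits of admissible epimorphisms in $\mathpzc{E}$; the ``enough projectives'' step is essentially Proposition~\ref{prop:indsdef} with $\mathcal{S}$ the class of projectives (your route via Proposition~\ref{prop:ind-resolution} is the same construction); and the passage to ``weakly ML'' is Proposition~\ref{prop:enoughprojml}(2).

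The one point you flag yourself is worth making explicit, because it is a genuine lacuna in the paper rather than in your write-up. Proposition~\ref{prop:enoughprojml}(2) is stated under a completeness hypothesis, and the paper says immediately after this proposition that completeness of $Ind(\mathpzc{E})$ and $Ind^{m}(\mathpzc{E})$ is ``a trickier question''. The definition of \emph{weakly} ML only asks that $\lim$ exist on weakly flasque diagrams and preserve acyclicity there; the proof of Proposition~\ref{prop:enoughprojml}(2) goes through verbatim once those particular limits exist, since it only uses $\Hom(P,\lim X_i)\cong\lim\Hom(P,X_i)$ for projective $P$. But neither the paper nor your proposal actually verifies that limits of weakly flasque diagrams exist in $Ind(\mathpzc{E})$ without further hypotheses. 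You are right to note this as the main obstacle; it would be cleanest either to add a standing completeness hypothesis or to argue separately that such limits exist.
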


Whether the categories $Ind(\mathpzc{E})$ or $Ind^{m}(\mathpzc{E})$ are complete is a trickier question. However the situation is simpler when $\mathpzc{E}$ is quasi-abelian. Indeed let $\mathpzc{E}$ be a quasi-abelian category with enough projectives. Then $Ind(\mathpzc{E})$ is an elementary quasi-abelian category in the terminology of \cite{qacs} Definition 2.1.10. In particular it is complete by \cite{qacs} Proposition 2.1.15. Suppose now we equip $\mathpzc{E}$ with an exact structure $\mathcal{Q}$ which may not coincide with the quasi-abelian one, but which still has enough projectives. Since $Ind(\mathpzc{E})$ is complete and ML, for any filtered category $\mathcal{I}$ we get unbounded derived limit functors
$$\mathbb{R}\lim:Ch(\mathpzc{Fun}(\mathcal{I}^{op},Ind(\mathpzc{E})))\rightarrow Ch(Ind(\mathpzc{E}))$$
Furthermore if $Ind^{m}(\mathpzc{E})$ is cocomplete, then it is a reflective subcategory of $Ind(\mathpzc{E})$. Indeed a left adjoint to the inclusion $Ind^{m}(\mathpzc{E})\rightarrow Ind(\mathpzc{E})$ is given by sending $``\colim_{\mathcal{I}}"X_{i}$ to the colimit  $\colim_{\mathcal{I}}X_{i}$ computed in $Ind^{m}(\mathpzc{E})$. Therefore it  is complete, and therefore it is both ML and strongly co-ML. Once again we get an unbounded derived limit functor.
$$\mathbb{R}\lim:Ch(\mathpzc{Fun}(\mathcal{I}^{op},Ind^{m}(\mathpzc{E})))\rightarrow Ch(Ind^{m}(\mathpzc{E}))$$

\begin{example}\label{ex:locallysplit}
Let $\mathpzc{E}$ be a quasi-abelian category with enough projectives. Following \cite{mukherjee2020topological} Chapter 2, Section 3, the \textbf{locally split exact structure} on $Ind(\mathpzc{E})$/ $Ind^{m}(\mathpzc{E})$ is the indization of the split exact structure on $\mathpzc{E}$. Moreover in the split exact structure on $\mathpzc{E}$ all objects are projective, so both $Ind(\mathpzc{E})$ and $Ind^{m}(\mathpzc{E})$ have enough projectives. Thus if $Ind(\mathpzc{E})$ (resp. $Ind^{m}(\mathpzc{E})$)is complete then it is ML.
\end{example}

\section{Example: Bornological and Locally Convex Modules}

In this section we apply our results to examples from functional analysis, specifically categories of bornological spaces and locally convex topological spaces. Homological algebra for functional analysis has been developed in detail in e.g. \cite{reconstruction}, \cite{dcfapp}, \cite{qacs}, \cite{prosmans2000homological}, \cite{wengenroth2003derived}.

\begin{defn}
A \textbf{semi-normed ring} is a unital commutative ring $R$, equipped with a function $|-|:R\rightarrow\mathbb{R}_{\ge0}$ such that for $s,r\in R$
\begin{enumerate}
\item
$|s+r|\le |s|+|r|$
\item
$|sr|\le |s||r|$
\item
$|0|=0$. 
\end{enumerate}
A semi-normed ring is said to be a \textbf{normed ring} if $|r|=0\Rightarrow r=0$. A normed ring is said to be a \textbf{Banach ring} if it is complete as a metric space, with the metric induced by $|-|$. 
\end{defn}

For example any discrete valuation ring $R$ is naturally a semi-normed ring. 

\begin{defn}
Let $R$ be a semi-normed ring. A \textbf{semi-normed } $R$-\textbf{module} is a $R$-module $M$, equipped with a function $||-||:M\rightarrow\mathbb{R}_{\ge0}$ such that there is $C_{M}\ge0$, such that for all $r\in R,m,n\in M$
\begin{enumerate}
\item
$||rm||\le C_{M}|r|||m||$
\item
$||m+n||\le  ||m||+||n||$
\item
$||0||=0$
\end{enumerate}
A map $f:M\rightarrow N$ of semi-normed $R$-modules is said to be \textbf{bounded} if there is $C_{f}\ge 0$ such that for all $m\in M$,
$$||f(m)||\le C_{f}||m||$$
\end{defn}
Semi-normed $R$-modules arrange into a category $\sNorm_{R}$. 

\begin{defn}
An object $V\in\sNorm_{R}$ is said to be \textbf{torsion-free} if it is torsion-free as an $R$-module. The full subcategory of $\sNorm_{R}$ consisting of torsion-free modules is denoted $\sNorm_{R}^{tf}$. 
\end{defn}

We are gong to show that $\sNorm_{R}$ and $\sNorm_{R}^{tf}$ are quasi-abelian categories with enough projectives. For a collection $\{M_{i}\}_{i\in\mathcal{I}}$ of semi-normed $R$-modules, we denote by $\coprod^{\le1}_{i\in\mathcal{I}}M_{i}$ the semi-normed $R$-module whose underlying $R$-module is $\coprod_{i\in\mathcal{I}}M_{i}$, with semi-norm $\rho((m_{i}))=\sum_{i\in\mathcal{I}}\rho_{i}(m_{i})$, where $\rho_{i}$ is the semi-norm on $M_{i}$. For a semi-normed $R$-module $M$,  and $\epsilon>0$, we also denote by $M_{\epsilon}$ the semi-normed $R$-module with $\rho_{M_{\epsilon}}\defeq\epsilon\rho_{M}$. Finally, we denote by $M^{\rightarrow 0}$ the module $\coprod^{\le1}_{\epsilon\in\mathbb{R}_{>0}}M_{\epsilon}$. The details of the following will be given in forthcoming work. The proof of existence of projectives is very similar to \cite{qacs} Proposition 3.2.11.

\begin{prop}
Both $\sNorm_{R}$ and $\sNorm_{R}^{tf}$ are quasi-abelian categories with enough functorial projectives. Moreover a sequence is exact in $\sNorm_{R}^{tf}$ if and only if it is exact in $\sNorm_{R}$.
\end{prop}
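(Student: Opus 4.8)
The plan is to establish three things in turn: that $\sNorm_R$ is quasi-abelian, that it has enough functorial projectives, and that the analogous statements hold for $\sNorm_R^{tf}$ together with the comparison of exact structures. For the quasi-abelian claim, I would first recall that $\sNorm_R$ is additive, and that kernels and cokernels exist: the kernel of $f\colon M\to N$ is the submodule $f^{-1}(0)$ with the restricted semi-norm, and the cokernel is $N/\overline{\im f}$ (or $N/\im f$; since we allow semi-norms rather than norms one must be slightly careful which is correct, but in either case one gets a semi-normed module with the quotient semi-norm). One then checks that pushouts of admissible monomorphisms along arbitrary maps are again admissible monomorphisms and dually for admissible epimorphisms; concretely, an admissible monomorphism is a bounded injection which is a homeomorphism onto its image up to equivalence of semi-norms (a ``strict'' monomorphism), and admissible epimorphisms are the strict surjections. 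The stability axioms are verified by the usual explicit construction of pushouts/pullbacks in categories of normed-type objects, exactly as in the quasi-abelian treatments of Banach or Fréchet modules.

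For enough projectives, I would imitate \cite{qacs} Proposition 3.2.11. The key gadget is the ``free'' semi-normed module on a set with prescribed bounds: for $M\in\sNorm_R$ one forms a coproduct $\coprod^{\le 1}$ of copies of $R$ (or of shifts $R_\epsilon$) indexed by the elements of the unit ball of $M$ at all scales, i.e. something like $P(M)\defeq \big(\coprod^{\le 1}_{m\in M} R_{1/(\|m\|+\epsilon)}\big)^{\rightarrow 0}$ or more simply a coproduct of rescaled copies $R^{\rightarrow 0}$; the canonical evaluation map $P(M)\to M$ is then a strict (admissible) epimorphism by construction, since every element of $M$ is hit with controlled norm. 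It remains to see that $R^{\rightarrow 0}$, and hence each $\coprod^{\le 1}$ of its rescalings, is projective with respect to admissible epimorphisms: given a strict surjection $g\colon A\to B$ and a map $R_\epsilon\to B$, the image of $1$ lifts to some $a\in A$ with $\|a\|$ controlled by the openness estimate for $g$, and the $\rightarrow 0$ direct-sum trick absorbs the loss of constant, so the lift is bounded. Functoriality of $M\mapsto P(M)$ is immediate from the construction since everything is defined in terms of the underlying set/module and the semi-norm.

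For $\sNorm_R^{tf}$, the point is that the full subcategory of torsion-free objects is closed under the relevant operations. Subobjects of torsion-free modules are torsion-free, so kernels computed in $\sNorm_R$ stay in $\sNorm_R^{tf}$; for cokernels one must instead take the cokernel in $\sNorm_R$ and then quotient by its torsion submodule (equivalently, one works with the torsion-free quotient), and one checks this yields the cokernel in $\sNorm_R^{tf}$ with the quotient semi-norm, and that the resulting class of kernel-cokernel pairs is still a Quillen exact structure; alternatively, $\sNorm_R^{tf}$ is a reflective subcategory of $\sNorm_R$ and one transports the structure. The free modules $R^{\rightarrow 0}$ and their rescaled coproducts are already torsion-free (as $R$ is a domain in the cases of interest, or more precisely since we only need them torsion-free as $R$-modules), so the same $P(M)$ works and gives enough functorial projectives in $\sNorm_R^{tf}$. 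Finally, for the comparison of exactness: a sequence in $\sNorm_R^{tf}$ is exact iff it is a kernel-cokernel pair there, and since the inclusion $\sNorm_R^{tf}\hookrightarrow\sNorm_R$ preserves kernels and, on torsion-free objects, also the relevant cokernels (the cokernel of a map between torsion-free modules in $\sNorm_R$ is automatically torsion-free when the map is a strict monomorphism, because the quotient of a torsion-free module by a ``saturated'' submodule is torsion-free), the two notions of exactness coincide.

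The main obstacle I anticipate is the book-keeping around semi-norms versus norms in the cokernel: one has to pin down whether the cokernel of $f$ is $N/\im f$ or $N/\overline{\im f}$, verify that with this choice strict epimorphisms are stable under pullback, and — for the torsion-free category — confirm that passing to the torsion-free quotient is compatible with the semi-norm quotient and does not destroy strictness of the maps in a kernel-cokernel pair. This is exactly the kind of routine-but-delicate verification that \cite{qacs} carries out for Banach and Fréchet modules, which is why the statement defers the details to forthcoming work; I would structure the argument to isolate this single lemma (``in $\sNorm_R$, strict mono/strict epi are stable under pushout/pullback, and the torsion-free quotient of a strict monomorphism is again a strict monomorphism'') and reduce everything else to it.
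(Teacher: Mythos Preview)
Your proposal is essentially correct and follows the same outline as the paper: defer the quasi-abelian verification to existing literature, build explicit projectives in $\sNorm_R$ by a $\coprod^{\le 1}$ of rescaled copies of $R$ as in \cite{qacs} Proposition 3.2.11, and then transfer everything to the torsion-free subcategory. The paper's $P(M)$ is slightly more explicit than yours (it separates the cases $\rho_M(m)\neq 0$ and $\rho_M(m)=0$, using $R_{\rho_M(m)}$ for the former and $R^{\rightarrow 0}$ for the latter), but the idea is identical.

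There is one genuine difference worth noting. For projectives in $\sNorm_R^{tf}$ you argue that the free objects $P(M)$ are already torsion-free, hedging with ``as $R$ is a domain in the cases of interest''. The paper instead observes that the inclusion $\sNorm_R^{tf}\hookrightarrow\sNorm_R$ has an exact left adjoint $(-)^{tf}$ (quotient by torsion), so that $P^{tf}$ is projective in $\sNorm_R^{tf}$ for any projective $P$ in $\sNorm_R$, and $P(M)^{tf}\to M$ remains an admissible epimorphism. This is cleaner: it avoids any hypothesis on $R$, and it also gives you the reflective-subcategory structure for free, which is exactly what you need to make the exactness comparison go through without the ``saturated submodule'' argument you sketch at the end (which, as stated, conflates strictness of a monomorphism with saturation of its image). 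Your direct observation is in fact also correct --- $R$ is always torsion-free over itself and $\coprod^{\le 1}$ preserves this --- so no domain hypothesis is needed; but the left-adjoint argument is the more robust packaging.
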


\begin{proof}[Sketch proof]
The proof that they are quasi-abelian is essentially Lemma 2.3.2 in \cite{mukherjee2020topological}. One simply has to observe that in all the constructions there everything remains torsion-free. Let us prove first that $\sNorm_{R}$ has enough functorial projectives. Let $M$ be a semi-normed $R$-module. Consider the $R$-module $P(M)$ defined by
$$P(M)\defeq\Bigr(\coprod^{\le1}_{m:\rho_{M}(m)\neq0}R_{\rho_{M}(m)}\Bigr)\oplus\coprod^{\le1}_{m:\rho_{M}(m)=0}R^{\rightarrow 0}$$
Then the obvious projection map $P(M)\rightarrow M$ is an admissible epimorphism. Now the finclusion functor $\sNorm_{R}^{tf}\rightarrow\sNorm_{R}$ is exact. Moreover it has a left adjoint $(-)^{tf}$ given by quotienting out by the torsion submodule. Thus for any projective $R$-module $P$, $P^{tf}$ is a projective torsion-free $R$-module. So if $M\in\sNorm_{R}^{tf}$, then we get an admissible epimorphism $P(M)^{tf}\rightarrow M$ where $P(M)^{tf}$ is projective. 
\end{proof}

\subsection{Bornological $R$-Modules}

Let $R$ be a semi-normed ring.

\begin{defn}
\begin{enumerate}
\item
The category of \textbf{bornological} $R$-\textbf{modules} is defined to be $\mathpzc{Born}_{R}\defeq Ind^{m}(\sNorm_{R})$. 
\item
The category of \textbf{torsion-free bornological} $k$-\textbf{modules} is defined to be $\mathpzc{Born}^{tf}_{R}\defeq Ind^{m}(\sNorm^{tf}_{R})$. 
\end{enumerate}
\end{defn}

\begin{prop}
The categories $\mathpzc{Born}_{R}$ and $\mathpzc{Born}^{tf}_{R}$ are complete and cocomplete.
\end{prop}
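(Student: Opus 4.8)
The plan is to reduce everything to the properties of $Ind(\sNorm_{R})$ and $Ind(\sNorm^{tf}_{R})$ that are already available, and then transfer completeness and cocompleteness along the full inclusions $\mathpzc{Born}_{R}=Ind^{m}(\sNorm_{R})\hookrightarrow Ind(\sNorm_{R})$ and $\mathpzc{Born}^{tf}_{R}=Ind^{m}(\sNorm^{tf}_{R})\hookrightarrow Ind(\sNorm^{tf}_{R})$. By the preceding proposition, $\sNorm_{R}$ (resp. $\sNorm^{tf}_{R}$) is a quasi-abelian category with enough projectives, so in particular it is finitely cocomplete; as recalled in the discussion before Example \ref{ex:locallysplit}, $Ind(\sNorm_{R})$ (resp. $Ind(\sNorm^{tf}_{R})$) is then an elementary quasi-abelian category in the sense of \cite{qacs}, hence complete by \cite{qacs} Proposition 2.1.15, and cocomplete because $Ind$ of a finitely cocomplete category has both finite colimits (computed levelwise) and filtered colimits, and therefore all small colimits. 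So the only real work is passing from the full ind-category to its essentially-monomorphic subcategory.

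For cocompleteness I would treat coproducts and cokernels separately. Coproducts are the easy case: given monomorphic presentations $X_{t}=``\colim_{\alpha}"X_{t}^{\alpha}$ for $t$ in a set $T$, the coproduct $\coprod_{t}X_{t}$ computed in $Ind(\sNorm_{R})$ is presented by the finite partial sums $\bigoplus_{t\in S}X_{t}^{\alpha_{t}}$, and each of its transition maps is a composite of a finite direct sum of monomorphisms with a split monomorphism (inclusion of a direct summand), hence a monomorphism; thus $\coprod_{t}X_{t}$ already lies in $Ind^{m}$, and by fullness it is the coproduct there. Cokernels are the delicate case, since the cokernel in $Ind^{m}$ need not coincide with that in $Ind(\sNorm_{R})$: given $f\colon X\to Y$ in $Ind^{m}$ I would form $Z=\coker(f)$ in $Ind(\sNorm_{R})$ with its canonical epimorphism $Y\twoheadrightarrow Z$, and then replace $Z$ by its \emph{essentially monomorphic reflection} — fix a monomorphic presentation $Y=``\colim_{j}"Y_{j}$, record the images of the $Y_{j}$ inside $Z$, and assemble them into a monomorphic system representing a subobject $\overline{Z}\hookrightarrow Z$ lying in $Ind^{m}$ — and then check that $Y\twoheadrightarrow\overline{Z}$ has the universal property of the cokernel of $f$ in $Ind^{m}$. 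This uses that $\sNorm_{R}$ is closed under subobjects and quotients, so the relevant images stay in the category; for $\sNorm^{tf}_{R}$, which is \emph{not} closed under quotients, I would instead use the preceding proposition's assertion that its exact structure is induced from that of $\sNorm_{R}$, with cokernels obtained by additionally applying $(-)^{tf}$. With coproducts and cokernels available, both categories are cocomplete.

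For completeness there are two routes. The quick one: once cocompleteness is in hand, the discussion preceding Example \ref{ex:locallysplit} applies verbatim, exhibiting $Ind^{m}(\sNorm_{R})$ as a reflective subcategory of $Ind(\sNorm_{R})$ via the left adjoint sending $``\colim_{\mathcal{I}}"X_{i}$ to the colimit of the $X_{i}$ formed in $Ind^{m}(\sNorm_{R})$; a reflective subcategory of a complete category is complete, with limits computed as in the ambient category, so $Ind^{m}(\sNorm_{R})$ is complete, and the same applies to $Ind^{m}(\sNorm^{tf}_{R})$. The self-contained route, which I would record as a remark, is to show directly that $Ind^{m}$ is closed under products and subobjects inside $Ind(\sNorm_{R})$ — a product of monomorphisms is a monomorphism, and a subobject of an essentially monomorphic ind-object is essentially monomorphic — by the same bookkeeping with images used in \cite{braunling2014tate} Section 3 to prove extension-closedness of $Ind^{a}$; this yields all small limits at once.

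The step I expect to be the main obstacle is the cokernel construction, equivalently the existence of the essentially-monomorphic reflector $Ind(\sNorm_{R})\to Ind^{m}(\sNorm_{R})$: one must control images of objects of $\sNorm_{R}$ inside a general ind-object, verify these are again represented within $\sNorm_{R}$, and organise them functorially; and the torsion-free variant needs the extra care noted above, since $\sNorm^{tf}_{R}$ is not closed under quotients and its cokernels are only obtained after the corrective functor $(-)^{tf}$.
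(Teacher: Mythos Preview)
Your plan is sound, but it takes a more laborious route than the paper's. The paper's proof consists of two short moves: first, it observes that the torsion-free functor $(-)^{tf}:\sNorm_{R}\to\sNorm^{tf}_{R}$ extends to a left adjoint $(-)^{tf}:\mathpzc{Born}_{R}\to\mathpzc{Born}^{tf}_{R}$ of the inclusion, so that $\mathpzc{Born}^{tf}_{R}$ is reflective in $\mathpzc{Born}_{R}$; this immediately reduces both completeness and cocompleteness of $\mathpzc{Born}^{tf}_{R}$ to the corresponding properties of $\mathpzc{Born}_{R}$. Second, for $\mathpzc{Born}_{R}$ itself the paper simply cites \cite{bambozzi2016dagger} Section~1.4 rather than building anything by hand.

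The essential difference is where the reflectivity is placed. You argue $Ind^{m}(\sNorm_{R})\hookrightarrow Ind(\sNorm_{R})$ and $Ind^{m}(\sNorm^{tf}_{R})\hookrightarrow Ind(\sNorm^{tf}_{R})$ separately, which forces you to construct the essentially-monomorphic reflector twice and to handle the awkward cokernel issue in $\sNorm^{tf}_{R}$ directly. The paper instead uses the reflection $\mathpzc{Born}^{tf}_{R}\hookrightarrow\mathpzc{Born}_{R}$, which comes essentially for free from the already-established reflection at the level of seminormed modules, and thereby avoids any direct analysis of $Ind^{m}(\sNorm^{tf}_{R})$. Your approach is more self-contained (you do not appeal to an external reference for the base case) and yields explicit descriptions of cokernels and the reflector, which may be useful elsewhere; the paper's approach is shorter and sidesteps precisely the step you flagged as the main obstacle.
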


\begin{proof}
The functor $(-)^{tf}:\sNorm_{R}\rightarrow\sNorm_{R}^{tf}$ extends to a functor $(-)^{tf}:\mathpzc{Born}_{R}\rightarrow\mathpzc{Born}^{tf}_{R}$ which is left adjoint to the inclusion functor, i.e. $\mathpzc{Born}^{tf}_{R}$ is a reflective subcategory of $\mathpzc{Born}_{R}$. It thus suffices to show that $\mathpzc{Born}_{R}$ is complete and cocomplete. This is in \cite{bambozzi2016dagger} Section 1.4.
\end{proof}

\begin{cor}
$\mathpzc{Born}_{R}$ and $\mathpzc{Born}^{tf}_{R}$ are complete, and cocomplete categories. Moreover for both the quasi-abelian and locally split exact structures, they are ML and strongly co-ML
\end{cor}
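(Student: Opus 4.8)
The plan is to reduce everything to the general theory of $Ind^{m}$ of a quasi-abelian category with enough projectives developed earlier in this section, combined with the preceding proposition that identifies and analyses $\mathpzc{Born}_{R}$ and $\mathpzc{Born}^{tf}_{R}$. First, the completeness and cocompleteness assertion is precisely the statement of the proposition immediately above, so nothing new is needed there; what remains is to establish the ML and strongly co-ML properties, and this has to be done separately for the quasi-abelian and for the locally split exact structures. Throughout one uses that $\mathpzc{Born}_{R}=Ind^{m}(\sNorm_{R})$ and $\mathpzc{Born}^{tf}_{R}=Ind^{m}(\sNorm^{tf}_{R})$, and that $\sNorm_{R}$ and $\sNorm^{tf}_{R}$ are quasi-abelian with enough functorial projectives by the earlier proposition.

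For the quasi-abelian exact structures I would argue as follows. Since $\sNorm_{R}$ (resp. $\sNorm^{tf}_{R}$) has enough projectives, the result that $Ind^{m}$ of a category with enough projectives again has enough projectives shows that $\mathpzc{Born}_{R}$ (resp. $\mathpzc{Born}^{tf}_{R}$) has enough projectives; being complete, it is then ML by the corollary following Proposition \ref{prop:enoughprojml}. For the strongly co-ML property one invokes the discussion preceding Example \ref{ex:locallysplit}: since $\mathpzc{Born}_{R}$ and $\mathpzc{Born}^{tf}_{R}$ are cocomplete, they are reflective subcategories of $Ind(\sNorm_{R})$ and $Ind(\sNorm^{tf}_{R})$ respectively, the latter being strongly co-ML because filtered colimits there are computed, and are exact, inside $\mathpzc{Lex}((-)^{op},\mathpzc{Ab})$; filtered colimits in the reflective subcategory are obtained by applying the colimit-preserving reflector, and one concludes, exactly as asserted there, that $Ind^{m}$ is strongly co-ML as well.

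For the locally split exact structures the argument is formally identical, the only new ingredient being the source of enough projectives: by Example \ref{ex:locallysplit}, every object of $\sNorm_{R}$ and of $\sNorm^{tf}_{R}$ is projective for the split exact structure, so the locally split indizations $\mathpzc{Born}_{R}$ and $\mathpzc{Born}^{tf}_{R}$ have enough projectives. Completeness then gives ML via the same corollary, and strongly co-ML follows exactly as before from the reflective-subcategory argument, filtered colimits in $Ind$ of a split-exact category again being exact.

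The main point requiring care is checking that the general $Ind^{m}$-machinery genuinely transfers to the torsion-free category $\sNorm^{tf}_{R}$ and to the locally split structure with no modification. This rests on the preceding proposition, which establishes both that $\sNorm^{tf}_{R}$ is quasi-abelian and that it has enough functorial projectives, together with the fact noted there that a sequence is exact in $\sNorm^{tf}_{R}$ if and only if it is exact in $\sNorm_{R}$; and, for the strongly co-ML claims, on the exactness of filtered colimits surviving passage from $Ind$ to the reflective subcategory $Ind^{m}$, which was already asserted in the section above. Once these points are in place the corollary is immediate.
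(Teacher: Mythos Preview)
Your proposal is correct and follows the same route the paper intends: the corollary is stated without proof precisely because it is meant to be read off from the preceding material, and you have correctly assembled the pieces---completeness and cocompleteness from the immediately preceding proposition, enough projectives in $Ind^{m}$ inherited from $\sNorm_{R}$ (resp.\ $\sNorm_{R}^{tf}$, resp.\ the split-exact structures via Example~\ref{ex:locallysplit}), hence ML by the corollary to Proposition~\ref{prop:enoughprojml}, and strongly co-ML from the reflective-subcategory discussion just before Example~\ref{ex:locallysplit}. Your handling of the torsion-free and locally split variants is also in line with the paper's own level of justification.
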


As a consequence of this corollary and Proposition \ref{prop:probounded} we get the following, which is Proposition 3.4.4 in \cite{mukherjee2020topological}

\begin{cor}
The complete and cocomplete category $\mathpzc{Born}^{tf}_{R}$ is ML and co-ML for both the quasi-abelian and locally split exact structures. Thus for both of these exact structures the functor
$$Ch(Pro(\mathpzc{Born}^{tf}_{R}))\rightarrow Ch(\mathpzc{Born}^{tf}_{R})$$
is right-deformable. 
\end{cor}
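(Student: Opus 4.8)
The plan is to read the first assertion directly off the preceding corollary and to obtain the deformability statement as a special case of Proposition~\ref{prop:probounded}, applied to the identity functor. The preceding corollary already records that $\mathpzc{Born}^{tf}_{R}$ is ML and \emph{strongly} co-ML for both the quasi-abelian and the locally split exact structures; since being strongly co-ML entails being co-ML by definition, the first sentence of the statement requires no further argument.

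For the second sentence I would invoke Proposition~\ref{prop:probounded} with $\mathpzc{E}=\mathpzc{D}=\mathpzc{Born}^{tf}_{R}$ and $F=\Id_{\mathpzc{Born}^{tf}_{R}}$. The identity functor is exact, so the whole category $\mathcal{S}=\mathpzc{Born}^{tf}_{R}$ is right functorially adapted to it: the pair $(\Id,\id)$ is an $\mathcal{S}$-right deformation functor because identity morphisms are admissible monomorphisms, and condition~(2) in the definition of a right functorially adapted subcategory is trivially satisfied when $\mathcal{S}=\mathpzc{D}$ and $F=\Id$. The associated functor $\overline{\Id}\colon Ch(Pro(\mathpzc{Born}^{tf}_{R}))\rightarrow Ch(\mathpzc{Born}^{tf}_{R})$ is precisely the degree-wise extension of the canonical functor $Pro(\mathpzc{Born}^{tf}_{R})\rightarrow\mathpzc{Born}^{tf}_{R}$ sending a formal pro-limit $``\lim_{\mathcal{I}^{op}}"X_{i}$ to the genuine limit $\lim_{\mathcal{I}^{op}}X_{i}$ (defined since $\mathpzc{Born}^{tf}_{R}$ is complete), i.e.\ it is the functor named in the statement.

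It then remains to check that the hypotheses of Proposition~\ref{prop:probounded} hold for each of the two exact structures. The category $\mathpzc{Born}^{tf}_{R}$ is quasi-abelian as an additive category, being $Ind^{m}$ of the quasi-abelian category $\sNorm^{tf}_{R}$; in particular it has kernels and cokernels, which is what is needed so that $Pro(\mathpzc{Born}^{tf}_{R})$ carries the standard admissible, homologically left complete right truncation system required by the underlying machinery (Theorem~\ref{lem:counboundeddeform}). The remaining, exact-structure-dependent input, namely that $\mathpzc{Born}^{tf}_{R}$ is ML, was supplied by the preceding corollary in both the quasi-abelian and the locally split cases. Proposition~\ref{prop:probounded} therefore applies and yields that $Ch(Pro(\mathpzc{Born}^{tf}_{R}))\rightarrow Ch(\mathpzc{Born}^{tf}_{R})$ is right-deformable in each case.

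This is essentially a bookkeeping corollary: the substantive work, namely completeness and cocompleteness of $\mathpzc{Born}^{tf}_{R}$ together with its ML and strongly co-ML properties, has already been done. The only point that wants a moment's care is the one just flagged, that for the locally split structure the word ``quasi-abelian'' in Proposition~\ref{prop:probounded} should be understood as a condition on the underlying additive category, ensuring the truncation system, with the exact-structure-specific hypothesis being ML-ness, which does hold. I expect no genuine obstacle beyond this.
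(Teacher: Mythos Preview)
Your proposal is correct and follows exactly the route the paper indicates: the paper gives no separate proof for this corollary, simply prefacing it with ``As a consequence of this corollary and Proposition~\ref{prop:probounded} we get the following,'' which is precisely your argument (apply the preceding corollary for the ML/co-ML claim, then Proposition~\ref{prop:probounded} with $F=\Id$ for the deformability). Your observation about the word ``quasi-abelian'' in Proposition~\ref{prop:probounded} is apt; comparing with its dual, Proposition~\ref{prop:indprobounded}, which is stated for a ``co-ML exact category,'' the intended hypothesis is evidently that $\mathpzc{D}$ be an ML exact category, and your reading is the correct one.
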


\subsection{Locally Convex $k$-Modules}

Let $R$ be Banach ring, and consider the category $\mathcal{T}_{c,R}$ of locally convex topological $R$-modules. An object $X$ of $\mathcal{T}_{c,R}$ is a topological $R$-module whose topology is determined by a family of semi-norms $P_{X}$. The same proof as that of Proposition 1.1.10 and Proposition 1.2.2 in \cite{dcfapp} gives the following.

\begin{prop}
$\mathcal{T}_{c,R}$ is a complete and cocomplete quasi-abelian category with exact products and exact coproducts. 
\end{prop}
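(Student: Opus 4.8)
Because \cite{dcfapp} already establishes the analogous statements (Propositions 1.1.10 and 1.2.2) for a Banach field $k$, and because every construction there is phrased purely in terms of families of seminorms and never uses invertibility of nonzero scalars, the plan is to re-run those arguments over the Banach ring $R$; I sketch the structure.

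First I would record that $\mathcal{T}_{c,R}$ is additive and bicomplete. A product $\prod_i X_i$ is the product $R$-module with the initial locally convex topology for the projections, i.e.\ the topology generated by the seminorms $\rho\circ\pi_i$ with $\rho\in P_{X_i}$; a kernel of $f\colon X\to Y$ is $\ker f$ with the subspace topology, and limits of arbitrary diagrams are assembled from these. Dually, a coproduct $\bigoplus_i X_i$ is the direct sum $R$-module with the locally convex direct sum topology (the final locally convex topology for the inclusions), a cokernel is the quotient module with the quotient topology — again locally convex, since seminorms descend to quotients — and colimits are assembled from these. This gives completeness and cocompleteness, in particular finite bicompleteness.

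Next I would verify the quasi-abelian axioms. One identifies the morphisms appearing as kernels with the topological embeddings onto submodules carrying the induced topology, and those appearing as cokernels with the open surjections (equivalently, the quotient maps). By the standard criterion for the class of all kernel--cokernel pairs to form a Quillen exact structure (see \cite{qacs}, and \cite{Buehler}), it then suffices to show that strict epimorphisms are stable under pullback and strict monomorphisms under pushout. For a pullback of a strict epimorphism $p\colon Y\twoheadrightarrow Z$ along $f\colon X\to Z$, the fibre product $X\times_Z Y\subseteq X\times Y$ surjects onto $X$, and from the explicit seminorm description of $X\times Y$ one checks directly that this projection is open, hence strict; the pushout statement is dual, using the locally convex direct sum description. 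Then, for exactness of products, given short exact (hence strict) sequences $0\to X_i\to Y_i\to Z_i\to 0$ the termwise product $0\to\prod X_i\to\prod Y_i\to\prod Z_i\to 0$ is again a kernel--cokernel pair, and one checks it is strict: a product of embeddings is an embedding, since the subspace topology on $\prod X_i\subseteq\prod Y_i$ coincides with the product of the subspace topologies (immediate from the seminorms), and a product of open surjections is an open surjection. For coproducts one argues dually with $\bigoplus$.

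The main obstacle is precisely this last point. Unlike products, the locally convex direct sum topology is not the topology induced from any product, and the statement that $\bigoplus_i X_i\subseteq\bigoplus_i Y_i$ carries the subspace topology — equivalently, that $\bigoplus$ preserves strict monomorphisms — requires the explicit neighbourhood basis of the direct sum topology, obtained by taking suitable convex (or, over a general $R$, absolutely $R$-convex) hulls of unions of neighbourhoods of $0$ of the summands. This is the non-formal heart of \cite{dcfapp} Proposition 1.2.2, and also the only step where one must confirm that replacing a Banach field by a Banach ring changes nothing; it does not, since that neighbourhood basis, and the verification that it behaves well under submodules and quotients, never require inverting scalars.
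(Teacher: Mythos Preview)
Your proposal is correct and is essentially the same approach as the paper's: the paper does not give an independent argument but simply states that the proof of \cite{dcfapp} Propositions 1.1.10 and 1.2.2 carries over verbatim to a Banach ring $R$. You have supplied a more detailed sketch of what those arguments contain and flagged the one non-formal step (exactness of locally convex direct sums), which is more than the paper itself does, but the route is the same.
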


In \cite{dcfapp} Schneiders gives a useful condition implying the acyclicity of a projective system in $\mathcal{T}_{c,R}$.

\begin{defn}[Definition 1.2.6 \cite{dcfapp}]
Let $\mathcal{I}$ be a filtered category. A functor $X:\mathcal{I}^{op}\rightarrow\mathcal{T}_{c,R}$ is said to \textbf{satisfy condition SC} if for any $i\in\mathcal{I}$ and any absolutely convex neighbourhood $U$ of zero in $X_{i}$, there is $j\ge i$ such that 
$$x_{i,k}(X_{k})\subset q_{i}(\lim_{\mathcal{I}}X_{s})+U,\;\;\forall k\ge j$$
where $x_{i,k}:X_{k}\rightarrow X_{i}$ is the transition map and $q_{i}:\lim_{s\in\mathcal{I}}X_{s}\rightarrow X_{i}$ is the canonical projection.
\end{defn}

We denote by $|-|_{t}:\mathcal{T}_{c,R}\rightarrow{}_{R}\mathpzc{Mod}$ the forgetful functor.

\begin{prop}[Corollary 1.2.8 \cite{dcfapp}]
Let $\mathcal{I}$ be a filtered category and $X:\mathcal{I}^{op}\rightarrow\mathcal{T}_{c,R}$ a functor. Then the following are equivalent.
\begin{enumerate}
\item
The map
$$\lim_{\mathcal{I}}X_{i}\rightarrow\mathbb{R}\lim_{\mathcal{I}}X_{i}$$
is an equivalence.
\item
$X$ satisfies condition SC and the map
$$\lim_{\mathcal{I}}|X_{i}|_{t}\rightarrow\mathbb{R}\lim_{\mathcal{I}}|X_{i}|_{t}$$
is an equivalence.
\end{enumerate}
\end{prop}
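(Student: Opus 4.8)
The plan is to reproduce Schneiders' proof of Corollary 1.2.8 of \cite{dcfapp}, indicating why it survives replacing the ground field $\mathbb{C}$ by the Banach ring $R$.

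First the reductions. Both sides of the asserted equivalence are invariant under replacing $\mathcal{I}$ by a cofinal directed set, so we may assume $\mathcal{I}$ is directed; and since $\mathcal{T}_{c,R}$ is complete with exact products, Proposition 3.3.3 of \cite{prosmans1999derived} (quoted above) identifies $\mathbb{R}\lim_{\mathcal{I}^{op}}X$ with $\lim_{\mathcal{I}^{op}}\mathpzc{R}(X)$, so that (1) is precisely the assertion that $X$ is Roos-acyclic. When $\mathcal{I}=\omega$ this is concrete: $\lim_{\omega^{op}}\mathpzc{R}(X)$ is quasi-isomorphic to the two-term complex $\prod_{n}X_{n}\xrightarrow{\Id-k}\prod_{n}X_{n}$ in degrees $0,1$, with $k$ induced by the transition maps, whose $H^{0}$ is $\lim_{\omega^{op}}X$; hence $X$ is Roos-acyclic if and only if $\Id-k$ is an admissible epimorphism in $\mathcal{T}_{c,R}$. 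I will run the argument in this case, the general directed case being identical with $\mathpzc{R}(X)$ in place of the two-term complex.

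Next, the split into an algebraic and a topological part. Products and kernels in $\mathcal{T}_{c,R}$ are computed by equipping the underlying module-theoretic product, resp. kernel, with the product, resp. subspace, topology --- equivalently, $|-|_{t}$ has both a left and a right adjoint --- so $|-|_{t}:\mathcal{T}_{c,R}\rightarrow{}_{R}\mathpzc{Mod}$ is exact and preserves products; hence it commutes with the formation of $\mathpzc{R}(-)$ and of $\lim_{\mathcal{I}^{op}}$, and $|-|_{t}$ applied to $\lim_{\mathcal{I}^{op}}\mathpzc{R}(X)$ is $\lim_{\mathcal{I}^{op}}\mathpzc{R}(|X|_{t})$, which computes $\mathbb{R}\lim_{\mathcal{I}^{op}}|X_{i}|_{t}$. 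An admissible epimorphism of locally convex $R$-modules is exactly a map which is surjective on underlying modules and open onto its target, and by the previous observation surjectivity of $\Id-k$ on underlying $R$-modules is exactly condition (2b). Thus (1) holds if and only if (2b) holds and, in addition, $\Id-k$ is open; it remains to prove that, granting (2b), openness of $\Id-k$ is equivalent to condition SC.

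This last equivalence is the crux and the main obstacle, and is precisely the neighbourhood estimate of \cite{dcfapp}. Unwinding the product topology, openness of $\Id-k$ amounts, for each index $i$ and each absolutely convex $0$-neighbourhood $U\subset X_{i}$, to the possibility of choosing a preimage under $\Id-k$ of a prescribed target whose $i$-th coordinate lies in $U$; one constructs such a preimage by the standard telescoping procedure, at each stage using the inclusions $x_{i,k}(X_{k})\subset q_{i}(\lim X)+U$ furnished by SC to absorb the contribution of far-away transition maps into a term coming from $\lim X$ up to a smaller neighbourhood, the existence and convergence of the sums involved being supplied by (2b); conversely, applying openness to the neighbourhood determined by a given $U$ at level $i$ returns the corresponding instance of SC. The scalars enter only through absolutely convex hulls and estimates on semi-norms, so nothing beyond the semi-norm axioms for $R$ is needed and Schneiders' estimates carry over verbatim; all that remains is to keep track of the quantifiers in SC against the quotient-topology condition on $\prod_{n}X_{n}$, the algebraic content being trivial once (2b) is in hand.
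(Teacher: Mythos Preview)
The paper does not prove this proposition: it is quoted verbatim from \cite{dcfapp} Corollary 1.2.8 and used as a black box, so there is nothing to compare your attempt against. Your outline is a faithful reconstruction of Schneiders' strategy --- split ``admissible epimorphism'' into ``surjective on underlying modules'' (controlled by $|-|_{t}$, giving (2b)) plus ``open'' (controlled by condition SC) --- and the observation that the estimates use only the semi-norm axioms, hence survive replacing $\mathbb{C}$ by $R$, is correct.

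Two minor remarks on your write-up. First, the assertion that $|-|_{t}$ has a \emph{right} adjoint is not obvious and is not needed: you only use that $|-|_{t}$ preserves products and kernels (hence commutes with the Roos complex and with $\lim$) and is exact, all of which follow directly from how limits and the quasi-abelian structure are computed in $\mathcal{T}_{c,R}$. Second, the sentence ``the general directed case being identical with $\mathpzc{R}(X)$ in place of the two-term complex'' glosses over the fact that for a general directed $\mathcal{I}$ the Roos complex is not two-term, so the openness condition is no longer a single inequality but a family of them indexed by the simplices of $\mathcal{I}$; Schneiders handles this, but your sketch does not make clear how the telescoping procedure and the SC condition interact beyond the $\omega$ case. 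Since the paper itself defers entirely to \cite{dcfapp}, this level of sketchiness is in keeping with how the result is being used.
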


As a useful consequence we get the following.

\begin{cor}\label{cor:acycliclimtop}
Let $\mathcal{I}$ be a filtered category and $X:\mathcal{I}^{op}\rightarrow\mathcal{T}_{c,R}$ a functor.  If $|X|_{t}$ is weakly flasque then 
$$\lim_{\mathcal{I}}X_{i}\rightarrow\mathbb{R}\lim_{\mathcal{I}}X_{i}$$
is an equivalence.
\end{cor}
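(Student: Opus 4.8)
The statement to prove is Corollary \ref{cor:acycliclimtop}: if $X:\mathcal{I}^{op}\rightarrow\mathcal{T}_{c,R}$ is a functor with $|X|_{t}$ weakly flasque, then $\lim_{\mathcal{I}}X_{i}\rightarrow\mathbb{R}\lim_{\mathcal{I}}X_{i}$ is an equivalence. The obvious route is to verify the two conditions in part (2) of the preceding proposition (Corollary 1.2.8 of \cite{dcfapp}), namely (i) that $X$ satisfies condition SC, and (ii) that $\lim_{\mathcal{I}}|X_{i}|_{t}\rightarrow\mathbb{R}\lim_{\mathcal{I}}|X_{i}|_{t}$ is an equivalence. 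The second condition is the easy one: $|X|_{t}$ is a weakly flasque functor into ${}_{R}\mathpzc{Mod}$, and ${}_{R}\mathpzc{Mod}$ is ML (being an abelian category with enough projectives and all limits, hence ML by the Corollary following Proposition \ref{prop:enoughprojml}), so for weakly flasque diagrams the canonical map from the limit to the derived limit is an equivalence. One should reduce to a directed index category by choosing a cofinal functor from a directed set (as done elsewhere in the paper), noting that the restriction of a weakly flasque functor along a cofinal functor is weakly flasque.

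For condition SC, the plan is to use weak flasqueness directly. First reduce to $\mathcal{I}$ directed. Fix $i\in\mathcal{I}$ and an absolutely convex neighbourhood $U$ of $0$ in $X_{i}$. Weak flasqueness of $|X|_{t}$ says that for every directed subset $J\subseteq\mathcal{I}$ the map $\lim_{\mathcal{I}}|X_{s}|_{t}\rightarrow\lim_{J}|X_{s}|_{t}$ is an (admissible, hence surjective) epimorphism of $R$-modules. Apply this with $J$ the up-set $\{s : s\ge j\}$ for a suitable $j\ge i$; surjectivity of $\lim_{\mathcal{I}}X_{s}\rightarrow\lim_{s\ge j}X_{s}$ onto the underlying modules forces, after composing with the projection to $X_{i}$, that every element of the image of $X_{k}\rightarrow X_{i}$ (for $k\ge j$) that lies in $x_{i,j}(X_j)$, equivalently the relevant compatible families, lifts to $\lim_{\mathcal{I}}X_s$ and so lies in $q_i(\lim_{\mathcal{I}}X_s)$; then one only has to absorb the mismatch into $U$. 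In fact in a directed category one can take $j=i$ and observe $x_{i,k}(X_k)\subseteq x_{i,i}(X_i)=X_i$ trivially, so the inclusion $x_{i,k}(X_k)\subseteq q_i(\lim X_s)+U$ follows once $q_i$ is surjective onto $X_i$ — which is exactly weak flasqueness applied to $J=\{i\}$ (or to the terminal-ish sub-poset above $i$). So condition SC holds essentially for free from weak flasqueness, which makes this corollary almost immediate.

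The one point requiring a little care — and the step I expect to be the main obstacle — is matching the algebraic notion of "weakly flasque" used in the body of the paper (defined via $\lim_{\mathcal{I}}F_i\rightarrow\lim_{J}F_i$ being an admissible epimorphism for directed subsets $J$) with the precise hypothesis in Schneiders' condition SC, which is phrased in terms of images of individual transition maps and neighbourhoods of zero. One must check that admissible epimorphisms in ${}_{R}\mathpzc{Mod}$ are exactly surjections (true, the exact structure being the abelian one), and that the projections $q_i:\lim_{\mathcal{I}}X_s\rightarrow X_i$ are the composites $\lim_{\mathcal{I}}X_s\rightarrow\lim_{s\ge i}X_s\rightarrow X_i$, so that surjectivity of the first factor gives the needed statement about images. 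Once this bookkeeping is done, the corollary follows by invoking Corollary 1.2.8 of \cite{dcfapp} (the preceding proposition) with both of its conditions now verified.
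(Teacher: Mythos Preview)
Your proposal is correct and follows the same approach as the paper: verify the two hypotheses of the preceding proposition (Corollary 1.2.8 of \cite{dcfapp}), namely condition SC and acyclicity of $|X|_{t}$ for $\lim$. The paper's proof simply asserts condition SC holds ``clearly'' and that weak flasqueness of $|X|_{t}$ in ${}_{R}\mathpzc{Mod}$ gives the second condition; your observation that weak flasqueness applied to the singleton $J=\{i\}$ forces $q_{i}$ to be surjective, hence $x_{i,k}(X_{k})\subset X_{i}=q_{i}(\lim X_{s})\subset q_{i}(\lim X_{s})+U$, is exactly the unpacking of that ``clearly''.
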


\begin{proof}
Such a system clearly satisfies condition SC. Moreover $|X|_{t}$ is a weakly flasque system in ${}_{R}\mathpzc{Mod}$. Thus $\lim_{\mathcal{I}}|X_{i}|_{t}\rightarrow\mathbb{R}\lim_{\mathcal{I}}|X_{i}|_{t}$ is an equivalence.
\end{proof}

\begin{cor}\label{cor:tcadepi}
The category $\mathcal{T}_{c,k}$ is ML.
\end{cor}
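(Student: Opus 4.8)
The plan is to unwind Definition \ref{defn:ML}(2). Completeness of $\mathcal{T}_{c,R}$ has already been recorded (it is in fact complete with exact products), so the only thing left to check is the weak ML condition: for every filtered category $\mathcal{I}$, the functor $\lim_{\mathcal{I}}$ carries a term-wise acyclic complex in $Ch_{\le0}(\mathpzc{Fun}^{wfl}_{\textbf{AdEpi}}(\mathcal{I}^{op},\mathcal{T}_{c,R}))$ to an acyclic complex of $\mathcal{T}_{c,R}$. Exactly as in the proof that $\mathpzc{Ab}$ is ML, this reduces to one weakly flasque diagram at a time: it suffices to prove that for every $F\in\mathpzc{Fun}^{wfl}_{\textbf{AdEpi}}(\mathcal{I}^{op},\mathcal{T}_{c,R})$ the canonical map $\lim_{\mathcal{I}}F\to\mathbb{R}\lim_{\mathcal{I}}F$ is an equivalence, i.e. that such an $F$ is Roos-acyclic. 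Here $\mathbb{R}\lim_{\mathcal{I}}$ exists on bounded-above complexes because $\mathcal{T}_{c,R}$ is complete with exact products (the Proposition on explicit right-derivability of $\lim_{\mathcal{I}}$ by Roos-acyclic objects, \cite{prosmans1999derived} Proposition 3.3.3), and once every term of a bounded-above term-wise acyclic complex $X_\bullet$ is $\lim_{\mathcal{I}}$-acyclic, the comparison $\lim_{\mathcal{I}}X_\bullet\to\mathbb{R}\lim_{\mathcal{I}}X_\bullet$ is an isomorphism, while $\mathbb{R}\lim_{\mathcal{I}}X_\bullet\cong 0$ because $X_\bullet$ is zero in the derived category of $\mathpzc{Fun}(\mathcal{I}^{op},\mathcal{T}_{c,R})$ (acyclicity there being detected objectwise, hence term-wise); hence $\lim_{\mathcal{I}}X_\bullet$ is acyclic.

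To prove the reduced statement I would pass through the forgetful functor $|-|_t:\mathcal{T}_{c,R}\to{}_{R}\mathpzc{Mod}$. This functor preserves all projective limits, since limits of locally convex modules are computed on underlying modules equipped with the initial topology, and it sends admissible epimorphisms of $\mathcal{T}_{c,R}$ — which are strict epimorphisms, in particular surjective — to surjections of $R$-modules, that is, to admissible epimorphisms in ${}_{R}\mathpzc{Mod}$. Consequently, if $F$ is weakly flasque then so is $|F|_t$. Corollary \ref{cor:acycliclimtop} now applies verbatim and gives exactly that $\lim_{\mathcal{I}}F\to\mathbb{R}\lim_{\mathcal{I}}F$ is an equivalence.

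The genuinely load-bearing ingredients are the two already flagged: first, the passage from ``each term of $X_\bullet$ is $\lim_{\mathcal{I}}$-acyclic'' to ``$\lim_{\mathcal{I}}$ computes $\mathbb{R}\lim_{\mathcal{I}}$ on $X_\bullet$'', which is the standard hyper-derived-functor fact and is legitimate here precisely because the Roos-acyclic objects form a class adapted to $\lim_{\mathcal{I}}$ in the sense of the Prosmans Proposition invoked above; and second, the content packaged inside Corollary \ref{cor:acycliclimtop}, namely Schneiders' condition-SC criterion together with the fact that ${}_{R}\mathpzc{Mod}$ is ML. Everything else — that $|-|_t$ preserves the relevant limits and admissible epimorphisms, and that short exact sequences, hence acyclicity, in $\mathpzc{Fun}(\mathcal{I}^{op},\mathcal{T}_{c,R})$ are tested objectwise — is routine. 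If one wished to avoid applying Corollary \ref{cor:acycliclimtop} for a general filtered $\mathcal{I}$, one could first replace $\mathcal{I}$ by a cofinal directed poset, exactly as in the $\mathpzc{Ab}$ case, but that is not needed.
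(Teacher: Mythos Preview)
Your argument is correct and matches the paper's intended proof, which gives no explicit argument for this corollary but places it immediately after Corollary~\ref{cor:acycliclimtop} as a direct consequence: a weakly flasque system in $\mathcal{T}_{c,k}$ has weakly flasque underlying module system (since $|-|_t$ preserves limits and admissible epimorphisms), hence is $\lim$-acyclic by Corollary~\ref{cor:acycliclimtop}, and the reduction to termwise acyclicity is exactly the one the paper uses for $\mathpzc{Ab}$. The only cosmetic slip is that you write $\mathcal{T}_{c,R}$ while the statement is for $\mathcal{T}_{c,k}$, but the argument goes through for any semi-normed ring $R$ since ${}_R\mathpzc{Mod}$ is ML.
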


Thus we can derive the projective limit functor at the level of unbounded complexes. Moreover the functor $R_{PsN}^{T_{c}}:Pro(\sNorm_{R})\rightarrow\mathcal{T}_{c,R}$ is deformable at the level of unbounded complexes. This functor has a left adjoint $L^{PsN}_{T_{c}}$ which sends a locally convex module $X$ to $``\lim_{P_{X}}"X_{\rho}$, where $P_{X}$ is the family of continuous semi-norms on $X$, and $X_{\rho}$ is the semi-normed $R$-module $|X|_{t}$ equipped with the semi-norm $\rho$. By the characterisation of continuous maps and admissible epimorphisms of locally convex $R$-modules in terms of semi-norms in e.g. \cite{dcfapp} Corollary 1.1.8, it is straightforward to see that $L^{PsN}_{T_{c}}$ is exact.

%
%

\begin{lem}
The adjunction
$$\adj{L_{T_{c}}^{PsN}}{Ch(\mathcal{T}_{c,R})}{Ch(Pro(\sNorm_{k}))}{R_{PsN}^{T_{c}}}$$
is deformable, and for $X\in Ch(\mathcal{T}_{c,R})$, the map $\mathbb{R}R_{PsN}^{T_{c}}L_{T_{c}}^{PsN}(X)\rightarrow X$ is an equivalence.
\end{lem}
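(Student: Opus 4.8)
The plan is to deduce the statement from three facts already at our disposal: the exactness of $L^{PsN}_{T_c}$, Proposition~\ref{prop:probounded} applied to the exact inclusion $F\colon\sNorm_R\hookrightarrow\mathcal{T}_{c,R}$ (for which $R^{PsN}_{T_c}=\overline{F}$), and Corollary~\ref{cor:acycliclimtop}. Deformability is the easy half. Since $L^{PsN}_{T_c}$ is exact it preserves every quasi-isomorphism, so it is trivially left deformable and $\mathbb{L}L^{PsN}_{T_c}\simeq L^{PsN}_{T_c}$. On the other side, $\mathcal{T}_{c,R}$ is quasi-abelian and ML (Corollary~\ref{cor:tcadepi}), $F$ is exact, and $\mathcal{S}=\sNorm_R$ is right functorially adapted to $F$ (take $(Q,\eta)=(\mathrm{id},\mathrm{id})$, the extension condition being automatic as $F$ is exact), so Proposition~\ref{prop:probounded} provides the right derived functor $\mathbb{R}R^{PsN}_{T_c}\colon Ch(Pro(\sNorm_R))\to Ch(\mathcal{T}_{c,R})$. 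As $L^{PsN}_{T_c}$ preserves \emph{all} weak equivalences, the left deformation of $L^{PsN}_{T_c}$ and the right deformation of $R^{PsN}_{T_c}$ are compatible, and $L^{PsN}_{T_c}\dashv\mathbb{R}R^{PsN}_{T_c}$ is a derived adjunction.

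For the second assertion, note first that the ordinary unit $\eta_X\colon X\to R^{PsN}_{T_c}L^{PsN}_{T_c}(X)$ is an isomorphism for every $X\in\mathcal{T}_{c,R}$: one has $R^{PsN}_{T_c}L^{PsN}_{T_c}(X)=\lim_{\rho\in P_X}X_\rho$ computed in $\mathcal{T}_{c,R}$, and since each transition map $X_{\rho'}\to X_\rho$ is the identity on underlying modules, this limit has underlying module $|X|_t$ carrying the initial topology generated by all $\rho\in P_X$, which is exactly the topology of $X$. Applying this in every degree gives $R^{PsN}_{T_c}\circ L^{PsN}_{T_c}=\mathrm{id}$ on $Ch(\mathcal{T}_{c,R})$. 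Hence it suffices to show that $L^{PsN}_{T_c}(X_\bullet)$ is $R^{PsN}_{T_c}$-acyclic, i.e.\ that $R^{PsN}_{T_c}L^{PsN}_{T_c}(X_\bullet)\to\mathbb{R}R^{PsN}_{T_c}L^{PsN}_{T_c}(X_\bullet)$ is a quasi-isomorphism; composing its inverse with $\eta_{X_\bullet}^{-1}$ then produces the claimed equivalence. When $X_\bullet=X$ is a single object this is immediate: $L^{PsN}_{T_c}(X)=``\lim_{P_X^{op}}"X_\rho$, so the identification $\mathbb{R}\overline{F}(``\lim_{\mathcal{I}}"X_i)\cong\mathbb{R}\lim_{\mathcal{I}^{op}}F(X_i)$ established above gives $\mathbb{R}R^{PsN}_{T_c}L^{PsN}_{T_c}(X)\cong\mathbb{R}\lim_{P_X^{op}}X_\rho$; the underlying-module diagram $\rho\mapsto|X_\rho|_t=|X|_t$ is constant with identity transitions, hence weakly flasque, so Corollary~\ref{cor:acycliclimtop} yields $\lim_{P_X^{op}}X_\rho\xrightarrow{\ \sim\ }\mathbb{R}\lim_{P_X^{op}}X_\rho$.

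The main obstacle is to upgrade this to arbitrary unbounded complexes. Each term $L^{PsN}_{T_c}(X_n)$ is $R^{PsN}_{T_c}$-acyclic by the previous paragraph, but the filtered index $P_{X_n}$ depends on $n$, so $L^{PsN}_{T_c}(X_\bullet)$ is not visibly the formal limit of a single tower of complexes of seminormed modules, and the fixed-index comparison of $\mathbb{R}\overline{F}$ with $\mathbb{R}\lim$ does not apply verbatim. Moreover the acyclicity furnished by Corollary~\ref{cor:acycliclimtop} rests on the ``condition SC together with underlying-module flasqueness'' criterion, which is strictly weaker than membership in $Pro^{F-wfl}(\sNorm_R)$, so $L^{PsN}_{T_c}(X_n)$ need not lie in the adapted class used in Proposition~\ref{prop:probounded}. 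I expect the resolution is to check that the complexes in $Ch(Pro(\sNorm_R))$ which are degreewise $R^{PsN}_{T_c}$-acyclic in this broader sense are stable under the co-very-special resolutions and termwise products used to compute $\mathbb{R}R^{PsN}_{T_c}$ --- so that such a complex is its own right deformation up to quasi-isomorphism --- using Theorem~\ref{lem:counboundeddeform} and the homological left completeness of the right truncation system on $Pro(\sNorm_R)$; for $X_\bullet$ bounded below one may instead pass to the cofinal sub-poset of $\prod_n P_{X_n}$ of sequences of seminorms compatible with the differentials, reduce to the fixed-index case, and then handle the unbounded case by the usual truncation argument.
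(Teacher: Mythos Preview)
Your deformability argument and the degree-$0$ case are essentially the paper's own proof: exactness of $L^{PsN}_{T_c}$, right-deformability of $R^{T_c}_{PsN}$ via Proposition~\ref{prop:probounded}, and acyclicity of $L^{PsN}_{T_c}(X)$ for a single object via Corollary~\ref{cor:acycliclimtop} applied to the constant-underlying-module system $\rho\mapsto X_\rho$.

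The gap is in your last paragraph, which is explicitly speculative. The paper's resolution of the unbounded case is simpler and more direct than either of your suggested routes, and uses a point you did not isolate: $L^{PsN}_{T_c}$ is a left adjoint, hence commutes with cokernels, hence with the right truncations $\tau^R_{\le n}$. Therefore $L^{PsN}_{T_c}(X)\cong\lim_{\omega^{op}}L^{PsN}_{T_c}(\tau^R_{\le n}X)$, and each $L^{PsN}_{T_c}(\tau^R_{\le n}X)$ is a bounded-above complex whose terms lie in the image of $L^{PsN}_{T_c}$ on objects, hence are $R^{T_c}_{PsN}$-acyclic by the degree-$0$ step; this already gives the bounded-above case. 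For the full unbounded case the paper invokes the dual of Remark~\ref{rem:weakerres}: since $R^{T_c}_{PsN}$ is a right adjoint it commutes with $(\omega,\textbf{AdEpi})$-limits, so the tower $\bigl(L^{PsN}_{T_c}(\tau^R_{\le n}X)\bigr)_n$, whose transition maps are degreewise admissible epimorphisms (because $L^{PsN}_{T_c}$ is exact), qualifies as one of the relaxed resolutions permitted there. One then reads off
\[
\mathbb{R}R^{T_c}_{PsN}L^{PsN}_{T_c}(X)\;\cong\;R^{T_c}_{PsN}\,\lim_{\omega^{op}}L^{PsN}_{T_c}(\tau^R_{\le n}X)\;\cong\;\lim_{\omega^{op}}\tau^R_{\le n}X\;\cong\;X,
\]
the last isomorphism using that $\mathcal{T}_{c,k}$ is weakly $(\omega,\textbf{AdEpi})$-coelementary. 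Your proposed detour through a cofinal sub-poset of $\prod_n P_{X_n}$ is unnecessary, and the attempt to enlarge the adapted class to ``SC-plus-flasque'' acyclics would require re-proving stability under the co-very-special machinery; the truncation tower bypasses all of this.
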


\begin{proof}
The functor $L_{T_{c}}^{PsN}$ is exact, so it is trivially deformable, and we explained above why $R_{PsN}^{T_{c}}$ is deformable.

Let $X\in Ch(\mathcal{T}_{c,k})$. First assume that $X$ is concentrated in degree $0$. Then $L_{T_{c}}^{PsN}(X)$ is $R_{PsN}^{T_{c}}$-acyclic by Corollary \ref{cor:acycliclimtop} and the dual of Proposition \ref{prop:indcolimiderived}. Indeed $\mathbb{R}R_{PsN}^{T_{c}}L_{T_{c}}^{PsN}(X)\cong\mathbb{R}\lim_{P_{X}}X_{\rho}$. But the system $\rho\mapsto |X_{\rho}|_{t}$ is clearly weakly flasque, since all the maps are the identity. Thus we have $\mathbb{R}R_{PsN}^{T_{c}}L_{T_{c}}^{PsN}(X)\cong R_{PsN}^{T_{c}}L_{T_{c}}^{PsN}(X)\cong X$. It follows that for any bounded above complex $X$, $\mathbb{R}R_{PsN}^{T_{c}}L_{T_{c}}^{PsN}(X)\rightarrow X$ is an equivalence. Now let $X$ be an arbitrary complex. The functor $L_{T_{c}}^{PsN}$ commutes with cokernels. Thus by Remark \ref{rem:weakerres}, $L_{T_{c}}^{PsN}(X)\cong\lim_{\omega^{op}}L_{T_{c}}^{PsN}(\tau^{R}_{\le n}X)$ is $R_{PsN}^{T_{c}}$-acyclic. Since $\mathcal{T}_{c,k}$ is ML it is in particular weakly $(\omega,\textbf{AdEpi})$-coelementary. Thus we get equivalences. 
$$X\cong\lim_{\omega^{op}}\tau^{R}_{\le n}X\cong R_{PsN}^{T_{c}}\lim_{\omega^{op}}L_{T_{c}}^{PsN}(\tau^{R}_{\le n}X)\cong \mathbb{R}R_{PsN}^{T_{c}}\lim_{\omega^{op}}L_{T_{c}}^{PsN}(\tau^{R}_{\le n}X)\cong \mathbb{R}R_{PsN}^{T_{c}}L_{T_{c}}^{PsN}(X)$$
\end{proof}

The situation for filtered colimits is less clear, but if $k$ is a Banach field then we have the following (which is proven for $k=\mathbb{C}$ in Proposition 1.1.11 \cite{dcfapp}).

\begin{prop}\label{prop:tcadmon}
The categories $\sNorm_{k}$ and $\mathcal{T}_{c,k}$ both have enough injectives. In particular $\mathcal{T}_{c,k}$ is weakly $(\omega,\textbf{AdMon})$-elementary. 
\end{prop}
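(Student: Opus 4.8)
The plan is to prove that $\sNorm_k$ has enough injectives and then deduce the same for $\mathcal{T}_{c,k}$, since the final sentence of the proposition (that $\mathcal{T}_{c,k}$ is weakly $(\omega,\textbf{AdMon})$-elementary) will then follow from the Example after the definition of weak $(\omega,\mathcal{S})$-elementarity, which says that any countably cocomplete exact category with enough injectives is weakly $(\omega,\textbf{AdMon})$-elementary (and $\mathcal{T}_{c,k}$ is cocomplete by the proposition recalling Schneiders' results). So the work is entirely in producing injectives.

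First I would identify the injective cogenerator. Since $k$ is a Banach field, the one-dimensional space $k$ itself, and more generally the ``sup-norm'' spaces, should be injective in $\sNorm_k$. Concretely, for a semi-normed $k$-vector space $M$ with semi-norm $\rho$, form the space $M^{\vee}$ of bounded linear functionals, and then the double-dual-type embedding $M \to \prod^{\le 1}_{\phi \in B} k_{c(\phi)}$ indexed by a suitable bounded subset $B$ of $M^\vee$, with the target weighted so that the map is a bounded monomorphism and, crucially, a \emph{strict} one (an admissible monomorphism in the quasi-abelian structure, i.e. the norm on $M$ is recovered up to the category's slack constant from the restricted norm). The non-trivial analytic input here is a Hahn–Banach-type statement: over a Banach field $k$, every bounded functional on a subspace extends to a bounded functional on the whole space with controlled norm. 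For $k=\mathbb{R}$ or $\mathbb{C}$ this is classical Hahn–Banach; for a general Banach field one uses that $k$ is spherically complete or invokes the appropriate non-archimedean Ingleton/Hahn–Banach theorem — this is exactly the point where the hypothesis ``$k$ is a Banach field'' (rather than a Banach ring) is used, and I expect \emph{this} to be the main obstacle, or at least the step requiring the most care about which class of Banach fields the argument covers. One then checks that products of copies of $k$ (with the $\ell^1$-type or $\ell^\infty$-type norm appropriate to $\sNorm_k$) are injective: given a strict mono $X \hookrightarrow Y$ and a map $X \to k$, extend using the Hahn–Banach statement; passing to products is formal.

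Next, transport enough injectives from $\sNorm_k$ to $\mathcal{T}_{c,k} = Pro$-like completions — more precisely, recall that a locally convex space $X$ is determined by its family $P_X$ of continuous semi-norms, giving the adjunction $\adj{L^{PsN}_{T_c}}{\mathcal{T}_{c,k}}{Pro(\sNorm_k)}{R^{PsN}_{T_c}}$ already in the paper, with $L^{PsN}_{T_c}$ exact. Dually, $\mathcal{T}_{c,k}$ receives objects from $Ind(\sNorm_k)$: a locally convex space embeds into a product of Banach (or semi-normed) spaces, namely $X \hookrightarrow \prod_{\rho \in P_X} \widehat{X_\rho}$, and this embedding is strict. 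So to get an injective object of $\mathcal{T}_{c,k}$ containing $X$, take an injective embedding $X_\rho \hookrightarrow I_\rho$ in $\sNorm_k$ for each continuous semi-norm $\rho$, form $\prod_\rho I_\rho$ in $\mathcal{T}_{c,k}$, and check (i) this product is injective in $\mathcal{T}_{c,k}$ — because products of injectives are injective, and a semi-normed injective, viewed as a locally convex space via its single semi-norm, remains injective against strict monos in $\mathcal{T}_{c,k}$ by the semi-norm description of strictness in \cite{dcfapp} Corollary 1.1.8 — and (ii) the composite $X \hookrightarrow \prod_\rho X_\rho \hookrightarrow \prod_\rho I_\rho$ is an admissible monomorphism, again by the semi-norm criterion. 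This is essentially Schneiders' argument (\cite{dcfapp} Proposition 1.1.11) for $k=\mathbb{C}$, run verbatim once the Banach-field Hahn–Banach input is in hand.

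Finally, conclude: $\mathcal{T}_{c,k}$ is cocomplete (recalled above) and has enough injectives, hence is weakly $(\omega,\textbf{AdMon})$-elementary by the Example following the definition; and $\sNorm_k$, being cocomplete with enough injectives, is likewise weakly $(\omega,\textbf{AdMon})$-elementary, though the statement only asserts this for $\mathcal{T}_{c,k}$. I would keep the write-up short by citing \cite{dcfapp} Proposition 1.1.11 for the structure of the argument and only spelling out the replacement of classical Hahn–Banach by the Banach-field version, flagging any restriction on $k$ (e.g. spherical completeness) if the general case is not available.
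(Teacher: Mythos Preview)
Your argument for $\mathcal{T}_{c,k}$ --- embed $X$ into $\prod_{\rho\in P_X}X_\rho$, then into $\prod_\rho I_\rho$ with each $I_\rho$ injective in $\sNorm_k$ --- is exactly what the paper does, and your reduction of the final clause to the earlier Example is also correct.

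For $\sNorm_k$ the paper takes a different and somewhat cleaner route than your direct Hahn--Banach construction. It cites an external result (Lemma~A.42 of \cite{koren}) that $\Ban_k$ has enough injectives, observes that Banach injectives stay injective in $\sNorm_k$, and then handles an arbitrary $M\in\sNorm_k$ via the decomposition $M\cong Sep(M)\oplus Zcl(M)$: the zero-closure $Zcl(M)$ carries the trivial semi-norm and is itself injective, while $Sep(M)\to Cpl(Sep(M))\to I$ embeds the separated part into a Banach injective. This avoids re-proving any analytic extension theorem and, more importantly, deals transparently with the non-separated case.

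That last point is where your proposal has a genuine gap: if $M$ has nonzero elements of semi-norm zero, the double-dual-type map $M\to\prod_\phi k_{c(\phi)}$ built from bounded functionals cannot be a monomorphism, since no bounded functional separates such elements from $0$. So your embedding is only strict on $Sep(M)$; you would still need something like the $Sep\oplus Zcl$ splitting to finish. Your flag about spherical completeness is well-placed --- but note that the cited $\Ban_k$ result presumably absorbs the same hypothesis, so the paper's argument is not obviously more general, just more modular.
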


\begin{proof}
By Lemma A.42 in \cite{koren} the category $\Ban_{k}$ has enough injectives. Now if $M\in\sNorm_{k}$, then $M\cong Sep(M)\oplus Zcl(M)$, where $Zcl(M)$ is the closure of $0$ in $M$ and is injective. Let $I$ be an injective Banach space and $Cpl(M)\rightarrow I$ an admissible monomorphism. $I$ remains injective in $\sNorm_{k}$, and the composite map $Sep(M)\rightarrow Cpl(M)\rightarrow I$ is an admissible monomorphism. Thus $M\rightarrow I\oplus Zcl(M)$ is an admissible monomorphism.  Let $X$ be a locally convex space, and let $P_{X}$ be the set of continuous semi-norms on $X$. There is an admissible monomorphism $X\rightarrow\prod_{p\in P_{X}}X_{p}$. For each semi-normed space $X_{p}$, pick an injective $I_{p}$ in $\sNorm_{k}$ and an admissible monomorphism $X_{p}\rightarrow I_{p}$. Then $X\rightarrow\prod_{p\in P_{X}}I_{p}$ is an admissible monomorphism, and $\prod_{p\in P_{X}}I_{p}$ is injective. 
\end{proof}

So for $k$ a Banach field we also have an unbounded derived colimit functor. 

\begin{cor}
The categories $\mathcal{T}_{c,k}$ and $Pro(\sNorm_{k})$ are both equipped with the injective model structure. Moreover the adjunction
$$\adj{L_{T_{c}}^{PsN}}{Ch(\mathcal{T}_{c,k})}{Ch(Pro(\sNorm_{k}))}{R_{PsN}^{T_{c}}}$$
is a Quillen adjunction.
\end{cor}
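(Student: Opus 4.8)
The plan is to obtain both model structures from one construction — the injective model structure on the category of unbounded chain complexes of a weakly idempotent complete exact category which has enough injectives and is ML with exact countable products, as produced in \cite{kelly2016homotopy} — and then to deduce the Quillen property of the adjunction from the fact, recorded above, that $L_{T_{c}}^{PsN}$ is exact.

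First I would check that $\mathcal{T}_{c,k}$ meets the hypotheses of that construction: it is quasi-abelian, hence weakly idempotent complete; it has exact products by the Proposition describing $\mathcal{T}_{c,R}$; it has enough injectives by Proposition \ref{prop:tcadmon}; and it is ML by Corollary \ref{cor:tcadepi}. Hence $Ch(\mathcal{T}_{c,k})$ carries the injective model structure, in which the weak equivalences are the quasi-isomorphisms, the cofibrations are the degree-wise admissible monomorphisms — so every object is cofibrant — and the fibrant objects are the homotopically injective complexes assembled, through weakly flasque towers, from bounded below complexes of injectives. For $Pro(\sNorm_k)$ I would argue the same way: since $\sNorm_k$ is quasi-abelian it has kernels, so $Pro(\sNorm_k)=(Ind((\sNorm_k)^{op}))^{op}$ is complete and strongly ML, hence ML; it is weakly idempotent complete because $\sNorm_k$ is; it has enough injectives, because $\sNorm_k$ does (Proposition \ref{prop:tcadmon}) and $Ind$ preserves enough projectives, so passing to opposite categories this property is inherited by $Pro(\sNorm_k)$; and its products are exact, being computed through the Pro-structure from the exact products of $\sNorm_k$. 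So $Ch(Pro(\sNorm_k))$ also carries the injective model structure.

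It then remains to check the Quillen condition, and since $L_{T_{c}}^{PsN}$ is the left adjoint it suffices to show that it preserves cofibrations and trivial cofibrations. But $L_{T_{c}}^{PsN}$ is exact, so it carries degree-wise admissible monomorphisms to degree-wise admissible monomorphisms and acyclic complexes to acyclic complexes, hence quasi-isomorphisms to quasi-isomorphisms; as cofibrations in the injective model structures are precisely the degree-wise admissible monomorphisms and trivial cofibrations are those which are in addition quasi-isomorphisms, $L_{T_{c}}^{PsN}$ preserves both. Therefore $\adj{L_{T_{c}}^{PsN}}{Ch(\mathcal{T}_{c,k})}{Ch(Pro(\sNorm_k))}{R_{PsN}^{T_{c}}}$ is a Quillen adjunction. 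Moreover, by the preceding Lemma the comparison $\mathbb{R}R_{PsN}^{T_{c}}L_{T_{c}}^{PsN}(X)\to X$ is an equivalence for every $X$, so the derived unit is invertible and $\mathbb{L}L_{T_{c}}^{PsN}=L_{T_{c}}^{PsN}$ is homotopically fully faithful, realizing $D(\mathcal{T}_{c,k})$ as a full subcategory of $D(Pro(\sNorm_k))$.

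The step I expect to be the main obstacle is not the Quillen condition, which is formal once the two injective model structures are available, but the bookkeeping needed to confirm that $Pro(\sNorm_k)$ genuinely satisfies every running hypothesis of the model-structure theorem of \cite{kelly2016homotopy} — in particular that enough injectives and the relevant exactness of projective limits really do descend from $\sNorm_k$ to $Pro(\sNorm_k)$ — together with pinning down the precise form of that model structure; for $\mathcal{T}_{c,k}$ the only non-formal input, namely the ML property, has already been secured in Corollary \ref{cor:tcadepi}.
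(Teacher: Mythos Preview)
Your proposal is correct and follows essentially the same route as the paper: the paper's proof simply cites \cite{kelly2016homotopy} Theorem 4.59 together with Corollary \ref{cor:tcadepi} and Proposition \ref{prop:tcadmon} for the existence of the two injective model structures, and then notes that the adjunction is Quillen because $L_{T_{c}}^{PsN}$ is exact. You spell out the hypothesis-checking in more detail than the paper does (and your closing remark about the derived unit, while not part of the statement, is a correct consequence of the preceding Lemma), but the underlying argument is the same.
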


\begin{proof}
The claim regarding the injective model structures is an immediate consequence of \cite{kelly2016homotopy} Theorem 4.59 and Corollary \ref{cor:tcadepi}, and Proposition \ref{prop:tcadmon}. The fact that the adjunction is Quillen is immediate from the fact that $L_{T_{c}}^{PsN}$ is exact.
\end{proof}

\begin{prop}
 Let $\mathpzc{D}$ be an exact category, and let $F:\sNorm_{k}\rightarrow\mathpzc{D}$ be an exact functor. Suppose that $\mathpzc{D}$ is ML. Then the induced functor
 $$F^{t}:Ch(\mathcal{T}_{c,k})\rightarrow Ch(\mathpzc{D})$$
 is right-deformable. Moreover if $\mathpzc{D}$ is strongly ML then for $X\in\mathcal{T}_{c,k}$, $\mathbb{R}F^{t}(X)$ is concentrated in degree $0,-1$.
\end{prop}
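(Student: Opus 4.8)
The strategy is to factor $F^{t}$ as $\overline{F}\circ L_{T_{c}}^{PsN}$, where $L_{T_{c}}^{PsN}\colon\mathcal{T}_{c,k}\to Pro(\sNorm_{k})$ is the exact functor $X\mapsto ``\lim_{P_{X}}"X_{\rho}$ from the preceding lemma, and $\overline{F}\colon Pro(\sNorm_{k})\to\mathpzc{D}$ is the functor $``\lim_{\mathcal{I}^{op}}"X_{i}\mapsto\lim_{\mathcal{I}^{op}}F(X_{i})$, which is defined because $\mathpzc{D}$, being ML, has projective limits. Since $F$ is exact, the full subcategory $\mathcal{S}=\sNorm_{k}$ is right functorially adapted to $F$ with deformation functor $(\Id,\id)$, so Proposition \ref{prop:probounded} (equivalently Theorem \ref{lem:counboundeddeform}, which needs only that $\mathpzc{D}$ be exact and that $\overline{F}$ commute with countable products) shows that $\overline{F}\colon Ch(Pro(\sNorm_{k}))\to Ch(\mathpzc{D})$ is right-deformable; its right deformation is given by $\mathcal{B}$-co-very-special coresolutions assembled from objects of $Pro^{F-wfl}(\sNorm_{k})$, and any complex of such objects is $\overline{F}$-acyclic because $F$ sends a weakly flasque diagram to a weakly flasque diagram in the ML category $\mathpzc{D}$, so that $\lim$ computes $\mathbb{R}\lim$ there.

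Next I would produce a right deformation of the unbounded category $Ch(\mathcal{T}_{c,k})$ whose image under $L_{T_{c}}^{PsN}$ lands among $\overline{F}$-acyclic complexes. The category $\mathcal{T}_{c,k}$ has kernels, is ML by Corollary \ref{cor:tcadepi} (hence weakly $(\omega,\textbf{AdEpi})$-coelementary), and carries the standard admissible, homologically left complete right truncation system; together with the functorial coresolution of Proposition \ref{prop:tcadmon}, whose terms are \emph{products of semi-normed spaces} $\prod_{p}I_{p}$, the dual of Corollary \ref{Bres} supplies a functorial $\mathcal{B}$-co-very-special coresolution, i.e.\ a right deformation $(Q_{-},\eta_{-})$ of $Ch(\mathcal{T}_{c,k})$ all of whose terms are products of semi-normed spaces. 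For such a term, $L_{T_{c}}^{PsN}(\prod_{p}I_{p})$ is isomorphic in $Pro(\sNorm_{k})$ to the pro-object given by the finite subproducts $\prod_{p\in S}I_{p}$, whose transition maps are the split (hence admissible) coordinate projections; as the exact functor $F$ carries split epimorphisms to split epimorphisms, this pro-object lies in $Pro^{a}(\sNorm_{k})\subseteq Pro^{F-wfl}(\sNorm_{k})$. Since $L_{T_{c}}^{PsN}$ is exact it therefore carries the coresolution $Q_{-}(X)$ to a co-very-special system in $Pro(\sNorm_{k})$ for the right deformation of $\overline{F}$, and hence $F^{t}=\overline{F}\circ L_{T_{c}}^{PsN}$ preserves quasi-isomorphisms between complexes in the essential image of $Q_{-}$. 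Thus $(Q_{-},\eta_{-})$ is a right deformation for $F^{t}$, $F^{t}$ is right-deformable, and $\mathbb{R}F^{t}\cong\overline{F}\circ L_{T_{c}}^{PsN}\circ Q_{-}$.

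For the final assertion, suppose $\mathpzc{D}$ is strongly ML and let $X\in\mathcal{T}_{c,k}$ be concentrated in degree $0$. Then $L_{T_{c}}^{PsN}(X)=``\lim_{P_{X}}"X_{\rho}$ is an object of $Pro(\sNorm_{k})$, so by the boundedness clause of Proposition \ref{prop:probounded} (the dual of Proposition \ref{prop:indprobounded}, valid since $F$ is exact and hence $\mathcal{S}=\sNorm_{k}$), the complex $\mathbb{R}\overline{F}(L_{T_{c}}^{PsN}(X))$ is quasi-isomorphic to a complex concentrated in degrees $0,-1$. Finally $L_{T_{c}}^{PsN}(Q_{-}(X))\to L_{T_{c}}^{PsN}(X)$ is a quasi-isomorphism, by exactness of $L_{T_{c}}^{PsN}$, between $\overline{F}$-acyclic complexes, so $\mathbb{R}F^{t}(X)=\overline{F}(L_{T_{c}}^{PsN}(Q_{-}(X)))\simeq\mathbb{R}\overline{F}(L_{T_{c}}^{PsN}(X))$, which gives the claim.

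The main obstacle is the middle paragraph. One cannot simply declare $L_{T_{c}}^{PsN}(X)$ to be $\overline{F}$-acyclic for an arbitrary locally convex space $X$, since the pro-system $\rho\mapsto X_{\rho}$ need not remain weakly flasque after applying $F$; in the earlier lemma on $R^{T_{c}}_{PsN}$ this was salvaged by the Mittag--Leffler criterion of Corollary \ref{cor:acycliclimtop}, which is specific to $\mathcal{T}_{c,k}$ and unavailable for a general target $\mathpzc{D}$. Hence one must first pass to the coresolution of Proposition \ref{prop:tcadmon} and genuinely exploit that its terms are products of semi-normed spaces, for which the relevant pro-system collapses to a tower of split epimorphisms; the remaining points — functoriality of that coresolution, the pro-isomorphism identifying $L_{T_{c}}^{PsN}$ of a product of semi-normed spaces with its finite subproducts, and the commutation of $\overline{F}$ with countable products (a countable product of pro-objects being cofinally indexed by its finite subproducts, on which the additivity of $F$ suffices) — are routine.
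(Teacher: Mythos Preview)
Your overall factorisation $F^{t}=\overline{F}\circ L_{T_{c}}^{PsN}$ and the reduction to Proposition~\ref{prop:probounded} are the right moves, but the middle paragraph contains a genuine gap. The pro-object $L_{T_{c}}^{PsN}\bigl(\prod_{p}I_{p}\bigr)$ is by definition $``\lim_{\rho}"\,\bigl(\prod_{p}I_{p}\bigr)_{\rho}$, where $\rho$ runs over the continuous semi-norms; a cofinal system is given by the semi-norms $\rho_{S}(x)=\max_{p\in S}\|x_{p}\|$ for $S$ finite. The transition maps $\bigl(\prod_{p}I_{p}\bigr)_{\rho_{S'}}\to\bigl(\prod_{p}I_{p}\bigr)_{\rho_{S}}$ are the \emph{identity on the underlying module}, so they are bounded bijections whose inverse is not bounded. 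In particular they are not split (or even admissible) epimorphisms, and the pro-object is \emph{not} isomorphic to $``\lim_{S}"\,\prod_{p\in S}I_{p}$: the projection $\bigl(\prod_{p}I_{p}\bigr)_{\rho_{S}}\to\prod_{p\in S}I_{p}$ has nonzero kernel (the elements supported off $S$, which have semi-norm zero but are not zero in $\sNorm_{k}$), and testing against a non-separated target shows the induced map of pro-objects is not an isomorphism. Consequently there is no reason for the $F$-image of this system to be weakly flasque in $\mathpzc{D}$, and your claim that $L_{T_{c}}^{PsN}$ of a product of semi-normed spaces lies in $Pro^{F\text{-}wfl}(\sNorm_{k})$ fails. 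You correctly diagnosed that one cannot assert $\overline{F}$-acyclicity of $L_{T_{c}}^{PsN}(X)$ for arbitrary $X$; the same problem persists for $X=\prod_{p}I_{p}$.

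The paper avoids this difficulty by a different route. Rather than trying to identify explicit $\overline{F}$-acyclic objects in the image of $L_{T_{c}}^{PsN}$, it uses that both $\mathcal{T}_{c,k}$ and $Pro(\sNorm_{k})$ carry the injective model structure (so $K$-injective resolutions exist and derive any functor), together with the preceding lemma $\mathbb{R}R_{PsN}^{T_{c}}L_{T_{c}}^{PsN}(X)\simeq X$. Since $L_{T_{c}}^{PsN}$ is exact, $R_{PsN}^{T_{c}}$ preserves injectives; hence an injective resolution $L_{T_{c}}^{PsN}(X)\to I_{\bullet}$ in $Pro(\sNorm_{k})$ yields an injective resolution $X\to R_{PsN}^{T_{c}}(I_{\bullet})$ in $\mathcal{T}_{c,k}$, and applying $F^{t}$ to the latter amounts to computing $\mathbb{R}(F^{t}\circ R_{PsN}^{T_{c}})$ on $L_{T_{c}}^{PsN}(X)$. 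One then invokes Proposition~\ref{prop:probounded} for this composite functor out of $Pro(\sNorm_{k})$. The upshot is that the acyclicity burden is carried entirely by $K$-injectives in $Pro(\sNorm_{k})$, not by any special form of $L_{T_{c}}^{PsN}(X)$, and the lemma on the derived unit is what ties the two derived functors together.
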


\begin{proof}
The functors $F^{t}$ and $F^{t}\circ R_{PsN}^{T_{c}}$ are derivable by $K$-injective resolutions. Let $L^{PsN}_{T_{c}}(X)\rightarrow I_{\bullet}$ be an injective resolution in $Ch_{-}(Pro(\sNorm_{k}))$. Since $\mathbb{R}R_{PsN}^{T_{c}}L_{T_{c}}^{PsN}(X)\rightarrow X$ is an equivalence, $X\rightarrow R_{PsN}^{T_{c}}(I_{\bullet})$ is an injective resolution in $Ch_{-}(\mathcal{T}_{c,k})$. Thus $\mathbb{R}F^{t}(X)\cong\mathbb{R}(F^{t}\circ R_{PsN}^{T_{c}})(X)$. The claims now follow by Proposition \ref{prop:probounded}.
\end{proof}

\begin{rem}
The functor $(-)^{b}:\mathcal{T}_{c,k}\rightarrow\mathpzc{Born}_{k}$ has a left adjoint $(-)^{t}:\mathpzc{Born}_{k}\rightarrow\mathcal{T}_{c.k}$. This functor is also exact when restricted to the category of semi-normed spaces, and thus the functor
$$(-)^{t};Ch(\mathpzc{Born}_{k})\rightarrow Ch(\mathcal{T}_{c,k})$$
is left deformable.  By general nonsense, (\cite{dwyer2005homotopy} 44.2), there is an adjunction
$$\adj{\mathbb{L}(-)^{t}}{\Ho(Ch(\mathpzc{Born}_{k}))}{\Ho(Ch(\mathcal{T}_{c,k}))}{\mathbb{R}(-)^{b}}$$
In forthcoming work we will study this adjunction in greater detail.
\end{rem}

\bibliographystyle{amsalpha}
\bibliography{unbvarxiv2.bib}

\end{document}